\documentclass[a4paper,
pagesize,
oneside, 				
numbers=noenddot,
DIV=10, 
headinclude=false, 
bibtotoc, 
fontsize=11pt, 
abstracton,
cleardoublepage=empty, 
english
]{scrartcl}

\usepackage{setspace}
\usepackage{standalone}

\usepackage{shuffle}

\usepackage{simplewick}

\usepackage{authblk}

\usepackage{mathtools}

\usepackage{stmaryrd}

\usepackage{pdflscape}

\usepackage{pst-pdf}

\usepackage{url}

\usepackage{upgreek}

\usepackage{sidecap}

\usepackage{xcolor}
\definecolor{darkred}{rgb}{0.9,0.1,0.1}
\definecolor{darkblue}{rgb}{0,0,0.7}
\definecolor{darkgreen}{rgb}{0,0.5,0}
\definecolor{bluegray}{rgb}{0.4, 0.6, 0.8}
\definecolor{cadmiumorange}{rgb}{0.93, 0.53, 0.18}
\definecolor{darkcerulean}{rgb}{0.03, 0.27, 0.49}
\definecolor{pagebackground}{rgb}{.15,.21,.18}
\definecolor{pageforeground}{rgb}{.84,.84,.85}
\usepackage[pagebackref, colorlinks=false, linkcolor=darkblue, linktocpage=true, linkbordercolor=red, citecolor=darkblue]{hyperref} % 

\usepackage[english]{babel}
\usepackage[utf8]{inputenc}
\usepackage{lmodern} 
\usepackage[T1]{fontenc} 
\usepackage{amsmath, amsfonts, amssymb}
\usepackage{mathtools}
\usepackage{mathrsfs}
\usepackage{verbatim}
\usepackage{stmaryrd}
\usepackage[framed,amsmath,thmmarks,hyperref]{ntheorem}
\usepackage{commath}

\usepackage[labelfont=bf]{caption}

\usepackage{longtable}

\allowdisplaybreaks

\usepackage[english=american]{csquotes}

\usepackage{graphicx}

\usepackage{comment}

\usepackage{framed}

\usepackage{pdflscape}

\usepackage[]{listofsymbols}

\setkomafont{sectioning}{\bfseries} 

\setkomafont{pagenumber}{\bfseries}
\setkomafont{pageheadfoot}{\normalfont}
\setkomafont{pagenumber}{\normalfont}

\usepackage{enumitem} 

\makeatletter
\newcommand{\myitem}[1]{%
\item[#1]\protected@edef\@currentlabel{#1}%
}
\makeatother

\usepackage{ifthen}

\usepackage{tikz,pgfplots}
\pgfplotsset{compat=newest}%{width=5cm,height=4cm}
\usetikzlibrary{cd}
\usetikzlibrary{calc}
\usetikzlibrary{arrows, scopes}
\usetikzlibrary{decorations.pathmorphing}
\usetikzlibrary{positioning}
\usetikzlibrary{shapes.geometric}
\usetikzlibrary{backgrounds}
\usetikzlibrary{arrows,chains,matrix,positioning,scopes}
\usepackage[multiple,hang,flushmargin]{footmisc}

\usepackage{graphicx}
\usepackage[colorinlistoftodos, disable]{todonotes}  %"disable" to turn it off
\setuptodonotes{inline}
\usepackage{marginnote}

\usepackage[framemethod=TikZ]{mdframed}

\mdfdefinestyle{temporary}{linewidth=3pt, linecolor=red}
\mdfdefinestyle{onlyframe}{middlelinecolor=black!100,roundcorner=1ex, skipbelow=15pt}
\mdfdefinestyle{theorem}{middlelinecolor=black!25, backgroundcolor=black!15,roundcorner=1ex, skipbelow=15pt}
\mdfdefinestyle{onlyback}{middlelinecolor=white!100, backgroundcolor=black!10,roundcorner=1ex, skipbelow=15pt}
\mdfdefinestyle{main}{middlelinecolor=darkblue!25, backgroundcolor=darkblue!15,roundcorner=1ex, skipbelow=15pt}

\usepackage{commath}

\usepackage{enumitem}

\usepackage{nicefrac}

\usepackage{rotating}

\usepackage{relsize}

\usepackage{ctable}
\usepackage{multicol}
\usepackage{multirow}

\usepackage{accents}

\usepackage{ucs}
\usepackage{fixltx2e} % Schickere Ausgabe
\usepackage{lmodern} % ordentliche Schriften
\usepackage{setspace,graphicx,tikz,tabularx} % für Elemente der Titelseite
\usepackage[draft=false,babel,tracking=true,kerning=true,spacing=true]{microtype} % optischer Randausgleich etc.

\usepackage{private}

\DeclareSymbolFont{extraup}{U}{zavm}{m}{n}
\DeclareMathSymbol{\varclub}{\mathalpha}{extraup}{84} 
\DeclareMathSymbol{\varspade}{\mathalpha}{extraup}{85}

\usepackage{tikz,pgfplots}
\pgfplotsset{width=\textwidth,height=5cm}
\usepackage{mhequ}
\usepackage{mathrsfs}

\usepackage{multirow}

\usepackage[framemethod=TikZ]{mdframed}

\mdfdefinestyle{temporary}{linewidth=3pt, linecolor=red}
\mdfdefinestyle{theorem}{middlelinecolor=black!25, backgroundcolor=black!15,roundcorner=1ex, skipbelow=15pt}
\mdfdefinestyle{main}{middlelinecolor=darkblue!25, backgroundcolor=darkblue!15,roundcorner=1ex, skipbelow=15pt}

\mdfdefinestyle{temporary}{linewidth=3pt, linecolor=red}
\mdfdefinestyle{onlyframe}{middlelinecolor=black!100,roundcorner=1ex, skipbelow=15pt}
\mdfdefinestyle{theorem}{middlelinecolor=black!25, backgroundcolor=black!15,roundcorner=1ex, skipbelow=15pt}
\mdfdefinestyle{onlyback}{
	middlelinecolor=white!100, 
	backgroundcolor=black!10,
	roundcorner=10pt, 
	skipbelow=25pt,
	linewidth=0pt,
	topline = true,    
	bottomline = true,
	leftline = true,
	rightline = true}
\mdfdefinestyle{main}{middlelinecolor=darkblue!25, backgroundcolor=darkblue!15,roundcorner=1ex, skipbelow=15pt}

\tikzset{
	dectriangle/.style 2 args={
		triangle,
		alias=sourcenode,
		append after command={\decorate{#1}{#2}}
	},
	dectriangle/.default={0}{0},
}

\tikzset{
	cross/.style={path picture={ 
			\draw[symbols]
			(path picture bounding box.south east) -- (path picture bounding box.north west) (path picture bounding box.south west) -- (path picture bounding box.north east);
	}},
	root/.style={circle,fill=green!50!black,inner sep=0pt, minimum size=1.2mm},
	dot/.style={circle,fill=pageforeground,inner sep=0pt, minimum size=1mm},
	bigdot/.style={circle,fill=black,inner sep=0pt,minimum size=4mm},
	dotred/.style={circle,fill=pageforeground!50!pagebackground,inner sep=0pt, minimum size=2mm},
	var/.style={circle,fill=pageforeground!10!pagebackground,draw=pageforeground,inner sep=0pt, minimum size=3mm},
	kernel/.style={semithick,shorten >=2pt,shorten <=2pt},
	kernels/.style={snake=zigzag,shorten >=2pt,shorten <=2pt,segment amplitude=1pt,segment length=4pt,line before snake=2pt,line after snake=5pt,},
	rho/.style={densely dashed,semithick,shorten >=2pt,shorten <=2pt},
	testfcn/.style={dotted,semithick,shorten >=2pt,shorten <=2pt},
	renorm/.style={shape=circle,fill=pagebackground,inner sep=1pt},
	labl/.style={shape=rectangle,fill=pagebackground,inner sep=1pt},
	xic/.style={very thin,circle,draw=symbols,fill=symbols,inner sep=0pt,minimum size=1.2mm},
	g/.style={very thin,rectangle,draw=symbols,fill=symbols!10!pagebackground,inner sep=0pt,minimum width=2.5mm,minimum height=1.2mm},
	xi/.style={very thin,circle,draw=symbols,fill=symbols!10!pagebackground,inner sep=0pt,minimum size=1.2mm},
	xicams/.style={very thin,circle,draw=cameron,fill=cameron!10!pagebackground,inner sep=0pt,minimum size=1.2mm},
	xicams2/.style={very thin,circle,draw=cameron2,fill=cameron2!10!pagebackground,inner sep=0pt,minimum size=1.2mm},
	cmn/.style={circle,fill=cameron!50,draw=cameron,inner sep=0pt,minimum size=1.2mm},
	gxi/.style={very thin,circle,draw=lightgray,fill=lightgray!10!pagebackground,inner sep=0pt,minimum size=1.2mm},
	xies/.style={very thin,rectangle,fill=green!50!black!25,draw=symbols,inner sep=0pt,minimum size=1.1mm},
	xiesf/.style={very thin,rectangle,fill=green!50!black,draw=symbols,inner sep=0pt,minimum size=1.1mm},
	xix/.style={very thin,crosscircle,fill=symbols!10!pagebackground,draw=symbols,inner sep=0pt,minimum size=1.2mm},
	X/.style={very thin,cross,rectangle,fill=pagebackground,draw=symbols,inner sep=0pt,minimum size=1.2mm},
	xib/.style={thin,circle,fill=symbols!10!pagebackground,draw=symbols,inner sep=0pt,minimum size=1.6mm},
	xigreen/.style={circle,fill=darkgreen!30,draw=darkgreen,inner sep=0pt,minimum size=2.2mm},
	xidarkcerulean/.style={circle,fill=darkcerulean!30,draw=darkcerulean,inner sep=0pt,minimum size=2.2mm},
	xivilet/.style={circle,fill=violet!30,draw=violet,inner sep=0pt,minimum size=2.2mm},
	xired/.style={circle,fill=darkred!30,draw=darkred,inner sep=0pt,minimum size=2.2mm},
	xifill/.style={circle,fill=white, draw=black,inner sep=0pt,minimum size=2.2mm},
	xibig/.style={circle,fill=black!30,draw=black,inner sep=0pt,minimum size=2.2mm},
	xie/.style={thin,circle,fill=green!50!black,draw=symbols,inner sep=0pt,minimum size=1.6mm},
	xid/.style={thin,circle,fill=symbols,draw=symbols,inner sep=0pt,minimum size=1.6mm},
	xibx/.style={thin,crosscircle,fill=symbols!10!pagebackground,draw=symbols,inner sep=0pt,minimum size=1.6mm},
	kernels2/.style={very thick,draw=connection,segment length=12pt},
	gkernels2/.style={very thick,draw=lightgray,segment length=12pt},
	keps/.style={thin,draw=symbols,->},
	kepspr/.style={thick,draw=connection,->},
	krho/.style={thin,draw=symbols,superdense,->},
	krhopr/.style={thick,draw=connection,superdense,->},
	triangle/.style = { regular polygon, regular polygon sides=3},
	notnorm/.style={thin,circle,draw=symbols,fill=symbols,inner sep=0pt,minimum size=0.5mm},
	not/.style={thin,circle,draw=connection,fill=connection,inner sep=0pt,minimum size=0.5mm},
	notcam/.style={thin,circle,draw=cameron,fill=cameron,inner sep=0pt,minimum size=0.5mm},
	gnot/.style={thin,circle,draw=lightgray,fill=lightgray,inner sep=0pt,minimum size=0.5mm},
	diff/.style = {very thin,draw=symbols,triangle,fill=red!50!black,inner sep=0pt,minimum size=1.6mm},
	diff1/.style = {very thin,dectriangle={1}{0},fill=red!50!black,draw=symbols,inner sep=0pt,minimum size=1.6mm},
	diff2/.style = {very thin,dectriangle={1}{1},fill=red!50!black,draw=symbols,inner sep=0pt,minimum size=1.6mm},
	diffmini/.style = {very thin,rectangle,fill=black,draw=black,inner sep=0pt,minimum size=0.75mm},
	kernelsmod/.style={very thick,draw=connection,segment length=12pt},
	rec/.style = {very thin,rectangle,fill=black,draw=black,inner sep=0pt,minimum size=2mm},
	cerc/.style={very thin,circle,draw=black,fill=symbols,inner sep=0pt,minimum size=2mm},
	trup/.style={very thin,regular polygon,regular polygon sides=3,shape border rotate=180,draw=black,fill=red,inner sep=0pt,minimum size=3mm},
	stars/.style={very thin,star,star points=6,star point ratio=0.5, draw=black,fill=red,inner sep=0pt,minimum size=0.7mm},
	>=stealth,
}

\makeatletter
\def\DeclareSymbol#1#2#3{%
	\expandafter\gdef\csname MH@symb@#1\endcsname{\tikzsetnextfilename{symbol#1}%
		\tikz[baseline=#2,scale=0.15,draw=symbols,line join=round]{#3}}%
	\expandafter\gdef\csname MH@symb@#1s\endcsname{\scalebox{0.75}{\tikzsetnextfilename{symbol#1}%
			\tikz[baseline=#2,scale=0.15,draw=symbols,line join=round]{#3}}}%
	\expandafter\gdef\csname MH@symb@#1ss\endcsname{\scalebox{0.65}{\tikzsetnextfilename{symbol#1}%
			\tikz[baseline=#2,scale=0.15,draw=symbols,line join=round]{#3}}}%
}
\def\<#1>{\ifthenelse{\boolean{mmode}}{\mathchoice{\csname MH@symb@#1\endcsname}{\csname MH@symb@#1\endcsname}{\csname MH@symb@#1s\endcsname}{\csname MH@symb@#1ss\endcsname}}{\csname MH@symb@#1\endcsname}}
\makeatother

\DeclareSymbol{greennode}{-2.8}{\node[xigreen] {};}
\DeclareSymbol{rednode}{-2.8}{\node[xired] {};}

\def\CA{\mathcal{A}}
\def\CB{\mathcal{B}}
\def\CC{\mathcal{C}}
\def\CD{\mathcal{D}}

\def\CF{\mathcal{F}}
\def\CG{\mathcal{G}}

\def\CI{\mathcal{I}}

\def\CL{\mathcal{L}}
\def\CM{\mathcal{M}}
\def\CN{\mathcal{N}}

\def\CR{\mathcal{R}}
\def\CS{\mathcal{S}}

\def\CW{\mathcal{W}}

\renewcommand{\S}{\mathbb{S}}
\renewcommand{\Y}{\mathbb{Y}}
\newcommand{\YY}{\boldsymbol{Y}}

\newcommand{\BBbar}{\boldsymbol{\bar{B}}}
\newcommand{\ESig}{\operatorname{ESig}}

\def\fC{\mathfrak{C}}

\def\fw{\mathfrak{w}}

\def\w{\mathtt{w}}
\def\v{\mathtt{v}}

\newcommand{\bi}{\boldsymbol{i}}

\newcommand{\bq}{\boldsymbol{q}}

\def\tte{\mathtt{e}}

\newcommand{\RN}[1]{%
\textup{\uppercase\expandafter{\romannumeral#1}}%
}

\newcommand{\scal}[1]{\langle #1 \rangle}

\colorlet{testcolor}{green!60!black}
\colorlet{testcolor2}{blue!100!black}
\colorlet{lightblue}{darkblue!80}
\colorlet{grayblue}{bluegray!100}
\colorlet{orange}{cadmiumorange!100}

\usetikzlibrary{calc}
\usetikzlibrary{shapes.misc}
\usetikzlibrary{shapes.symbols}
\usetikzlibrary{shapes.geometric}
\usetikzlibrary{snakes}
\usetikzlibrary{decorations}
\usetikzlibrary{decorations.markings}
\usetikzlibrary{patterns}

\makeatletter
\pgfdeclareshape{crosscircle}
{
\inheritsavedanchors[from=circle] % this is nearly a circle
\inheritanchorborder[from=circle]
\inheritanchor[from=circle]{north}
\inheritanchor[from=circle]{north west}
\inheritanchor[from=circle]{north east}
\inheritanchor[from=circle]{center}
\inheritanchor[from=circle]{west}
\inheritanchor[from=circle]{east}
\inheritanchor[from=circle]{mid}
\inheritanchor[from=circle]{mid west}
\inheritanchor[from=circle]{mid east}
\inheritanchor[from=circle]{base}
\inheritanchor[from=circle]{base west}
\inheritanchor[from=circle]{base east}
\inheritanchor[from=circle]{south}
\inheritanchor[from=circle]{south west}
\inheritanchor[from=circle]{south east}
\inheritbackgroundpath[from=circle]
\foregroundpath{
\centerpoint%
\pgf@xc=\pgf@x%
\pgf@yc=\pgf@y%
\pgfutil@tempdima=\radius%
\pgfmathsetlength{\pgf@xb}{\pgfkeysvalueof{/pgf/outer xsep}}%  
\pgfmathsetlength{\pgf@yb}{\pgfkeysvalueof{/pgf/outer ysep}}%  
\ifdim\pgf@xb<\pgf@yb%
\advance\pgfutil@tempdima by-\pgf@yb%
\else%
\advance\pgfutil@tempdima by-\pgf@xb%
\fi%
\pgfpathmoveto{\pgfpointadd{\pgfqpoint{\pgf@xc}{\pgf@yc}}{\pgfqpoint{-0.707107\pgfutil@tempdima}{-0.707107\pgfutil@tempdima}}}
\pgfpathlineto{\pgfpointadd{\pgfqpoint{\pgf@xc}{\pgf@yc}}{\pgfqpoint{0.707107\pgfutil@tempdima}{0.707107\pgfutil@tempdima}}}
\pgfpathmoveto{\pgfpointadd{\pgfqpoint{\pgf@xc}{\pgf@yc}}{\pgfqpoint{-0.707107\pgfutil@tempdima}{0.707107\pgfutil@tempdima}}}
\pgfpathlineto{\pgfpointadd{\pgfqpoint{\pgf@xc}{\pgf@yc}}{\pgfqpoint{0.707107\pgfutil@tempdima}{-0.707107\pgfutil@tempdima}}}
}
}
\makeatother

\colorlet{symbols}{blue!90!black}
\colorlet{trees}{black!90!black}
\colorlet{cameron}{darkgreen!90!black}
\colorlet{testcolor}{green!60!black}
\colorlet{darkblue}{blue!60!black}
\colorlet{darkgreen}{green!60!black}

\def\symbol#1{\textcolor{symbols}{#1}}
\def\1{\mathbf{1}}
\def\X{\symbol{X}}

\usetikzlibrary{calc}
\usetikzlibrary{shapes.misc}
\usetikzlibrary{shapes.symbols}
\usetikzlibrary{shapes.geometric}
\usetikzlibrary{snakes}
\usetikzlibrary{decorations}
\usetikzlibrary{decorations.markings}

\def\drawx{\draw[-,solid] (-3pt,-3pt) -- (3pt,3pt);\draw[-,solid] (-3pt,3pt) -- (3pt,-3pt);}
\tikzset{
root/.style={draw=symbols,circle,inner sep=0pt,minimum size=0.5mm,fill=white},	
smalldot/.style={circle,fill=symbols,draw=symbols,inner sep=0pt,minimum size=0.5mm},
dot/.style={circle,fill=black,inner sep=0pt,minimum size=1mm},
bigdot/.style={circle,fill=black,inner sep=0pt,minimum size=4mm},
noiseedge/.style={black,semithick,decorate, decoration={snake,segment length=4pt,amplitude=1pt}},
smallestdot1/.style={circle,fill=symbols,draw=symbols,inner sep=0pt,minimum size=0.2mm},
smallestdot2/.style={circle,fill=cameron,draw=cameron,inner sep=0pt,minimum size=0.2mm},
smallnoiseedge1/.style={draw=symbols,decorate, decoration={snake,segment length=1.75pt,amplitude=0.55pt}},
smallnoiseedge2/.style={draw=cameron,decorate, decoration={snake,segment length=1.75pt,amplitude=0.55pt}},
noiseedge1/.style={draw=symbols,decorate, decoration={snake,segment length=0.9pt,amplitude=0.55pt}},
noiseedge2/.style={draw=cameron,decorate, decoration={snake,segment length=0.9pt,amplitude=0.55pt}},
smallnoisenode1/.style={draw=symbols,circle,inner sep=0pt,minimum size=0.5mm,fill=white},	
smallnoisenode2/.style={draw=cameron,circle,inner sep=0pt,minimum size=0.5mm,fill=white},
poly/.style={draw=symbols,circle,inner sep=0pt,minimum size=0.5mm,fill=black},
dot/.style={circle,fill=black,inner sep=0pt,minimum size=1mm},
int/.style={circle,fill=black,draw=black,inner sep=0pt,minimum size=0.7mm},
circ/.style={circle,draw=black,inner sep=0pt, minimum size=1mm},
var/.style={circle,fill=black!10,draw=black,inner sep=0pt, minimum size=2mm},
dotred/.style={circle,fill=black!50,inner sep=0pt, minimum size=2mm},
generic/.style={semithick,shorten >=1pt,shorten <=1pt},
oddfunc/.style={generic, dotted},
dist/.style={ultra thick,draw=testcolor,shorten >=1pt,shorten <=1pt},
testfcn/.style={ultra thick,testcolor,shorten >=1pt,shorten <=1pt,<-},
testfunction/.style={ultra thick,testcolor,shorten >=1pt,shorten <=1pt},
testfcnx/.style={ultra thick,testcolor,shorten >=1pt,shorten <=1pt,<-,
postaction={decorate,decoration={markings,mark=at position 0.6 with {\drawx}}}},
kprime/.style={semithick,shorten >=1pt,shorten <=1pt,densely dashed,->},
kprimex/.style={semithick,shorten >=1pt,shorten <=1pt,densely dashed,->,
postaction={decorate,decoration={markings,mark=at position 0.4 with {\drawx}}}},
kernel/.style={semithick,shorten >=1pt,shorten <=1pt,->,draw=black},
Pkernel/.style={ultra thick,shorten >=1pt,shorten <=1pt,->,draw=blue},
PkernelBig/.style={very thick,shorten >=1pt,shorten <=1pt,decorate, draw=blue, decoration={zigzag,amplitude=1.5pt,segment length = 3pt,pre length=2pt,post length=2pt}},
multx/.style={shorten >=1pt,shorten <=1pt,
postaction={decorate,decoration={markings,mark=at position 0.5 with {\drawx}}}},
kernelx/.style={semithick,shorten >=1pt,shorten <=1pt,->,
postaction={decorate,decoration={markings,mark=at position 0.4 with {\drawx}}}},
kernel1/.style={->,semithick,shorten >=1pt,shorten <=1pt,postaction={decorate,decoration={markings,mark=at position 0.45 with {\draw[-] (0,-0.1) -- (0,0.1);}}}},
kernel2/.style={->,semithick,shorten >=1pt,shorten <=1pt,postaction={decorate,decoration={markings,mark=at position 0.45 with {\draw[-] (0.05,-0.1) -- (0.05,0.1);\draw[-] (-0.05,-0.1) -- (-0.05,0.1);}}}},
kernelBig/.style={semithick,shorten >=1pt,shorten <=1pt,decorate, decoration={zigzag,amplitude=1.5pt,segment length = 3pt,pre length=2pt,post length=2pt}},
kernelBigg/.style={thick,shorten >=1pt,shorten <=1pt,decorate, decoration={zigzag,amplitude=3.5pt,segment length = 7pt,pre length=2pt,post length=2pt}},
kernelBigg1/.style={thick,shorten >=1pt,shorten <=1pt,decorate, decoration={saw,amplitude=3.5pt,segment length = 7pt,pre length=2pt,post length=2pt}},
kernelBigg2/.style={thick,shorten >=1pt,shorten <=1pt,decorate, decoration={bumps,amplitude=3.5pt,segment length = 7pt,pre length=2pt,post length=2pt}},
rho/.style={dotted,semithick,shorten >=1pt,shorten <=1pt},
k/.style={color=trees, snake=zigzag, segment length=1pt, segment amplitude=0.3pt},
r/.style={color=trees, double, double distance=0.1pt},
renorm/.style={shape=circle,fill=white,inner sep=1pt},
labl/.style={shape=rectangle,fill=white,inner sep=1pt},
xi/.style={circle,fill=symbols!30,draw=symbols,inner sep=0pt,minimum size=0.8mm},
cmn/.style={circle,fill=cameron!50,draw=symbols,inner sep=0pt,minimum size=1.2mm},
xix/.style={crosscircle,fill=symbols!10,draw=symbols,inner sep=0pt,minimum size=1.2mm},
xib/.style={circle,fill=symbols!30,draw=symbols,inner sep=0pt,minimum size=1.4mm},
xiblack/.style={circle,fill=trees,draw=trees,inner sep=0pt,minimum size=0.7mm},
xigray/.style={circle,fill=symbols!30,draw=symbols,inner sep=0pt,minimum size=0.8mm},
xibx/.style={crosscircle,fill=symbols!10,draw=symbols,inner sep=0pt,minimum size=1.6mm},
not/.style={circle,fill=symbols,draw=symbols,inner sep=0pt,minimum size=0.5mm},
>=stealth,
}
\makeatletter
\def\DeclareSymbol#1#2#3{\expandafter\gdef\csname MH@symb@#1\endcsname{\tikz[baseline=#2,scale=0.13,draw=symbols]{#3}}\expandafter\gdef\csname MH@symb@#1s\endcsname{\scalebox{0.6}{\tikz[baseline=#2,scale=0.15,draw=symbols]{#3}}}}
\def\<#1>{\csname MH@symb@#1\endcsname}
\makeatother

\DeclareSymbol{xi}{-3}{\node[xibig] {};}
\DeclareSymbol{wn}{-3}{\node[xibig] {};}
\DeclareSymbol{xigray}{-3}{\node[xigray] {};}
\DeclareSymbol{X}{-2.4}{\node[dot] {};}
\DeclareSymbol{1}{0}{\draw[white] (-.4,0) -- (.4,0); \draw (0,0)  -- (0,1.2) node[xi] {};}
\DeclareSymbol{2}{0}{\draw (-0.5,1.2) node[xi] {} -- (0,0) -- (0.5,1.2) node[xi] {};}
\DeclareSymbol{3}{0}{\draw (0,0) -- (0,1.2) node[xi] {}; \draw (-.7,1) node[xi] {} -- (0,0) -- (.7,1) node[xi] {};}
\DeclareSymbol{31}{-3}{\draw (0,0) -- (0,-1) -- (1,0) node[xi] {}; \draw (0,0) -- (0,1.2) node[xi] {}; \draw (-.7,1) node[xi] {} -- (0,0) -- (.7,1) node[xi] {};}
\DeclareSymbol{30}{-3}{\draw (0,0) -- (0,-1); \draw (0,0) -- (0,1.2) node[xi] {}; \draw (-.7,1) node[xi] {} -- (0,0) -- (.7,1) node[xi] {};}
\DeclareSymbol{10}{-3}{\draw (0,0) -- (0,-1); \draw (-.8,1) node[xi] {} -- (0,0);}
\DeclareSymbol{32}{-3}{\draw (0,0) -- (0,-1) -- (1,0) node[xi] {}; \draw (0,0) -- (0,-1) -- (-1,0) node[xi] {}; \draw (0,0) -- (0,1.2) node[xi] {}; \draw (-.7,1) node[xi] {} -- (0,0) -- (.7,1) node[xi] {};}
\DeclareSymbol{22}{-3}{\draw (0,0.3) -- (0,-1) -- (1,0) node[xi] {}; \draw (0,0.3) -- (0,-1) -- (-1,0) node[xi] {};\draw (-.7,1) node[xi] {} -- (0,0.3) -- (.7,1) node[xi] {};}
\DeclareSymbol{12}{-3}{\draw (0,0) -- (0,-1) -- (1,0) node[xi] {}; \draw (0,0) -- (0,-1) -- (-1,0) node[xi] {}; \draw (-.8,1) node[xi] {} -- (0,0);}
\DeclareSymbol{20}{-3}{\draw (0,0) -- (0,-1);\draw (-.7,1) node[xi] {} -- (0,0) -- (.7,1) node[xi] {};}

\DeclareSymbol{xib}{-3}{\node[xiblack] {};}
\DeclareSymbol{X}{-2.4}{\node[dot] {};}
\DeclareSymbol{1b}{0}{\draw[white] (-.4,0) -- (.4,0); \draw[trees] (0,0)  -- (0,1.2) node[xiblack] {};}
\DeclareSymbol{2b}{0}{\draw[trees] (-0.5,1.2) node[xiblack] {} -- (0,0) -- (0.5,1.2) node[xiblack] {};}
\DeclareSymbol{3b}{0}{\draw[trees] (0,0) -- (0,1.2) node[xiblack] {}; \draw[trees] (-.7,1) node[xiblack] {} -- (0,0) -- (.7,1) node[xiblack] {};}
\DeclareSymbol{31b}{-3}{\draw[trees] (0,0) -- (0,-1) -- (1,0) node[xiblack] {}; \draw[trees] (0,0) -- (0,1.2) node[xiblack] {}; \draw[trees] (-.7,1) node[xiblack] {} -- (0,0) -- (.7,1) node[xiblack] {};}
\DeclareSymbol{30b}{-3}{\draw[trees] (0,0) -- (0,-1); \draw[trees] (0,0) -- (0,1.2) node[xiblack] {}; \draw[trees] (-.7,1) node[xiblack] {} -- (0,0) -- (.7,1) node[xiblack] {};}
\DeclareSymbol{10b}{-3}{\draw[trees] (0,0) -- (0,-1); \draw[trees] (-.8,1) node[xiblack] {} -- (0,0);}
\DeclareSymbol{32b}{-3}{\draw[trees] (0,0) -- (0,-1) -- (1,0) node[xiblack] {}; \draw[trees] (0,0) -- (0,-1) -- (-1,0) node[xiblack] {}; \draw[trees] (0,0) -- (0,1.2) node[xiblack] {}; \draw[trees] (-.7,1) node[xiblack] {} -- (0,0) -- (.7,1) node[xiblack] {};}
\DeclareSymbol{22b}{-3}{\draw[trees] (0,0.3) -- (0,-1) -- (1,0) node[xiblack] {}; \draw[trees] (0,0.3) -- (0,-1) -- (-1,0) node[xiblack] {};\draw[trees] (-.7,1) node[xiblack] {} -- (0,0.3) -- (.7,1) node[xiblack] {};}
\DeclareSymbol{12b}{-3}{\draw[trees] (0,0) -- (0,-1) -- (1,0) node[xi] {}; \draw[trees] (0,0) -- (0,-1) -- (-1,0) node[xiblack] {}; \draw[trees] (-.8,1) node[xiblack] {} -- (0,0);}
\DeclareSymbol{20b}{-3}{\draw[trees] (0,0) -- (0,-1);\draw[trees] (-.7,1) node[xiblack] {} -- (0,0) -- (.7,1) node[xiblack] {};}

\DeclareSymbol{G}{1}{\draw[white] (-.4,0) -- (.4,0); \draw[trees, semithick] (0,0)  -- (0,2.5) {};}
\DeclareSymbol{K}{1}{\draw[white] (-.4,0) -- (.4,0); \draw[trees, semithick, snake=zigzag, segment length=2pt, segment amplitude=0.5pt] (0,0)  -- (0,2.5)  {};}
\DeclareSymbol{R}{1}{\draw[white] (-.4,0) -- (.4,0); \draw[color=trees, double, double distance=0.4pt] (0,0)  -- (0,2.5) {};}
\DeclareSymbol{I}{1}{\draw[white] (-.4,0) -- (.4,0); \draw[symbols, semithick] (0,0)  -- (0,2.5) {};}

\DeclareSymbol{1r}{0}{\draw[white] (-.4,0) -- (.4,0); \draw[r] (0,0)  -- (0,1.2) node[xiblack] {};}
\DeclareSymbol{1k}{0}{\draw[white] (-.4,0) -- (.4,0); \draw[k] (0,0)  -- (0,1.2) node[xiblack] {};}
\DeclareSymbol{2kr}{0}{\draw[k] (-0.5,1.2) node[xiblack] {} -- (0,0); \draw[r] (0,0) -- (0.5,1.2) node[xiblack] {};}
\DeclareSymbol{2rk}{0}{\draw[k] (-0.5,1.2) node[xiblack] {} -- (0,0); \draw[r] (0,0) -- (0.5,1.2) node[xiblack] {};}
\DeclareSymbol{2kk}{0}{\draw[k] (-0.5,1.2) node[xiblack] {} -- (0,0); \draw[k] (0,0) -- (0.5,1.2) node[xiblack] {};}
\DeclareSymbol{2rr}{0}{\draw[r] (-0.5,1.2) node[xiblack] {} -- (0,0); \draw[r] (0,0) -- (0.5,1.2) node[xiblack] {};}
\DeclareSymbol{3rrr}{0}{\draw[r] (0,0) -- (0,1.2) node[xiblack] {}; \draw[r] (-.7,1) node[xiblack] {} -- (0,0) -- (.7,1) node[xiblack] {};}
\DeclareSymbol{3kkk}{0}{\draw[k] (0,0) -- (0,1.2) node[xiblack] {}; \draw[k] (-.7,1) node[xiblack] {} -- (0,0) -- (.7,1) node[xiblack] {};}
\DeclareSymbol{3kkr}{0}{\draw[k] (0,0) -- (0,1.2) node[xiblack] {}; \draw[k] (-.7,1) node[xiblack] {} -- (0,0); \draw[r] (0,0) -- (.7,1) node[xiblack] {};}
\DeclareSymbol{3krr}{0}{\draw[r] (0,0) -- (0,1.2) node[xiblack] {}; \draw[k] (-.7,1) node[xiblack] {} -- (0,0); \draw[r] (0,0) -- (.7,1) node[xiblack] {};}
\DeclareSymbol{3rrr}{0}{\draw[r] (0,0) -- (0,1.2) node[xiblack] {}; \draw[r] (-.7,1) node[xiblack] {} -- (0,0) -- (.7,1) node[xiblack] {};}
\DeclareSymbol{3kkk}{0}{\draw[k] (0,0) -- (0,1.2) node[xiblack] {}; \draw[k] (-.7,1) node[xiblack] {} -- (0,0) -- (.7,1) node[xiblack] {};}
\DeclareSymbol{3kkr}{0}{\draw[k] (0,0) -- (0,1.2) node[xiblack] {}; \draw[k] (-.7,1) node[xiblack] {} -- (0,0); \draw[r] (0,0) -- (.7,1) node[xiblack] {};}
\DeclareSymbol{3krr}{0}{\draw[r] (0,0) -- (0,1.2) node[xiblack] {}; \draw[k] (-.7,1) node[xiblack] {} -- (0,0); \draw[r] (0,0) -- (.7,1) node[xiblack] {};}
\DeclareSymbol{30k}{-3}{\draw[k] (0,0) -- (0,-1); \draw[trees] (0,0) -- (0,1.2) node[xiblack] {}; \draw[trees] (-.7,1) node[xiblack] {} -- (0,0) -- (.7,1) node[xiblack] {};}
\DeclareSymbol{30r}{-3}{\draw[r] (0,0) -- (0,-1); \draw[trees] (0,0) -- (0,1.2) node[xiblack] {}; \draw[trees] (-.7,1) node[xiblack] {} -- (0,0) -- (.7,1) node[xiblack] {};}
\DeclareSymbol{3rrr0k}{-3}{\draw[k] (0,0) -- (0,-1); \draw[r] (0,0) -- (0,1.2) node[xiblack] {}; \draw[r] (-.7,1) node[xiblack] {} -- (0,0) -- (.7,1) node[xiblack] {};}
\DeclareSymbol{3kkk0k}{-3}{\draw[k] (0,0) -- (0,-1);\draw[k] (0,0) -- (0,1.2) node[xiblack] {}; \draw[k] (-.7,1) node[xiblack] {} -- (0,0) -- (.7,1) node[xiblack] {};}
\DeclareSymbol{3kkr0k}{-3}{\draw[k] (0,0) -- (0,-1);\draw[k] (0,0) -- (0,1.2) node[xiblack] {}; \draw[k] (-.7,1) node[xiblack] {} -- (0,0); \draw[r] (0,0) -- (.7,1) node[xiblack] {};}
\DeclareSymbol{3krr0k}{-3}{\draw[k] (0,0) -- (0,-1);\draw[r] (0,0) -- (0,1.2) node[xiblack] {}; \draw[k] (-.7,1) node[xiblack] {} -- (0,0); \draw[r] (0,0) -- (.7,1) node[xiblack] {};}
\DeclareSymbol{3rrr0r}{-3}{\draw[r] (0,0) -- (0,-1); \draw[r] (0,0) -- (0,1.2) node[xiblack] {}; \draw[r] (-.7,1) node[xiblack] {} -- (0,0) -- (.7,1) node[xiblack] {};}
\DeclareSymbol{3kkk0r}{-3}{\draw[r] (0,0) -- (0,-1);\draw[k] (0,0) -- (0,1.2) node[xiblack] {}; \draw[k] (-.7,1) node[xiblack] {} -- (0,0) -- (.7,1) node[xiblack] {};}
\DeclareSymbol{3kkr0r}{-3}{\draw[r] (0,0) -- (0,-1);\draw[k] (0,0) -- (0,1.2) node[xiblack] {}; \draw[k] (-.7,1) node[xiblack] {} -- (0,0); \draw[r] (0,0) -- (.7,1) node[xiblack] {};}
\DeclareSymbol{3krr0r}{-3}{\draw[r] (0,0) -- (0,-1);\draw[r] (0,0) -- (0,1.2) node[xiblack] {}; \draw[k] (-.7,1) node[xiblack] {} -- (0,0); \draw[r] (0,0) -- (.7,1) node[xiblack] {};}
\DeclareSymbol{30b}{-3}{\draw[trees] (0,0) -- (0,-1); \draw[trees] (0,0) -- (0,1.2) node[xiblack] {}; \draw[trees] (-.7,1) node[xiblack] {} -- (0,0) -- (.7,1) node[xiblack] {};}

\DeclareSymbol{xib}{-3}{\node[xiblack] {};}
\DeclareSymbol{X}{-2.4}{\node[dot] {};}
\DeclareSymbol{1b}{0}{\draw[white] (-.4,0) -- (.4,0); \draw[trees] (0,0)  -- (0,1.2) node[xiblack] {};}
\DeclareSymbol{2b}{0}{\draw[trees] (-0.5,1.2) node[xiblack] {} -- (0,0) -- (0.5,1.2) node[xiblack] {};}
\DeclareSymbol{3b}{0}{\draw[trees] (0,0) -- (0,1.2) node[xiblack] {}; \draw[trees] (-.7,1) node[xiblack] {} -- (0,0) -- (.7,1) node[xiblack] {};}
\DeclareSymbol{31b}{-3}{\draw[trees] (0,0) -- (0,-1) -- (1,0) node[xiblack] {}; \draw[trees] (0,0) -- (0,1.2) node[xiblack] {}; \draw[trees] (-.7,1) node[xiblack] {} -- (0,0) -- (.7,1) node[xiblack] {};}
\DeclareSymbol{30b}{-3}{\draw[trees] (0,0) -- (0,-1); \draw[trees] (0,0) -- (0,1.2) node[xiblack] {}; \draw[trees] (-.7,1) node[xiblack] {} -- (0,0) -- (.7,1) node[xiblack] {};}
\DeclareSymbol{10b}{-3}{\draw[trees] (0,0) -- (0,-1); \draw[trees] (-.8,1) node[xiblack] {} -- (0,0);}
\DeclareSymbol{32b}{-3}{\draw[trees] (0,0) -- (0,-1) -- (1,0) node[xiblack] {}; \draw[trees] (0,0) -- (0,-1) -- (-1,0) node[xiblack] {}; \draw[trees] (0,0) -- (0,1.2) node[xiblack] {}; \draw[trees] (-.7,1) node[xiblack] {} -- (0,0) -- (.7,1) node[xiblack] {};}
\DeclareSymbol{22b}{-3}{\draw[trees] (0,0.3) -- (0,-1) -- (1,0) node[xiblack] {}; \draw[trees] (0,0.3) -- (0,-1) -- (-1,0) node[xiblack] {};\draw[trees] (-.7,1) node[xiblack] {} -- (0,0.3) -- (.7,1) node[xiblack] {};}
\DeclareSymbol{12b}{-3}{\draw[trees] (0,0) -- (0,-1) -- (1,0) node[xi] {}; \draw[trees] (0,0) -- (0,-1) -- (-1,0) node[xiblack] {}; \draw[trees] (-.8,1) node[xiblack] {} -- (0,0);}
\DeclareSymbol{20b}{-3}{\draw[trees] (0,0) -- (0,-1);\draw[trees] (-.7,1) node[xiblack] {} -- (0,0) -- (.7,1) node[xiblack] {};}

\usepackage{caption}
\usepackage{subcaption}

\usepackage{lipsum}

\usepackage{cite}

\setkomafont{dictumauthor}{\normalfont}
\setkomafont{dictumtext}{\normalfont}

\usepackage{colortbl}

\allowdisplaybreaks[1]

\usepackage{scalerel}

\renewcommand{\emptyset}{\varnothing}

\newcommand{\Gammareg}{\Gamma_{\text{\tiny R}}}
\newcommand{\Gammairreg}{\Gamma_{\text{\tiny IR}}}
\newcommand{\DD}{\mathbb{D}}
\renewcommand{\AA}{\mathbb{A}}
\newcommand{\ext}{\mathrm{ext}}
\newcommand{\var}{\mathrm{var}}
\newcommand{\internal}{\mathrm{int}}
\newcommand{\symotimes}{\tilde{\otimes}}
\newcommand{\Dom}{\operatorname{Dom}}
\newcommand{\Sym}{\operatorname{Sym}}
\newcommand{\ASym}{\operatorname{ASym}}
\newcommand{\biota}{\boldsymbol{\underline{\iota}}}

\newcommand{\cbox}[2]{\tikz[baseline=(char.base)]{\node[draw=black,fill=#1,fill opacity=0.4,text opacity=1,inner sep=2pt,minimum height=1em] (char) {#2};}}

\definecolor{mutedorange}{RGB}{255,165,0}
\definecolor{mutedgreen}{RGB}{0,128,0}
\definecolor{mutedcyan}{RGB}{70,130,180}
\definecolor{mutedmagenta}{RGB}{229,43,80}
\definecolor{mutedpurple}{RGB}{130,100,150}

\makeatletter
\newcommand\bigwp{\mathop{\mathpalette\bigDi@mond\relax}}
\newcommand\bigDi@mond[2]{%
\vcenter{\hbox{\m@th
\scalebox{\ifx#1\displaystyle 2\else1.2\fi}{$#1\diamond$}%
}}%
}
\makeatother

\author{Henri Elad Altman, Tom Klose, Nicolas Perkowski}

\date{\today}

\title{A Rough Functional \\ Breuer--Major Theorem}

\numberwithin{equation}{section}

\begin{document}

\maketitle	

\begin{abstract}
	We extend the functional Breuer--Major theorem by Nourdin and Nualart~(2020) to the space of rough paths.
	The proof of tightness combines the multiplication formula for iterated Malliavin divergences, due to Furlan and Gubinelli~(2019), with Meyer's inequality and a Kolmogorov-type criterion for the $r$-variation of càdlàg rough paths, due to Chevyrev et al.~(2022).
	Since martingale techniques do not apply, we obtain the convergence of the finite-dimensional distributions through a bespoke version of Slutsky's lemma: 
	First, we overcome the lack of hypercontractivity by an iterated integration-by-parts scheme which reduces the remaining 
	analysis to finite Wiener chaos; crucially, this argument relies on Malliavin differentiability of the nonlinearity but not on chaos decay and, as a consequence, encompasses the centred absolute value function.    
	Second, in the spirit of the law of large numbers, 
	we show that the diagonal of the second-order process converges to an explicit symmetric correction term. 
	Finally, we compute all the moments of the remaining process and, through a fine combinatorial analysis, show that they converge to those of the Stratonovich Brownian rough path perturbed by an antisymmetric area correction, as computed by a suitable amendment of Fawcett's theorem. 
	All of these steps benefit from a major combinatorial reduction that is implied by the original argument of Breuer and Major~(1983).
\end{abstract}

\noindent
\emph{Key words and phrases.} \\
\noindent
Breuer--Major theorem, Rough paths, Rough invariance principles, Malliavin calculus, Meyer's inequality, Hermite shift	

\vspace{0.5em}
\noindent
\emph{2020 Mathematics Subject Classification.} \\
\noindent
60F17, 60L20, 60H07

\setcounter{tocdepth}{3} 			

\tableofcontents

\section{Introduction}

The \emph{Central Limit Theorem}~(CLT) is at the heart of probability and statistics. We can state it in the following way: for a sequence of centred i.i.d. random variables $Z=(Z_i)_{i \in \Z}$ with variance $\sigma_Z^2 < \infty$, there exists an i.i.d. sequence of standard normal Gaussians~$X = (X_i)_{i \in \Z}$ and a 
real-valued function\footnote{For example, one can take~$f = F_Z^{-1} \circ \Phi$ where~$F_Z^{-1}$ is the quantile function of~$Z$ and~$\Phi$ the standard normal cumulative distribution function.} $f$ such that~$Z_i \overset{d}{=} f(X_i)$ and
\begin{equation*}
	\frac{1}{N^{\nicefrac{1}{2}}} \sum_{i=0}^{N-1} f(X_i) \longrightarrow \CN(0,\sigma_Z^2) \quad \text{in law as} \ N \to \infty \,.
	%\tag{CLT}
\end{equation*}
In many applications of interest, however, the data exhibits non-trivial \emph{correlations} (see, for example,~\cite{beran_92} for a variety of examples) and thus violates the crucial independence assumption. 
In this context, the celebrated Breuer--Major theorem~\cite{breuer_major_83} provides a sufficient criterion on the function~$f$ and the decay of correlations under which one can still observe CLT behaviour:
\begin{theorem}[Breuer--Major] \label{thm:breuer_major}
	Consider a stationary sequence~$X = (X_i)_{i \in \Z}$ of centred, one-dimensional Gaussian random variables with covariance function~$\rho(i) = \E\sbr[0]{X_0 X_i}$ such that~$\rho(0) = 1$.
	Further, let~$\gamma \coloneqq \CN(0,1)$ and $f \in L^2(\gamma)$ with Hermite rank $d \geq 1$, i.e. its chaos decomposition is given by 
	\begin{equation*}
		f(x) = \sum_{q=d}^{\infty} c_q \, H_q(x)	
	\end{equation*}
	where~$H_q$ denotes the $q$-th Hermite polynomial.
	Then, if~$\rho \in \ell^d(\Z)$ and~$S_N$ is given by 
	\begin{equation*} 
		S_N(t) := \frac{1}{N^{\nicefrac{1}{2}}} \sum_{i=0}^{\lfloor N t \rfloor-1} f(X_i), \quad t \in [0,1] \,,
	\end{equation*}
	we have 
	\begin{equation} \label{e:fdd_convergence}
		S_N \overset{\text{\upshape{f.d.d.}}} \longrightarrow \sigma B \quad \text{as} \quad N \to \infty 
	\end{equation}
	where $B$ is a standard Brownian motion and the variance~$\sigma^2$ is given by 
	\begin{equation} \label{e:sigma}
		\sigma^2 
		\coloneqq \sum_{q=d}^{\infty} q! \, c_q^2 \sum_{k \in \mathbb{Z}} \rho(k)^q < \infty \,.
	\end{equation}
\end{theorem}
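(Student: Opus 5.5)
We prove the classical Breuer--Major theorem by the now-standard route: reduce to finitely many chaoses, treat each finite chaos with the fourth moment theorem, and then remove the truncation.

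\textbf{Step 1: variance.} By the product formula $\E[H_p(X_i)H_q(X_j)] = \delta_{pq}\, q!\,\rho(i-j)^q$, we get
\begin{equation*}
\E[S_N(t)S_N(s)] = \sum_{q\ge d} q!\,c_q^2\, \frac{1}{N}\sum_{i<\lfloor Nt\rfloor,\, j<\lfloor Ns\rfloor}\rho(i-j)^q .
\end{equation*}
Since $|\rho|\le 1$ and $\rho\in\ell^d$, every $\rho^q$ with $q \geq d$ is summable, with $\sum_k|\rho(k)|^q\le \|\rho\|_{\ell^d}^d$. A Cesàro argument then gives convergence of each $q$-term to $q!\,c_q^2\sum_k\rho(k)^q\,(s\wedge t)$. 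Dominated convergence in $q$ is justified by $\sum_q q!\,c_q^2=\|f\|_{L^2(\gamma)}^2<\infty$. In particular $\sigma^2<\infty$, and the tail satisfies
\begin{equation*}
\sup_N \E\big|S_N^{>M}(t)\big|^2\le \|\rho\|_{\ell^d}^d\sum_{q>M}q!\,c_q^2\to0 ,
\end{equation*}
where $S_N^{>M}$ collects the chaoses $q>M$.

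\textbf{Step 2: finite chaos.} Represent $X_i=W(e_i)$ for an isonormal process on a Hilbert space $\mathfrak H$ with $\langle e_i,e_j\rangle=\rho(i-j)$. Then
\begin{equation*}
S_N^{\le M}(t)=\sum_{q=d}^M I_q(g_{N,q,t}), \qquad g_{N,q,t}=N^{-1/2}c_q\sum_{i<\lfloor Nt\rfloor}e_i^{\otimes q}.
\end{equation*}
By the multidimensional Peccati--Tudor theorem, joint Gaussian convergence of the vector $(I_q(g_{N,q,t_k}))_{q,k}$ follows from two facts: convergence of its covariances, which is Step 1 (different chaoses are orthogonal), and the contraction conditions $\|g_{N,q,t}\otimes_r g_{N,q,t}\|_{\mathfrak H^{\otimes(2q-2r)}}\to0$ for $1\le r\le q-1$.

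The limit covariance is that of $\sigma_q B_q$ with independent Brownian motions $B_q$, where $\sigma_q^2 = q!\,c_q^2\sum_k\rho(k)^q$. Summing over $q$ gives $\sigma^{\le M}B$.

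\textbf{Step 3: contractions (main obstacle).} Writing out the contraction,
\begin{equation*}
\|g\otimes_r g\|^2\lesssim N^{-2}\sum_{i,j,k,l<N}|\rho(i-j)|^r|\rho(k-l)|^r|\rho(i-k)|^{q-r}|\rho(j-l)|^{q-r}.
\end{equation*}
This is the genuinely nontrivial estimate. Apply $|a|^r|b|^{q-r}\le |a|^q+|b|^q$ to reduce to sums with one free index and convolutions. Then use Young's inequality with exponents $q/r$ and $q/(q-r)$ on the truncation $\rho\,\mathbf 1_{|\cdot|\le N}$.

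This gives the bound $N^{-1}\big(\sum_{|k|<N}|\rho(k)|^{r}\big)^{\cdots}$ familiar from Nourdin--Peccati. It tends to zero because $\rho\in\ell^q$ forces $N^{-(q-r)/q}\sum_{|k|<N}|\rho(k)|^r\to0$, by Hölder applied to a split into $|k|\le K$ and $|k|>K$.

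\textbf{Step 4: conclusion.} For fixed times, the differences between the finite-dimensional laws of $S_N$ and $\sigma B$ are controlled by three terms: the finite-chaos error from Step 2, the uniform tail bound from Step 1, and $|\sigma^{\le M}-\sigma|\to0$. A standard $\varepsilon/3$ approximation argument, comparing characteristic functions or using Billingsley's Theorem 3.2, then yields \eqref{e:fdd_convergence}.
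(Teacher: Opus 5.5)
Your proof is correct. It is essentially the Malliavin--Stein proof of Nourdin and Peccati, which is a different route from the one in the paper. The paper quotes Theorem~\ref{thm:breuer_major} from Breuer and Major rather than proving it. The first-level part of its own f.d.d.\ argument in Section~\ref{s:convergence_fdd} (which, for that level, uses only $f\in L^2(\gamma)$) follows the original Breuer--Major method of moments:
\begin{itemize}
\item The chaos tail is removed by exactly your Step~1 bound (first part of the proof of Proposition~\ref{p:reduction}).
\item For fixed $M$, \emph{all} joint moments of the truncated sums are computed via the diagram formula. Irregular diagrams vanish (Proposition~\ref{p:bm83_irregular}), so Wick's formula holds asymptotically (Corollary~\ref{coro:BM83}).
\item Moment-determinacy of the Gaussian limit, together with the approximation Lemma~\ref{lem:conv_in_law}, concludes. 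That lemma is the analogue of Billingsley's Theorem~3.2 in your Step~4.
\end{itemize}

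Your route replaces the combinatorics of diagrams of arbitrary order with a single four-index contraction estimate per pair $(q,r)$. Via Stein's method it also gives access to quantitative rates. The price is that it relies on the fourth-moment phenomenon, i.e.\ on the limit being Gaussian. This is why the paper avoids it. The asymptotic Wick formula holds under arbitrary index constraints (simplices, diagonals), which is what is needed to compute moments of the iterated sums $\mathbb{S}_N$. Their limit lies in the second chaos and is non-Gaussian. The absolute-value version of the Breuer--Major estimate also drives the $L^2$ reduction for the second level.

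A small clarification on Step~3: no Young inequality is needed.
\begin{itemize}
\item After $|a|^r|b|^{q-r}\le |a|^q+|b|^q$, summing out one index gives a factor $\|\rho\|_{\ell^q}^q$.
\item The remaining triple sum is at most $N\sum_{|u|<N}|\rho(u)|^r\sum_{|v|<N}|\rho(v)|^{q-r}$.
\item So the relevant bound is the product $\bigl(N^{-(q-r)/q}\sum_{|u|<N}|\rho(u)|^r\bigr)\bigl(N^{-r/q}\sum_{|v|<N}|\rho(v)|^{q-r}\bigr)$, not a single sum raised to a power.
\item Both factors tend to zero by your Hölder splitting; the second is the first with $r$ replaced by $q-r$.
\end{itemize}
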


\smallskip
The previous theorem immediately begs the question as to whether the convergence in~\eqref{e:fdd_convergence} can be updated to \emph{functional convergence} in the Skorokhod space~$\mathbf{D}(0,1)$. 
As Chambers and Slud~\cite{chambers_slud_89} have shown by an explicit counterexample, this is not the case under the sole assumptions that~$f$ has Hermite rank~$d$ and~$\rho \in \ell^d(\Z)$; instead, they provide a sufficient condition, later slightly improved by Ben Hariz~\cite{ben_hariz_02}, which requires explicit information on the decay of the Hermite coefficients~$(c_q)_{q \geq d}$.
Their \emph{fast chaos decay assumption}, however, is difficult to verify in practice and has been replaced by Nourdin and Nualart~\cite{nourdin_nualart_20} (see also their joint work with Campese~\cite{campese_nourdin_nualart_20}) by the less restrictive, more easily checkable assumption that~$f \in L^{p_1}(\gamma)$ for some~$p_1 > 2$.   

\paragraph{Main result.}
The purpose of the present work is to lift the functional Breuer--Major theorem to the space of \emph{rough paths} which, in order to be non-trivial, requires to look at~$\R^m$-valued functions~$\vec{f}$ for~$m \geq 2$. 
More precisely, we consider the vector of functions
\begin{equation*}
	\vec{f}
	= \del[0]{f_1,\ldots,f_m}, \quad f_k: \R \to \R \,, 
\end{equation*}
where each~$f_k \in L^2(\gamma)$ is of the \emph{same} finite Hermite rank~$d \geq 1$ with corresponding Hermite decomposition
\begin{equation} \label{e:hermite_exp_fk}
	f_k(x) = \sum_{q \geq d} c_q^{(k)} H_q(x), \quad k \in \llbracket 1,m \rrbracket \coloneqq \cbr[0]{1,\ldots,m} \, .
\end{equation}
In addition, from here onwards, we let
\begin{equation} \label{e:vector_X}
	\del[0]{X_i = (X^{(1)}_i,\ldots,X^{(m)}_i)}_{i \in \Z}
\end{equation}
be an~$\R^m$-valued stationary centred Gaussian sequence, that is, a multivariate Gaussian process indexed by~$\Z$. 
In addition, for~$k, \ell \in \llbracket 1,m \rrbracket$, we let\footnote{Observe that, \emph{by convention}, we always write~$\rho_{k,\ell}(n)$ to denote the time difference from the first to the second variable encoded by the subindices, and not the other way round.}
\begin{equation} \label{e:multivariate_covariance}
	\rho_{k,\ell}(j-i) \coloneqq \E\sbr[0]{X_i^{(k)} X_j^{(\ell)}}
\end{equation}
and assume that~$\rho_{k,k}(0) = 1$. Note that~$\rho_{k,\ell}(u) = \rho_{\ell,k}(-u)$ for~$u \in \Z$.
For~$t \in [0,1]$, we then define~$\boldsymbol{S}_N(t) = (S_N(t), \S_N(t))$ where the \emph{first} and \emph{second-order processes}~$S_N$ and~$\S_N$ are, respectively, given by
\begin{equs}[][e:1st_2nd_order_p]
	S_N(t) & \coloneqq \frac{1}{N^{\nicefrac{1}{2}}} \sum_{i = 0}^{\lfloor N t \rfloor - 1} \vec{f}(X_i), \quad 
	\S_N(t) \coloneqq \frac{1}{N} \sum_{0 \leq i < j \leq \lfloor N t \rfloor - 1} \vec{f}(X_i) \otimes \vec{f}(X_j) 
\end{equs}
where
\begin{equation} \label{e:vector_f}
	\vec{f}(X_i) \coloneqq \del[0]{f_1(X^{(1)}_i), \ldots, f_m(X^{(m)}_i)} \,.
\end{equation}
We refer to Remark~\ref{rmk:general}\ref{rmk:general:3} below for further comments on our setting.
Finally, we write~$\DD^{k,p}(\gamma)$ for the Malliavin--Sobolev space w.r.t.~$\gamma = \CN(0,1)$ (see Definition~\ref{d:malliavin_derivative} and Remark~\ref{rmk:definitions_1D} below) as well as~$\mathcal{D}^{r-\var}([0,1];\R^m)$ for the space of Skorokhod-type $r$-variation rough paths (see Definition~\ref{d:skorokhod_variation_metric} below).

\medskip 
The following is our main result: 

\begin{theorem}[Rough Functional Breuer--Major Theorem] \label{thm:main}
	Let~$m \in \N$ and consider an $m$-dimensional vector $\vec{f} = (f_1,\ldots,f_m)$ whose components~$f_k \in L^2(\gamma)$ each are of Hermite rank \underline{at least}\footnote{See Remark~\ref{rmk:main} below for further comments.} $d \geq 1$ with Hermite expansion as in~\eqref{e:hermite_exp_fk}.
	We also impose the following conditions:
	\begin{enumerate}[label=(\arabic*)]
		\item \label{thm:main:1}
		For any~$k \in \llbracket 1,m \rrbracket$, we have\footnote{We introduce the parameter~$p$ here, so we can later refer to it.}~$f_k \in \mathbb{D}^{d,2p}(\gamma)$ for $p = 2$, i.e.~$f_k \in \mathbb{D}^{d,4}(\gamma)$.
		\item \label{thm:main:2}
		For each~$k, \ell \in \llbracket 1,m \rrbracket$, we have~$\sum_{i \in \Z} \abs[0]{\rho_{k,\ell}(i)}^{d} < \infty$, i.e.~$\rho_{k,\ell} \in \ell^{d}(\Z)$.
	\end{enumerate}
	Then, for any~$r > 2$ we have
	\begin{equation} \label{thm:main:conv}
		\lim_{N \to \infty} \boldsymbol{S}_N  = \boldsymbol{B} \quad \text{in law in} \ \mathcal{D}^{r-\var}([0,1];\R^m)
	\end{equation}
	where~$\boldsymbol{B} = (B,\mathbb{B})$ is a Brownian rough path with characteristics~$(\Sigma,\Gamma)$, that is: 
	\begin{enumerate}[label=(\roman*)]
		\item The first component $B$ is an $m$-dimensional Brownian motion with covariance matrix~$\Sigma$ given by
		\begin{equation} \label{e:Sigma}
			\Sigma 
			=
			\DD + 2 \Sym(\Gamma) 
		\end{equation}
		where
		\begin{equation} \label{e:Delta}
			\Delta(u) 
			\coloneqq 
			\E\sbr[1]{\vec{f}(X_1) \otimes \vec{f}(X_{u+1})}, \quad \text{i.e.} \quad 
			\Delta_{k,\ell}(u) 
			= \sum_{q \geq d} q! \thinspace c_q^{(k)} c^{(\ell)}_q \thinspace \rho_{k,\ell}(u)^q 
		\end{equation}
		and~$\DD$ as well as~$\Gamma$ are given by 
		\begin{equation} \label{e:Gamma}
			\DD \coloneqq \Delta(0), \quad \Gamma \coloneqq \sum_{u=1}^\infty \Delta(u) \,.
		\end{equation}
		\item The second component $\B$ is of the form
			\begin{equation} \label{e:rp_limit}
				\B(t) = \int_0^t B(s) \otimes \dif B(s) +  \, t\Gamma, 
			\end{equation}
		where the integration denotes \emph{It\^{o} integration}, cf.~ Remark~\ref{rmk:general}, Point~\ref{rmk:general:5} below.
	\end{enumerate}
\end{theorem}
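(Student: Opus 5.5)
The proof follows the standard scheme for weak convergence in rough path space: we establish tightness of $(\boldsymbol{S}_N)_N$ in $\mathcal{D}^{r-\var}([0,1];\R^m)$ for every $r>2$, and then identify the limit through convergence of finite-dimensional distributions of $(S_N,\S_N)$ to those of $(B,\B)$.

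\emph{Tightness.} We apply the Kolmogorov-type criterion of Chevyrev et al.\ for càdlàg rough paths. It suffices to show, uniformly in $N$ and for grid times $s<t$ with $t-s\ge 1/N$, the bounds
\begin{equation*}
\E\big[|S_N(s,t)|^{2p}\big]^{1/(2p)}\lesssim |t-s|^{1/2},\qquad \E\big[|\S_N(s,t)|^{p}\big]^{1/p}\lesssim |t-s|,
\end{equation*}
with $p=2$. Times closer than $1/N$ are handled trivially, since the paths are constant between grid points. To obtain these bounds, write each $f_k=\delta^d(\cdots)$ as an iterated divergence using the Hermite rank $\ge d$, as in Nourdin--Nualart. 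Then $\sum_i f_k(X_i^{(k)})=\delta^d(\sum_i u_i)$ with $u_i=(\text{shifted } f_k)(X_i)\,e_i^{\otimes d}$, and Meyer's inequality bounds its $L^{2p}$ norm by $\DD^{d,2p}$ norms. The resulting inner products $\langle e_i,e_j\rangle^d=\rho(j-i)^d$ are summable by assumption~\ref{thm:main:2}, which gives $\lesssim (t-s)N$. For the second level, the product $\delta^d(u)\delta^d(v)$ is expanded via the Furlan--Gubinelli multiplication formula into divergences of contractions. Each term is again controlled by Meyer's inequality together with $\ell^d$ summability of the correlations, which yields the $|t-s|$ scaling. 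Hypothesis~\ref{thm:main:1} with $p=2$ is exactly what is needed for the fourth and second moments.

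\emph{F.d.d.\ convergence.} The plan has three steps, combined at the end by a Slutsky-type lemma.

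(a) Truncate $f_k$ to its first $Q$ chaoses. The remainder is small in $L^2$ uniformly in $N$: for $S_N$ this follows from the variance formula, and for $\S_N$ from an iterated integration-by-parts bound that uses only Malliavin differentiability, not chaos decay. Hypercontractivity is unavailable here, and this argument is how we avoid needing it. This reduces the problem to polynomials of bounded degree.

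(b) Split $\S_N=\frac1N\sum_{i<j}$ into its symmetric part $\tfrac12(S_N\otimes S_N - \tfrac1N\sum_i \vec f(X_i)^{\otimes 2})$ and its antisymmetric part. By a law of large numbers for the diagonal (ergodicity of the stationary Gaussian sequence), $\tfrac1N\sum_{i<\lfloor Nt\rfloor}\vec f(X_i)^{\otimes 2}\to t\,\DD$ in $L^1$.

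(c) For finite chaos, compute all joint moments of $S_N$ and of the antisymmetric area $\mathrm{Anti}(\S_N)$ by diagram formulas. Using the Breuer--Major reduction, only pairings forming ``cycles of length two'' survive; all other diagrams vanish as $N\to\infty$ by $\ell^d$ summability. The limiting moments match those of a Brownian motion with covariance $\Sigma$ and Lévy area plus $t\,\mathrm{Anti}(\Gamma)$, via Fawcett's theorem amended by an area drift.

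Finally, determinacy of the Gaussian chaos moments gives convergence in law. Combining (a)--(c), the limit is
\begin{equation*}
\B(t)=\tfrac12 B(t)^{\otimes 2}-\tfrac t2\DD+\text{Lévy area}+t\,\mathrm{Anti}(\Gamma).
\end{equation*}
This equals the Itô integral $\int_0^t B\otimes \dif B$ plus $\tfrac t2(\Sigma-\DD)+t\,\mathrm{Anti}(\Gamma)=t\,\mathrm{Sym}(\Gamma)+t\,\mathrm{Anti}(\Gamma)=t\Gamma$, which is the claimed form~\eqref{e:rp_limit}.

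\emph{Main obstacle.} We expect step (c) to be the hardest: the combinatorial moment analysis of the antisymmetric area. One must show that the non-cyclic diagrams are negligible uniformly in the chaos orders, and identify the surviving pairings with the moment structure of Brownian motion plus area correction. We would first enumerate the pairings of the $2n$ double sums, reducing each to a product of $\rho^{q}$ factors summed over $\ell^d$, and then match these against Fawcett's formula.
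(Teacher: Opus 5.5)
Your tightness argument is the paper's: Hermite-shift representation as iterated divergences, the Furlan--Gubinelli product formula, Meyer's inequality, and the Chevyrev et al.\ criterion. Your diagonal law of large numbers is a valid and simpler substitute for the paper's truncated $L^2$ variance computation. Covariances $\rho_{k,\ell}(n)\to0$ make the stationary Gaussian sequence mixing, and Birkhoff's theorem then applies directly to the untruncated $f_kf_\ell\in L^1$.

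The genuine gap is step (c), which is where the limit is actually identified. Breuer--Major regularity (your ``cycles of length two'') only gives an asymptotic Wick formula in terms of $\Delta^M(i_b-i_a)$. It does not tell you which of the resulting pairings survive the ordering constraints $i<j$ inside each area and across possibly overlapping intervals, nor what they converge to. ``Matching against Fawcett's formula'' also has no bridge as stated. Fawcett computes expected signatures of a geometric rough path on a single interval. Your $\mathbb{S}_N$ are iterated sums, whose products expand by a quasi-shuffle rule with extra diagonal terms, so Fawcett cannot be applied to their moments directly. Those diagonal terms are exactly what decides between $\tfrac12\Delta(0)$ and $\Gamma$. (Uniformity ``in the chaos orders'' is moot after truncation, since only finitely many orders remain.)

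The missing idea in the paper is to add $\tfrac12\mathbb{D}_N$, i.e.\ pass to the piecewise-linear interpolation $Y_N$. Its canonical lift $\boldsymbol{Y}_N$ is a geometric rough path with $\boldsymbol{Y}_N^{(2)}=\mathbb{S}_N+\tfrac12\mathbb{D}_N$, so your $\ASym\,\mathbb{S}_N$ is exactly its area and your decomposition is compatible with this. Chen and shuffle then reduce every mixed moment over arbitrary intervals to $\mathbb{E}\prod_\ell\langle\boldsymbol{Y}_N(s_\ell,t_\ell),\mathtt{w}_\ell\rangle$ over disjoint intervals. Each factor is an explicit weighted sum over a closed discrete simplex. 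The paper then carries out a real combinatorial analysis:
\begin{itemize}
\item blocks of at least three equal indices vanish (spanning-forest counting);
\item non-ladder pairings vanish (gap variables and an $\eta$-splitting of the domain);
\item bridging ties $i_{2j}=i_{2j+1}$ vanish;
\item pairings across different intervals vanish, so the expectation factorizes;
\item the non-bridging ladder sum converges to $\frac{1}{n!}\prod_j\bigl(\tfrac12\Delta^M(0)+\Gamma^M\bigr)_{\mathtt{w}_{2j-1}\mathtt{w}_{2j}}$, which is the amended Fawcett formula.
\end{itemize}
Some argument of this kind is needed, either this one or an equivalent direct classification of surviving pairings for your increments and areas together with a matching computation for the limit. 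As written, (c) restates the conclusion rather than proving it.

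Three smaller points. In (a), the integration-by-parts argument does not give smallness uniformly in $N$. The irregular-diagram terms are bounded by $\mathbb{D}^{d,4}$-norms of the chaos tails, which need not vanish as $M\to\infty$, and these terms only vanish as $N\to\infty$ for fixed $M$. You get $\lim_M\limsup_N$, which is all a Slutsky-type lemma needs. That argument also uses up to $3d$ derivatives, so it needs an Ornstein--Uhlenbeck regularization with bounds depending only on $\mathbb{D}^{d,4}$-norms. Finally, you must remove the cut-off in the limit object as well, showing $\Sigma^M\to\Sigma$ and $\mathbb{A}^M\to\mathbb{A}$.
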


We record the following corollary which is an immediate consequence of the continuity of the It\^{o}--Lyons map w.r.t. rough paths metric in~$\mathcal{D}^{r-\var}([0,1];\R^m)$, see~\cite[Coro.~4.4]{chevyrev_et_al_homogenisation_part_2}.
Recall that~$\vec{f}(X_i)$ has been introduced in~\eqref{e:vector_f}.
\begin{corollary} \label{coro:main}
	Let~$n, N  \geq 1$ and~$r > 2$. For $b \in \C_b^3(\R^n;\R^n)$ and $V \in \C_b^3(\R^n; \R^{n \times m})$, let~$Y_N$ be defined through the Euler-type recurrence relation 
	\begin{equation*} 
		Y_N\del[1]{\frac{i+1}{N}} 
		= Y_N\del[1]{\frac{i}{N}} + \frac{1}{\sqrt{N}} V\del[1]{Y_N\del[1]{\frac{i}{N}}} \vec{f}(X_i) + \frac{1}{N} b\del[1]{Y_N\del[1]{\frac{i}{N}}}, \quad Y_N(0) = y_N \in \R^n,
	\end{equation*}
	whenever~$i \in \llbracket 0, N-1 \rrbracket$, which is extended to~$t \in [0,1]$ by setting~$Y_N(t) \coloneqq Y_N\del[1]{\frac{i}{N}}$ whenever~$i/N \leq t < (i+1)/N$.
	For~$\Gamma \in \R^{m \times m}$ given in~\eqref{e:Gamma}, define~$c: \R^n \to \R^n$ through
	\begin{equs}
		c(z) & 
		\coloneqq \sum_{k,\ell=1}^m \Gamma_{k,\ell} \sbr[0]{(V_k \cdot \nabla) V_\ell}(z), \quad   %\\
		\sbr[0]{(V_k \cdot \nabla) V_\ell}_j(z)
		\coloneqq \sum_{u=1}^n \frac{\partial V_{j,\ell}(z)}{\partial z_u} V_{u,k}(z), \quad j \in \llbracket 1,n \rrbracket \,.
	\end{equs}
	If~$y_N \to y$ as~$N\to\infty$, then~$Y_N$ converges in law in $r$-variation topology to the unique strong solution of the SDE
	\begin{equation*}
		\dif Y(t) = V(Y(t)) \dif B(t) + \sbr[0]{b(Y(t)) + c(Y(t))} \dif t, \quad Y(0) = y\,,
	\end{equation*}
	where~$B$ is an $\R^m$-valued Brownian motion with covariance matrix~$\Sigma$ (at time~$1$) given in~\eqref{e:Sigma}. 
\end{corollary}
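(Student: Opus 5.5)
The plan is to recast the recurrence for $Y_N$ as a rough differential equation driven by a suitable lift of $S_N$, and then invoke Theorem~\ref{thm:main} together with the continuity of the It\^{o}--Lyons map in $\mathcal{D}^{r-\var}$ from~\cite[Coro.~4.4]{chevyrev_et_al_homogenisation_part_2}. Concretely, consider the space--time path $Z_N(t) \coloneqq (S_N(t), \lfloor Nt \rfloor /N) \in \R^{m+1}$ with vector fields $(V, b)$. Its second level is built from $\S_N$ in the $(m\times m)$-block. The cross terms with the time component are defined as the corresponding discrete sums, which are Young-type and converge to the Riemann--Stieltjes integrals.

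The Euler scheme is a ``Marcus/forward'' step: each jump $\Delta S_N = N^{-1/2}\vec f(X_i)$ is applied with $V$ frozen at the left point. The first key step is to check that this is exactly the discrete solution map considered in~\cite{chevyrev_et_al_homogenisation_part_2}. That map solves the càdlàg RDE driven by the rough path whose jumps carry the strictly ordered (Itô-type) discrete second level
\[
\tfrac1N\sum_{i<j}\vec f(X_i)\otimes \vec f(X_j),
\]
i.e. precisely $\S_N$, with no diagonal terms. For the time component, the increments $1/N$ contribute second-order terms of size $O(N^{-3/2})$ and $O(N^{-2})$, which vanish in $r$-variation. Hence $Y_N = \Phi(y_N, \boldsymbol{Z}_N)$, where $\Phi$ is the (forward) Itô--Lyons map. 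This map is locally Lipschitz on $\mathcal{D}^{r-\var}$ for $r\in(2,3)$ and $V,b\in C^3_b$; we may assume $r<3$, since the $r$-variation topology only gets weaker as $r$ increases.

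Next, Theorem~\ref{thm:main} gives $\boldsymbol S_N \to \boldsymbol B$ in law in $\mathcal{D}^{r-\var}$. The time component $\lfloor Nt\rfloor/N \to t$ deterministically, and the cross integrals are continuous functionals in the Young regime. So joint convergence of $\boldsymbol Z_N$ to the space--time lift of $(\boldsymbol B, t)$ follows, e.g. via Slutsky and the fact that the extra components are controlled by the first level. I expect this to be the main technical point: making sure that the cross-integral lift is continuous in the Skorokhod-type rough path metric, where the jumps of $S_N$ and of the time grid coincide. Combining the convergence with $y_N\to y$ and the continuous mapping theorem yields $Y_N \to \Phi(y,\boldsymbol B)$ in law in $r$-variation.

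Finally, we identify the limit. The path $\boldsymbol B$ is continuous with $\B(t) = \int_0^t B\otimes \dif B + t\Gamma$ (Itô). An RDE driven by an Itô lift plus a deterministic area-type perturbation $t\Gamma$ solves
\[
\dif Y = V(Y)\dif B + b(Y)\dif t + \sum_{k,\ell}\Gamma_{k,\ell}\,(V_k\cdot\nabla)V_\ell(Y)\dif t .
\]
To see this, expand the Davie-type local estimate
\[
Y_{s,t}\approx V(Y_s)B_{s,t}+b(Y_s)(t-s)+DV(Y_s)V(Y_s)\B_{s,t}.
\]
The Itô part of $\B$ contributes a martingale increment that vanishes in the sum, and the remaining term $(t-s)\Gamma$ gives exactly $c(Y_s)(t-s)$. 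Since the coefficients are $C^3_b$, strong uniqueness holds, so the limit is the unique strong solution, which proves the corollary.
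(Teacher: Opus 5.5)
Your proposal is correct and follows the paper's route: the paper obtains the corollary directly from Theorem~\ref{thm:main} and the continuity of the (forward) It\^{o}--Lyons map in $\mathcal{D}^{r-\var}$ from \cite[Coro.~4.4]{chevyrev_et_al_homogenisation_part_2}, with the limit being the RDE driven by $\big(B,\int B\otimes\dif B+t\Gamma\big)$, i.e.\ the It\^{o} SDE with extra drift $c$. What you add is an explicit account of the details the paper leaves to that reference: the Euler scheme as the forward RDE driven by the strictly ordered lift $\S_N$, the treatment of the drift through a time component, and the identification of $c$ from the area term $t\Gamma$.
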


Before we discuss how the previous theorem relates to the existing literature, a few remarks are in order.
\medskip
\begin{remarkb}[General comments] \label{rmk:general}
	\begin{enumerate}[label=(\roman*)]
		\item The previous result also holds for~$p > 1$ and~$d \in \cbr[0]{1,2}$, i.e. under less stringent integrability, but more stringent summability assumptions. See Remark~\ref{rmk:main}, Point~\ref{rmk:main:integrability} and, in particular, Remark~\ref{e:L1_norm_reduction_strategy} for more detailed comments in this direction.
		\item The previous theorem \emph{cannot} be deduced from any mixing-type conditions. 
		In fact, Breuer and Major~\cite{breuer_major_83} have already provided two explicit sequences of random variables which generate the same $\sigma$-algebra, but one of them obeys the CLT while the other satisfies a \emph{non-central limit theorem} in the spirit of Dobrushin and Major~\cite{dobrushin_major_1979,major_1981}, Rosenblatt~\cite{rosenblatt_1981}, and Taqqu~\cite{taqqu_1975,taqqu_1977,taqqu_1979}.  
		\item The assumption that all functions~$f_k$ have the \emph{same} Hermite rank at least~$d$ can easily be dropped: If we denote by~$d_k \geq 1$ the lower bound for the Hermite rank of~$f_k$, then all the arguments in this article work for~$d \coloneqq \min_{k=1,\ldots,m} d_k$. 
		\item \label{rmk:general:5} 
		The fact that~$\boldsymbol{B}$ can only have characteristics~$(\Sigma,\Gamma)$ given in~\eqref{e:Sigma} and~\eqref{e:Gamma}, respectively, is well-understoof by now, cf.~\cite[Thm.~1]{engel_friz_orenshtein_24}; the difficulty lies in the verification of its prerequisities which are a special case of our computations in Section~\ref{s:convergence_fdd}.
		
		Note that, since we can decompose~$\Gamma$ into symmetric and antisymmetric part, 
		\begin{equation} \label{e:AA}
			\Gamma = \Sym(\Gamma) + \AA, \quad \AA \coloneqq \ASym(\Gamma) \,
		\end{equation}
		the formula for~$\Sigma$ in~\eqref{e:Sigma} combined with It\^{o}--Stratonovich correction implies that
			\begin{equation} \label{e:Stratonovich_correction}
				\B(t) = \int_0^t B(s) \otimes \circ \dif B(s) -  \, \frac{t}{2}\DD + t \AA \,, 
			\end{equation}
		where $\circ$ denotes \emph{Stratonovich integration}.
		While the natural expected limit is given by the It\^{o} lift of~$B$,  re\-pre\-sen\-ting $\B$ as in~\eqref{e:Stratonovich_correction} is more aligned with our proof, see the strategy and organisation paragraphs below.	
		\item \label{rmk:general:3} 
		Let us comment on the structural assumption in~\eqref{e:vector_f}. 
		In line with the work by Arcones~\cite{arcones_94}, the most general assumption would be for the process~$X$ in~\eqref{e:vector_X} to be an $\R^n$-valued, centred, and stationary Gaussian process for some~$n \in \N$ such that each function $f_k: \R^n \to \R$ has~$X$ as its input, i.e.
		\begin{equation*}
			f_k(X_i) = f_k(X^{(1)}_i, \ldots, X^{(n)}_i) \,.
		\end{equation*}  
		However, this would lead to a significant increase in notation complexity (for example due to multivariate Hermite expansions) without adding much conceptual value.
		In contrast, our~\enquote{diagonal case} in~\eqref{e:vector_f} gives non-trivial symmetric and \emph{area corrections}~$\DD$ and~$\AA$ without being overly notationally involved;
		we expect our arguments for that case to generalise without much difficulty.
		\item \label{rmk:general:4} 
		Observe that the \emph{univariate case}, i.e. if~$(X_i)_{i \in Z}$ is a one-dimensional stationary Gaussian sequence as in Theorem~\ref{thm:breuer_major}, is a special (degenerate) case in which
		\begin{equation*}
			(X^{(1)}_i,\ldots,X^{(m)}_i) = (X_i,\ldots,X_i) \,.
		\end{equation*} 
		In this scenario, we will simply write~$\rho$ instead of~$\rho_{1,1}$ and note that~$\rho(u) = \rho(-u)$ which, in turn, implies that~$\AA \equiv 0$ and~$\Gamma = \Sym(\Gamma)$. 
	\end{enumerate}
\end{remarkb}

\begin{remarkb}[Differentiability, integrability, and summability assumptions] \label{rmk:main}
	\begin{enumerate}[label=(\roman*)]
		\item \textbf{Differentiability.} \label{rmk:main:malliavin_sobolev} The Malliavin--Sobolev regularity assumption~\ref{thm:main:1} is crucial in two places, namely in the proof of the \emph{tightness estimate} (Theorem~\ref{thm:tightness}) and in the \emph{reduction to finite chaos} (Proposition~\ref{p:reduction}) for the f.d.d. convergence.
		In both cases, the reason for the increased regularity (as compared to the first-level case by Nourdin and Nualart~\cite{nourdin_nualart_20}) is the same: While the $d$-th Hermite shift (see Definition~\ref{d:hermite_shift}) increases regularity by~$d$, by the Leibniz rule, the $2d$ Malliavin derivatives encoded in the product might all hit the same of the two factors, thus leading to~$d$ required derivatives for each of the factors.
		We do not know whether the resulting assumption on the Malliavin differentiability is optimal or an artefact of our proof.
		\item \textbf{Summability.} 
		For illustrational purposes, suppose for a moment that we are in the univerate setting of Remark~\ref{rmk:general}, Point~\ref{rmk:general:4}.
		
		As we have just discussed in Point~\ref{rmk:main:malliavin_sobolev}, our argument requires $d$ Malliavin derivatives to work.
		Note that there is a fundamental tension between this (encoded by assumption~\ref{thm:main:1} in Theorem~\ref{thm:main}) and assumption~\ref{thm:main:2}: 
		While the latter becomes a \emph{stronger} requirement when~$d$ shrinks, the former becomes \emph{weaker}, and vice versa.
		In fact, this observation is the reason why Theorem~\ref{thm:main} is phrased so as to require Hermite rank \enquote{at least}~$d \geq 1$ because the Hermite rank is not straightforwardly connected to the order of Malliavin differentiability.
		A paradigmatic example is the centred absolute value function 
		\begin{equation*}
			f(x) =\abs[0]{x} - \sqrt{\frac{2}{\pi}}
		\end{equation*}	
		which has Hermite rank~$2$ but $f \notin \mathbb{D}^{2,q}(\gamma)$ for any~$q$. 
		However, choosing~$d = 1$, we do know that~$f \in \mathbb{D}^{1,q}(\gamma)$ for all~$q$---it is, therefore, covered by our assumptions provided we have~$\rho \in \ell^1(\Z)$ (instead of just~$\rho \in \ell^2(\Z)$).
		
		In addition, note that imposing that~$f = f_k$ has \enquote{at least} Hermite rank~$d$ is consistent with Theorem~\ref{thm:breuer_major}. 
		Indeed: If $f$ had Hermite rank~$d_\star \geq d$ and we did impose~$\rho \in \ell^d(\Z)$, then~$\rho$ would also be in~$\ell^{d_\star}(\Z)$ and the conclusion of the theorem holds.
		In that (simpler) setting, there is just nothing to be gained from this trade-off. 
		\item \textbf{Integrability.} 
		\label{rmk:main:integrability} %\blue{
		The parameter~$p \geq 2$ in the Malliavin--Sobolev regularity assumption~\ref{thm:main:1} corresponds to~$p_1$ in the work by Nourdin and Nualart~\cite{nourdin_nualart_20}, i.e.~$p_1 = 2p$. 
		We recall that they only deal with the first-order process~$S_N$ in~\eqref{e:1st_2nd_order_p} and require~$p_1 > 2$ to obtain functional convergence. 
		Therefore, the natural assumption for us should be~$p > 1$ (for tightness) and~$p = 1$ (for the f.d.d. convergence).
		
		For \emph{tightness}, assuming~$p > 1$ seems indeed to be enough: We provide detailed comments in Remark~\ref{rmk:tightness_p}.

		For the \emph{f.d.d. convergence} (Theorem~\ref{thm:conv_fdd}), in contrast, $p = 2$ is crucial. 
		This is deeply connected with the breakdown of Meyer's inequality for~$p = 1$ and the fact that the Poincaré inequality can only compensate for it when~$d \in \cbr[0]{1,2}$ in~\ref{thm:main:1}; this has already been observed by Addona, Muratori, and Rossi~\cite{addona_muratori_rossi_22}. 
		We provide detailed comments in Remarks~\ref{rmk:L1_vs_L2} and~\ref{rmk:failure_tightness_strategy} below.  
		As a consequence, we need to control the $L^2(\P)$-norm (and not just that in~$L^1(\P)$) of~$\S_N$ in Proposition~\ref{p:reduction}.   
		Let us also mention that this is structurally similar to a problem in quantitative Breuer--Major theorems: While Nourdin, Nualart, and Peccati~\cite{nourdin_nualart_peccati_21} (whose strategy informs parts of our argument) require~$f_k \in \DD^{1,4}(\gamma)$ and~$d = 2$, Angst, Dalmao, and Poly~\cite{angst_dalmao_poly_24} have recently shown that~$f_k \in \DD^{1,2}(\gamma)$ is enough by means of the so-called sharp operator.
		It would be very interesting to see if those techniques would also apply in our context.  
		\item \textbf{Projection.} After projection to finite chaoses of order less than or equal to~$M$ in Section~\ref{s:reduction}, the functions~$f_k^M$ are Malliavin smooth and have moments of all orders; accordingly, Sections~\ref{s:LLN_diagonal} and~\ref{s:moment_computations} only require the assumption~$\rho_{k,\ell} \in \ell^d(\Z)$ via the summability $\Delta^M \in \ell^1(\Z)$ (for~$\Delta$ as in~\eqref{e:Delta} and~$\Delta^M$ its counterpart with all functions~$f_k$ projected onto chaos components~$M$ and lower).  
	\end{enumerate}
\end{remarkb}

\paragraph{Discussion and relation to the literature.}
Theorem~\ref{thm:main} fits squarely into an ever growing body of literature on \emph{rough invariance principles}, comprising the rough Donsker's theorem~\cite{breuillard_friz_huesmann_09}, magnetic Brownian motion~\cite{friz_gassiat_lyons_15}, slow-fast systems~\cite{kelly_melbourne_16, kelly_melbourne_17, chevyrev_et_al_19, chevyrev_et_al_homogenisation_part_2}, random walks in random environment~\cite{deuschel_orenshtein_perkowski_21, orenshtein_21,lopusanschi_orenshtein_21}, as well as averaging~\cite{friz_kifer_24} and rough (fractional) homogenisation~\cite{Gehringer_Li_JTP, gehringer_li_20, Gehringer_Li_Sieber_2022, hairer_li_20, hairer_li_23, djurdjevac_kremp_perkowski_25}; we refer to the recent article~\cite{engel_friz_orenshtein_24} for a systematic and more complete overview based on the Green--Kubo formula.

Fundamentally, the difficulty in establishing Theorem~\ref{thm:main} is two-fold:
\begin{enumerate}[label=(\arabic*)]
	\item Each of the functions~$f_k$ in~\eqref{e:hermite_exp_fk} is allowed to have an \emph{infinite chaos decomposition}, which rules out arguments based on Gaussian hypercontractivity.
	This is particularly relevant since computing~$p$-th moments of the process~$\mathbb{S}_N$ for~$p \geq 2$ is similar to computing $(2p)$-th moments of the functions~$f_k$ but it is known that chaos tails do not generally decay in any other space than~$L^2$.
	\item The sequence~$(X_i)_{i \in \Z}$ has \emph{non-trivial correlations}, which prohibits the use of martingale techniques to show convergence in finite-dimensional distributions to the limiting rough path.  
\end{enumerate}
Although viewed from a homogenisation perspective slightly different from ours, similar problems have appeared in the work of Gehringer and Li~\cite{Gehringer_Li_JTP, gehringer_li_20} and their joint work with Sieber~\cite{Gehringer_Li_Sieber_2022}, so we will now discuss in which way our work differs from theirs.
First, let us mention that they deal with the \emph{mixed case} where some of the components~$f_k$ enjoy the CLT scaling, while others obey a non-central CLT scaling that leads to Hermite processes in the limit; crucially, the latter are not moment-determined in general, which complicates the convergence analysis. 
However, while the mixed case is very interesting from a modelling perspective, the iterated integrals become \emph{Young integrals} if at least one of the involved components is of Hermite type.
From now onwards, we will therefore focus on the \emph{purely Gaussian} part of their work (with truly rough limits), specifically w.r.t. the two issues raised above and phrased in the terminology of the article at hand.

\begin{enumerate}[label=(\arabic*)]
	\item In case~$f_k$ has components in infinitely many chaoses, \cite{Gehringer_Li_JTP, gehringer_li_20, Gehringer_Li_Sieber_2022} impose a \emph{fast chaos decay assumption} akin to that of Chambers and Slud mentioned above.
	In essence, this assumption guarantees that one can obtain the required $L^{2p}$-estimates for $p > 2$, which are necessary for tightness, by Gaussian hypercontractivity. 
	However, that assumption is \emph{systematically stronger} than our assumption that $f_k \in \DD^{d,2p}(\gamma)$.
	In particular, as pointed out by Nourdin and Nualart \cite{nourdin_nualart_20}, it does not cover the benchmark case~$f_k(x) = \abs{x} - \sqrt{\nicefrac{2}{\pi}}$ while ours does, albeit at the price of the stronger assumption that~$\rho \in \ell^1(\Z)$, see Remark~\ref{rmk:main} for details. 
	\item \label{intro:conditional_decay}In order to establish the convergence in finite-dimensional distributions, \cite{Gehringer_Li_JTP, gehringer_li_20, Gehringer_Li_Sieber_2022} impose a \emph{conditional decay condition} on the functions~$f_k$ w.r.t. the Gaussian sequence~$X = (X_i)_{i \in \Z}$.
	Given this assumption, they can again resort to martingale techniques to show the convergence of the finite-dimensional distributions.
	However, we will see in Appendix~\ref{a:counterexample} that, already in the univariate setting of Remark~\ref{rmk:general}~\ref{rmk:general:4},  there is a rich family of \emph{long-range dependent} stationary Gaussian sequences~$X$ (i.e. $\rho \notin \ell^1(\Z)$) such that the function~$f_k = H_d$ for some~$d \geq 2$ violates the conditional decay assumption w.r.t.~$X$.  
	In contrast, we will show that~$\rho \in \ell^d(\Z)$ such that the counterexample is, indeed, covered by our main result.
\end{enumerate}
On the one hand, our tightness argument uses more flexible Malliavin calculus tools (rather than moment-computations combined with the fast chaos decay condition). 
On the other hand, in our case, the limit \emph{is} moment-determined and we develop a novel reduction argument for the f.d.d. convergence, based on integration-by-parts, that compensates the lack of martingale methods.
We will provide further details in the strategy paragraph below.

Before doing so, we want to highlight that the problem of a nonlinearity with components in infinitely many chaoses is also an active research direction in the study of (singular) stochastic partial differential equations, see~\cite{Gubinelli_Perkowski_16,furlan_gubinelli_19, Hairer_Xu_19} in the Gaussian and \cite{kong_wang_xu_24} in the Poisson case.
While these works deal with so-called subcritical equations and in the weak coupling regime, the recent work of Cannizzaro, the second named author, and Moulard~\cite{cannizzaro_klose_moulard_26} has, for the first time, tackled this problem in case of the \emph{critical} Stochastic Burgers Equation with general nonlinearity, both on the full space and at strong coupling.

Finally, let us mention that the techniques developed in the above-mentioned works have recently been employed by Kong and Wang~\cite{kong_wang_25} to prove the functional Breuer--Major theorem in the Poisson case, in which Meyer's inequality is not available. 

\paragraph{Strategy of proof, key novelties, and insights.}
In order to establish Theorem~\ref{thm:main}, we need to show tightness and convergence of the finite-dimensional distributions. 
\begin{enumerate}[label=(\arabic*)]
	\item \emph{Tightness}: As for Nourdin and Nualart, the key tools for establishing tightness will come from Malliavin calculus, namely the regularisation properties of the Hermite shift operator alongside Meyer's inequalities for iterated divergences.
	In contrast to their work, however, the second-order process~$\mathbb{S}^{k,\ell}_N$ involves the \emph{product} of~$f_k(X_i^{(k)})$ and~$f_\ell(X_j^{(j)})$.
	Representing both as iterated Malliavin divergences with input the respective Hermite shifts, the key novelty is our use of \emph{multiplication formula for iterated divergences} due to Furlan and Gubinelli~\cite{furlan_gubinelli_19}, see Proposition~\ref{p:divergence_multiplication} below.
	As we will see, the multiplication formula includes up to $2d$ Malliavin derivatives of the $d$-th Hermite shift of~$f_k$ and~$f_\ell$, i.e. $d$ derivatives on~$f_k$ and~$f_\ell$ itself, cf. Remark~\ref{rmk:main}, Point~\ref{rmk:main:malliavin_sobolev}.   
	\item \emph{Convergence of finite-dimensional distributions}: 
	Modern proofs of the Breuer--Major theorem, for example in~\cite[Chap.~7]{nourdin_peccati_book}, make use of the Malliavin--Stein method; in turn, however, this requires a good enough understanding of the limiting distribution. 
	In contrast to the Gaussian case (related to the first rough path level~$B$), this is not available for the second-order process~$\mathbb{B}$.  
	In the absence of martingale arguments, the saving grace is that~$(B(s,t),\mathbb{B}(s,t)) \in \R^{\otimes m} \oplus (\R^{m})^{\otimes 2}$ is in the second (inhomogeneous) Wiener chaos and, thereby, its law is moment-determined.
	In computations structurally similar to those of Hairer~\cite{hairer_variance_blowup}, we therefore show convergence of all the moments. 
	The argument is quite intricate, so we will present a detailed outline of the strategy in Section~\ref{s:strategy} below and just summarise the key difficulties and insights here:
	\begin{itemize}[leftmargin=*, label=$\star$]
		\item When computing expectations of products of terms~$f_{\w_j}(X^{(\w_j)}_{i_j})$, $\w_j \in \llbracket 1,m \rrbracket$ for $j \in \llbracket 1,l \rrbracket$ and~$l \geq 2$, the Breuer--Major argument (see Proposition~\ref{p:bm83_irregular} below for a recap) still applies in the multivariate setting and if there are constraints between the indices~$i_1, \ldots, i_{l}$: It states that, \emph{asymptotically}, Wick's theorem applies, leading to a major computational simplification.
		The reason is that the argument by Breuer and Major does not use the sign of the correlations~$\rho_{k,\ell}$.
		\item Chaos tails do not decay in~$L^p$ for~$p > 2$. 
		As a consequence, looking at~$\mathbb{S}^{k,\ell}_N$ as defined in~\eqref{e:1st_2nd_order_p} but with at least one of the functions~$f_k$ and~$f_\ell$ projected onto chaoses~$M$ and higher, it is not at all clear how to show that its~$L^2$-norm (i.e., morally, the~$L^4$-norm of~$f_k$ and~$f_\ell$!) goes to zero as~$M$ tends to infinity.
		
		Instead of hypercontractivity, we will use an iterated Gaussian integration-by-parts procedure, the computational complexity of which we tame by a graphical notation similar to Feynman diagrams (in the sense of pairwise matchings); 
		this allows to split the expression for~$\norm[0]{\mathbb{S}_N^{k,\ell}}_{L^2}$ into a \emph{deterministic} and (an expectation of) a \emph{probabilistic} part.
		Roughly speaking, in most cases, the deterministic part will vanish as~$N \to \infty$ while the probabilistic part is independent of~$N$ and uniformly bounded in the multi-index for each fixed~$M$, thanks to the differentiability assumption~\ref{thm:main:1} in Theorem~\ref{thm:main}. 
		In the few other cases, the deterministic part is uniformly bounded in~$N$ while the probabilistic part vanishes as~$M \to \infty$.  
		We refer to the paragraph on p.~\pageref{strategy_reduction} for details.
		\item It is precisely the \enquote{diagonal terms}, corresponding to~$i=j$ in~\eqref{e:1st_2nd_order_p}, that asymp\-to\-ti\-cal\-ly give the~$\DD$ in~\eqref{e:Stratonovich_correction}, i.e. the symmetric part of correction to the \emph{Stratonovich Brownian rough path}. We will see that this is a relatively straightforward consequence of the original Breuer--Major argument.
		\item Adding the diagonals (times a factor~$(1/2)$) to~$\mathbb{S}_N$ in~\eqref{e:1st_2nd_order_p}, we can temporarily convert from the piecewise constant to the piecewise linear interpolation. 
		In this way, we can use iterated \emph{integral}-signatures and bypass the need to work with ite\-ra\-ted sum-signatures as developed and investigated by~\cite{kiraly_oberhauser_2019, Diehl_EF_Tapia_20, Diehl_EF_Tapia_20_proceedings, Diehl_EF_Tapia_23} in the context of machine learning and time series analysis (see Remark~\ref{rmk:iterated_sum_signature} for further comments in this direction).
		As indicated in~\eqref{e:Stratonovich_correction}, our moment computations then need to reproduce those of the Stratonovich rough path, shifted by the antisymmetric matrix~$\AA$, in the limit. 
		\item In the detailed convergence analysis for the moments of~$\S_N$ with the dia\-gonals added in, we will identify the \emph{exact} mechanism that gives rise to the $\AA$-translated Stratonovich signature of Brownian motion. 
		More precisely: (1) Odd tensor components vanish because, in the language of Breuer and Major, there are no regular diagrams with an odd number of levels. 
		(2) In the \enquote{asymptotic Wick theorem} implied by their argument, at tensor level~$2n$, only the ladder pairing~$P_\star = \cbr[0]{\cbr[0]{1,2}, \cbr[0]{3,4}, \ldots, \cbr[0]{2n-1,2n}}$ survives in the limit.
		(3)~Not all multi-indices~$0 \leq i_1 \leq \ldots \leq i_{2n} \leq N-1$ contribute to that limit. In fact, \enquote{diagonals of order~$3$ and higher} (i.e. when three or more consecutive indices coincide) vanish asymptotically.
		(4) Further, equality between two consecutive indices is only asymptotically relevant if it is in accordance with the pairing~$P_\star$, i.e.~$0 \leq i_1 \leq i_2 < i_3 \leq i_4 < \ldots < i_{2n-1} \leq i_{2n}$.    
		(5) Sums over equal indices give rise to~$\frac{1}{2}\Delta(0) = \frac{1}{2}\DD$ and, otherwise, to~$\Gamma = \sum_{k \in \N} \Delta(k)$. 
		This combines to~$\frac{1}{2} \DD + \Gamma = \frac{1}{2}(\Sigma + 2 \AA)$, as implied by a suitable amendment of Fawcett's theorem.   
	\end{itemize}
\end{enumerate}

\paragraph{Organisation of the article.}
The remainder of the article is organised as follows. 
In Section~\ref{s:preliminaries}, we introduce some notation and recall basic concepts of Gaussian analysis and Malliavin calculus. 
In particular, we recall the original argument by Breuer and Major and precisely state the form in which we use it throughout the article~(Corollary~\ref{coro:BM83}).
In Section~\ref{s:tightness}, we briefly recall background material on rough path spaces equipped with $r$-variation topologies (Section~\ref{s:rough_paths_primer}) and, in~Section~\ref{s:tightness_result}, then establish tightness in such spaces~(Corollary~\ref{coro:tighness}); the key $L^p$-estimates are presented in Theorem~\ref{thm:tightness} and proved in Section~\ref{subsec:proof_tightness_thm}.
Section~\ref{s:convergence_fdd} establishes the convergence of the finite-dimensional distributions (Theorem~\ref{thm:conv_fdd}).
We begin with a high-level overview of the strategy in Section~\ref{s:strategy} which leads to the tailor-made Slutsky-type statement~(Proposition~\ref{prop:abstract_convergence_statement}) that the ensuing analysis is based upon.
We then implement the outlined agenda by verifying the assumptions for the Slutsky-type result in the sections that follow.
Section~\ref{s:reduction} contains the reduction step which shows that chaos tails above a fixed order vanish asymptotically in a suitable double limit.
As a consequence, we can always assume that we are working in a fixed (inhomogeneous) chaos. 
In Section~\ref{s:LLN_diagonal}, we then show that, in the spirit of the law of large numbers, the diagonals of order two give rise to the term~$\DD$ in~\eqref{e:Stratonovich_correction}.
Section~\ref{s:moment_computations} contains a detailed combinatorial analysis, as outlined in the strategy paragraph, culminating in the proof of Theorem~\ref{thm:conv_fdd_stratonovich} (via that of Theorem~\ref{t:product_case}), which shows that the moments of the discrete approximation converge to those of the Stratonovich rough path corrected by~$\AA$.
In Section~\ref{s:removing_chaos_cutoff}, we then remove the chaos cut-off introduced in Section~\ref{s:reduction} from the limiting rough path, which only enters via the covariance matrix and the corrections.
Finally, in Section~\ref{s:proof_fdd_convergence}, we bring all the previous effort to fruition and present the proof of Theorem~\ref{thm:conv_fdd} which establishes the f.d.d. convergence.

\paragraph{Acknowledgements.}

We are grateful to Ilya Chevyrev, Emilio Ferrucci, Fanhao Kong, Ivan Nourdin,  Nikolas Tapia, and Weijun Xu for insightful discussions that have benefited this work.

Work on this project started as HEA was being employed as Dirichlet Postdoctoral Fellow in Mathematics at Freie Universit\"{a}t Berlin, with funding provided by MATH+, in the framework of the ``MATH+ EXC 2046'' research project. 
TK is supported by a UKRI Horizon Europe Guarantee MSCA Postdoctoral Fellowship (UKRI, SPDEQFT, grant reference EP/Y028090/1). Views and opinions expressed are however those of the authors only and do not necessarily reflect those of UKRI. In particular, UKRI cannot be held responsible for them. 
NP gratefully acknowledges funding by the Deutsche Forschungsgemeinschaft (DFG, German Research Foundation) -- CRC/TRR 388 ``Rough Analysis, Stochastic Dynamics and Related Fields'' -- Project ID 516748464

\section{Preliminaries and auxiliary results} \label{s:preliminaries}

In this section, we collect preliminaries and auxiliary results that will be used throughout the article.

\subsection{Notation} \label{s:notation}

\begin{itemize}
	\item For two expressions~$A$ and~$B$, we write~$A \lesssim B$ if there exists a constant~$c > 0$ such that~$A \leq c B$.
	If the constant~$c$ depends on a parameter~$\alpha$, we will write~$A \lesssim_\alpha B$.
	\item We write~$\N$ for the natural numbers starting from~$1$ and~$\N_0 \coloneqq \N \cup \cbr[0]{0}$.
	For~$a,b \in \R$ such that~$a < b$, we write~$\llbracket a,b \rrbracket \coloneqq [a,b] \cap \Z$.
	\item We write~$A_N \asymp B_N$ if~$A_N = B_N + o(1)$ as~$N \to \infty$, i.e.~$\lim_{N \to \infty} A_N = \lim_{N \to \infty} B_N$.
	Unless otherwise stated, we will exclusively use this notation for the limit~$N \to \infty$ which is particularly relevant for quantities depending, additionally, on another parameters besides~$N$.
	\item For~$k \in \N$, we use the multi-index notation~$i_{1:k} \coloneqq (i_1,\ldots,i_k)$ and~$i_{[1:k]} \coloneqq \sum_{j=1}^k i_j$.
	Furthermore, for~$a,b \in \R$ such that~$a < b$, we write~$a \leq i_{1:k} \leq b$ to mean~$a \leq i_\ell \leq b$ for all~$\ell \in \llbracket 1,k \rrbracket$. 
	\item For~$l \in \N$, we let~$\square_{l,N} \coloneqq \llbracket 0,N-1 \rrbracket^{l}$ denote the $l$-dim. box of side length~$N-1$. 
	\item For~$l \in \N$ and~$a,b \in \Z$ with~$a < b$, we let
	\begin{equation*}
		\triangle^{(l)}_{a,b} \coloneqq  \cbr[1]{ r_{1:l} \in [a,b]^{l}: \ a \leq r_1 < r_2 < \ldots < r_l \leq b-1}
	\end{equation*}
	denote the $l$-dim. simplex over the interval~$[a,b]$ as well as its closure
	\begin{equation*}
		\bar{\triangle}_{a,b}^{(l)} 
		=  
		\cbr[1]{ r_{1:l} \in [a,b]^{l}: \ a \leq r_1 \leq r_2 \leq \ldots \leq r_l \leq b-1} 
	\end{equation*}
	We will not notationally distinguish the \emph{discrete simplex}, i.e. the case when~$a,b, r_{1:\ell}$ in the previous definition are constrained to be integers.
\end{itemize}

\subsection{Malliavin calculus}

In this section, we introduce the key concepts and results from Malliavin calculus which we will use throughout this article. 
For a detailed introduction to these topics, we refer the reader to the books by Nualart~\cite{nualart} and Nourdin and Peccati~\cite{nourdin_peccati_book}.

We need the following auxiliary definition:
\begin{definition}[Isonormal Gaussian process] \label{d:isonormal_gp}
	Let~$H$ be a real separable Hilbert space. A stochastic process~$W = \cbr[0]{W(h): \ h \in H}$, defined on a complete probability space~$(\Omega, \CF,\P)$, is an \emph{isonormal Gaussian process} over~$H$ if~$W$ is a centred Gaussian family of random variables such that~$\E\sbr[0]{W(h)W(g)} = \scal{h,g}$ for all~$g,h \in H$. 
\end{definition}

Recall that~$\cbr[0]{X_i}_{i \in \Z} \subseteq L^2(\Omega)$ is a stationary sequence of centred, $\R^m$-valued Gaussian random variables with correlation function~$\rho_{k,\ell}(u) = \E\sbr[0]{X_0^{(k)} X^{(\ell)}_u}$ and~$\rho_{k,k}(0) = 1$ for all~$k, \ell \in \llbracket 1,m \rrbracket$.
In particular, for any~$i \in \Z$ and~$k \in \llbracket 1,m \rrbracket$, we have~$X_i^{(k)} \sim \gamma$ for~$\gamma = \CN(0,1)$.
The following statement is a straightforward amendment of~\cite[Prop.~7.2.3]{nourdin_peccati_book}.
\begin{lemma} \label{l:Gaussian_repr}
	There exists a real separable Hilbert space~$H$, an isonormal Gaussian process~$\cbr[0]{W(h): h \in H}$ over~$H$, and a set~$E = \cbr[0]{e_{k,i}: k \in \llbracket 1,m \rrbracket, \ i \in \Z} \subseteq H$ such that
	\begin{enumerate}[label=(\roman*)]
		\item $E$ generates~$H$,
		\item $\rho_{k,\ell}(j-i) = \E\sbr[0]{X^{(k)}_i X^{(\ell)}_j} = \scal{e_{k,i},e_{\ell,j}}_{H} $ for any~$k,l \in \llbracket 1,m \rrbracket$ and~$i,j \in \Z$, and
		\item $X^{(k)}_i = W(e_{k,i})$ for every~$k \in \llbracket 1,m \rrbracket$ and~$i \in \Z$. 
	\end{enumerate}
\end{lemma}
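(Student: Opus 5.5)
The plan is to take $H$ to be the closed linear span, inside $L^2(\Omega,\CF,\P)$, of the family $\{X^{(k)}_i : k \in \llbracket 1,m \rrbracket,\ i \in \Z\}$, which is the first Wiener chaos generated by the sequence. Then $H$ is a real Hilbert space with the inner product $\scal{U,V}_H \coloneqq \E\sbr[0]{UV}$. It is separable because it is the closure of the span of a countable family. We set
\begin{equation*}
	e_{k,i} \coloneqq X^{(k)}_i \in H .
\end{equation*}
Property~(i) holds by construction, since $E$ spans a dense subspace. Property~(ii) is a tautology, because $\scal{e_{k,i},e_{\ell,j}}_H = \E\sbr[0]{X^{(k)}_i X^{(\ell)}_j} = \rho_{k,\ell}(j-i)$ by stationarity and the convention in~\eqref{e:multivariate_covariance}.

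Next, define $W(h) \coloneqq h$ for $h \in H$, viewing each element of $H$ as a random variable. We then check that $W$ is an isonormal Gaussian process in the sense of Definition~\ref{d:isonormal_gp}.

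\textbf{Centred Gaussian family.} Any finite linear combination of the $X^{(k)}_i$ is centred Gaussian, because $(X_i)_{i\in\Z}$ is a centred, jointly Gaussian $\R^m$-valued process. $L^2$-limits of centred jointly Gaussian vectors remain centred jointly Gaussian, which one sees by passing to the limit in the characteristic functions: the covariances converge, hence so do the characteristic functions. Therefore every finite vector $(W(h_1),\ldots,W(h_n))$ is centred Gaussian.

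\textbf{Covariance.} We have $\E\sbr[0]{W(h)W(g)} = \scal{h,g}_H$ by the definition of the inner product.

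Property~(iii), $W(e_{k,i}) = X^{(k)}_i$, is then immediate. The underlying probability space may be taken complete by passing to the completion, and this changes nothing above.

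The only point that needs any care is the closure step: the limiting family must stay \emph{jointly} Gaussian, and we must be working with equivalence classes in $L^2$. This is standard, and the characteristic-function argument settles it. Alternatively, one can follow~\cite[Prop.~7.2.3]{nourdin_peccati_book} verbatim, replacing the index set $\Z$ by $\llbracket 1,m\rrbracket \times \Z$. Nothing in the argument uses the specific structure of the index set, so the amendment is purely notational.
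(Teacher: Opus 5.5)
Your proof is correct and is essentially the argument the paper relies on. The paper gives no separate proof; it simply invokes \cite[Prop.~7.2.3]{nourdin_peccati_book} with the index set $\Z$ replaced by $\llbracket 1,m\rrbracket\times\Z$, exactly as you note at the end. Your choice of $H$ as the closed span of the $X^{(k)}_i$ in $L^2(\P)$ with $W(h)=h$ is canonically isometric to the usual abstract construction (complete the finitely supported sequences under the covariance form, then extend $h\mapsto\sum h_{k,i}X^{(k)}_i$ by isometry). It has the small advantage of needing no quotient by null vectors, for instance in the degenerate case of Remark~\ref{rmk:general}~\ref{rmk:general:4}, where $e_{k,i}=e_{\ell,i}$.
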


\begin{definition}[Hermite polynomials and Wiener chaos] \label{d:wiener_chaos}
	We let~$H_0(x) \coloneqq 1$ and, for~$q \in \N$, introduce the \emph{$q$-th Hermite polynomial}~$H_q$ by 
	\begin{equation} \label{e:hermite_poly}
		H_q(x) \coloneqq (-1)^q e^{\frac{x^2}{2}}\frac{\dif^{\, q}}{\dif x^q}e^{-\frac{x^2}{2}},\quad 
	\end{equation}
	For~$q \in \N_0$, the \emph{$q$-th Wiener chaos}~$\CW_q$ is defined as 
	\begin{equation*}
		\CW_q \coloneqq \operatorname{cl}_{L^2(\Omega)} \, \operatorname{span}\cbr[1]{H_q(W(h)):\, h\in H,\,\norm[0]{h}_{H}=1} \,.
	\end{equation*}
	where~$\operatorname{cl}_{L^2(\Omega)}$ denotes the closure w.r.t.~$L^2(\Omega)$.
\end{definition}

For the next definition, we denote by~$\symotimes$ the \emph{symmetric} tensor product on the Hilbert space~$H$, that is: 
\begin{equation*}
	h_1 \symotimes \ldots \symotimes h_q 
	\coloneqq 
	\frac{1}{q!}
	\sum_{\pi \in \mathfrak{S}(q)} \bigotimes_{\ell=1}^q h_{\pi(\ell)} \,.
\end{equation*}	
\begin{definition}[Wiener--It\^{o} isometry] \label{d:wiener_ito_isometry}
	We let~$I_0 \coloneqq \operatorname{id}_{\R}$ and, for $q\geq 1$, define 
	\begin{equation} \label{e:wiener_ito_isometry}
		I_q: H^{\symotimes q} \to \CW_q, 
		\quad 
		I_q(h^{\otimes q}) \coloneqq H_q(W(h)) \quad \text{for} \quad h \in H, \quad \norm[0]{h}_{H}=1
	\end{equation} 
	which is extended to a linear isometry (w.r.t. the scaled norm $\sqrt{q!} \norm[0]{\cdot}_{H^{\symotimes q}}$).
\end{definition}

It can be shown that any $G\in L^2(\Omega)$ admits the \emph{Wiener chaos expansion}
	\begin{equation}\label{e:wiener_chaos_expansion}
		G = \E[G] +\sum_{q=1}^\infty I_q(g_q) \,,
	\end{equation}
where the symmetric kernels $g_q \in H^{\symotimes q}$ are uniquely determined by $G$.

\begin{definition}[Malliavin derivative] \label{d:malliavin_derivative}
	For smooth cylindrical random variables of the form $G= g(W(h_1), \dots , W(h_n))$, where $h_i \in  H$ and $g \in C_b^{\infty}(\R^n)$, the \emph{Malliavin derivative} $D$ is defined as the $H$-valued random variable
	\begin{equation*}
		DG \coloneqq \sum_{i=1}^n \frac{\partial g}{\partial x_i} (W(h_1), \dots, W(h_n))h_i, \quad \text{i.e.} \quad DG \in L^2(\Omega;H) \,.		
	\end{equation*}
	By iteration, higher-order derivatives $D^k G$ are defined as elements of $L^2(\Omega; H^{\otimes k})$ and we will also write $D^0 \coloneqq \operatorname{id}$. 
\end{definition}

\begin{definition}[Malliavin--Sobolev spaces] \label{d:malliavin_derivative}
	For any $k \in \N_0$ and $p \geq 1$, the \emph{Malliavin--Sobolev space} $\DD^{k,p}$ is obtained as the completion of smooth cylindrical random variables under the norm
	\begin{equation*}
		\norm{G}^p_{k,p} \coloneqq \E\sbr[0]{\abs[0]{G}^p} + \sum_{l=1}^k \E\sbr[1]{\thinspace \norm[0]{D^l G}_{H^{\symotimes l}}^p} \,.	
	\end{equation*}
\end{definition}

Next, we introduce the \emph{divergence operator} $\delta$ as the adjoint of the Malliavin derivative~$D$.
\begin{definition}[(Iterated) Malliavin divergence] \label{d:malliavin_divergence}
We define
	\begin{equation*}
		\Dom \delta \coloneqq
		\cbr[1]{
			u \in L^2(\Omega; H): \exists c_u \in \R, \ \forall G \in \DD^{1,2}: \,
			\abs[0]{\E\sbr[0]{\scal{DG, u}_H}} \leq c_u \norm{G}_{L^2(\Omega)}
		} \,.
	\end{equation*}
	For $u \in \Dom \delta$, the random variable $\delta(u)$ is defined via the duality relation
	\begin{equation*} 
		\forall G \in \DD^{1,2}: \ \E\sbr[0]{G \delta(u)} = \E\sbr[0]{\scal{DG, u}_{H}} \,.
	\end{equation*}
	It is called the \emph{(Malliavin) divergence} operator.
	For~$k \geq 2$, we can analogously define the \emph{iterated divergence operators}~$\delta^k$ via the duality statement 
	\begin{equation*} 
		\forall G \in \DD^{k,2}: \ \E\sbr[0]{G \delta^k(u)} = \E\sbr[0]{\scal{D^k G, u}_{H^{\symotimes k}}} \,,
	\end{equation*}
	where $u$ belongs to $\Dom \delta^k \subseteq L^2(\Omega; H^{\symotimes k})$.
\end{definition}

\begin{remark}
	When $u\in H^{\symotimes k}$ is deterministic, we have~$\delta^k(u)= I_k(u)$.
	In particular, this applies to~$u = h^{\otimes k}$ for~$h \in H$.
\end{remark}

\begin{remark}
	The Malliavin derivative~$D$ and its adjoint~$\delta$ have obvious analogues for random variables that take values in another separable Hilbert space~$V$, see~\cite[Sec.~2.4 and 2.6]{nourdin_peccati_book}.
	If we wish to highlight this fact, we will for example write $\DD^{k,p}(V)$ etc.
	The corresponding results in this subsection then apply mutatis mutandis.
\end{remark}

Essentially, the following proposition is the content of~\cite[Lem.~B.7]{furlan_gubinelli_19}, see also Remark~\ref{rmk:divergence_multiplication} below. 
\begin{proposition}[Divergence multiplication formula] \label{p:divergence_multiplication}
	For~$n_1,n_2 \in \N$,~$i,j \in \Z$, $k,\ell \in \llbracket 1,m \rrbracket$, and $p \geq 2$, let 
	\begin{equation*}
		u = f(W(e_{k,i})) e_{k,i}^{\otimes n_1}, \quad v = g(W(e_{\ell,j})) e_{\ell,j}^{\otimes n_2}, \quad f, g \in \DD^{n_1+n_2,2p}(\gamma) \,. 
	\end{equation*}
	Then, the following \emph{multiplication formula}
	\begin{align*}
		\thinspace &
		\delta^{n_1}(u) \delta^{n_2}(v) \\[0.5em]
		%= 
		= \ & 
		\sum_{(q,r,l) \in \CI_{n_1,n_2}} C_{n_1,n_2,q,r,l} \delta^{n_1+n_2-q-r}\del[1]{f^{(r-l)}(X^{(k)}_i) g^{(q-l)}(X^{(\ell)}_j) e_{k,i}^{\symotimes (n_1-q)} \symotimes e_{\ell, j}^{\symotimes (n_2-r)}} \times \\
		%%%
		& \qquad \qquad \times 
		\rho_{k,\ell}(j-i)^{q+r-l} 
	\end{align*}
	holds with
	\begin{align*}
		\CI_{n_1,n_2} 
		& \coloneqq \cbr[1]{(q,r,l) \in \N_0^3: \ q \in \llbracket 0,n_1 \rrbracket, \ r \in \llbracket 0,n_2 \rrbracket, \ l \in \llbracket 0,q \wedge r \rrbracket}, \\[0.5em]
		%%%
		C_{n_1,n_2,q,r,l} & \coloneqq 
		\binom{n_1}{q} \binom{n_2}{r} \binom{q}{l} \binom{r}{l} l! \,.
	\end{align*}
	Additionally, the product~$\delta^{n_1}(u)\delta^{n_2}(v)$ is in~$L^p(\Omega)$.
\end{proposition}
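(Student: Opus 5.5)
Since the formula is bilinear in $(f,g)$ and both sides are continuous in the relevant norms, I will first prove it for $f,g$ smooth with bounded derivatives (e.g. $f,g\in C^\infty_b$ or polynomials) and then extend by density in $\DD^{n_1+n_2,2p}(\gamma)$. The main tool is the standard commutation and product rule for divergences: for $G\in\DD^{1,2}$ and $w$ in the domain,
\begin{equation*}
G\,\delta^n(w) = \sum_{s=0}^{n}\binom{n}{s}\,\delta^{n-s}\big(\langle D^sG, w\rangle_{H^{\otimes s}}\big),
\end{equation*}
the iterated version of $G\delta(w)=\delta(Gw)+\langle DG,w\rangle$ (Nualart, Prop.~1.3.3 iterated, or Furlan--Gubinelli Lem.~B.6). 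I will write $\delta^{n_1}(u)=\delta^{n_1}(f(X^{(k)}_i)e_{k,i}^{\otimes n_1})$ and treat $\delta^{n_2}(v)$ as the random factor $G$.

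First, I apply the product rule with $G=\delta^{n_2}(v)$ and $w=u$. This needs $D^q\delta^{n_2}(v)$ contracted with $e_{k,i}^{\otimes q}$. I use the commutation relation $D_h\delta^{n}(w)=\delta^n(D_hw)+n\,\delta^{n-1}(\langle w,h\rangle)$, iterated $q$ times. Since $D_{e_{k,i}}$ acting on $g(X^{(\ell)}_j)e_{\ell,j}^{\otimes n_2}$ produces $g'(X_j^{(\ell)})\rho_{k,\ell}(j-i)e_{\ell,j}^{\otimes n_2}$, and contracting $e_{\ell,j}$ with $e_{k,i}$ yields another factor $\rho_{k,\ell}(j-i)$, I get
\begin{equation*}
\langle D^q\delta^{n_2}(v),e_{k,i}^{\otimes q}\rangle=\sum_{a}\binom{q}{a}\frac{n_2!}{(n_2-a)!}\,\rho^{q}\,\delta^{n_2-a}\big(g^{(q-a)}(X^{(\ell)}_j)e_{\ell,j}^{\otimes(n_2-a)}\big),
\end{equation*}
with $\rho=\rho_{k,\ell}(j-i)$. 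The product $f(X^{(k)}_i)$ times this is multiplied back into $\delta^{n_1-q}(\cdot\,e_{k,i}^{\otimes(n_1-q)})$. One more use of the product rule then moves $f$ inside the inner divergence, generating derivatives $f^{(b)}$ and further factors $\rho^{b}$. Finally I combine $\delta^{n_1-q}(\delta^{n_2-a}(\cdots)\,e^{\otimes})$ into a single $\delta^{n_1+n_2-q-a}$ of the symmetrised tensor, using $\delta^{n}(\delta^{m}(w)\,h^{\otimes n})$-type identities for deterministic directions. These hold because for deterministic $h$ and a symmetric kernel of product form, $\delta^n\circ\delta^m$ applied with disjoint directions equals $\delta^{n+m}$ of the symmetric tensor product.

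Second, I will reindex the resulting finite sum into the triple $(q,r,l)$ and check the coefficients $\binom{n_1}{q}\binom{n_2}{r}\binom{q}{l}\binom{r}{l}l!$. Here $q$ counts the $e_{k,i}$ directions removed, $r$ the $e_{\ell,j}$ directions removed, and $l$ the pairings between them, which contribute a contraction $\rho$ but no derivative. The total power is then $\rho^{q+r-l}$, and the derivative orders are $r-l$ on $f$ and $q-l$ on $g$. This is exactly the Wick-type combinatorics of the classical product formula for multiple integrals, which it must reduce to when $f=g=1$. To avoid error-prone bookkeeping, I would verify the coefficients by the duality route instead: test both sides against $\delta^{N}$-dual functionals, or simply check on $f(x)=e^{sx-s^2/2}$ and $g(x)=e^{tx-t^2/2}$. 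For these exponentials, $\delta^n(e^{sW(h)-s^2/2}h^{\otimes n})$ has a closed form via Hermite generating functions, and both sides become explicit exponentials in $s,t$. Matching power series in $s,t$ identifies all coefficients, and linear combinations of exponentials are dense.

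Third, for the extension and the integrability claim I use Meyer's inequality: $\|\delta^{n}(w)\|_{L^p}\lesssim\sum_{s\le n}\|D^sw\|_{L^p(H^{\otimes(n+s)})}$. Each term on the right-hand side involves $f^{(r-l)}g^{(q-l)}$ together with up to $n_1+n_2-q-r$ further derivatives. The total number of derivatives on $f$ or $g$ is therefore at most $n_1+n_2$, since $r-l+n_1+n_2-q-r\le n_1+n_2$. By Hölder with exponents $2p,2p$, each term is bounded by $\|f\|_{n_1+n_2,2p}\|g\|_{n_1+n_2,2p}$ in $L^p$. This gives continuity of the right-hand side and that it lies in $L^p$. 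The left-hand side converges in $L^1$ along approximations, also by Meyer and Hölder. I expect the main obstacle to be the combinatorial identification of the coefficients and the merging of nested divergences, which is where the generating-function check is the safest route.
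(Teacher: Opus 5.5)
Your proof is correct. The paper does not prove this statement itself. It quotes \cite[Lem.~B.7]{furlan_gubinelli_19} in the special case of their Rem.~B.8, and uses Lemma~\ref{l:Gaussian_repr} to identify $\langle e_{k,i},e_{\ell,j}\rangle_H=\rho_{k,\ell}(j-i)$. It then asserts, without details, that the argument carries over from $C^{n_1+n_2}$ to $\DD^{n_1+n_2,2p}(\gamma)$ and gives $L^p$-integrability. Your route supplies exactly that missing verification in self-contained form:
\begin{itemize}
\item the iterated product rule;
\item Lemma~\ref{lem:commutation_D_delta};
\item merging $\delta^{n_1-q}\circ\delta^{m}$ with deterministic directions into a single $\delta^{n_1-q+m}$, via the one-line duality $\mathbb{E}[G\,\delta^{n}(\delta^{m}(w)h^{\otimes n})]=\mathbb{E}\langle D^{n+m}G,h^{\otimes n}\otimes w\rangle$;
\item density, using Corollary~\ref{coro:continuity_div} and H\"older.
\end{itemize}

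The generating-function check is superfluous, because your direct bookkeeping already produces the coefficients. Label by $q$ the index from the first product rule, by $a$ the number of contractions in the commutation lemma, and by $b$ the index from the second product rule. Your computation then yields the terms
\[
\binom{n_1}{q}\binom{q}{a}\frac{n_2!}{(n_2-a)!}\binom{n_2-a}{b}\,\rho_{k,\ell}(j-i)^{q+b}\,\delta^{n_1+n_2-q-a-b}\Big(f^{(b)}(X^{(k)}_i)\,g^{(q-a)}(X^{(\ell)}_j)\,e_{k,i}^{\symotimes(n_1-q)}\symotimes e_{\ell,j}^{\symotimes(n_2-a-b)}\Big),
\]
with $0\le a\le q\wedge n_2$ and $0\le b\le n_2-a$. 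The substitution $l=a$, $r=a+b$ gives the following:
\begin{itemize}
\item $\frac{n_2!}{(n_2-a)!}\binom{n_2-a}{b}=\binom{n_2}{r}\binom{r}{l}l!$, so the coefficient is $C_{n_1,n_2,q,r,l}$;
\item the exponent becomes $q+r-l$;
\item the derivative orders become $r-l$ and $q-l$;
\item the index range becomes exactly $\CI_{n_1,n_2}$.
\end{itemize}
On its own, matching on exponentials would only pin down coefficients once the shape of the terms is known, so rely on this direct computation instead.

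Beyond the citation, your approach also gives a sharper range of $p$. The extension step only uses:
\begin{itemize}
\item $L^{2p}$-continuity of $\delta^{n_1}$ and $\delta^{n_2}$ on the left;
\item $L^{p}$-continuity of $\delta^{K}$ on the right;
\item H\"older with exponents $(2p,2p)$.
\end{itemize}
None of this needs $p\ge 2$, only $p>1$. One caveat: for $1<p<2$ the integrands $f^{(a)}g^{(b)}$ need not lie in $L^2$, so $\delta^K$ must be read as the continuous extension from Corollary~\ref{coro:continuity_div}, not the $L^2$-adjoint of Definition~\ref{d:malliavin_divergence}. With that reading, your argument proves the proposition for every $p>1$, which the paper only conjectures in Remark~\ref{rmk:tightness_p}.
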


\begin{remark} \label{rmk:divergence_multiplication}
	In fact, we stated the previous proposition in the special case detailed in~\cite[Rem.~B.8]{furlan_gubinelli_19} and, additionally, made use of the statements in Lemma~\ref{l:Gaussian_repr}.
	
	Besides, we have added the assumption that~$f, g \in \DD^{n_1+n_2,2p}(\gamma)$ (rather than $C^{n_1+n_2}(\R;\R)$) which implies the integrability~$\delta^{n_1}(u) \delta^{n_2}(v) \in L^p(\Omega)$; one can easily check that the proof provided by Furlan and Gubinelli still applies.
\end{remark}

For the following definition, see~\cite[Sec.~2.8.1 and~2.8.2]{nourdin_peccati_book}.

\begin{definition}[OU semigroup and generator] \label{d:OU_operator}
	For~$G \in L^2(\Omega)$ with Wiener chaos decomposition given in~\eqref	{e:wiener_chaos_expansion}, we define the \emph{Ornstein--Uhlenbeck (OU) semigroup}~$(P_t)_{t \geq 0}$ and its infinitesimal \emph{generator}~$L$ as follows:
	\begin{equation*}
		P_t G
		=
		\sum_{q=0}^\infty e^{-qt} I_q(g_q), 
		\quad
		(-L)^r G
		=
		\sum_{q=1}^\infty q^r I_q(g_q), \quad t \geq 0, \ r \in \R \,.
	\end{equation*}
	Sometimes, we will also refer to~$(-L)$ as the~\emph{number operator}.
	For~$r = -1$, the operator~$L^{-1}$ is called the \emph{pseudo-inverse} of~$L$ and satisfies~$LL^{-1} G = G - \E\sbr[0]{G}$.
\end{definition}

\begin{remark} \label{rmk:definitions_1D}
	For the isonormal Gaussian process~$W(h) = h$ over~$H = \R$, defined on the complete probability space~$(\R,\CB(\R),\gamma)$ with~$\gamma = \CN(0,1)$, we write~$\DD^{k,p}(\gamma)$ for the Malliavin--Sobolev space introduced in Definition~\ref{d:malliavin_derivative} above.
	In that case, we have~$Dg = g'$, $(\delta g)(x) = xg(x) - g'(x)$, and~$(Lg)(x) = g''(x) - xg'(x)$, see~\cite[Chap.~1]{nourdin_peccati_book} for details.
\end{remark}

The following proposition presents \emph{Meyer's inequality} (see \cite[Thm.~1.5.1]{nualart}). 
\begin{proposition}[Meyer's inequality] \label{p:meyer_inequality}
	 For $p>1$ and~$k \in \N$, we have 
		\begin{equation} \label{e:meyer}
		  \norm[0]{D^k G}_{L^p(\Omega, H^{\symotimes k})} 
		  \lesssim_{k,p} 
		  \norm[0]{(- L)^{k/2} G}_{L^p(\Omega)} 
		  \lesssim_{k,p} 
		  \del[2]{\norm[0]{D^k G}_{L^p(\Omega, H^{\symotimes k})}
			+\norm[0]{G}_{L^p(\Omega)}}
	\end{equation}
	uniformly over for all $G \in \DD^{k,p}$.
\end{proposition}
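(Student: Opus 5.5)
The plan is to prove the case $k=1$ first and then obtain general $k$ by induction, using the commutation relation between $D$ and the Ornstein--Uhlenbeck semigroup. By density it suffices to work with smooth cylindrical $G$, or even polynomial functionals in finitely many chaoses, which are dense in $\DD^{k,p}$. The basic tools are:
\begin{itemize}
\item Mehler's formula $P_tG(\omega)=\E'\sbr[0]{G(e^{-t}\omega+\sqrt{1-e^{-2t}}\,\omega')}$, where $\omega'$ is an independent copy of the isonormal process;
\item the intertwining identity $DP_t=e^{-t}P_tD$, which follows chaos by chaos from $D I_q(g)=qI_{q-1}(g(\cdot))$;
\item the fact that $P_t$ is a contraction on $L^p$, for all $p \ge 1$, also for Hilbert-valued variables.
\end{itemize}

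\textbf{Case $k=1$, upper bound $\norm[0]{DG}_p\lesssim\norm[0]{(-L)^{1/2}G}_p$.} I would follow Pisier's argument. Set $\theta=\sqrt{1-e^{-2t}}$ and write
\[
(-L)^{-1/2}G=\frac{1}{\sqrt\pi}\int_0^\infty t^{-1/2}P_tG\,\dif t
\]
for centred $G$. Then represent $D(-L)^{-1/2}$ through Mehler's formula. Differentiating $P_tG$ in the direction $h$ and using Gaussian integration by parts in $\omega'$ yields an expression of the form
\[
\E'\sbr[1]{W'(h)\,G(e^{-t}\omega+\theta\omega')}\cdot e^{-t}/\theta.
\]
Rotating the pair $(\omega,\omega')$ by the angle $\phi$ with $\cos\phi=e^{-t}$ turns the $t$-integral into a principal-value integral over $\phi$ with kernel $\sim 1/\phi$ near $0$. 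This is a Hilbert transform along the rotation group acting on $L^p(\Omega\times\Omega')$. Boundedness of the Hilbert transform on $L^p$, for $1<p<\infty$, transferred to this measure-preserving flow via Calderón's transference, then gives $\norm[0]{D(-L)^{-1/2}F}_{L^p(H)}\lesssim\norm[0]{F}_p$. The Hilbert-valued aspect is handled by testing against $W'(h)$ and using the $L^p$ equivalence of Gaussian moments, $\E'|W'(u)|^p\asymp\norm[0]{u}_H^p$. Applying this with $F=(-L)^{1/2}G$ gives the upper bound.

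\textbf{Case $k=1$, lower bound.} The reverse inequality, with the additional $\norm[0]{G}_p$ term, I would get by duality. For $F$ in the dual space $L^{p'}$, use $\delta D=-L$ to write
\[
\E\sbr[0]{(-L)^{1/2}G\,F}=\E\sbr[0]{\scal{DG,\,D(-L)^{-1/2}(F-\E F)}_H},
\]
and apply the upper bound in $L^{p'}$ to the second factor. This yields $\norm[0]{(-L)^{1/2}G}_p\lesssim\norm[0]{DG}_p+|\E G|$, and $|\E G|\le\norm[0]{G}_p$.

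\textbf{General $k$.} I would argue by induction. The intertwining identity gives $D(-L)=(I-L)D$ on $H$-valued variables. Hence
\[
D^k(-L)^{-k/2}=\bigl[(I-L)^{-(k-1)/2}D^{k-1}\bigr]\,\bigl[\text{stuff}\bigr],
\]
up to factors of the form $(-L)^{1/2}(I-L)^{-1/2}$ and its inverse on chaoses of order $\ge1$. These factors are bounded on $L^p$ by Meyer's multiplier theorem: a sequence $\phi(q)=\sum_j a_j q^{-j/2}$ that is analytic in $q^{-1/2}$ for large $q$ defines an $L^p$-bounded chaos multiplier. That theorem I would prove via hypercontractivity, which controls the finitely many low chaoses, together with the Mehler and $P_t$ representation for the analytic tail. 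Combining $k$ applications of the $k=1$ bounds, applied to Hilbert-valued variables, with these multipliers gives both sides of~\eqref{e:meyer}. The lower-order derivatives $D^jG$, $j<k$, are absorbed into $\norm[0]{D^kG}_p+\norm[0]{G}_p$ by the same multiplier argument.

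\textbf{Main obstacle.} I expect the $k=1$ Riesz-transform bound to be the crux, specifically making the Pisier rotation and transference step rigorous with constants independent of the dimension. The multiplier theorem is the second delicate point.
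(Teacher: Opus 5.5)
The paper does not prove this proposition; it only cites Nualart's Theorem~1.5.1. Your plan is essentially the textbook proof behind that citation: Pisier's rotation/Hilbert-transform argument for the Riesz transform at $k=1$, duality for the reverse bound, and induction on $k$ via $D(-L)^{a}=(I-L)^{a}D$ together with Meyer's multiplier theorem.

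The $k=1$ part is sound. Your treatment of the $H$-norm works, since $|\langle D(-L)^{-1/2}G(\omega),h\rangle_H|\lesssim \|W'(h)\|_{L^{p'}(\Omega')}\|\Psi(\omega,\cdot)\|_{L^p(\Omega')}$, where $\Psi$ is the principal-value rotation integral and does not depend on $h$. Your duality step even gives $\|(-L)^{1/2}G\|_p\lesssim\|DG\|_p$ with no $|\mathbb{E}G|$ term. The general-$k$ upper bound also works once the factorisation is written as $D^k(-L)^{-k/2}=D^{k-1}(I-L)^{-(k-1)/2}\,D(-L)^{-1/2}$ on centred variables.

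The step that does not close as written is the reverse inequality for $k\ge 2$. Iterating the $k=1$ lower bound and the induction hypothesis gives
\[
\|(-L)^{k/2}G\|_p\lesssim\|(-L)^{(k-1)/2}DG\|_p+\|J_0DG\|_p\lesssim\|D^kG\|_p+\|DG\|_p+\|G\|_p .
\]
You propose to absorb $\|DG\|_p$ ``by the same multiplier argument''. That argument only yields $\|DG\|_p\lesssim\|(-L)^{1/2}G\|_p\lesssim\|(-L)^{k/2}G\|_p+\|G\|_p$. It therefore bounds the lower-order term by the very quantity you are estimating, with a constant that is not small, so the argument is circular. The interpolation $\|D^jG\|_p\lesssim\|D^kG\|_p+\|G\|_p$ is true, but it is a consequence of Meyer's inequality, not an input to it.

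There are two standard repairs. Write $J_n$ for the $n$-th chaos projection and $T_\phi=\sum_n\phi(n)J_n$.

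(a) Prove the reverse bound directly at level $k$ by duality, as you did for $k=1$. We have $\mathbb{E}\langle D^kG,D^kF\rangle=\mathbb{E}[G\,T_\phi F]$ with $\phi(n)=n(n-1)\cdots(n-k+1)$. Set $\psi(n)=n^{k/2}/\phi(n)$ for $n\ge k$ and $\psi(n)=0$ otherwise. Then, for $F\in L^{p'}$,
\[
\mathbb{E}[(-L)^{k/2}G\,F]=\mathbb{E}\langle D^kG,D^kT_\psi F\rangle+\mathbb{E}[J_{<k}(-L)^{k/2}G\,F].
\]
By the level-$k$ upper bound in $L^{p'}$, $\|D^kT_\psi F\|_{p'}\lesssim\|T_{\psi'}F\|_{p'}$, where $\psi'(n)=\prod_{j<k}(1-j/n)^{-1}$ is a Meyer multiplier. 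The finitely many low chaoses are controlled by the $L^p$-boundedness of each $J_n$, which follows from hypercontractivity and duality.

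(b) Alternatively, run your induction with the remainder $\|J_{<j}X\|_p$ in place of $\|X\|_p$. The remainders produced are then finite-chaos quantities, for example $\|DJ_{<k}G\|_p\lesssim\|(-L)^{1/2}J_{<k}G\|_p\lesssim\|G\|_p$, and nothing has to be absorbed.
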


We record the following corollary, cf.~\cite[Prop.~1.5.4]{nualart} in case~$k=1$; the case of general~$k \geq 1$ is well-known.
\begin{corollary}[Continuity of iterated divergences] \label{coro:continuity_div}
	For any~$q > 1$ and~$k \in \N$, the operator $\delta^k$ maps $\DD^{k,p} (H^{\symotimes k})$ continuously into $L^p(\Omega)$, that is:
		\begin{equation} \label{e:continuity_div}
			\norm[0]{\delta ^k(v)}_{L^p(\Omega)} 
			\lesssim_p
			\sum_{j=0}^k  \norm[0]{D^jv}_{L^p(\Omega; H^{\symotimes j})} \,.
		\end{equation}
\end{corollary}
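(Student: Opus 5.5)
We want to show Corollary~\ref{coro:continuity_div}: $\delta^k$ maps $\DD^{k,p}(H^{\symotimes k})$ continuously into $L^p(\Omega)$ for $p>1$. The plan is a duality argument combined with Meyer's inequality (Proposition~\ref{p:meyer_inequality}), with the Malliavin derivatives moved from the test variable onto $v$ by commuting $D^k$ past the operators $(-L)^{-k/2}$.

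\textbf{Step 1 (dual formulation).} Let $q=p/(p-1)$ be the conjugate exponent. By density, it suffices to bound $\E[G\,\delta^k(v)]$ uniformly over smooth cylindrical $G$ with $\norm{G}_{L^q}\le 1$. We may subtract the finite-chaos part of $G$ of order $<k$, which is orthogonal to $\delta^k(v)$. Indeed, $\E[G\delta^k(v)]=\E[\scal{D^kG,v}]$ vanishes when $G$ lies in chaoses of order below $k$. The projection $J_{<k}$ onto these chaoses is bounded on $L^q$, by hypercontractivity on finite chaos. Hence we may assume $G=\sum_{n\ge k}I_n(g_n)$.

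\textbf{Step 2 (change of variables and commutation).} Write $G=(-L)^{k/2}F$ with $F=(-L)^{-k/2}G$. Then
\begin{equation*}
\E[G\delta^k(v)]=\E\big[\scal{D^kG,v}\big]=\E\big[\scal{D^k(-L)^{k/2}F,v}\big].
\end{equation*}
Using the commutation relation $D(-L)^{1/2}=(-L+1)^{1/2}D$, iterated $k$ times, we get
\begin{equation*}
D^k(-L)^{k/2}F = R_k\, (-L)^{k/2}D^kF',
\end{equation*}
where $R_k$ is a Meyer multiplier acting on $H^{\otimes k}$-valued functionals. It is built from ratios like $((n+k)/n)^{1/2}$ on each chaos, and it is bounded on $L^r$ for all $r>1$ by Meyer's multiplier theorem.

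\textbf{Step 3 (transfer by self-adjointness).} Since $(-L)^{k/2}$ and $R_k$ are self-adjoint, we move them onto $v$ and obtain
\begin{equation*}
\abs{\E[G\delta^k(v)]}\lesssim \norm{D^k(-L)^{-k/2}G}_{L^q}\,\norm{R_k^*(-L)^{k/2}v}_{L^p}.
\end{equation*}
The first factor is bounded by $\norm{G}_{L^q}$ using the left inequality in \eqref{e:meyer}. The second factor is bounded by $\norm{v}_{L^p}+\norm{D^kv}_{L^p}$ using the right inequality in \eqref{e:meyer}, which applies verbatim to $H^{\otimes k}$-valued variables. Taking the supremum over $G$ yields \eqref{e:continuity_div}, and the intermediate derivatives $j\in\llbracket1,k-1\rrbracket$ appear harmlessly on the right-hand side.

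\textbf{Main obstacle.} The hard part is the commutation in Step~2: making the multiplier $R_k$ precise and justifying its $L^p$-boundedness. This requires Meyer's multiplier theorem, which is not stated earlier in the paper. If that route proves cumbersome, the fallback is induction on $k$ using the case $k=1$ (Nualart, Prop.~1.5.4) together with the Heisenberg-type identity
\begin{equation*}
D\delta(u)=u+\delta(Du).
\end{equation*}
With it, we write $\delta^k(v)=\delta(\delta^{k-1}(v))$, where $\delta^{k-1}$ acts on the last $k-1$ tensor components. We then bound
\begin{equation*}
\norm{\delta^{k-1}v}_{\DD^{1,p}(H)}\lesssim \sum_{j\le k}\norm{D^jv}_{L^p}.
\end{equation*}
This follows by commuting $D$ past $\delta^{k-1}$, which produces lower-order divergences of derivatives of $v$, and then applying the induction hypothesis.
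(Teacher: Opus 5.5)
The paper gives no proof of this corollary. It cites \cite[Prop.~1.5.4]{nualart} for $k=1$ and calls the general case well known. The route that phrase points to is essentially your fallback: induction on $k$ from the case $k=1$, using Lemma~\ref{lem:commutation_D_delta} with a single derivative, schematically $D\delta^{k-1}(v)=\delta^{k-1}(Dv)+(k-1)\delta^{k-2}(v)$. That induction naturally produces all the intermediate norms $\|D^jv\|_{L^p}$ appearing in \eqref{e:continuity_div}.

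Your main argument is a genuinely different, non-inductive proof. It combines duality against $G\in L^q$, the intertwining $D^k(-L)^{k/2}=(-L+k)^{k/2}D^k$, and Proposition~\ref{p:meyer_inequality} on both sides of the pairing. It buys the sharper bound $\|\delta^k(v)\|_{L^p}\lesssim\|v\|_{L^p}+\|D^kv\|_{L^p}$ in one stroke. The price is Meyer's multiplier theorem on $L^p(\Omega;H^{\otimes k})$ to make $R_k$ bounded, an ingredient the paper never states. This is legitimate: on chaos $m$ the symbol of $R_k$ is $(1+k/m)^{k/2}$, which is analytic in $1/m$ for $m>k$.

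One step needs repair. The identity $D^k(-L)^{k/2}F=R_k(-L)^{k/2}D^kF$ fails on the $k$-th chaos of $F$. For $F=I_k(f_k)$, $D^kF=k!\,f_k$ is deterministic and annihilated by $(-L)^{k/2}$, whereas the left-hand side equals $k^{k/2}k!\,f_k\neq0$. Correspondingly, the symbol $((m+k)/m)^{k/2}$ of $R_k$ is undefined at $m=0$.

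Since Step~1 only removes chaoses of order $<k$, you must also split off $J_kG$. Its contribution is $\langle D^kJ_kG,\mathbb{E}[v]\rangle_{H^{\otimes k}}$. This is bounded by $\|G\|_{L^q}\|v\|_{L^p}$, because $J_k$ is bounded on $L^q$ and all $L^r$-norms are equivalent on a fixed chaos. Alternatively, factor through $(1-L)^{k/2}$, whose multiplier $((m+k)/(m+1))^{k/2}$ is harmless for every $m\ge0$, and use the $(1-L)$-form of Meyer's inequality.

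With this fix, and reading $F'$ as $F$ in Step~2, Steps~2--3 go through, including the reduction to nice $v$ by density.
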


The following statement is the content of~\cite[Lem.~B.13]{furlan_gubinelli_19}, see also~\cite[Identity~(7c)]{nualart_zakai_89}.
\begin{lemma}\label{lem:commutation_D_delta}
	Let~$j,k \in \N$ and assume that~$u \in \mathbb{D}^{k+j,2}(H^{\otimes j})$ symmetric and such that all of its derivatives are symmetric. 
	Then, the following identity holds:
	\begin{equation*}
		D^k \delta^j(u) = \sum_{\ell = 0}^{k \wedge j} \binom{k}{\ell} \binom{j}{\ell} \ell! \, \delta^{j-\ell} D^{k-\ell} u \,.
	\end{equation*}
	In the previous formula, the iterated divergence~$\delta^{j-\ell}$ acts on the original variables of~$u$, not on those created by the derivatives~$D^{k-\ell}$.
\end{lemma}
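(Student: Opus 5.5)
We prove the identity of Lemma~\ref{lem:commutation_D_delta} by a double induction, first on~$k$ with~$j$ fixed and then on~$j$, starting from the elementary Heisenberg commutation relation
\begin{equation*}
	D\delta(u) = u + \delta(Du),
\end{equation*}
valid for~$u \in \DD^{2,2}(H)$. Here~$\delta$ acts on the original $H$-variable of~$u$ and not on the one created by~$D$. We check this relation first for smooth cylindrical elementary processes~$u = G h$ with~$G$ smooth cylindrical and~$h \in H$. In that case~$\delta(Gh) = G W(h) - \scal{DG,h}$, and differentiating gives
\begin{equation*}
	D\delta(Gh) = Gh + W(h)DG - D\scal{DG,h} = u + \delta(Du).
\end{equation*}
We then extend to~$\DD^{2,2}(H)$ by density, using the continuity of~$\delta$ on~$\DD^{1,2}$ from Corollary~\ref{coro:continuity_div} with~$p=2$ and the closability of~$D$.

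\textbf{Step 1 (the case $k=1$, general $j$).} We show
\begin{equation*}
	D\delta^j(u) = j\,\delta^{j-1}(u) + \delta^j(Du)
\end{equation*}
by induction on~$j$. We write~$\delta^j(u) = \delta(\delta^{j-1}(u))$, where the inner divergence acts on~$j-1$ of the tensor slots. Applying the Heisenberg relation to the outer~$\delta$ produces~$\delta^{j-1}(u)$, read with one slot contracted against the free derivative direction, plus~$\delta(D\delta^{j-1}u)$. The induction hypothesis applied to~$D\delta^{j-1}u$ then yields~$(j-1)\delta^{j-1}(u) + \delta^j(Du)$, so the two terms combine to~$j\,\delta^{j-1}(u) + \delta^j(Du)$.

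The symmetry of~$u$ and of its derivatives is exactly what we need here. It guarantees that the "slot contracted with the derivative direction" contributions are all equal, independent of which of the~$j$ slots is involved. This gives the factor~$j$, and it lets us identify the resulting elements of~$H^{\otimes(j-1)}\otimes H$ with the ordering convention stated in the lemma.

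\textbf{Step 2 (induction on $k$).} We assume the formula for~$k$ and apply~$D$ to each term~$\delta^{j-\ell}D^{k-\ell}u$ using Step~1. This gives
\begin{equation*}
	D\,\delta^{j-\ell}D^{k-\ell}u = (j-\ell)\,\delta^{j-\ell-1}D^{k-\ell}u + \delta^{j-\ell}D^{k+1-\ell}u.
\end{equation*}
Collecting the coefficient of~$\delta^{j-\ell}D^{k+1-\ell}u$ produces
\begin{equation*}
	\binom{k}{\ell}\binom{j}{\ell}\ell! + \binom{k}{\ell-1}\binom{j}{\ell-1}(\ell-1)!\,(j-\ell+1).
\end{equation*}
Since~$\binom{j}{\ell-1}(j-\ell+1) = \binom{j}{\ell}\ell$, this equals~$\binom{j}{\ell}\ell!\bigl[\binom{k}{\ell}+\binom{k}{\ell-1}\bigr] = \binom{k+1}{\ell}\binom{j}{\ell}\ell!$, by Pascal's rule. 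The range of summation~$\ell \le (k+1)\wedge j$ comes out correctly, because terms with~$\ell>j$ vanish through the factor~$(j-\ell+1)$.

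As with Step~1, the identification of terms relies on symmetry: the newly created derivative slot can be placed among the existing ones without changing the element, and this requires the hypothesis that all derivatives of~$u$ are symmetric.

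\textbf{Main obstacle.} The main difficulty is not the combinatorics but the bookkeeping of which tensor slots each operator acts on, together with the functional-analytic justification. For the latter we need~$u \in \DD^{k+j,2}$, so that every intermediate object~$\delta^{j-\ell}D^{k-\ell}u$ lies in the domain of~$D$. We would prove the identity first for smooth cylindrical symmetric~$u$, where all manipulations are algebraic, and then pass to the limit. For that limit we use Meyer's inequality (Proposition~\ref{p:meyer_inequality}) and Corollary~\ref{coro:continuity_div} with~$p=2$, which give~$L^2$-continuity of both sides in the~$\DD^{k+j,2}$ norm.
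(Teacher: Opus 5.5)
The paper gives no proof of this lemma; it only cites Furlan--Gubinelli (Lem.~B.13) and Nualart--Zakai (Identity~(7c)). Your algebraic core is the standard argument and is correct. It consists of three pieces:
\begin{itemize}
\item the Heisenberg relation $D\delta(v)=v+\delta(Dv)$;
\item the induction on $j$ giving $D\delta^j(u)=j\,\delta^{j-1}(u)+\delta^j(Du)$;
\item the induction on $k$ via $\binom{j}{\ell-1}(j-\ell+1)=\ell\binom{j}{\ell}$ and Pascal's rule.
\end{itemize}
Step~1 actually uses only the symmetry of $u$ in its $j$ slots. It must, however, be applied in its Hilbert-space-valued form, because $\delta^{j-1}u$ and $D^{k-\ell}u$ carry extra parameter slots. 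Also, the freed slot of $\delta^{j-1}(u)$ \emph{becomes} the new derivative direction; nothing is contracted against it.

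The step that does not work as written is the final density argument. A general smooth cylindrical $u$, even if symmetric in its $j$ slots, does not have symmetric derivatives, and for such $u$ the identity in the unsymmetrised form of the lemma is false. Take $j=1$, $k=2$, $u=W(g)h$ with $g\perp h$, both nonzero. Then $\delta(u)=W(g)W(h)$, so $D^2\delta(u)=g\otimes h+h\otimes g$. The right-hand side of the lemma is $\delta(D^2u)+2Du=2\,h\otimes g$ (or $2\,g\otimes h$, depending on the slot convention). In general $D^2_{s,r}\delta(u)=D_ru_s+D_su_r+\delta\bigl(D^2_{s,r}u\bigr)$, which matches the lemma only if $D_ru_s=D_su_r$. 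This is exactly where your Step~2 identification needs the hypothesis, so you must say which approximants you use.

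There are two easy repairs.
\begin{itemize}
\item[(a)] Approximate inside the class. Symmetry of $Du$ forces every chaos kernel of $u$ to be symmetric in all $q+j$ variables. Chaos truncation followed by a symmetrised finite-rank approximation of the kernels therefore gives polynomial functionals with fully symmetric derivatives, converging in $\DD^{k+j,2}$.
\item[(b)] Prove, for arbitrary smooth $u$ symmetric in its $j$ slots, the identity with each right-hand term symmetrised over its $k$ free slots. The left side is automatically symmetric, and your induction goes through verbatim after symmetrising. Then pass to the limit and note that, under the hypothesis, the symmetrisation does nothing.
\end{itemize}
For the limit itself you only need continuity of the right-hand side (Corollary~\ref{coro:continuity_div}, vector-valued, $p=2$). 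Combine this with closability of $D$, applied to the identity for every $k'\le k$. Continuity of $D^k\delta^j$ on $\DD^{k+j,2}$ is then a consequence rather than an input.
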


\subsection{The Hermite shift operator} \label{s:hermite_shift}

Let~$\gamma \coloneqq \CN(0,1)$ and recall that the Hermite polynomials~$(H_k)_{k \in \N_0}$ form a complete orthonormal basis of~$L^2(\gamma) \coloneqq L^2(\R,\gamma)$.

\begin{definition}[Hermite rank] \label{d:hermite_rank}
	Let~$g \in L^2(\gamma)$ with Hermite decomposition
	\begin{equation} \label{e:hermite_decomposition}
		g = \sum_{q = d}^\infty c_q H_q
	\end{equation}
	The parameter
	\begin{equation*}
		d \coloneqq \inf\cbr[0]{q \in \N_0: \ c_q \neq 0} \in \N 
	\end{equation*} 
	is called the \emph{Hermite rank}~$d \in \N$ of~$g$, that is~$c_0 = \ldots = c_{d-1} = 0$.
\end{definition}

\begin{definition}[Hermite shift] \label{d:hermite_shift}
	Let~$g \in L^2(\gamma)$ with Hermite decomposition~\eqref{e:hermite_decomposition}. 
	For any~$n \geq 1$, we introduce the \emph{$n$-th Hermite shift}~$\CS_n$ as follows:
	\begin{equation*}
		\CS_n: L^2(\gamma) \to L^2(\gamma), \quad 
		\CS_n g 
		\coloneqq 
		\sum_{q \geq n} c_q H_{q-n} \,.
	\end{equation*}
\end{definition}

The following lemma provides a representation of the Hermite shift in terms of the Mallavin derivative~$D$ and the number opeator~$(-L)$.
Its proof can be found in~\cite[Lem.~2.1]{nourdin_nualart_20} when the Hermite rank is \emph{exactly}~$d$, but the adaptation is straightforward.
\begin{lemma} \label{l:hermite_shift_representation}
	Let $g \in L^{2}(\gamma)$ with Hermite expansion~\eqref{e:hermite_decomposition} and Hermite rank at least $d \geq 1$. 
	Then, for any $h \in H$ with~$\norm[0]{h}_{H} = 1$, the following identities hold:
	\begin{align*} 
		g(W(h)) & = \delta^{d}\del[1]{\CS_{d} g(W(h)) h^{\otimes d}} \\[0.5em]
		\sbr[0]{\CS_{d} g}(W(h)) h^{\otimes d} & = (D (-L)^{-1})^{d} (g(W(h))) \\[0.5em]
		\sbr[0]{\CS_{d} g}(W(h)) & = \scal{(D (-L)^{-1})^{d} (g(W(h))),h^{\otimes d}}_{H^{\symotimes d}}
	\end{align*}
\end{lemma}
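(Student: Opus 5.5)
The plan is to argue chaos by chaos: every operator in the three identities acts diagonally on the Hermite basis, so we check each identity on a single Hermite polynomial and then extend by linearity and $L^2$-continuity. Fix $h\in H$ with $\norm[0]{h}_H=1$ and write $g=\sum_{q\ge d}c_qH_q$. Since $W(h)\sim\gamma$, the definition of the Wiener--It\^{o} isometry gives $g(W(h))=\sum_{q\ge d}c_q I_q(h^{\otimes q})$, with convergence in $L^2(\Omega)$ because $\norm[0]{I_q(h^{\otimes q})}_{L^2}^2=q!$.

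For the second identity, recall that on the $q$-th chaos $(-L)^{-1}$ acts as multiplication by $q^{-1}$ (for $q\ge1$), while $D I_q(h^{\otimes q}) = qI_{q-1}(h^{\otimes(q-1)})\,h$. Hence
\begin{equation*}
D(-L)^{-1}I_q(h^{\otimes q}) = I_{q-1}(h^{\otimes (q-1)})\,h .
\end{equation*}
Iterating $d$ times (with $D(-L)^{-1}$ acting componentwise on $H^{\otimes j}$-valued variables) and using $q\ge d$, so that no chaos index drops below zero, we obtain $(D(-L)^{-1})^d I_q(h^{\otimes q})=I_{q-d}(h^{\otimes(q-d)})\,h^{\otimes d} = H_{q-d}(W(h))\,h^{\otimes d}$. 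Summing against $c_q$ yields $[\CS_d g](W(h))\,h^{\otimes d}$.

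To justify the interchange of the infinite sum with the unbounded operators, we use that $D(-L)^{-1}$ is bounded from $L^2(\Omega)$ into $L^2(\Omega;H)$. Indeed, $\norm[0]{D(-L)^{-1}I_q(f)}^2=q!\,q^{-1}\norm[0]{f}^2\le\norm[0]{I_q(f)}^2$, and the images of distinct chaoses are orthogonal. The partial sums therefore converge, and the identity passes to the limit. This truncation/closedness step is the only mildly delicate point. The third identity then follows by pairing with $h^{\otimes d}$ and using $\scal{h^{\otimes d},h^{\otimes d}}=1$.

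For the first identity, note that $\CS_d g\in L^2(\gamma)$ because its coefficients are $c_q$ with $\norm[0]{H_{q-d}}^2=(q-d)!\le q!$. Consider the partial sums $u_M=\sum_{q=d}^M c_q H_{q-d}(W(h))\,h^{\otimes d}$. The standard product rule $\delta^d(F h^{\otimes d})$ for $F=I_{q-d}(h^{\otimes(q-d)})$ lying in a single chaos in the direction $h$ gives $\delta^d(I_{q-d}(h^{\otimes (q-d)})h^{\otimes d})=I_q(h^{\otimes q})$. This can be checked by duality: for test variables $G$, $\E[\scal{D^dG,h^{\otimes d}}\,I_{q-d}]$ only sees the $q$-th chaos of $G$ and equals $\E[G\,I_q(h^{\otimes q})]$ via $D^dI_q(f)=\frac{q!}{(q-d)!}I_{q-d}(f)$ contracted. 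Hence $\delta^d(u_M)=\sum_{q\le M}c_qI_q(h^{\otimes q})\to g(W(h))$ in $L^2$.

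Since $u_M\to[\CS_dg](W(h))\,h^{\otimes d}$ in $L^2(\Omega;H^{\otimes d})$ and $\delta^d$ is closed (as an adjoint), the limit lies in $\Dom\delta^d$ and the first identity holds. Alternatively, it follows directly from the second identity combined with the relation $\delta^d(D(-L)^{-1})^d G = G-\E G$ for centred $G$ of rank at least $d$, which can be checked in the same chaos-by-chaos manner. The Hermite rank $\ge d$ assumption is used exactly to ensure that no lower chaoses, which would be annihilated, are present.
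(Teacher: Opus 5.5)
Your chaos-by-chaos verification is correct and is essentially the argument of Lemma~2.1 in Nourdin and Nualart, which the paper cites instead of giving its own proof. On each chaos, $D(-L)^{-1}$ lowers $I_q(h^{\otimes q})$ to $I_{q-1}(h^{\otimes (q-1)})\,h$, and $\delta^d$ raises $I_{q-d}(h^{\otimes (q-d)})\,h^{\otimes d}$ back to $I_q(h^{\otimes q})$; your handling of the infinite sums, via the $L^2$-boundedness of $D(-L)^{-1}$ and the closedness of $\delta^d$, makes the passage from rank exactly $d$ to rank at least $d$ immediate, as the paper asserts.
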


The following \emph{regularisation property} of the Hermite shift is crucial for our analysis. 
\begin{proposition} \label{p:estimate_hermite_shift}
	Let~$j \in \N_0$ and~$p > 1$. Then, for any~$n \in \N$, the operator~$\CS_n$ is continuous from~$\DD^{j,p}(\gamma)$ to~$\DD^{j+n,p}(\gamma)$, that is:
	\begin{equation} \label{e:estimate_shift:0}
		\norm[0]{\CS_n g}_{j+n,p} \lesssim_{j,n,p} \norm[0]{g}_{j,p} \,.
	\end{equation}
	In particular, for any~$k,n \in \N_0$, we have the estimate
	\begin{equation} \label{e:estimate_shift}
		\norm[0]{\CS_n g}_{k,p} \lesssim_{k,n,p} \norm[0]{g}_{(k-n) \vee 0,p} \,.
	\end{equation}
\end{proposition}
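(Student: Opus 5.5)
The plan is to prove \eqref{e:estimate_shift:0} by induction on~$n$, reducing to the case~$n=1$, and to deduce \eqref{e:estimate_shift} from it. The key observation is the representation in Lemma~\ref{l:hermite_shift_representation}. In one dimension (Remark~\ref{rmk:definitions_1D}) it says that, for~$g$ with Hermite rank at least~$1$,
\begin{equation*}
	\CS_1 g = D(-L)^{-1} g = \del[1]{(-L)^{-1} g}' \,.
\end{equation*}
One checks this on Hermite polynomials: $H_q' = q H_{q-1}$ and $(-L)^{-1} H_q = q^{-1} H_q$. Since $\CS_n = \CS_1^{\,n}$ on the relevant functions, iterating the case~$n=1$ gives the general case. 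The step is legitimate because $\CS_1 g$ need not have rank~$\geq 1$, but $\CS_1$ only discards the zeroth coefficient. We therefore first replace $g$ by $g - \E_\gamma[g]$; this leaves $\CS_n g$ unchanged for~$n \geq 1$ and satisfies $\norm{g-\E g}_{j,p} \lesssim \norm{g}_{j,p}$.

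\textbf{Case $n=1$.} Set $G \coloneqq (-L)^{-1} g$ with $g$ centred. We need $\norm{G'}_{j+1,p} \lesssim \norm{g}_{j,p}$, that is, control of $\norm{D^{l} G}_{L^p}$ for $1 \leq l \leq j+2$. By Meyer's inequality (Proposition~\ref{p:meyer_inequality}),
\begin{equation*}
	\norm{D^{l} G}_{L^p} \lesssim \norm{(-L)^{l/2} G}_{L^p} = \norm{(-L)^{l/2-1} g}_{L^p} \,.
\end{equation*}
For $l \geq 2$, write $(-L)^{l/2-1} g = (-L)^{(l-2)/2} g$, where $l-2 \leq j$. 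Applying the upper bound in \eqref{e:meyer} gives
\begin{equation*}
	\norm{(-L)^{(l-2)/2} g}_{L^p} \lesssim \norm{D^{l-2} g}_{L^p} + \norm{g}_{L^p} \leq \norm{g}_{j,p} \,.
\end{equation*}
For $l=1$, and for the $L^p$ norm of $G'$ itself, we need that $(-L)^{-1/2}$ and $(-L)^{-1}$ are bounded on centred $L^p$. This is the standard consequence of Meyer's multiplier theorem, or of the representation $(-L)^{-s} = \Gamma(s)^{-1}\int_0^\infty t^{s-1} P_t \dif t$ combined with the exponential $L^p$-decay $\norm{P_t g}_{L^p} \lesssim e^{-ct}\norm{g}_{L^p}$ for centred $g$, $p>1$. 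Summing over~$l$ yields the claim; the case $l=0$ reduces to $\norm{G'}_{L^p} = \norm{DG}_{L^p}$, which is covered above.

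The \textbf{main obstacle} is bookkeeping the fractional powers. The case $l-2$ odd requires Meyer's inequality for half-integer powers. The estimate \eqref{e:meyer} as stated covers $(-L)^{k/2}$ for integer~$k$, which is exactly what arises here, so this is fine. The boundedness of negative powers on centred $L^p$ is the one ingredient not stated earlier, and I would cite Nualart's Prop.~1.5.x (Meyer multipliers) for it. Also, $p>1$ is essential throughout, since Meyer fails at $p=1$.

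\textbf{Second estimate.} For \eqref{e:estimate_shift}: if $k \geq n$, apply \eqref{e:estimate_shift:0} with $j = k-n$. If $k < n$, write $\CS_n g = \CS_{n-k}(\CS_k g)$ and use the trivial monotonicity $\norm{\cdot}_{k,p} \leq \norm{\cdot}_{n,p}$ together with \eqref{e:estimate_shift:0} for $j=0$:
\begin{equation*}
	\norm{\CS_n g}_{k,p} \leq \norm{\CS_n g}_{n,p} \lesssim \norm{g}_{0,p} \,.
\end{equation*}
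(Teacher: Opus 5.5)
Your argument is correct, but it is not the paper's route. The paper proves nothing directly:
\begin{itemize}
\item it takes \eqref{e:estimate_shift:0} from \cite[Lem.~2.3]{nualart_zhou_21} (see also \cite{nourdin_nualart_peccati_21});
\item it deduces the case $k \geq n$ of \eqref{e:estimate_shift} exactly as you do, with $j = k-n$;
\item it cites a separate estimate, \cite[Eq.~(2.7)]{campese_nourdin_nualart_20}, for the case $k < n$.
\end{itemize}

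You instead rebuild \eqref{e:estimate_shift:0} from three ingredients: the identity $\CS_1 = D(-L)^{-1}$, Meyer's inequality applied to $G = (-L)^{-1} g$, and the $L^p$-boundedness of $(-L)^{-1/2}$ on centred functions. The identity actually holds on all of $L^2(\gamma)$, because the pseudo-inverse already discards the constant; your centring step is therefore harmless but optional. This is essentially the mechanism behind the cited lemma, and it makes explicit where $p>1$ is used, namely in Meyer's inequality and in the $L^p$-decay of $P_t$ on centred functions.

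Your treatment of $k<n$ is a genuine simplification. Since $j=0$ is admissible in \eqref{e:estimate_shift:0}, the chain $\norm{\CS_n g}_{k,p} \leq \norm{\CS_n g}_{n,p} \lesssim \norm{g}_{0,p}$ makes the extra reference unnecessary. The factorisation $\CS_n g = \CS_{n-k}(\CS_k g)$ you write down is never actually used.

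The one point you gloss over is that Meyer's inequality is applied to $G$ before you know $G \in \DD^{l,p}$. The standard fix is to prove the bound first for polynomial $g$ (finite chaos) and then extend by density. The limit is identified through convergence of the Hermite coefficients, which follows from $L^p$-convergence for $p>1$.
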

\begin{proof}
	The estimate in~\eqref{e:estimate_shift:0} is shown in~\cite[Lem.~2.3]{nualart_zhou_21}, see also~\cite[bottom of p.~6]{nourdin_nualart_peccati_21}.
	If~$k \geq n$, the estimate in~\eqref{e:estimate_shift} follows immediately from~\eqref{e:estimate_shift:0} upon choosing~$j = k - n$.
	The case~$k < n$ is a consequence of the estimate in~\cite[Eq.~(2.7)]{campese_nourdin_nualart_20}.
\end{proof}

The following lemma is inspired by the strategy presented in~\cite{nourdin_nualart_peccati_21}. 

\begin{lemma} \label{lem:OU_regularisation}
	Let $p > 1$, consider~$h \in L^p(\gamma)$, and let~$h^{[r]} \coloneqq P_{1/r} h$ where~$P_t$ denotes the OU semigroup, see Definition~\ref{d:OU_operator}.
	Then, for~$n \in \N$, we have:
	\begin{enumerate}[label=(\roman*)]
		\item \label{lem:OU_regularisation:i} $h^{[r]} \in \mathbb{D}^{k,p}(\gamma)$ for any~$k \geq 1$.
		\item \label{lem:OU_regularisation:ii} If~$h \in \mathbb{D}^{n,p}(\gamma)$, then~$h^{[r]} \to h$ in~$\mathbb{D}^{n,p}(\gamma)$ as~$r \to \infty$. 
		\item \label{lem:OU_regularisation:iii} 
		If~$h \in \mathbb{D}^{n,p}(\gamma)$, then we have~$ \CS_n h^{[r]} \to \CS_n h$ in~$\mathbb{D}^{2n,p}(\gamma)$ as~$r \to \infty$.
	\end{enumerate}
\end{lemma}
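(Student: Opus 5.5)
Throughout, we work with the Mehler representation of the OU semigroup on $(\R,\gamma)$,
\begin{equation*}
	(P_t h)(x) = \int_\R h\big(e^{-t}x + \sqrt{1-e^{-2t}}\,y\big)\,\gamma(\dif y),
\end{equation*}
and use that $(P_t)_{t \geq 0}$ is a contraction semigroup on every $L^p(\gamma)$, $p \geq 1$, which is strongly continuous for $p<\infty$.

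For \ref{lem:OU_regularisation:i}, we differentiate under the integral after the change of variables $z = e^{-t}x + \sqrt{1-e^{-2t}}y$. This gives the Bismut-type formula
\begin{equation*}
	(P_t h)^{(k)}(x) = \Big(\tfrac{e^{-t}}{\sqrt{1-e^{-2t}}}\Big)^{k} \int_\R h\big(e^{-t}x+\sqrt{1-e^{-2t}}y\big)\, H_k(y)\,\gamma(\dif y).
\end{equation*}
By Hölder's inequality in $y$ and Jensen's inequality, it follows that $\norm{(P_th)^{(k)}}_{L^p(\gamma)} \lesssim_{k,p,t} \norm{h}_{L^p(\gamma)}$. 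The point is that the map $(x,y)\mapsto e^{-t}x+\sqrt{1-e^{-2t}}y$ pushes $\gamma\otimes\gamma$ forward to $\gamma$, and $H_k \in L^{p'}(\gamma)$. This estimate, together with a standard approximation argument (approximate $h$ in $L^p$ by smooth functions, which are dense, and close the graph of $D$), yields $h^{[r]} \in \DD^{k,p}(\gamma)$ for every $k$.

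For \ref{lem:OU_regularisation:ii}, we use the commutation relation $D P_t = e^{-t} P_t D$, which holds on $\DD^{1,p}$ and can be read off from Mehler's formula. Iterating it, $(P_t h)^{(j)} = e^{-jt} P_t h^{(j)}$ for $j \le n$. Hence
\begin{equation*}
	\norm{(h^{[r]})^{(j)} - h^{(j)}}_{L^p} \le \norm{P_{1/r}h^{(j)} - h^{(j)}}_{L^p} + (1-e^{-j/r})\norm{h^{(j)}}_{L^p}.
\end{equation*}
Both terms tend to zero as $r\to\infty$ by strong continuity of $P_t$ on $L^p(\gamma)$. Strong continuity itself follows from Mehler's formula, dominated convergence for continuous bounded $h$, and density combined with contractivity.

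For \ref{lem:OU_regularisation:iii}, we apply Proposition~\ref{p:estimate_hermite_shift} with $j=n$, so that $\CS_n$ is continuous from $\DD^{n,p}(\gamma)$ to $\DD^{2n,p}(\gamma)$. By linearity,
\begin{equation*}
	\norm{\CS_n h^{[r]} - \CS_n h}_{2n,p} \lesssim \norm{h^{[r]}-h}_{n,p},
\end{equation*}
and the right-hand side tends to zero by \ref{lem:OU_regularisation:ii}. One point needs care here: Proposition~\ref{p:estimate_hermite_shift} is stated on $\DD^{j,p}$, so we must make sure that $\CS_n$ is applied to elements of that space. For $h^{[r]}$ this is guaranteed by \ref{lem:OU_regularisation:i}, and for $h$ it is the hypothesis.

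The only mildly delicate step is justifying the Bismut formula and the commutation relation for merely $L^p$ (respectively $\DD^{n,p}$) inputs. We would first prove both for $h \in C_c^\infty$ and then extend them by density, using the uniform $L^p$ bounds from part \ref{lem:OU_regularisation:i}.
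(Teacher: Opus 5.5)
Your proposal is correct. Part~\ref{lem:OU_regularisation:iii} is argued exactly as in the paper: continuity of $\mathcal{S}_n\colon \mathbb{D}^{n,p}(\gamma)\to\mathbb{D}^{2n,p}(\gamma)$ from Proposition~\ref{p:estimate_hermite_shift} with $j=n$, then linearity, then part~\ref{lem:OU_regularisation:ii}.

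The difference lies in parts~\ref{lem:OU_regularisation:i} and~\ref{lem:OU_regularisation:ii}. The paper does not prove these; it cites Nualart's lecture notes~\cite{Nualart_CBMS} (Propositions~3.8 and~3.7 there) for two facts:
\begin{itemize}
\item the smoothing bound $\|P_{1/r}h\|_{k,p}\lesssim_{k,p} r^k\|h\|_{L^p(\gamma)}$;
\item the convergence $P_{1/r}h\to h$ in $\mathbb{D}^{n,p}(\gamma)$.
\end{itemize}
You instead derive both directly in the one-dimensional setting from Mehler's formula. For~(i) you use the Bismut-type identity with Hermite weights. For~(ii) you use the intertwining relation $D^jP_t=e^{-jt}P_tD^j$ together with strong continuity of $P_t$ on $L^p(\gamma)$.

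The citation is shorter and holds on a general Wiener space. Your route buys three things:
\begin{itemize}
\item It is self-contained.
\item It gives the explicit constant $(e^{-t}/\sqrt{1-e^{-2t}})^k\|H_k\|_{L^{p'}(\gamma)}\sim t^{-k/2}$, i.e.\ a rate $r^{k/2}$ rather than $r^k$. This is irrelevant for the lemma.
\item It makes explicit how $P_t$ acts on $L^p(\gamma)$ for $1<p<2$, via Mehler's formula. There the chaos-expansion definition in Definition~\ref{d:OU_operator}, which is stated only for $L^2$, does not literally apply.
\end{itemize}
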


\begin{proof}
	By~\cite[Prop.~3.8]{Nualart_CBMS}, we have 
	\begin{equation*}
		\norm[0]{h^{[r]}}_{k,p} 
		= \norm[0]{P_{\nicefrac{1}{r}} h}_{k,p} 
		\lesssim_{k,p} r^k \norm[0]{h}_p 
	\end{equation*}
	which establishes~\ref{lem:OU_regularisation:i}. 
	For~\ref{lem:OU_regularisation:ii},~\cite[Prop.~3.7]{Nualart_CBMS} states that 
	\begin{equation*}
		\norm[0]{h^{[r]} - h}_{n,p} = \norm[0]{P_{\nicefrac{1}{r}} h - h}_{n,p} \to 0 \quad \text{as} \quad r \to \infty \,.
	\end{equation*}
	For~\ref{lem:OU_regularisation:iii}, we use Proposition~\ref{p:estimate_hermite_shift} to find 
	\begin{equation*}
		\norm[0]{\CS_n h^{[r]} - \CS_n h}_{2n,p} = \norm[0]{\CS_n\del[0]{h^{[r]} - h}}_{2n,p} 
		\lesssim \norm[0]{h^{[r]} - h}_{n,p}
	\end{equation*}
	and, since~$\norm[0]{\cdot}_{k,p} \leq \norm[0]{\cdot}_{2n,p}$ for any~$k \in \llbracket 0,2n \rrbracket$, the claim follows from~\ref{lem:OU_regularisation:ii}. 
\end{proof}

\subsection{Diagram formula, regular, and irregular diagrams} \label{s:diagram_formulas}

The following definition is a slightly amended version of~\cite[pp.~431-432]{breuer_major_83}.
\begin{definition}[Diagrams] \label{d:diagrams}
	Let~$l \in \N$, ~$q_{1:l} \in \N^l$, and~$n \coloneqq q_{[1:l]}$.
	A (complete Feynman) \emph{diagram} of order~$q_{1:l}$ is an undirected graph~$G = (V(G),E(G))$ with vertex set~$V(G)$ of cardinality~$n$ % = \llbracket 1, n \rrbracket
	and edge set~$E(G)$ satisfying the following properties:
	\begin{enumerate}[label=(\roman*)]
		\item The set~$V(G)$ has the form 
			\begin{equation*}
				V(G) = \bigsqcup_{\ell=1}^l L_\ell, \quad L_\ell = \cbr[0]{(\ell,u): \ u \in \llbracket 1, q_\ell \rrbracket}, \quad \ell \in \llbracket 1,l \rrbracket
			\end{equation*}
		where~$L_j$ is called the $j$-th \emph{level} of the graph~$G$ and~$q_j$ its \emph{size}.
		\item Each vertex~$w \in V(G)$ is of degree~$1$, i.e. each vertex is connected to \emph{exactly} one other vertex.  
		\item Each edge~$w = \del[0]{(\ell_1,j_1), (\ell_2,j_2)}$ only passes between different levels, i.e.~$\ell_1 \neq \ell_2$
	\end{enumerate}
	One can summarise the properties of~$G$ by saying that the vertices of~$G$ are matched in pairs without self-intersections within the same level, see Figure~\ref{fig:diagrams} for a visual representation.
	
	We let~$\Gamma(q_{1:l})$ denote the set of all such graphs and, for~$G \in \Gamma(q_{1:l})$ and~$w \in E(V)$ with~$w = ((\ell_1,j_1),(\ell_2,j_2))$ such that~$\ell_1 < \ell_2$, we define the functions~$d_1(w)\coloneqq \ell_1$ and~$d_2(w)\coloneqq \ell_2$.  
\end{definition}

\begin{remark}
	Note that, if~$\Gamma(q_{1:l}) \neq \emptyset$, Definition~\ref{d:diagrams} automatically implies that $n = q_{[1:l]}$ is even.
\end{remark}

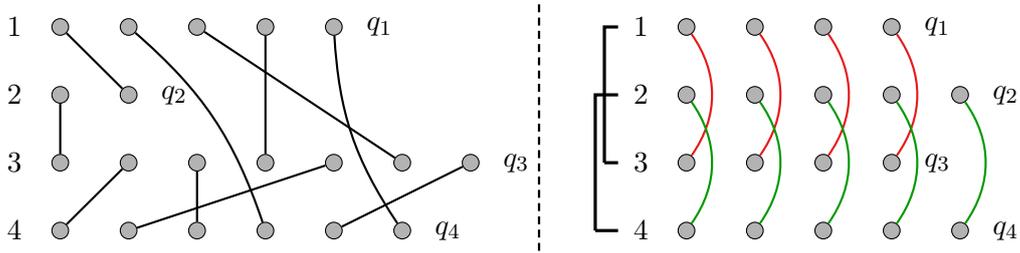
\begin{figure}[h]
	\centering
		\begin{subfigure}{.45\textwidth}
			%\begin{equs}
			\centering
			\begin{tikzpicture}[scale=0.3,baseline=0cm]
				%%%%%%%%
				\node at (4,3)  [] [label={[label distance=0.05cm]90:}] (1) {$1$};
				\node at (4,0)  [] [label={[label distance=0.05cm]90:}] 	(2) {$2$};
				\node at (4,-3)  [] [label={[label distance=0.05cm]90:}] 	(3) {$3$};
				\node at (4,-6)  [] [label={[label distance=0.05cm]90:}] 	(4) {$4$};
				%%%%%%%%
				\node at (6,3)  [xibig] [label={[label distance=0.05cm]90:}] (11) {};
				\node at (9,3)  [xibig] [label={[label distance=0.05cm]90:}] (12) {};
				\node at (12,3)  [xibig] [label={[label distance=0.05cm]90:}] (13) {};
				\node at (15,3)  [xibig] [label={[label distance=0.05cm]90:}] (14) {};
				\node at (18,3)  [xibig] [label={[label distance=0.05cm]90:}] (15) {};
				\node at (20,3)  [] [label={[label distance=0.05cm]90:}] (q1) {$q_1$};
				%%%%%
				\node at (6,0)  [xibig] [label={[label distance=0.05cm]-90:}] (21) {};
				\node at (9,0)  [xibig] [label={[label distance=0.05cm]-90:}] (22) {};
				\node at (11,0)  [] [label={[label distance=0.05cm]-90:}] (q2) {$q_2$};
				%%%%%
				\node at (6,-3)  [xibig] [label={[label distance=0.05cm]90:}] (31) {};
				\node at (9,-3)  [xibig] [label={[label distance=0.05cm]90:}] (32) {};
				\node at (12,-3)  [xibig] [label={[label distance=0.05cm]90:}] (33) {};
				\node at (15,-3)  [xibig] [label={[label distance=0.05cm]90:}] (34) {};
				\node at (18,-3)  [xibig] [label={[label distance=0.05cm]90:}] (35) {};
				\node at (21,-3)  [xibig] [label={[label distance=0.05cm]90:}] (36) {};
				\node at (24,-3)  [xibig] [label={[label distance=0.05cm]90:}] (37) {};
				\node at (26,-3)  [] [label={[label distance=0.05cm]90:}] (q3) {$q_3$};
				%%%%%
				\node at (6,-6)  [xibig] [label={[label distance=0.05cm]-90:}] (41) {};
				\node at (9,-6)  [xibig] [label={[label distance=0.05cm]-90:}] (42) {};
				\node at (12,-6)  [xibig] [label={[label distance=0.05cm]-90:}] (43) {};
				\node at (15,-6)  [xibig] [label={[label distance=0.05cm]-90:}] (44) {};
				\node at (18,-6)  [xibig] [label={[label distance=0.05cm]-90:}] (45) {};
				\node at (21,-6)  [xibig] [label={[label distance=0.05cm]-90:}] (46) {};
				\node at (23,-6)  [] [label={[label distance=0.05cm]-90:}] (q4) {$q_4$};
				%%%%%
				\draw[thick] (11) to (22);
				\draw[thick] (13) to (36);
				\draw[thick] (12) to [bend left=15] (44);
				\draw[thick] (21) to (31);
				\draw[thick] (32) to (41);
				\draw[thick] (14) to (34);
				\draw[thick] (15) to [bend right=15] (46);
				\draw[thick] (33) to (43);
				\draw[thick] (35) to (42);
				\draw[thick] (37) to (45);
				\draw[line width=0.3mm, densely dashed] (27,4) to (27,-7);
			\end{tikzpicture}\; 
			\subcaption{\emph{Irregular} diagram for~$q_{1:4} = (5,2,7,6)$ \phantom{together with the pairing of its levels.}}
			\label{subfig:irregular_diagram}
	\end{subfigure}
	\hspace{1.5em}
	\begin{subfigure}{.45\textwidth}
		%\begin{equs}
		\centering
		\begin{tikzpicture}[scale=0.3,baseline=0cm]
			%%%%%%%%
				\node at (4,3)  [] [label={[label distance=0.05cm]90:}] (1) {$1$};
				\node at (3,3) (L1) {};
				\node at (4,0)  [] [label={[label distance=0.05cm]90:}] 	(2) {$2$};
				\node at (4,-3)  [] [label={[label distance=0.05cm]90:}] 	(3) {$3$};
				\node at (3,-3) (L3) {};
				\node at (4,-6)  [] [label={[label distance=0.05cm]90:}] 	(4) {$4$};
				%%%%%%%%
				\node at (6,3)  [xibig] [label={[label distance=0.05cm]90:}] (11) {};
				\node at (9,3)  [xibig] [label={[label distance=0.05cm]90:}] (12) {};
				\node at (12,3)  [xibig] [label={[label distance=0.05cm]90:}] (13) {};
				\node at (15,3)  [xibig] [label={[label distance=0.05cm]90:}] (14) {};
				\node at (17,3)  [] [label={[label distance=0.05cm]90:}] (q1) {$q_1$};
				%%%%%
				\node at (6,0)  [xibig] [label={[label distance=0.05cm]-90:}] (21) {};
				\node at (9,0)  [xibig] [label={[label distance=0.05cm]-90:}] (22) {};
				\node at (12,0)  [xibig] [label={[label distance=0.05cm]-90:}] (23) {};
				\node at (15,0)  [xibig] [label={[label distance=0.05cm]-90:}] (24) {};
				\node at (18,0)  [xibig] [label={[label distance=0.05cm]-90:}] (25) {};
				\node at (20,0)  [] [label={[label distance=0.05cm]-90:}] (q2) {$q_2$};
				%%%%%
				\node at (6,-3)  [xibig] [label={[label distance=0.05cm]90:}] (31) {};
				\node at (9,-3)  [xibig] [label={[label distance=0.05cm]90:}] (32) {};
				\node at (12,-3)  [xibig] [label={[label distance=0.05cm]90:}] (33) {};
				\node at (15,-3)  [xibig] [label={[label distance=0.05cm]90:}] (34) {};
				\node at (17,-3)  [] [label={[label distance=0.05cm]90:}] (q3) {$q_3$};
				%%%%%
				\node at (6,-6)  [xibig] [label={[label distance=0.05cm]-90:}] (41) {};
				\node at (9,-6)  [xibig] [label={[label distance=0.05cm]-90:}] (42) {};
				\node at (12,-6)  [xibig] [label={[label distance=0.05cm]-90:}] (43) {};
				\node at (15,-6)  [xibig] [label={[label distance=0.05cm]-90:}] (44) {};
				\node at (18,-6)  [xibig] [label={[label distance=0.05cm]-90:}] (45) {};
				\node at (20,-6)  [] [label={[label distance=0.05cm]-90:}] (q4) {$q_4$};
			%%%%%
			\draw[thick, color=darkred] (11) to [bend left=35] (31);
			\draw[thick, color=darkred] (12) to [bend left=35] (32);
			\draw[thick, color=darkred] (13) to [bend left=35] (33);
			\draw[thick, color=darkred] (14) to [bend left=35] (34);
			%%%%%
			\draw[thick, color=darkgreen] (21) to [bend left=35] (41);
			\draw[thick, color=darkgreen] (22) to [bend left=35] (42);
			\draw[thick, color=darkgreen] (23) to [bend left=35] (43);
			\draw[thick, color=darkgreen] (24) to [bend left=35] (44);
			\draw[thick, color=darkgreen] (25) to [bend left=35] (45);
			%%%%%
			%pairing 1
			\draw[very thick] (3,3) -- (2.4,3) -- (2.4,-3);
  			\draw[very thick] (2.4,-3) -- (3,-3);
			%%%%%
			%pairing 2
			\draw[very thick] (3,0) -- (2,0) -- (2,-6);
  			\draw[very thick] (2,-6) -- (3,-6);
			%%%%%
		\end{tikzpicture}\; 
		\subcaption{\emph{Regular} diagram for~$q_{1:4} = (4,5,4,5)$ together with the pairing of its levels.} 
		\label{subfig:regular_diagram}
\end{subfigure}
\caption{\textbf{Visualisation of diagrams}. Note that, by relabelling the vertices within levels, a regular diagram~$G$ can always be represented as in Subfigure~\ref{subfig:regular_diagram} because this does not change~$\fC_G$ as introduced in~\eqref{e:diagram_formula:CG} below.}
\label{fig:diagrams}
\end{figure}

The previous definition is important because it allows us to compute the expectation of products of Hermite polynomials with Gaussian input via the so-called \emph{diagram formula}.

\begin{proposition}[Diagram formula] \label{p:diagram_formula}
	Let~$l \in \N_{\geq 2}$,~$q_{1:l} \in \N^l$, $\w = \w_{1:l} \in \llbracket 1,m \rrbracket^l$, and~$i_{1:l} \in \N^{l}$. For a centered, stationary Gaussian vector~$(X^{(\w_1)}_{i_1},\ldots,X^{(\w_l)}_{i_l})$ with covariance~$\E\sbr[0]{X^{(\w_k)}_{i_k} X^{(\w_\ell)}_{i_\ell}} = \rho_{\w_k,\w_\ell}(i_\ell - i_k)$ and~$\rho_{\w_k,\w_k}(0) = 1$ for any~$k,\ell \in \llbracket 1,l \rrbracket$, we have 
	\begin{equation} \label{e:diagram_formula}
		\E\sbr[4]{\prod_{k=1}^l H_{q_k}(X^{(\w_k)}_{i_k})}  
		= \sum_{G \in \Gamma(q_{1:l})} \fC_G(\w_{1:l}, q_{1:l}, i_{1:l}) 
	\end{equation}
	where\footnote{Note that the dependence on~$q_{1:l}$ in~$\fC_G$ is encoded via~$G \in \Gamma(q_{1:l})$. We made this dependence explicit because it will become relevant in Section~\ref{s:reduction}.}
	\begin{equation} \label{e:diagram_formula:CG}
		\fC_G(\w_{1:l}, q_{1:l}, i_{1:l}) \coloneqq \prod_{w \in E(G)} \rho_{\w_k,\w_\ell}(i_\ell - i_k) \,.
	\end{equation}
\end{proposition}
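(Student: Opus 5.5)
I would prove the diagram formula through generating functions, using the exponential generating function of the Hermite polynomials, namely $e^{tx - t^2/2} = \sum_{q \geq 0} \frac{t^q}{q!} H_q(x)$. Writing $Y_k \coloneqq X^{(\w_k)}_{i_k}$, the vector $(Y_1,\ldots,Y_l)$ is centred Gaussian with unit variances and covariances $c_{k\ell} \coloneqq \E[Y_kY_\ell] = \rho_{\w_k,\w_\ell}(i_\ell-i_k)$ for $k<\ell$. For $t = t_{1:l} \in \R^l$ I would compute
\begin{equation*}
	\E\sbr[3]{\prod_{k=1}^l e^{t_k Y_k - t_k^2/2}}
	= \exp\del[2]{\tfrac12 \operatorname{Var}\del[1]{\textstyle\sum_k t_k Y_k} - \tfrac12 \textstyle\sum_k t_k^2}
	= \exp\del[2]{\textstyle\sum_{k<\ell} t_k t_\ell \, c_{k\ell}} .
\end{equation*}
Here the unit variances $\rho_{\w_k,\w_k}(0)=1$ make the diagonal terms cancel exactly. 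This cancellation is what forbids edges within a level.

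Next I would expand both sides as power series in $t$ and compare the coefficients of $\prod_k t_k^{q_k}$. On the left, the coefficient is $\E[\prod_k H_{q_k}(Y_k)] / \prod_k q_k!$. Exchanging expectation and summation is justified by Gaussian exponential moments and dominated convergence, since $\sum_q \frac{|t|^q}{q!}|H_q(x)| \le C e^{c|t||x| + c t^2}$.

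On the right, expanding $\prod_{k<\ell} \exp(t_kt_\ell c_{k\ell})$ gives $\sum_{(n_{k\ell})} \prod_{k<\ell} \frac{(t_kt_\ell c_{k\ell})^{n_{k\ell}}}{n_{k\ell}!}$. The relevant coefficient is therefore a sum over symmetric nonnegative integer matrices $(n_{k\ell})$ with zero diagonal and row sums $q_k$, weighted by $\prod_{k<\ell} c_{k\ell}^{n_{k\ell}}/n_{k\ell}!$.

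The main step is the combinatorial count. The number of diagrams $G \in \Gamma(q_{1:l})$ having exactly $n_{k\ell}$ edges between levels $k$ and $\ell$ equals
\begin{equation*}
	\prod_k \frac{q_k!}{\prod_{\ell \neq k} n_{k\ell}!} \cdot \prod_{k<\ell} n_{k\ell}! = \frac{\prod_k q_k!}{\prod_{k<\ell} n_{k\ell}!}.
\end{equation*}
To see this, first partition the vertices of each level $L_k$ into labelled blocks of sizes $n_{k\ell}$ destined for level $\ell$ (a multinomial choice). Then match each block of $L_k$ aimed at $\ell$ bijectively with the block of $L_\ell$ aimed at $k$, which can be done in $n_{k\ell}!$ ways. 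Each such diagram contributes $\fC_G = \prod_{k<\ell} c_{k\ell}^{n_{k\ell}}$ by~\eqref{e:diagram_formula:CG}. Multiplying the coefficient by $\prod_k q_k!$ then yields~\eqref{e:diagram_formula}.

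I expect the only real care to be needed in this bijective count, specifically in making sure that it matches the paper's convention that vertices are labelled within levels. I would also check that the sign convention $\rho_{\w_k,\w_\ell}(i_\ell-i_k)$ for $k<\ell$ agrees with the covariance definition~\eqref{e:multivariate_covariance}, which it does.
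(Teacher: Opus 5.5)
The paper gives no proof of Proposition~\ref{p:diagram_formula}. It simply quotes the classical diagram formula from the Breuer--Major setting, so there is no in-paper argument to compare against. Your generating-function proof is correct and complete.

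\begin{itemize}
\item The identity $\mathbb{E}[\prod_k e^{t_kY_k-t_k^2/2}]=\exp(\sum_{k<\ell}t_kt_\ell c_{k\ell})$ holds for any centred Gaussian vector with unit variances. This includes singular covariance matrices, which do arise when the formula is applied with coinciding indices in Sections~\ref{s:LLN_diagonal} and~\ref{s:higher_diagonals}.
\item The domination you assert holds with explicit constants. Replacing the coefficients of $H_q$ by their absolute values gives polynomials with generating function $e^{|t||x|+t^2/2}$, so $\sum_q \frac{|t|^q}{q!}|H_q(x)|\le e^{|t||x|+t^2/2}$. This is integrable under the Gaussian law by H\"older, and it justifies identifying coefficients of two everywhere-convergent power series.
\item Your count $\prod_k q_k!/\prod_{k<\ell} n_{k\ell}!$ is right. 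A diagram with edge-multiplicity matrix $(n_{k\ell})$ corresponds bijectively to a choice of partitions of each level by partner level, together with bijections between the matched blocks.
\item The generating function you use is the one for the monic polynomials of \eqref{e:hermite_poly}. That is exactly the normalisation under which \eqref{e:diagram_formula} carries no factorial weights, despite the paper's remark elsewhere calling the $H_q$ orthonormal.
\end{itemize}

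For comparison, the proof usually given in the Wiener-chaos literature takes a different route. It writes $H_{q_k}(X^{(\w_k)}_{i_k})=I_{q_k}(e_{\w_k,i_k}^{\otimes q_k})$ via Lemma~\ref{l:Gaussian_repr} and Definition~\ref{d:wiener_ito_isometry}. It then iterates the contraction product formula for multiple Wiener--It\^{o} integrals and reads off the expectation as the zeroth-chaos component. That route is heavier, but it sits naturally alongside the product and commutation formulas the paper uses in Sections~\ref{s:tightness} and~\ref{s:reduction}, and it extends to general kernels. Your route is more elementary: it needs no Hilbert-space embedding, only joint Gaussianity and $\rho_{\w_k,\w_k}(0)=1$. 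It also makes transparent that the unit variances are precisely what exclude intra-level edges.
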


\begin{remark}[Level labels] \label{rmk:level_labels}
	In Figure~\ref{fig:diagrams}, we have added labels to the levels, in this case~$1$, $2$, $3$, and~$4$.
	These labels serve another important purpose beyond just numbering the levels: 
	They encode the fact that all nodes in the~$\ell$-th level correspond to the random variable~$X^{(\w_\ell)}_{i_\ell}$. 
	In particular, any edge between levels~$\ell_1$ and~$\ell_2$ corresponds to an instance of~$\rho_{\w_{\ell_1}, \w_{\ell_2}}(i_{\ell_2} - i_{\ell_1})$.
	Note that this agrees with our covariance convention in~\eqref{e:multivariate_covariance}.
\end{remark}

Let us now introduce \emph{regular} and~\emph{irregular} diagrams as on~\cite[p.~432]{breuer_major_83}.
\begin{definition}[Regular and irregular diagrams] \label{d:regular_graph}
	Let~$l \in \N$ and~$q_{1:l} \in \N^l$.  
	A diagram~$G \in \Gamma[q_{1:l}]$ is called \emph{regular} if its levels can be paired in such a way that no edge passes between levels in different pairs, and \emph{irregular} otherwise, see Figure~\ref{subfig:irregular_diagram} for a visualisation.
	
	We then have $\Gamma(q_{1:l}) = \Gammareg(q_{1:l}) \sqcup \Gammairreg(q_{1:l})$ where~$\Gammareg(q_{1:l})$ denotes the set of regular and $\Gammairreg(q_{1:l})$ the set of irregular diagrams. 
\end{definition}

Before we comment on the previous definition, we introduce the notion of (perfect) \emph{pairwise matchings}. 
\begin{definition}[Matchings] \label{d:matchings}
	For~$k \in \N$, we denote the set~$\CM(l)$ of perfect \emph{pairwise matchings} between vertices labelled by~$1, \ldots, l$, i.e.~$\CM(l) = \emptyset$ if~$l$ is odd. 
	For~$l$ even, i.e.~$l = 2n$ for some~$n \in \N$, we represent a matching~$P \in \CM(2n)$ by 
	\begin{equation*}
		P = \cbr[1]{\cbr[0]{P(2j-1),P(2j)}: \ j \in \llbracket 1,n \rrbracket} 
	\end{equation*}
	which consists of \emph{unordered} tuples.
	Sometimes we will just write~$p_{\ell} \coloneqq P(\ell)$ for~$\ell \in \llbracket 1,2n \rrbracket$. 
\end{definition}

\begin{remark} \label{rmk:regular_diagrams}
	Observe that~$l$ being odd automatically implies~$\Gamma_{\text{\tiny R}}(q_{1:l})= \emptyset$. 
	Further, in case~$l$ is even, i.e. $l = 2n$ for some~$n \in \N$, observe that any regular diagram~$\Gamma \in \Gamma_{\text{\tiny R}}(q_{1:2n})$ is  uniquely characterised (up to relabelling the vertices) by a pairing~$P \in \CM(2n)$, see Figure~\ref{subfig:regular_diagram} for a visualisation of this fact.
	Furthermore, in that case we know that there exists a permutation~$\pi \in \mathfrak{G}(2n)$ and~$\tilde{q}_{1:n} \in \N^n$ such that~$q_{1:2n} = \pi(\tilde q_1,\tilde q_1,\ldots,\tilde q_n,\tilde q_n)$, possibly with~$\tilde q_j = \tilde q_k$ even if~$j \neq k$. 
\end{remark}

The following result, essentially shown in~\cite[Proposition on~p.~433]{breuer_major_83}, will be central for several of our arguments.
See, however, Remark~\ref{rmk:breuer_major} below for the key modification to their argument which renders it applicable in our situation. 

\medskip
Recall that $\square_{l,N} \coloneqq \llbracket 0,N-1 \rrbracket^{l}$ denotes the discrete cube of dimension~$l \in \N$ and side length~$N$.
\begin{proposition}[Breuer--Major] \label{p:bm83_irregular}
	Let~$d,l\in \N$, ~$\bq = q_{1:l} \geq d$, $\w = \w_{1:l} \in \llbracket 1,m \rrbracket^l$, and~$\bi = i_{1:l} \geq 1$. 
	For any $G \in \Gamma(\bq)$, we let 
	\begin{equation} \label{e:def_TG}
		\mathring{T}_G(\w,\bq,N) 
		\coloneqq 
		\frac{1}{N^{\nicefrac{l}{2}}} \sum_{\bi \in \square_{l,N}} \abs[0]{\fC_G(\w, \bq, \bi)} 
	\end{equation}
	where~$\fC_G(\w, \bq, \bi)$ has been introduced in~\eqref{e:diagram_formula:CG}.
	Then, for any \emph{irregular} diagram~$G$, we have 
	\begin{equation} \label{p:bm83_irregular:conv}
		\lim_{N \to \infty} \mathring{T}_G(\w,\bq,N) = 0 \,.
	\end{equation}
\end{proposition}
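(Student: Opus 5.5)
We follow the original argument of Breuer and Major~\cite[p.~433]{breuer_major_83}, with the single change that every correlation is replaced by its absolute value. The proof never uses the sign of $\rho_{k,\ell}$, so it carries over to the multivariate cross-correlations. Set
\begin{equation*}
	\bar\rho(u) \coloneqq \max_{k,\ell \in \llbracket 1,m \rrbracket} \abs[0]{\rho_{k,\ell}(u)}\,.
\end{equation*}
By assumption~\ref{thm:main:2} of Theorem~\ref{thm:main}, $\bar\rho \in \ell^d(\Z)$. Moreover, $\bar\rho(u) \le 1$ by Cauchy--Schwarz, since $\rho_{k,k}(0)=1$. Each edge $w$ of $G$ contributes a factor $\abs[0]{\rho_{\w_k,\w_\ell}(i_\ell-i_k)} \le \bar\rho(i_{d_2(w)}-i_{d_1(w)})$. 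Hence it suffices to prove the claim for the univariate-type bound
\begin{equation*}
	T_G(N) \coloneqq N^{-l/2}\sum_{\bi\in\square_{l,N}}\prod_{w\in E(G)}\bar\rho\del[0]{i_{d_2(w)}-i_{d_1(w)}}\,,
\end{equation*}
using only that $\bar\rho \in \ell^d$, $\bar\rho \le 1$, and that each level has size $q_j \ge d$.

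\textbf{Step 1: a truncation for small levels.} Fix $\varepsilon>0$ and choose $K$ such that $\sum_{\abs{u}>K}\bar\rho(u)^d<\varepsilon$. Split $\bar\rho = \bar\rho\,\1_{\abs{u}\le K} + \bar\rho\,\1_{\abs{u}>K}$ and expand the product over edges. It then suffices to bound two kinds of terms:
\begin{enumerate}[label=(\alph*)]
	\item terms in which some edge carries the tail part, which we show are $O(\varepsilon^{c})$ uniformly in $N$;
	\item the term in which all edges are short, which we show vanishes as $N\to\infty$ for irregular $G$.
\end{enumerate}

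\textbf{Step 2: the generic bound.} For each vertex (level) $j$, we assign each incident edge the weight $\bar\rho(\cdot)^{q_j}$ and apply the generalised Hölder inequality
\begin{equation*}
	\prod_{w} \bar\rho_w \le \prod_{j=1}^l \Big(\sum_{w \ni j}\tfrac{1}{q_j}\bar\rho_w^{\,q_j}\Big)\,,
\end{equation*}
in the manner of Breuer--Major. Each edge $w$ has two endpoints, at levels $d_1(w)$ and $d_2(w)$, so we write $\bar\rho_w = \bar\rho_w^{1/2}\bar\rho_w^{1/2}$ and distribute the two halves to the two levels. Then use the arithmetic–geometric mean inequality within each level, $\prod_{w\ni j}\bar\rho_w^{1/2} \le q_j^{-1}\sum_{w\ni j}\bar\rho_w^{q_j/2}$. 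Expanding turns the sum into a sum over maps assigning to each level one incident edge. Each such choice yields a graph in which every level has out-degree one. Summing the indices of such a functional graph gives $N^{\#\text{components}}$ times a product of $\ell^{q_j/2}$-type norms.

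This last point needs care. Breuer and Major instead bound by $\sum_u\bar\rho(u)^{q}$ with $q\ge d$, which is finite since $\bar\rho\le1$. The cleaner route, which I would follow, is the Breuer--Major proposition itself: counting free summation indices, each connected component of the chosen edge set costs one free index. Each component has at least two levels, giving at most $N^{l/2}$, with equality only if every component is an exact pair, i.e.\ a regular configuration.

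\textbf{Step 3: the irregular case.} If $G$ is irregular, then every choice in Step 2 either produces a component with at least three levels, gaining a factor $N^{-1/2}$ (combined with $\bar\rho\in\ell^d$ and $\bar\rho\le1$ to control the sums), or pairs two levels $j,j'$ that are also joined to other levels. In the latter case the extra edges force the constraint $\abs{i_j-i_{j'}}\le K$ in the short term (b). This loses one power of $N$ relative to the free count and gives $O(K^{l}N^{-1/2})\to0$. In the tail term (a), the edges leaving the pair carry $\sum_{\abs{u}>K}\bar\rho^{d}<\varepsilon$. Letting $N\to\infty$ and then $\varepsilon\to0$ yields \eqref{p:bm83_irregular:conv}.

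The main obstacle is organising the Hölder/AM–GM step so that the exponents per level are exactly $q_j\ge d$ while edges between paired levels are counted correctly. I would follow the original reduction verbatim, tracking only absolute values.
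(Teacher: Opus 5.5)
Your reduction to $\bar\rho(u)=\max_{k,\ell}|\rho_{k,\ell}(u)|$ is valid ($\bar\rho\in\ell^d$, $0\le\bar\rho\le1$, $\bar\rho$ even), and you handle the all-short term (b) correctly by counting components of the level graph. The gap is the tail term (a). You never prove the claimed $O(\varepsilon^c)$ bound, and your Step~2 expansion cannot give it.

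Per-level AM--GM replaces $\prod_w\bar\rho_w$ by a sum over selections of one incident edge per level. Every selection that avoids the long edge loses the factor $\bar\rho\,\mathbf{1}_{|\cdot|>K}$ altogether. For irregular $G$ such a selection can be a perfect pairing of the levels, and it is then of order one. For instance, take $q=(d,d+1,d+1,d)$ with $d$ edges $1$--$2$, one edge $2$--$3$ and $d$ edges $3$--$4$. Consider the summand in which only the bridge $2$--$3$ is long. The selection pairing $\{1,2\}$ and $\{3,4\}$ contributes a constant multiple of $N^{-2}\sum_{\bi}\bar\rho(i_2-i_1)^{d+1/2}\bar\rho(i_4-i_3)^{d+1/2}$ (restricted, if you keep the indicators, to $|i_2-i_1|,|i_4-i_3|\le K<|i_3-i_2|$). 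This is of order one, with no $\varepsilon$ anywhere. Your Step~3 remark that ``the edges leaving the pair carry $\varepsilon$'' refers to precisely the factor that AM--GM has already discarded. What is missing is a bound, uniform in $N$, that retains a positive power of $\|\bar\rho\,\mathbf{1}_{|\cdot|>K}\|_{\ell^d}$ from a long edge. Proving such a bound is essentially as hard as the Breuer--Major estimate itself.

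Separately, the power counting in Step~2 is wrong as stated. The kernel $\bar\rho^{q_j/2}$ need not be summable (e.g.\ $q_j=d$), so a level off the cycle of its functional-graph component contributes up to $N^{1-q_j/(2d)}$ when summed out. A component with $n_c$ levels therefore costs up to $N^{n_c/2}$ rather than one free index. Relative to $N^{n_c/2}$, components with three or more levels gain only $o(1)$, not $N^{-1/2}$. Appealing at that point to ``the Breuer--Major proposition itself'' is circular.

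The simplest repair is to keep your $\bar\rho$ reduction and then run Breuer and Major's univariate argument verbatim. It works with $|r|$ throughout and only uses $|r|\le1$ and $r\in\ell^d$. This is what the paper does, without the $\bar\rho$ shortcut:
\begin{itemize}
\item It shows $\mathring{T}_G$ is invariant under renumbering the levels. The subsequent bound depends on the numbering, so this freedom is needed.
\item It applies AM--GM only to the $k_G(r)$ edges \emph{starting} at each level $r$, with exponent $k_G(r)$.
\item It sums out $i_1,i_2,\dots$ in turn to arrive at Breuer--Major's bound (2.17), $\mathring{T}_G\le N^{-l/2}\prod_{r=1}^l\sum_{|i|\le N}|\rho_{a_r^*,b_r^*}(i)|^{k_G(r)}$.
\item Their remaining argument then goes through because each $\rho_{a_r^*,b_r^*}$ lies in $\ell^d$.
\end{itemize}
With your $\bar\rho$, these $r$-dependent kernels would all simply be $\bar\rho$.
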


We also define~$T_G(\w,\bq,N)$ like~$\mathring{T}_G(\w,\bq,N)$ in~\eqref{e:def_TG}, but with~$\abs[0]{\fC_G(\w,\bq,\bi)}$ replaced by $\fC_G(\w,\bq,\bi)$, i.e. without absolute values. 
Note that the former only contains factors~$\abs[0]{\rho_{\w_k,\w_\ell}(i_\ell - i_k)}$ in~\eqref{e:diagram_formula:CG}, while the latter only has factors~$\rho_{\w_k,\w_\ell}(i_\ell - i_k)$.

\begin{remark} \label{rmk:breuer_major}
	The previous proposition is \emph{almost} identical to that in~\cite[Proposition on~p.~433]{breuer_major_83}. 
	There are, however, two small differences that are crucial for several of our arguments in Section~\ref{s:convergence_fdd}:
	\begin{enumerate}[label=(\arabic*)]
		\item Breuer and Major consider the univariate case, i.e.~$\w = \w_{1:l}$ for~$\w_1 = \ldots = \w_l = 1$.
		\item The convergence in~\eqref{p:bm83_irregular:conv} does not only hold for~$T_G(\w,\bq,N)$, but also for~$\mathring{T}_G(\w,\bq,N)$ defined in terms of the \emph{absolute value}~$\abs[0]{\rho}$ (instead of just~$\rho$ itself). 
	\end{enumerate}
	Accounting for both of these changes only requires small modifications in the original proof by Breuer and Major: We will present them in Appendix~\ref{a:proof_BM83}.

	One immediate consequence is that, if one replaces the summation condition~$\bi \in \square_{l,N}$ in the definition of~$\mathring{T}_G(\bq,N)$ by~$\bi \in A_{l,N}$ (which represents some \emph{index summation constraints}) where~$A_{l,N} \subseteq \square_{l,N}$, then the corresponding quantity still goes to zero because the sum is simply over a smaller set of indices. 
	Note that, by a simple scaling argument, the previous proposition still holds if the box~$\square_{l,N}$ depends on~$s,t \in [0,1]^2$ such that~$s < t$, i.e. if it is replaced by  
	\begin{equation*}
		\square_{l,N}(s,t) \coloneqq \llbracket \lfloor Ns \rfloor , \lfloor Nt \rfloor -1 \rrbracket^{l} \,.
	\end{equation*}
	This, then, also allows for the subset~$A_{l,N}$ to depend on~$s$ and~$t$ in an analogous way, of course.
\end{remark}

The previous proposition itself will be used in the proof of Proposition~\ref{p:reduction}, the reduction to finite chaos. 
Mostly, however, it will be used via the following corollary.
To this end, recall that for~$f \in L^2(\gamma)$, we let~$f^M \coloneqq \pi_{\leq M} f$ be the projection onto Wiener chaoses smaller than or equal to~$M$.

\begin{corollary} \label{coro:BM83}
	Let~$l \in \N_0$,~$\w_j \in \llbracket 1,m \rrbracket$ for~$j \in \llbracket 1,l \rrbracket$, and choose some set~$A_{l,N} \subseteq \square_{l,N}$. Then, there exists some~$n \in \N$ such that, as~$N \to \infty$, 
	\begin{equs}[][coro:BM83:eq]
		\thinspace &
		\frac{1}{N^{\nicefrac{l}{2}}} \sum_{\bi \in A_{l,N}} \E\sbr[4]{\prod_{j=1}^{l} f^{M}_{\w_{j}}(X^{(\w_{j})}_{i_j})} \\  
		\asymp \ & 
		\frac{1}{N^{n}} \sum_{\bi \in A_{2n,N}} \sum_{P \in \CM(2n)} 
		\prod_{j=1}^{n} \E\sbr[1]{f^{M}_{\w_{P(2j-1)}}\del[1]{X^{(\w_{P(2j-1)})}_{i_{P(2j-1)}}} f^{M}_{\w_{P(2j)}}\del[1]{X^{(\w_{P(2j)})}_{i_{P(2j)}}}} \1_{l=2n} \,.
	\end{equs} 
	In other words: Asymptotically, Wick's formula holds.
\end{corollary}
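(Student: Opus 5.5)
We prove Corollary~\ref{coro:BM83} by expanding each factor in Hermite polynomials, applying the diagram formula, and using Proposition~\ref{p:bm83_irregular} to discard irregular diagrams. Since the functions~$f^M_{\w_j}$ are projected onto chaoses of order at most~$M$, each has a \emph{finite} Hermite expansion $f^M_{\w_j} = \sum_{q=d}^{M} c^{(\w_j)}_q H_q$. We therefore write
\begin{equation*}
	\E\sbr[4]{\prod_{j=1}^{l} f^{M}_{\w_{j}}(X^{(\w_{j})}_{i_j})}
	= \sum_{d \leq q_{1:l} \leq M} \prod_{j=1}^l c^{(\w_j)}_{q_j} \sum_{G \in \Gamma(q_{1:l})} \fC_G(\w,q_{1:l},\bi)
\end{equation*}
using Proposition~\ref{p:diagram_formula}. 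This sum over~$q_{1:l}$ and~$G$ is finite, with a number of terms bounded in terms of~$M$ and~$l$ only. Hence, after multiplying by~$N^{-l/2}$ and summing over~$\bi \in A_{l,N}$, we may exchange the finite sums with the limit~$N \to \infty$.

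\textbf{Irregular diagrams.} For every irregular~$G$ we bound
\begin{equation*}
	N^{-l/2}\abs[1]{\sum_{\bi\in A_{l,N}} \fC_G} \leq \mathring T_G(\w,q_{1:l},N),
\end{equation*}
which tends to~$0$ by Proposition~\ref{p:bm83_irregular}. As noted in Remark~\ref{rmk:breuer_major}, restricting the summation to~$A_{l,N} \subseteq \square_{l,N}$ only decreases this nonnegative sum, so the bound is legitimate. Only regular diagrams survive. By Remark~\ref{rmk:regular_diagrams}, there are none if~$l$ is odd, which gives the factor~$\1_{l=2n}$; here we take~$n = l/2$.

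\textbf{Regular diagrams.} For~$l = 2n$, we regroup the regular diagrams by their level pairing~$P \in \CM(2n)$. A diagram paired by~$P$ requires~$q_{P(2j-1)} = q_{P(2j)} =: \tilde q_j$ for each~$j$. Between two paired levels of common size~$\tilde q$, the number of ways to match the vertices is~$\tilde q!$, each contributing~$\rho^{\tilde q}$. Thus the sum over regular diagrams with pairing~$P$, weighted by the Hermite coefficients and summed over~$\tilde q_j$, factorises as
\begin{equation*}
	\prod_{j=1}^n \sum_{\tilde q} \tilde q!\, c^{(\w_{P(2j-1)})}_{\tilde q} c^{(\w_{P(2j)})}_{\tilde q}\, \rho_{\w_{P(2j-1)},\w_{P(2j)}}\del[1]{i_{P(2j)}-i_{P(2j-1)}}^{\tilde q}.
\end{equation*}
Each factor is exactly~$\E\sbr[0]{f^M_{\w_{P(2j-1)}}(\cdot)\, f^M_{\w_{P(2j)}}(\cdot)}$, again by the diagram formula for two factors. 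This yields the right-hand side of the claim.

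\textbf{Main obstacle.} The delicate point is overcounting. A single diagram may be regular with respect to several pairings~$P$, for instance when four levels all have edges only within two different pairs' worth of structure, so summing over~$P$ double counts some diagrams. The plan is to show these ambiguous diagrams are negligible. A diagram regular for two distinct pairings has a level, say~$\ell$, connected to at least two other levels~$\ell', \ell''$. The pairings differ, so the restriction to the union of the two pairs' levels must be a connected component with at least three levels. After summing over~$\bi$, such a component carries too few free summation indices relative to its normalisation~$N^{-(\#\text{levels})/2}$, using~$\rho\in\ell^d$ and~$q_j \geq d$. This is exactly the mechanism in the proof of Proposition~\ref{p:bm83_irregular} (Appendix~\ref{a:proof_BM83}), applied to each connected component. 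Consequently, up to~$o(1)$, every regular diagram whose connected components are not all two-level components may be dropped. Regular diagrams whose components are exactly pairs are counted once, by their unique pairing. The same negligibility argument shows that adding back these multiply-counted diagrams on the right-hand side also costs only~$o(1)$, which completes the asymptotic identity.
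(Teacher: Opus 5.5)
Your proof is correct and follows the paper's argument. You expand the truncated $f^M_{\w_j}$ in finitely many Hermite polynomials and apply the diagram formula. You then discard each of the finitely many irregular diagrams via Proposition~\ref{p:bm83_irregular}, which is legitimate because $A_{l,N}\subseteq\square_{l,N}$, and you note that odd $l$ admits no regular diagrams. Your explicit regrouping of the regular diagrams by level pairing, which the paper leaves implicit, is also done correctly.

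Your ``main obstacle'' paragraph, however, rests on a false claim: no diagram is regular for two distinct pairings. Since $q_j\ge d\ge 1$, every level has at least one vertex and hence at least one edge. In a diagram regular for $P$, that edge must end at the $P$-partner of the level, so $P$ is uniquely determined by the diagram. In particular, no level of a regular diagram is ever joined to two other levels, so the premise of your argument cannot occur.

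Consequently, the regular diagrams split disjointly according to $P\in\CM(2n)$. The regular contribution equals the Wick-type right-hand side exactly, with no $o(1)$ error. You should replace that last paragraph by this one-line uniqueness observation.
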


\begin{proof}
	By assumption, we have
	\begin{equation*}
		f_{k}^M(x) = \sum_{q \geq d}^M c_q^{(k)} H_q(x) \,.
	\end{equation*}
	We now set~$\bq \coloneqq q_{1:l} \in \llbracket d,M \rrbracket^{l}$,~$\w \coloneqq \w_{1:l} \in \llbracket 1,m \rrbracket^{l}$, as well as~$c_{\bq}^{(\w)} \coloneqq \prod_{j=1}^{l} c_{q_j}^{(\w_j)}$.
	By the diagram formula~(Proposition~\ref{p:diagram_formula}), we can now rewrite the LHS of~\eqref{coro:BM83:eq} as follows:
	\begin{equs}[][e:BMcoro:e1]
		\frac{1}{N^{\nicefrac{l}{2}}} \sum_{\bi \in A_{l,N}} \E\sbr[4]{\prod_{j=1}^{l} f^{M}_{\w_{j}}(X^{(\w_{j})}_{i_j})} %\\[0.5em]
		& = 
		\frac{1}{N^{\nicefrac{l}{2}}} \sum_{\bi \in A_{l,N}} \sum_{d \leq \bq \leq M}
		c_{\bq}^{(\w)} \, \E\sbr[4]{\prod_{j=1}^{l} H_{q_j}(X^{(\w_{j})}_{i_j})} \\[0.5em]
		%%%
		& =  
		\frac{1}{N^{\nicefrac{l}{2}}} \sum_{\bi \in A_{l,N}} \sum_{d \leq \bq \leq M}
		c_{\bq}^{(\w)} \sum_{G \in \Gamma(\bq)} \fC_G(\w_{1:l}, q_{1:l}, i_{1:l})  \,.
	\end{equs}
	Since~$\Gamma(\bq) = \Gammareg(\bq) \sqcup \Gammairreg(\bq)$ (see Definition~\ref{d:regular_graph}), the claim follows once we show that the sum over \emph{irregular} graphs~$G \in \Gammairreg(\bq)$ in~\eqref{e:BMcoro:e1} vanishes as~$N \to \infty$. 
	For the sum over those graphs, using that~$A_{l,N} \subseteq \square_{l,N}$ as well as the definition of~$\mathring{T}_G(\w,\bq,N)$ in~\eqref{e:def_TG} we find that
	\begin{equs} 
		\thinspace &
		\frac{1}{N^{\nicefrac{l}{2}}} \sum_{\bi \in A_{l,N}} \sum_{d \leq \bq \leq M}
		\abs[0]{c_{\bq}^{(\w)}} \sum_{G \in \Gammairreg(\bq)} \abs[0]{\fC_G(\w_{1:l}, q_{1:l}, i_{1:l})}
		\leq %\ &  
		\sum_{d \leq \bq \leq M}
		\abs[0]{c_{\bq}^{(\w)}} \sum_{G \in \Gammairreg(\bq)} \mathring{T}_G(\w,\bq,N)
	\end{equs}
	and the claim now follows from Proposition~\ref{p:bm83_irregular} because the sums over~$\bq$ and~$G \in \Gamma(\bq)$ are each over \emph{finitely many} terms. 
	Note that if~$l$ is odd, by Remark~\ref{rmk:regular_diagrams}, we have~$\Gamma(\bq) = \Gammairreg(\bq)$ which gives rise to the indicator function~$\1_{l=2n}$ in~\eqref{coro:BM83:eq}. 
\end{proof}

The previous corollary will be used in various places of the article:
\begin{itemize}
	\item In Section~\ref{s:LLN_diagonal}, specifically in the proof of Lemma~\ref{l:diagonal_variance}.
	\item In Section~\ref{s:higher_diagonals}, specifically in the proof of Proposition~\ref{p:higher_order_diagonals}.
	\item In Section~\ref{s:conv_moments}, specifically in the proofs of Proposition~\ref{p:exp_sig} and Theorem~\ref{thm:conv_fdd_stratonovich}.
\end{itemize}
In each case, the index constraint and, therefore, the set~$A_{2n,N}$ is different; we will define it locally when it is required.

\subsection{Convergence of normalised sums of covariances} 

The following two lemmas collect some elementary convergence results.

\begin{lemma} \label{lem:elemantary_sum_covariances}
	For any function~$\rho: \Z \to \R$ such that~$\abs[0]{\rho(k)} \to 0$ as~$\abs[0]{k} \to \infty$, we have
	\begin{equation*}
		\lim_{N \to \infty} \frac{1}{N^2} \sum_{1 \leq i,j \leq N} \abs[0]{\rho(i-j)} = 0 \,.
	\end{equation*}
\end{lemma}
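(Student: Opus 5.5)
Since the summand depends only on the difference $i-j$, we first reduce the double sum to a single sum over differences and then run a Cesàro-type argument. For fixed $k \in \llbracket -(N-1), N-1 \rrbracket$, the number of pairs $(i,j) \in \llbracket 1,N \rrbracket^2$ with $i-j=k$ is exactly $N-\abs[0]{k} \leq N$. This gives
\begin{equation*}
	\frac{1}{N^2} \sum_{1 \leq i,j \leq N} \abs[0]{\rho(i-j)}
	= \frac{1}{N^2} \sum_{\abs[0]{k} \leq N-1} (N - \abs[0]{k}) \abs[0]{\rho(k)}
	\leq \frac{1}{N} \sum_{\abs[0]{k} \leq N-1} \abs[0]{\rho(k)} \,.
\end{equation*}
It therefore suffices to show that the Cesàro-type averages $\frac{1}{N}\sum_{\abs[0]{k} < N} \abs[0]{\rho(k)}$ tend to zero.

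Fix $\varepsilon > 0$. By assumption there is $K \in \N$ such that $\abs[0]{\rho(k)} \leq \varepsilon$ for all $\abs[0]{k} > K$. We split the sum into the near-diagonal part $\abs[0]{k} \leq K$ and the far part $K < \abs[0]{k} < N$.

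The near part contributes at most $\frac{1}{N}\sum_{\abs[0]{k} \leq K} \abs[0]{\rho(k)}$. This is a fixed finite constant divided by $N$, hence tends to zero. The far part contains at most $2N$ terms, each bounded by $\varepsilon$, so it contributes at most $2\varepsilon$. Consequently
\begin{equation*}
	\limsup_{N \to \infty} \frac{1}{N^2} \sum_{1 \leq i,j \leq N} \abs[0]{\rho(i-j)} \leq 2\varepsilon \,,
\end{equation*}
and letting $\varepsilon \downarrow 0$ gives the claim.

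There is no genuine obstacle here. The only point requiring care is that $\rho$ need not be summable (for instance $\rho \in \ell^d(\Z)$ with $d \geq 2$ only), so one cannot simply bound the sum by $\norm[0]{\rho}_{\ell^1}/N$. The $\varepsilon$-splitting above handles this, using only the decay $\abs[0]{\rho(k)} \to 0$. Note also that no bound on $\rho$ near the diagonal is needed beyond its finiteness at each point, which holds since $\rho$ is real-valued.
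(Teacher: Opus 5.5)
Your proof is correct and is essentially the paper's argument: both split the double sum into a near-diagonal and a far-diagonal part, and both use only the decay of $|\rho(k)|$ on the far part. The paper cuts at $|i-j| = \delta N$ and bounds the near-diagonal block by counting pairs, implicitly using $|\rho| \le 1$; that block actually has relative size about $2\delta$ rather than the $\delta^2$ written there, which is harmless since $\delta$ is arbitrary. You instead pass to the Ces\`aro average $\frac{1}{N}\sum_{|k|<N}|\rho(k)|$ and cut at a fixed $K$, which avoids the second parameter $\delta$ and that count.
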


\begin{proof}
	Let~$\eps, \delta > 0$. Then, there exists some~$N_0 = N_0(\eps,\delta) \in \N$ such that, for all~$N \geq N_0$, we know that~$\abs[0]{k} > \delta N$ implies that~$\abs[0]{\rho(k)} < \eps$.
	Then, we estimate
	\begin{equs}
		\frac{1}{N^2} \sum_{1 \leq i, j \leq N} \abs[0]{\rho(i-j)}
		& \leq
		\frac{1}{N^2} \sum_{\abs[0]{i-j} \leq \delta N} 1 + \frac{1}{N^2} \sum_{\abs[0]{i-j} > \delta N} \abs[0]{\rho(i-j)} \\
		& \leq
		\delta^2 + \frac{1}{N^2} \sum_{1 \leq i,j \leq N} \eps = \delta^2 + \eps \,.
	\end{equs}
	Since~$\eps > 0$ and~$\delta > 0$ were arbitrary, the claim follows. 
\end{proof}

\begin{lemma} \label{lem:path_sum_covariances}
	Let~$d \geq 1$,~$n \in \llbracket 1,d \rrbracket$, and assume that~$\rho_i \in \ell^n(\Z)$ for~$i = 1,2,3$.
	Further, let~$j,k,l,m \in \llbracket 1,4 \rrbracket$ be pairwise different, i.e.~$\cbr[0]{j,k,l,m} = \cbr[0]{1,2,3,4}$. Then,
	\begin{equation*}
		\lim_{N \to \infty}
		\frac{1}{N^2} \sum_{1 \leq i_{1:4} \leq N} \abs[0]{\rho_1(i_j - i_k)}^n \abs[0]{\rho_2(i_k - i_\ell)} \abs[0]{\rho_3(i_\ell - i_m)}^n = 0 \,.
	\end{equation*}
\end{lemma}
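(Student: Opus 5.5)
The plan is to use the path structure of the indices $j \to k \to \ell \to m$. The two outer edges carry the power $n$, so they are summable by assumption. The middle edge carries only the first power of $|\rho_2|$, so it needs a separate Cesàro-type argument, as in Lemma~\ref{lem:elemantary_sum_covariances}.

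\emph{Step 1 (change of variables).} For fixed $i_m \in \llbracket 1,N \rrbracket$, introduce the differences
\begin{equation*}
a \coloneqq i_j - i_k, \quad b \coloneqq i_k - i_\ell, \quad c \coloneqq i_\ell - i_m.
\end{equation*}
The map $(i_j,i_k,i_\ell) \mapsto (a,b,c)$ is injective, since it can be inverted successively: $i_\ell = c + i_m$, then $i_k = b + i_\ell$, then $i_j = a + i_k$. Its image lies in $\llbracket -(N-1),N-1 \rrbracket^3$. Summing over the free index $i_m$ then gives
\begin{equation*}
\frac{1}{N^2} \sum_{1 \leq i_{1:4} \leq N} \abs[0]{\rho_1(i_j - i_k)}^n \abs[0]{\rho_2(i_k - i_\ell)} \abs[0]{\rho_3(i_\ell - i_m)}^n
\leq \norm[0]{\rho_1}_{\ell^n}^n \norm[0]{\rho_3}_{\ell^n}^n \cdot \frac{1}{N} \sum_{\abs[0]{b} < N} \abs[0]{\rho_2(b)}.
\end{equation*}

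\emph{Step 2 (Cesàro for the middle edge).} It remains to show that $\frac{1}{N}\sum_{\abs[0]{b}<N} \abs[0]{\rho_2(b)} \to 0$. Since $\rho_2 \in \ell^n(\Z)$ with $n \geq 1$, we have $\abs[0]{\rho_2(b)} \to 0$ as $\abs[0]{b} \to \infty$. Given $\eps > 0$, choose $K$ such that $\abs[0]{\rho_2(b)} < \eps$ for all $\abs[0]{b} > K$. Splitting the sum at $K$, the part with $\abs[0]{b} \leq K$ contributes at most $\frac{1}{N}(2K+1)\sup_b \abs[0]{\rho_2(b)} \to 0$. The part with $K < \abs[0]{b} < N$ contributes at most $\frac{1}{N}\cdot 2N \eps = 2\eps$. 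Since $\eps > 0$ was arbitrary, the limit is zero. Alternatively, one can note that this is exactly the one-dimensional analogue of Lemma~\ref{lem:elemantary_sum_covariances}.

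\emph{Main obstacle.} There is no real obstacle here. The only point needing care is that one must not bound the middle factor in $\ell^1$, since $\rho_2$ is only assumed to lie in $\ell^n$. The saving factor $N^{-1}$ comes from the fact that four indices with three connecting edges leave exactly one free summation. This costs $N$ against the normalisation $N^2$, leaving one factor $N^{-1}$ to absorb the $\ell^n$-but-not-$\ell^1$ middle sum.
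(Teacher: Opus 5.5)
Your proof is correct and follows essentially the paper's argument. You sum out the two endpoint indices using $\rho_1,\rho_3\in\ell^n(\mathbb{Z})$, and you handle the middle factor $|\rho_2|$ by a Ces\`aro-type argument that uses only $\rho_2(b)\to 0$. The one cosmetic difference is that you reduce to the one-dimensional average $\frac{1}{N}\sum_{|b|<N}|\rho_2(b)|$ via a change of variables, whereas the paper keeps the double sum $\frac{1}{N^2}\sum_{i_k,i_\ell}|\rho_2(i_k-i_\ell)|$ and invokes Lemma~\ref{lem:elemantary_sum_covariances} directly.
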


\begin{proof}
	By possibly relabelling the indices, we may w.l.o.g. assume that~$(j,k,l,m) = (1,2,3,4)$. We first observe that
	\begin{equation*}
		\sum_{i_1 = 1}^N  \abs[0]{\rho_1(i_1 - i_2)}^n \leq \norm[0]{\rho_1}_{\ell^n(\Z)}^n, \quad 
		\sum_{i_4 = 1}^N  \abs[0]{\rho_3(i_3 - i_4)}^n \leq \norm[0]{\rho_3}_{\ell^n(\Z)}^n 
	\end{equation*}
	and, therefore,
	\begin{equs}
		\thinspace &
		\frac{1}{N^2} \sum_{1 \leq i_{1:4} \leq N} \abs[0]{\rho_1(i_1 - i_2)}^n \abs[0]{\rho_2(i_2 - i_3)} \abs[0]{\rho_3(i_3 - i_4)}^n \\
		\leq \ &  
		\frac{1}{N^2} \sum_{1 \leq i_2, i_3 \leq N}  \abs[0]{\rho_2(i_2 - i_3)} \norm[0]{\rho_1}_{\ell^n(\Z)}^n \norm[0]{\rho_3}_{\ell^n(\Z)}^n\,.
	\end{equs}
	The claim now follows from Lemma~\ref{lem:elemantary_sum_covariances}.
\end{proof}

\section{Tightness} \label{s:tightness}

In this section we prove that the sequence of discrete processes $\boldsymbol{S}_N = (S_N,\mathbb{S}_N)$, $N \geq 1$, as defined in \eqref{e:1st_2nd_order_p}, is tight in an appropriate topology on the space of rough paths.

\subsection{Variation topologies on rough paths} \label{s:rough_paths_primer}

Let us first introduce the rough path setting we are going to consider.
We will closely follow the definitions and notations of
\cite{chevyrev_et_al_homogenisation_part_2}; for a general introduction to rough paths theory, we refer to~\cite{lyons_caruana_levy,friz_victoir,friz_hairer_book}.

We introduce the truncated tensor algebra 
\begin{equation} \label{e:truncated_tensor_algebra}
	\tilde{T}^{(2)}(\mathbb{R}^m) := \mathbb{R}^m \oplus (\mathbb{R}^m \otimes \mathbb{R}^m)\,,	
\end{equation}
which we will denote by $\tilde{T}^{(2)}$ for short.
We will also equip it with the following multiplication operation: 
For two elements $\boldsymbol{x}=(x^{(1)},x^{(2)})$ and $\boldsymbol{y}=(y^{(1)},y^{(2)})$ in $\tilde{T}^{(2)}$, we write
\begin{equation*}
	\boldsymbol{xy} \coloneqq (x^{(1)}+y^{(1)}, x^{(2)} + x^{(1)} \otimes y^{(1)} + y^{(2)}).	
\end{equation*}
Recall that this defines a group structure on $\tilde{T}^{(2)}$ with $\boldsymbol{0} := (0,0)$ as the identity and the inverse of every $\boldsymbol{x}=(x^{(1)},x^{(2)}) \in \tilde{T}^{(2)}$ being given by $\boldsymbol{x}^{-1} := (-x^{(1)},-x^{(2)} + x^{(1)}\otimes x^{(1)})$. We further equip $\tilde{T}^{(2)}$ with the pseudo-norm given by
\begin{equation*}
	\|\boldsymbol{x}\| := |x^{(1)}| + \sqrt{|x^{(2)}|}, \qquad \boldsymbol{x} \in \tilde{T}^{(2)} 	
\end{equation*}
where $|\cdot|$ denotes the Euclidean norms on $\mathbb{R}^m$ and $(\mathbb{R}^m)^{\otimes 2}$, respectively, and with the metric $d$ given by
\begin{equation*}
	d(\boldsymbol{x},\boldsymbol{y}) \coloneqq \|\boldsymbol{x}^{-1} \boldsymbol{y}\|, \qquad \boldsymbol{x},\boldsymbol{y} \in \tilde{T}^{(2)}.	
\end{equation*}
For a fixed time horizon $T>0$ (eventually, we shall take $T=1$) and a path $\boldsymbol{X}:[0,T] \to \tilde{T}^{(2)}$, we define its increments as 
\begin{equation*}
	\boldsymbol{X}(s,t) := \boldsymbol{X}(s)^{-1} \boldsymbol{X}(t), \qquad 0 \leq s \leq t \leq T \,.	
\end{equation*}
If we write $\boldsymbol{X}(t) = (X(t),\mathbb{X}(t))$ for all $t \geq 0$, then the above expression can be equivalently written $\boldsymbol{X}(s,t) = (X(s,t),\mathbb{X}(s,t))$ for $0 \leq s \leq t \leq T$, where
\begin{equs}[][eq:p_rp_increments]
	X(s,t) & \coloneqq X(t)-X(s) \,, \\
	%%%
	\mathbb{X}(s,t) & \coloneqq \mathbb{X}(t) - \mathbb{X}(s) - (X(s)-X(0)) \otimes (X(t)-X(s)) \,. 
\end{equs}
Note that, with the above definition, the following Chen relation is plainly satisfied
\begin{equation*}
\boldsymbol{X}(s,u) \boldsymbol{X}(u,t) = \boldsymbol{X}(s,t), \qquad 0 \leq s \leq u \leq t.  
\end{equation*}
We recall the following definition from \cite{chevyrev_et_al_homogenisation_part_2}.

\begin{definition}
\label{def:p_rp}
Let $r \in (2,3)$. An $r$-rough path over $\mathbb{R}^m$ is a c\`{a}dl\`{a}g process $\boldsymbol{X} :[0,T] \to \tilde{T}^{(2)}$ such that $\boldsymbol{X}(0)= \boldsymbol{0}$ and $\|\boldsymbol{X}\|_{r-\var} := \|X\|_{r-\var} + \|\mathbb{X}\|_{r/2-\var} < \infty$, where 
\begin{equation*}
	\|X\|_{r-\var} := \sup_{\mathcal{P}} \left(\sum_{[s,t] \in \mathcal{P}} |X(s,t)|^r \right)^{1/r}, \quad \|\mathbb{X}\|_{r/2-\var} := \sup_{\mathcal{P}} \left(\sum_{[s,t] \in \mathcal{P}} |\mathbb{X}(s,t)|^{r/2} \right)^{2/r},
\end{equation*}
where the suprema run over all partitions $\mathcal{P}$ of the interval $[0,T]$.
\end{definition}

\begin{remark}
Note that we are working with $r$-variation (rather than $\alpha$-Hölder) rough paths so as to deal with the c\`{a}dl\`{a}g processes~$\boldsymbol{S}_N$.  
\end{remark}

We finally recall the (Skorokhod-type) $r$-variation metric on the space of $r$-rough paths.

\begin{definition} \label{d:skorokhod_variation_metric}
For $r$-rough paths $\boldsymbol{X} = (X,\mathbb{X})$ and $\boldsymbol{Y} = (Y,\mathbb{Y})$, the $r$-variation metric between $\boldsymbol{X}$ and $\boldsymbol{Y}$ is defined as
\begin{equation*}
	\sigma_{r-\var}(\boldsymbol{X},\boldsymbol{Y}) 
	\coloneqq 
	\inf_{\omega \in \Omega} 
	\cbr[1]{\abs[0]{\omega} + \norm[0]{\boldsymbol{X}; \boldsymbol{Y} \circ \omega}_{r-\var}} 	
\end{equation*}
where $\Omega$ denotes the set of all continuous increasing bijections from $[0,T]$ to itself. 
Above, we used the notation $|\omega| := \sup_{t \in [0,T]} |\omega(t) -t|$ for all $\omega \in \Omega$, and  
\begin{equation*}
	\norm{\boldsymbol{X};\boldsymbol{Y}}_{r-\var} \coloneqq \norm{X -Y}_{r-\var} + \norm{\mathbb{X} - \mathbb{Y}}_{r/2-\var} \,.
\end{equation*}
We denote the space of all $r$-rough paths, equipped with the metric~$\sigma_{r-\var}$, by $\mathcal{D}^{r-\var}(\mathbb{R}^m)$.
\end{definition}

We will briefly discuss some algebraic aspects of rough paths beyond level two when they are needed in Section~\ref{s:moment_computations}. 

\subsection{Tightness result} \label{s:tightness_result}

Recall that, for all~$N \geq 1$, the processes $S_N$ and ~$\mathbb{S}_N$ introduced in \eqref{e:1st_2nd_order_p} can be written in components as follows:  
\begin{equation*}
	S^k_N(t) := \frac{1}{\sqrt{N}} \sum_{0 \leq i < \lfloor N t \rfloor} f_k(X^{(k)}_i), \qquad
     \S^{k,\ell}_N (t) := \frac{1}{N} \sum_{0 \leq i < j < \lfloor N t \rfloor} f_k(X^{(k)}_i) f_\ell(X^{(\ell)}_j),  
\end{equation*}
where~$t \geq 0$ and $k,\ell \in \llbracket 1, m \rrbracket$.

The process~$\boldsymbol{S}_N$ given by $\boldsymbol{S}_N(t) \coloneqq (S_N(t), \mathbb{S}_N(t)) \in \tilde{T}^{(2)}(\R^m)$ then becomes a $p$-rough path in the sense of Definition~\ref{def:p_rp} over $\mathbb{R}^m$ for all $p \in (2,3)$. 
Note that the $p$-variation is finite because this is a  piecewise constant process. Note also that, for all $s,t \geq 0$,  the increment $\mathbb{S}_N(s,t)$ as defined in \eqref{eq:p_rp_increments} is given by
\begin{equation}
\label{eq:def.2nd_order_increment_expr}
\mathbb{S}^{k,\ell}_N(s,t) = \frac{1}{N} \sum_{\lfloor Ns \rfloor \leq i < j < \lfloor N t \rfloor} f_k(X^{(k)}_i) f_\ell(X^{(\ell)}_j),  \qquad k, \ell \in \llbracket 1, m \rrbracket.
\end{equation}
In particular it holds that, for $0 \leq s \leq t$, we have
\begin{equation}
\label{eq:exp_dist_incr_rp}
d(\boldsymbol{S}_N(s),\boldsymbol{S}_N(t)) = |S_N(t)-S_N(s)| + \|\mathbb{S}_N(s,t)\|^{1/2}. 
\end{equation}

Thus, in order to prove tightness results for the sequence of processes $(\boldsymbol{S}_N)_{N \geq 1}$, the key ingredient will be to control the increments $S_N(s,t) \coloneqq S_N(t)-S_N(s)$ and $\mathbb{S}_N(s,t)$. 
While moment bounds on the first-order increments $S_N(s,t)$ were already obtained in \cite{nourdin_nualart_20}, additional work is required to get similar bounds on $\mathbb{S}_N(s,t)$: This is the content of the following theorem. 

\begin{theorem} \label{thm:tightness}
	Let\footnote{See Remark~\ref{rmk:tightness_p} below for a discussion on the assumptions on~$p$. In particular, note that Lemma~\ref{lem:tightness_skorokhod} and Corollary~\ref{coro:tighness} only require~$p > 1$.} $p \geq  2$ and~$d \geq 1$. 
	Let us assume the following conditions:
	\begin{enumerate}[label=(\arabic*)]
		\item \label{thm:tightness:1}
		For each~$k \in \llbracket 1,m \rrbracket$,~$f_k$ has Hermite rank at least~$d$ and satisfies~$f_k \in\mathbb{D}^{d,2p}(\gamma)$ 
        and where $\gamma = \CN(0,1)$ denotes the standard normal distribution.
		\item \label{thm:tightness:2}
		For each~$k, \ell \in \llbracket 1,m \rrbracket$, we have~$\sum_{i \in \Z} \abs[0]{\rho_{k,\ell}(i)}^{d} < \infty$, i.e.~$\rho_{k,\ell} \in \ell^{d}(\Z)$.
	\end{enumerate}
	Then, for $0 \leq s \leq t$ and every~$k,\ell \in \llbracket 1,m \rrbracket$, we have $\norm[0]{\S^{k,\ell}_N(s,t)}_{L^{p}(\Omega)} \lesssim_{d,p} \frac{(\lfloor Nt \rfloor - \lfloor N s \rfloor)}{N}$.
\end{theorem}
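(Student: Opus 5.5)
We follow the Nourdin--Nualart strategy for the first level, adapted to products. Write $a \coloneqq \lfloor Ns \rfloor$, $b \coloneqq \lfloor Nt \rfloor$ and, by Lemma~\ref{l:hermite_shift_representation},
\begin{equation*}
	f_k(X^{(k)}_i) = \delta^d\del[1]{\CS_d f_k(X^{(k)}_i)\, e_{k,i}^{\otimes d}}, \qquad f_\ell(X^{(\ell)}_j) = \delta^d\del[1]{\CS_d f_\ell(X^{(\ell)}_j)\, e_{\ell,j}^{\otimes d}} \,.
\end{equation*}
By Proposition~\ref{p:estimate_hermite_shift}, $\CS_d f_k \in \DD^{2d,2p}(\gamma)$ with norm controlled by $\norm[0]{f_k}_{d,2p}$. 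We then apply the divergence multiplication formula (Proposition~\ref{p:divergence_multiplication}) with $n_1=n_2=d$, $f = \CS_d f_k$, $g = \CS_d f_\ell$; this is admissible because these lie in $\DD^{2d,2p}(\gamma)$. If needed, we first mollify via Lemma~\ref{lem:OU_regularisation} and pass to the limit at the end. This writes $\S^{k,\ell}_N(s,t)$ as a finite sum over $(q,r,l) \in \CI_{d,d}$ of
\begin{equation*}
	\frac{1}{N}\, \delta^{2d-q-r}\Bigl( \sum_{a \le i<j<b} \rho_{k,\ell}(j-i)^{q+r-l}\, (\CS_d f_k)^{(r-l)}(X^{(k)}_i)\, (\CS_d f_\ell)^{(q-l)}(X^{(\ell)}_j)\, e_{k,i}^{\symotimes(d-q)} \symotimes e_{\ell,j}^{\symotimes(d-r)} \Bigr) \,.
\end{equation*}

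We treat the two regimes separately. \textbf{Full contraction} is the case $q=r=d$. Here there is no divergence, and the term is bounded in $L^p$ by Minkowski and Hölder by $\tfrac1N\sum_{a\le i<j<b} \abs[0]{\rho_{k,\ell}(j-i)}^{2d-l}$ times $\sup\norm[0]{(\CS_d f)^{(d-l)}}_{L^{2p}}^2$. Since $2d-l \ge d$ and $\abs{\rho}\le1$, the sum is at most $(b-a)\norm[0]{\rho_{k,\ell}}_{\ell^d}^d$, which gives $\lesssim (b-a)/N$.

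\textbf{Remaining terms.} We apply Corollary~\ref{coro:continuity_div} with $2d-q-r$ divergences. This reduces matters to bounding $\norm[0]{D^h u}_{L^p(\Omega;H^{\otimes\cdot})}$ for $h \le 2d-q-r$, where $u$ is the $H^{\otimes(2d-q-r)}$-valued sum above. Each $D^h$ distributes by Leibniz into at most $(r-l)+h_1$ derivatives on $\CS_d f_k$ and $(q-l)+h_2$ on $\CS_d f_\ell$. The total count is at most $2d-q-r+(r-l)+(q-l) \le 2d$, but each individual factor could receive up to $2d$ derivatives. This is exactly why we need $\CS_d f \in \DD^{2d,2p}$, which holds by Proposition~\ref{p:estimate_hermite_shift} under $f\in\DD^{d,2p}$. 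For the $L^p$ norm of the $H$-tensor-valued sum, we write $\norm[0]{\cdot}_{L^p(H^{\otimes n})} = \norm[0]{\norm[0]{\cdot}^2}_{L^{p/2}}^{1/2}$ and expand the square. This is where $p\ge2$ is used. The expansion gives a double sum over $(i,j),(i',j')$ with coefficients
\begin{equation*}
	\abs[0]{\rho(j-i)}^{q+r-l}\,\abs[0]{\rho(j'-i')}^{q+r-l}\cdot\abs[0]{\langle e_{k,i}^{\otimes(d-q)}\otimes e_{\ell,j}^{\otimes(d-r)}\otimes(\text{derivative directions}),\ e_{k,i'}^{\otimes(d-q)}\otimes\cdots\rangle} \,,
\end{equation*}
together with random prefactors. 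By Hölder (and Minkowski in $L^{p/2}$), these prefactors are bounded by products of $\norm[0]{(\CS_d f)^{(\cdot)}}_{L^{2p}}$.

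The main obstacle is showing the resulting deterministic quadruple sum is $\lesssim (b-a)^2$, so that after the square root and the $1/N$ we get $(b-a)/N$. Each of the four indices $i,j,i',j'$ carries exactly $d$ $\rho$-factors overall: the $q+r-l$ contractions, the $2d-q-r$ tensor inner products, plus the derivative directions, which pair like the remaining $l$ slots. Since $\abs{\rho}\le 1$, we may lower exponents so that each vertex has degree exactly $d$ (or at least $d$). The sum is then a graph sum over four vertices in $\llbracket a,b-1\rrbracket$ with all degrees $d$. By the Brascamp--Lieb/Hölder-type bound used in Breuer--Major arguments (cf.\ Proposition~\ref{p:bm83_irregular} and its proof in Appendix~\ref{a:proof_BM83}), such a sum is bounded by $C(b-a)^{\#\text{vertices}/2}\prod\norm[0]{\rho}_{\ell^d}$, i.e.\ by $(b-a)^2$. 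Where this general bound is awkward, we argue case by case: if the edges form two disjoint pairs, sum out one endpoint of each via $\ell^d$ as in Lemma~\ref{lem:path_sum_covariances}; otherwise apply Hölder/Young in the form $\sum_{i}\prod_{e\ni i}\abs{\rho_e}\le\prod_e\norm[0]{\rho_e}_{\ell^d}$ using degree $d$. Putting the terms together yields $\norm[0]{\S^{k,\ell}_N(s,t)}_{L^p}\lesssim_{d,p}(b-a)/N$.
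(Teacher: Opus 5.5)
Your proposal is correct and follows the paper's skeleton: Hermite-shift representation, the multiplication formula of Proposition~\ref{p:divergence_multiplication}, continuity of $\delta^{2d-q-r}$, and a Leibniz count showing that at most $2d$ derivatives land on each $\CS_d f$ (whence $f_k\in\DD^{d,2p}$ suffices). It also uses the same squaring trick, with Minkowski in $L^{p/2}$ and Hölder in $L^{2p}$. The genuine difference is at the crux, the deterministic quadruple sum.

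\emph{The paper's route.} It reduces to a worst case: it discards the $\rho$-factors coming from derivative directions, takes $l=r\le q$, and lowers one exponent. It then bounds the sum by hand. Summing out $i$ costs $O(1)$ by a three-factor Hölder inequality, while the sums over $\tilde i$ and $\tilde j$ cost $N^{\beta}$ and $N^{\gamma}$ by Lemma~\ref{lem:sum_hoelder}, with $\beta+\gamma=1$.

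\emph{Your route.} You recognise each term as a four-level Feynman diagram sum and invoke Breuer--Major power counting. This works, with two corrections.
\begin{itemize}
\item The degree count is off. Vertex $i$ has degree $d+(r-l)+h_1\ge d$, and similarly for $j,i',j'$. So every level has size \emph{at least} $d$ (not exactly $d$), which is precisely the hypothesis $\bq\ge d$ of Proposition~\ref{p:bm83_irregular}. No lowering of exponents is needed.
\item The appeal needs three more sentences to be rigorous. For irregular $G$, $\mathring T_G(\w,\bq,n)\to0$ implies $\sup_n\mathring T_G(\w,\bq,n)<\infty$. Regular $G$ factorise into two pair sums, each at most $n\norm{\rho}_{\ell^d}^d$. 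By translation invariance, the sum over $\llbracket a,b-1\rrbracket^4$ equals $(b-a)^2\,\mathring T_G(\w,\bq,b-a)$, which gives uniformity in $s,t$.
\end{itemize}

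\emph{Comparison.} Your route is more conceptual and makes transparent why $\ell^d$ is the right summability. However, it rests on the Breuer--Major argument, which the paper only sketches in Appendix~\ref{a:proof_BM83}. The paper's computation is elementary and self-contained.

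\emph{The fallback.} Your case-by-case fallback is not a proof on its own. Summing out a single degree-$d$ vertex at cost $O(1)$ leaves the other three vertices with reduced degrees. Closing the bound then requires splitting the one available power $N$ as $N^{\beta}N^{\gamma}$, as the paper does, or the Breuer--Major ordering argument.
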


Before proving the above result in Subsection \ref{subsec:proof_tightness_thm} below, we first explain how it entails the claimed tightness property, namely that the sequence $(\mathbf{S}_N)_{N \geq 1}$ is tight in the space $\mathcal{D}^{r-\var}(\mathbb{R}^m)$ for all $r>2$.

By \cite[pp.~9-10]{nourdin_nualart_20} applied to the processes $S^k_N$, $k \in \llbracket 1, m \rrbracket$ (note that $2p>2$ for any~$p > 1$, so in particular for~$p \geq 2$) we obtain, for $0 \leq s \leq t$,
\begin{equation}
\label{eq:tightness_est_1st_order}
\|S^k_N(t)-S^k_N(s)\|_{L^{2p}(\Omega)} \lesssim \left( \frac{\lfloor N t \rfloor - \lfloor N s \rfloor}{N} \right)^{1/2} ,
\end{equation}
while, thanks to Theorem \ref{thm:tightness}, we have 
$$ \|\mathbb{S}^{k, \ell}_N(s,t)\|_{L^{p}(\Omega)} \lesssim \left( \frac{\lfloor N t \rfloor - \lfloor N s \rfloor}{N} \right)$$
for all $k, \ell \in \llbracket 1,m \rrbracket$.
In view of \eqref{eq:exp_dist_incr_rp},
we thereby obtain
\begin{equation}
\label{eq:tightness_est_ta_metric}
\mathbb{E}[d(\boldsymbol{S}_N(s),\boldsymbol{S}_N(t))^{2p}] \lesssim \left( \frac{\lfloor N t \rfloor - \lfloor N s \rfloor}{N} \right)^{p}
\end{equation}
which can be seen as a rough-path enhancement of the estimate \cite[eq. (3.2)]{nourdin_nualart_20}.

Thanks to these estimates we first  establish tightness of the sequence of processes $(\boldsymbol{S}_N)_{N \geq 1}$ in the Skorokhod topology:

\begin{lemma} \label{lem:tightness_skorokhod}
The sequence of processes $(\boldsymbol{S}_N)_{N \geq 1}$ is tight in the $(J_1)$ Skorokhod space $D([0,1],\tilde{T}^{(2)})$, where we recall that $\tilde{T}^{(2)} = \mathbb{R}^m \oplus (\mathbb{R}^m)^{\otimes 2}$. 
\end{lemma}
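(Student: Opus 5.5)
The plan is to apply a standard Kolmogorov--Chentsov-type tightness criterion for càdlàg processes in the Skorokhod $J_1$ topology on the Polish space $(\tilde{T}^{(2)},d)$. We use the version for piecewise constant processes on the grid $N^{-1}\Z$, e.g.\ Billingsley's criterion via the modulus $w''$. Because $\boldsymbol{S}_N(0)=\boldsymbol{0}$ for every $N$, it suffices to control the modulus of continuity. Concretely, we would invoke the criterion in the form of \cite[Thm.~13.5]{billingsley} adapted to metric-space-valued paths (or its rough path analogue in \cite{chevyrev_et_al_homogenisation_part_2}). It asks for constants $\beta\ge0$, $\alpha>1$ and a nondecreasing continuous $F$ such that, for all $s\le u\le t$,
\begin{equation*}
	\E\sbr[1]{d(\boldsymbol{S}_N(s),\boldsymbol{S}_N(u))^{\beta}\, d(\boldsymbol{S}_N(u),\boldsymbol{S}_N(t))^{\beta}} \lesssim \del[1]{F(t)-F(s)}^{\alpha}\,,
\end{equation*}
together with the fact that the limit process is continuous at $t=1$ (no jump at the endpoint).

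The key steps are as follows. First, we use the Cauchy--Schwarz inequality and \eqref{eq:tightness_est_ta_metric} with $\beta=p$. This gives, for $s\le u\le t$,
\begin{equation*}
	\E\sbr[1]{d(\boldsymbol{S}_N(s),\boldsymbol{S}_N(u))^{p}\, d(\boldsymbol{S}_N(u),\boldsymbol{S}_N(t))^{p}} \lesssim \del[2]{\tfrac{\lfloor Nu\rfloor-\lfloor Ns\rfloor}{N}}^{p/2}\del[2]{\tfrac{\lfloor Nt\rfloor-\lfloor Nu\rfloor}{N}}^{p/2}.
\end{equation*}
Second, we observe the following dichotomy. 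If $t-s<1/N$, then one of the two grid increments vanishes, because $\lfloor Nu\rfloor$ equals $\lfloor Ns\rfloor$ or $\lfloor Nt\rfloor$; hence the product is zero. Otherwise both factors are bounded by $(t-s)+1/N\le 2(t-s)$, and the product is $\lesssim (t-s)^{p}$. Since $p>1$, this yields the criterion with $F(t)=t$ and $\alpha=p$. Third, the endpoint condition follows from \eqref{eq:tightness_est_ta_metric} applied on $[1-\delta,1]$: Markov's inequality gives $\P(d(\boldsymbol{S}_N(1-\delta),\boldsymbol{S}_N(1))>\eps)\lesssim \eps^{-2p}(\delta+1/N)^p$, which tends to zero as $N\to\infty$ and then $\delta\to0$. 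The same argument at $t=0$ handles the left endpoint.

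The main obstacle is the first step. The estimate \eqref{eq:tightness_est_ta_metric} must hold with a constant that is uniform in $N$, $s$ and $t$, and with the metric $d$ rather than just componentwise. This requires the identity \eqref{eq:exp_dist_incr_rp}, which uses the group structure: $\boldsymbol{S}_N(s)^{-1}\boldsymbol{S}_N(t)=\boldsymbol{S}_N(s,t)$ with second level given by \eqref{eq:def.2nd_order_increment_expr}. It also requires combining the $L^{2p}$-bound \eqref{eq:tightness_est_1st_order} for the first level with the $L^p$-bound of Theorem~\ref{thm:tightness} for the second level. The second bound enters through $\E\|\mathbb{S}_N(s,t)\|^{p}\le \E|\mathbb{S}_N(s,t)|^p$ (up to constants), where $\|\mathbb{S}_N(s,t)\|^{1/2}$ raised to the power $2p$ gives exactly $|\mathbb{S}_N(s,t)|^{p}$. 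Beyond this, it only remains to check that the chosen version of the criterion applies to paths taking values in $(\tilde T^{(2)},d)$. Since this metric space is complete and separable, the general metric-space form of the criterion covers it.
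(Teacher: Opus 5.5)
Your argument is correct in substance but takes a different route from the paper. The paper checks the two conditions of \cite[Thm.~7.2]{ethier2005markov}:
\begin{itemize}
\item tightness of the one-dimensional marginals, from \eqref{eq:tightness_est_ta_metric} with $s=0$, Chebyshev, and compactness of $d$-balls;
\item control of the modulus $w'$, which it obtains from the grid-adapted Kolmogorov criterion \cite[Prop.~3.9]{chevyrev_et_al_homogenisation_part_2}. This gives an $N$-uniform $L^{2p}$-bound on a H\"older-type seminorm of $\boldsymbol{S}_N$ over the jump times $j/N$.
\end{itemize}
You instead use the Billingsley/Ethier--Kurtz product-moment criterion. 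Your observation that the product of the two increments vanishes identically when $t-s<1/N$ is exactly what gets around the failure of a single-increment bound $\E[d(\boldsymbol{S}_N(s),\boldsymbol{S}_N(t))^{2p}]\lesssim (t-s)^{p}$ below grid scale. Your route is more elementary: it needs only the two-point estimate and Cauchy--Schwarz. The paper's route, however, produces as a by-product the uniform bound on $\E\|\boldsymbol{S}_N\|_{1/\alpha-\var}^{2p}$, which is reused verbatim in Corollary~\ref{coro:tighness}. With your approach that bound would have to be established separately.

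One step needs correcting: it is not true that $\boldsymbol{S}_N(0)=\boldsymbol{0}$ plus modulus control suffices. Product-moment bounds (and $w''$) do not see a single large jump. For example, deterministic paths with one jump of size $N$ at time $1/2$ satisfy all your conditions but are not tight. You must add tightness of the marginals $\boldsymbol{S}_N(t)$, or equivalently of $\sup_t d(\boldsymbol{0},\boldsymbol{S}_N(t))$. The product-moment criterion in \cite[Ch.~3, Sec.~8]{ethier2005markov} lists this as a separate hypothesis. Here it follows at once from \eqref{eq:tightness_est_ta_metric} with $s=0$ and Markov's inequality, exactly as in the paper's verification of condition (a).

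Relatedly, Billingsley's Theorem~13.5 is a convergence theorem that presupposes f.d.d.\ convergence; this is why you refer to the limit being continuous at $t=1$. For a pure tightness statement you should use the tightness version of the criterion, where the endpoint condition is imposed on the sequence $\boldsymbol{S}_N$, as in your third step.
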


\begin{proof}
We show that conditions (a) and (b) of \cite[Thm.~7.2]{ethier2005markov} are satisfied. Regarding condition $(a)$, note that, thanks to \eqref{eq:tightness_est_ta_metric} applied with $s =0$, and recalling that $\boldsymbol{S}_N(0) = \boldsymbol{0}$ (the unit element in $\tilde{T}^{(2)}$), we deduce by Chebyshev's inequality that, for all $t>0$ and all $M >0$, $$\mathbb{P}(d(\boldsymbol{0},\boldsymbol{S}_N(t))>M) \leq \frac{t^{p}}{M^{2p}}.$$ 
Since the ball $\{\boldsymbol{x} \in \tilde{T}^{(2)}:d(\boldsymbol{0},\boldsymbol{x}) \leq M\}$ is compact in $\tilde{T}^{(2)}$, we deduce that the sequence of $\tilde{T}^{(2)}$-valued random variables $(\boldsymbol{S}_N(t))_{N \geq 1}$ is tight, i.e.   condition $(a)$ of \cite[Thm.~7.2]{ethier2005markov} holds. For condition $(b)$, we apply \cite[Prop.~3.9]{chevyrev_et_al_homogenisation_part_2} to the process $\boldsymbol{S}_N$. The latter is a piecewise constant process with values in $\tilde{T}^{(2)}$ and with jump times given by $t_j = \frac{j}{N}$, $0 \leq j \leq N$. Furthermore, for $0\leq i<j \leq N$,  thanks to \eqref{eq:tightness_est_ta_metric} we have, 
$$ \|S_N(t_j) - S_N(t_i)\|_{L^{2p}(\Omega)} \lesssim \left( \frac{\lfloor N t_j \rfloor - \lfloor N t_i \rfloor}{N} \right)^{1/2} = (t_j-t_i)^{1/2},$$
as well as
$$ \|\mathbb{S}_N(t_i,t_j)\|_{L^{p}(\Omega)} \lesssim \left( \frac{\lfloor N t_j \rfloor - \lfloor N t_i \rfloor}{N} \right) = (t_j-t_i).$$
By \cite[Prop.~3.9]{chevyrev_et_al_homogenisation_part_2} with $q=p$ and $\beta = 1/2$, we obtain, for all $\alpha \in (\frac{1}{2p}, \frac{1}{2})$ 
$$\mathbb{E}\left[\left| \sup_{i \neq j} \frac{d(\boldsymbol{S}_N(t_i),\boldsymbol{S}_N(t_j))}{|t_i-t_j|^{\alpha-\frac{1}{2p}}} \right|^{2p}\right] \leq C $$
for some constant $C$ which is independent of $N$. Condition $(b)$ of \cite[Prop.~3.9]{chevyrev_et_al_homogenisation_part_2} therefrom follows.
\end{proof}

Finally, we obtain the claimed tightness property:

\begin{corollary} \label{coro:tighness}
	Let $r >2$.
	Then, under the assumptions of Theorem~\ref{thm:tightness}, the sequence of processes $(\boldsymbol{S}_N(t))_{t \in \sbr[0]{0,1}}$ is tight in $\mathcal{D}^{r-\var}(\mathbb{R}^m)$.
\end{corollary}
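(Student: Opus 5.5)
The plan is to combine the Skorokhod tightness of Lemma~\ref{lem:tightness_skorokhod} with a uniform control of the $r$-variation norms. Then a tightness criterion in $\mathcal{D}^{r-\var}$ upgrades $J_1$-tightness to $\sigma_{r-\var}$-tightness. Concretely, I would invoke the criterion from \cite{chevyrev_et_al_homogenisation_part_2}, in the spirit of their Kolmogorov-type results around Prop.~3.9 and their interpolation lemma. It states that if $(\boldsymbol{S}_N)$ is tight in the Skorokhod space $D([0,1],\tilde{T}^{(2)})$ and, for some $r' \in (2,r)$, the family $\norm[0]{\boldsymbol{S}_N}_{r'-\var}$ is tight in $\R$, then $(\boldsymbol{S}_N)$ is tight in $\mathcal{D}^{r-\var}$. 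The reason is that $J_1$-convergence together with bounded $r'$-variation implies $\sigma_{r-\var}$-convergence by interpolation of variation norms. The first hypothesis is exactly Lemma~\ref{lem:tightness_skorokhod}. The whole task therefore reduces to a uniform moment bound on $\norm[0]{\boldsymbol{S}_N}_{r'-\var}$ for $r'$ close to $2$.

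For the variation bound, I would use the estimates~\eqref{eq:tightness_est_1st_order} and Theorem~\ref{thm:tightness}. Written in terms of the homogeneous metric, they give~\eqref{eq:tightness_est_ta_metric}: $\E[d(\boldsymbol{S}_N(s),\boldsymbol{S}_N(t))^{2p}] \lesssim (\lfloor Nt\rfloor-\lfloor Ns\rfloor)^p/N^p$. On the grid $t_j = j/N$ this is a genuine Kolmogorov bound $|t_i-t_j|^{p}$ with $p \geq 2 > 1$. A Kolmogorov/Garsia--Rodemich--Rumsey argument for rough paths then yields a uniform bound on the Hölder norm over grid points, as in the proof of Lemma~\ref{lem:tightness_skorokhod}: for $\alpha \in (\frac{1}{2p},\frac12)$,
\begin{equation*}
\sup_N \E\Big[\sup_{i\neq j} \frac{d(\boldsymbol{S}_N(t_i),\boldsymbol{S}_N(t_j))^{2p}}{|t_i-t_j|^{(\alpha-\frac{1}{2p})2p}}\Big]<\infty \,.
\end{equation*}
Since $\boldsymbol{S}_N$ is piecewise constant with jumps only at the $t_j$, any partition of $[0,1]$ can be replaced by one with points in the grid without decreasing the variation sums (intervals without jumps contribute zero). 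Hence $\norm[0]{\boldsymbol{S}_N}_{r'-\var}$ is controlled by the grid Hölder constant with exponent $\beta = \alpha - \frac{1}{2p}$, provided $r'\beta \geq 1$. Choosing $p$ large enough is not possible, since $p$ is fixed. Instead I pick $\alpha$ close to $1/2$, so that $\beta$ is close to $\frac12-\frac1{2p} \geq \frac14$ for $p=2$. This only gives $r' > 4$, so the direct Hölder route is too lossy for $r$ near $2$.

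This is the main obstacle: obtaining $r'$-variation control for every $r'>2$ from only finitely many moments. I would not use Hölder norms here but a variation-specific Kolmogorov criterion. I expect \cite{chevyrev_et_al_homogenisation_part_2} provides one, of the form ``if $\E[d(\boldsymbol{X}(s),\boldsymbol{X}(t))^{q}] \lesssim |t-s|^{q\beta}$ with $q\beta > 1$, then $\E\norm[0]{\boldsymbol{X}}_{r'-\var}^q < \infty$ for all $r' > 1/\beta$''. It applies on grid points of piecewise constant paths with constants uniform in $N$. With $q=2p$ and $\beta=1/2$ we need $p>1$, which holds, and we obtain $r'>2$ arbitrary. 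Markov's inequality then gives tightness of $\norm[0]{\boldsymbol{S}_N}_{r'-\var}$. Combined with Lemma~\ref{lem:tightness_skorokhod} and the interpolation criterion for $r'<r$, this concludes the argument.
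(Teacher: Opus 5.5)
Your proposal is correct and follows the paper's proof. The ingredients are the same: $J_1$-tightness from Lemma~\ref{lem:tightness_skorokhod}; the variation form of the Kolmogorov criterion \cite[Prop.~3.9]{chevyrev_et_al_homogenisation_part_2} with $q=p$, $\beta=1/2$, giving $\sup_N \E\|\boldsymbol{S}_N\|_{1/\alpha-\var}^{2p}<\infty$ for $\alpha\in(\frac{1}{2p},\frac12)$; Markov's inequality; and the fact that a $J_1$-compact set intersected with a $1/\alpha$-variation ball is compact in $\mathcal{D}^{r-\var}$ for $r>1/\alpha$. You are right that the Hölder route is too lossy, but it is unnecessary, since the variation-specific criterion you anticipate is exactly the one the paper invokes.
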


\begin{proof}
We proceed as in the proof of \cite[Lem.~4.7]{chevyrev_et_al_homogenisation_part_2}. Let $\eps >0$. Since the sequence of processes $(\boldsymbol{S}_N)_{N \geq 1}$ is tight in the $(J_1)$ Skorohod space $D([0,1],\tilde{T}^{(2)})$, by Prohorov's theorem there exists a compact subset $K$ of that space such that 
$$\forall N \geq 1, \qquad \mathbb{P}(\boldsymbol{S}_N \in K) \geq 1 - \eps/2. $$
Now, for all $N \geq 1$, we again apply \cite[Prop.~3.9]{chevyrev_et_al_homogenisation_part_2} to the process $\boldsymbol{S}_N$, with $q=p$ and $\beta = 1/2$ to obtain, 
for any $\alpha \in \del[1]{\frac{1}{2p},\frac{1}{2}}$, 
$$\E\sbr[1]{\|\boldsymbol{S}_N\|_{1/\alpha-\hbox{var}}^{2p}} \leq C $$
for some constant $C>0$ which is independent of $N$. Thanks to Markov's inequality we deduce that, for $R>0$ sufficiently large, for all $N \geq 1$, with probability at least $1-\eps$, $\boldsymbol{S}_N$ belongs to the set
$$\cbr[1]{\boldsymbol{X} \in K: \, \|\boldsymbol{X}\|_{1/\alpha-\var} \leq R}$$
which is a compact subset of $\mathcal{D}^{r-\var}(\mathbb{R}^m)$ for any $r > 1/\alpha$. Since $1/\alpha > 2$ can be chosen arbitrarily close to $2$, the claimed tightness follows.
\end{proof}

\subsection{Proof of moment estimates for tightness}
\label{subsec:proof_tightness_thm}

We will need the following lemma which is a simple application of Hölder's inequality.
\begin{lemma}\label{lem:sum_hoelder}
	Assume that~$\rho \in \ell^n(\Z)$ and that~$a \in \sbr[0]{0, n}$. For~$\alpha \coloneqq \alpha(a) = \frac{n-a}{n}$, we then have
	\begin{equation}
\label{eq:estimate_sum_sho_exp_alpha_var}
\frac{1}{N^\alpha} \sum_{\lfloor N s \rfloor \leq u < \lfloor Nt \rfloor } \abs[0]{\rho(u)}^a \leq \|\rho\|_{\ell^n}^a \, \left(\frac{\lfloor Nt \rfloor - \lfloor Ns \rfloor}{N} \right)^\alpha, 
\end{equation}
for all $t,s \geq 0$ such that $0\leq s<t$.
\end{lemma}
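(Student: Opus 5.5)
Set $K \coloneqq \lfloor Nt \rfloor - \lfloor Ns \rfloor$, so the sum in \eqref{eq:estimate_sum_sho_exp_alpha_var} has exactly $K$ terms, and apply Hölder's inequality with the conjugate pair $\tfrac{n}{a}$ and $\tfrac{n}{n-a}$ to the product $\abs[0]{\rho(u)}^a \cdot 1$. This gives
\begin{equation*}
	\sum_{\lfloor N s \rfloor \leq u < \lfloor Nt \rfloor} \abs[0]{\rho(u)}^a
	\leq
	\del[3]{\sum_{\lfloor N s \rfloor \leq u < \lfloor Nt \rfloor} \abs[0]{\rho(u)}^n}^{\nicefrac{a}{n}} K^{\nicefrac{(n-a)}{n}}
	\leq
	\norm[0]{\rho}_{\ell^n}^a \, K^{\alpha} \,,
\end{equation*}
where the partial sum of $\abs[0]{\rho(u)}^n$ is bounded by the full series $\norm[0]{\rho}_{\ell^n}^n$. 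Dividing by $N^\alpha$ turns $K^\alpha$ into $\del[0]{K/N}^\alpha$, which is the right-hand side of \eqref{eq:estimate_sum_sho_exp_alpha_var}.

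The endpoint cases are handled separately, since the Hölder exponents degenerate there. For $a = 0$ we have $\alpha = 1$, and with the convention $\abs[0]{\rho(u)}^0 = 1$ the left-hand side is exactly $K/N$. Since $\norm[0]{\rho}_{\ell^n}^0 = 1$, this holds with equality. For $a = n$ we have $\alpha = 0$, and the sum is bounded by $\norm[0]{\rho}_{\ell^n}^n$ directly, with no need for Hölder.

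None of these steps presents a genuine obstacle. The argument uses only $0 \leq a \leq n$ and the finiteness of $\norm[0]{\rho}_{\ell^n}$, and places no constraint on the sign of $\rho$ or its decay beyond $\ell^n$ summability.
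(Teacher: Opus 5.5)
Your proof is correct and is essentially the paper's argument: Hölder's inequality with the conjugate exponents $n/a$ and $n/(n-a)$ applied to $|\rho|^a$ against the indicator of the $K$-point index range, with the endpoint cases $a\in\{0,n\}$ handled separately. The only difference is that you write out those endpoint cases, which the paper simply calls trivial.
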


\begin{proof}
	For~$a \in \cbr[0]{0,n}$, the claim is trivial, so we assume that~$a \in \del[0]{0,n}$.
	For~$p \coloneqq \nicefrac{n}{a} > 1$, the conjugate Hölder exponent is
	\begin{equation*}
		p' = \frac{p}{p-1} = \frac{n}{n-a} > 1 \,.		
	\end{equation*}
	Hölder's inequality now implies that
	\begin{equs}
		\frac{1}{N^\alpha} \sum_{\lfloor N s \rfloor \leq u < \lfloor Nt \rfloor} \abs[0]{\rho(u)}^a
		& = 
		\frac{1}{N^\alpha} \norm[1]{\abs[0]{\rho(\cdot)}^a \1_{\{\lfloor N s \rfloor \leq \, \cdot \, < \lfloor Nt \rfloor \}}}_{\ell^1} \\[0.5em]
		& \leq
		\frac{1}{N^\alpha} \norm[1]{\abs[0]{\rho(\cdot)}^a}_{\ell^p} \norm[0]{\1_{\{\lfloor N s \rfloor \leq \, \cdot \, < \lfloor Nt \rfloor \}}}_{\ell^{p'}} \\[0.5em]
		& = 
		\norm[0]{\rho}_{\ell^n}^a \left(\frac{\lfloor Nt \rfloor - \lfloor Ns \rfloor}{N} \right)^{\alpha},
	\end{equs}
	where we have used that~$1/p' = \alpha$.
\end{proof}

We finally proceed to the proof of Theorem~\ref{thm:tightness}.

\begin{proof}[of Theorem~\ref{thm:tightness}]
	
	In order to alleviate the notation and computations, we first assume that $s=0$, $t=1$, and we will show eventually how the computations are modified in the case of general $s,t \geq 0$ with $s \leq t$.
	
	\smallskip
	\noindent
	\emph{Conventions in this proof.}
    Let~$\v, \w \in \llbracket 1,m \rrbracket$; we will use~$\v$ in place of~$k$ and~$\w$ in place of~$\ell$ to free up $k$ and~$\ell$ as parameters for other use. 
	Furthermore, we will write
	\begin{equation*}
		g \coloneqq f_{\v}, \quad 
		h \coloneqq f_{\w}, \quad 
		g_{d}^{(r)} \coloneqq \sbr[0]{\CS_{d} g}^{(r)}, \quad 
		h_{d}^{(r)} \coloneqq \sbr[0]{\CS_{d} h}^{(r)}.
	\end{equation*}
	where~$\CS_{d}$ denotes the~$d$-th Hermite shift operator, see Definition~\ref{d:hermite_shift}.
	
	At first, we use Lemma~\ref{l:hermite_shift_representation} to represent~$g(X^{(\v)}_i)$ and~$h(X^{(\w)}_j)$ as iterated Malliavin divergences and then apply the multiplication formula, Proposition~\ref{p:divergence_multiplication}, to compute the product: 
	\begin{align}
		\thinspace &
		g(X^{(\v)}_i)h(X^{(\w)}_j)
		= 
		\delta^{d}\del[1]{g_{d}(W(e_{\v,i})) e_{\v,i}^{\otimes d}} \delta^{d}\del[1]{h_{d}(W(e_{\w,j})) e_{\w,j}^{\otimes d}} \label{e:product_formula} \\[0.5em]
		= \ & 
		\sum_{(q,r,l) \in \CI_{d}} C_{d,q,r,l} \delta^{2d-q-r}\del[1]{g_{d}^{(r-l)}(X^{(\v)}_i) h_{d}^{(q-l)}(X^{(\w)}_j) e_{\v,i}^{\tilde{\otimes} (d-q)} \tilde{\otimes} e_{\w,j}^{\tilde{\otimes} (d-r)}} \rho_{\v,\w}(j-i)^{q+r-l} \notag
	\end{align}
	where
	\begin{equs}
		\CI_{d} & \coloneqq \CI_{d, d} = \cbr[1]{(q,r,l) \in \N^3: \ q \in \llbracket 0,d \rrbracket, \ r \in \llbracket 0,d \rrbracket, \ l \in \llbracket 0,q \wedge r \rrbracket}, \\
		%%%
		C_{d,q,r,l} & \coloneqq C_{d,d,q,r,l} =
		\binom{d}{q} \binom{d}{r} \binom{q}{l} \binom{r}{l} l!\,.
	\end{equs}
	Note that in~\eqref{e:product_formula}, we have that $r-l, q-l \in \llbracket 0,d \rrbracket$; in particular, the expression only contains instances of~$g_{d}^{(a)}$ for~$a \in \llbracket 0,d \rrbracket$ and likewise for $h_{d}^{(a)}$.
	Therefore, for fixed~$(q,r,l) \in \CI_{d}$, we set ~$K = K(d,q,r) \coloneqq 2d-q-r$ and apply Meyer's inequality, Proposition~\ref{p:meyer_inequality}, to get
	\begin{equs}[][e:tightness_meyer]
		\thinspace & 
		\norm[2]{\delta^{K}\del[2]{\frac{1}{N} \sum_{i < j} g_{d}^{(r-l)}(X^{(\v)}_i) h_{d}^{(q-l)}(X^{(\w)}_j) e_{\v,i}^{\tilde{\otimes} (d-q)} \tilde{\otimes} e_{\w,j}^{\tilde{\otimes} (d-r)}} \rho_{\v,\w}(j-i)^{q+r-l}}_{L^p(\Omega)} \\[0.5em]
		\lesssim  \  &
		\sum_{k = 0}^{K} \norm[2]{D^k\del[2]{\frac{1}{N} \sum_{i < j} g_{d}^{(r-l)}(X^{(\v)}_i) h_{d}^{(q-l)}(X^{(\w)}_j) e_{\v,i}^{\tilde{\otimes} (d-q)} \tilde{\otimes} e_{\w,j}^{\tilde{\otimes} (d-r)}} \rho_{\v,\w}(j-i)^{q+r-l}}_{L^p}
	\end{equs}
	where the last~$L^p$-norm is the one in~$L^p(\Omega; H^{\tilde{\otimes} (K+k)})$.
	
	Next, we observe that by the Leibniz formula (see~\cite[Exercise~2.3.10]{nourdin_peccati_book}), we have
	\begin{equs}[][e:tightness_leibniz]
		D^k \sbr[1]{g_{d}^{(r-l)}(X^{(\v)}_i) h_{d}^{(q-l)}(X^{(\w)}_j)}
		& =
		\sum_{\ell = 0}^k \binom{k}{\ell} \del[1]{D^{\ell} g_{d}^{(r-l)}(X^{(\v)}_i)} \tilde{\otimes} \del[1]{D^{k - \ell} h_{d}^{(q-l)}(X^{(\w)}_j)} \\
		& =
		\sum_{\ell = 0}^k \binom{k}{\ell} g_{d}^{(r-l + \ell)}(X^{(\v)}_i) h_{d}^{(q-l + k- \ell)}(X^{(\w)}_j) \, e_{\v,i}^{\tilde{\otimes} \ell} \tilde{\otimes} e_{\w,j}^{\tilde{\otimes} (k-\ell)}
	\end{equs}
	Since~$k \leq K = 2d-q-r$ and~$\ell \leq k$, as well as~$l \leq q \wedge r$, we emphasise that the derivatives in~\eqref{e:tightness_leibniz} satisfy
	\begin{equs}[][e:parameters_tightness]
		r-l+\ell & \leq r - l + (2d - q - r) = 2d - q - l \leq 2d \, , \\[0.5em]
		q-l + k- \ell & \leq q - l + (2d - q - r) = 2d - r - l \leq 2d \,.
	\end{equs}
	Hence, we require only~$2d$ derivatives for~$g_{d}$ and~$h_{d}$, that is, only~$d$ derivatives for~$g$ and~$h$ by Proposition~\ref{p:estimate_hermite_shift}, as claimed.
	We now plug~\eqref{e:tightness_leibniz} into~\eqref{e:tightness_meyer} and bound the resulting expression by
	\begin{align}
		\thinspace &
		\sum_{k = 0}^{K} \sum_{\ell = 0}^k \binom{k}{\ell} \times \label{e:tightness:bound}\\ 
		%%%
		& \qquad \times 
		\norm[2]{\frac{1}{N} \sum_{i < j} g_{d}^{(r-l + \ell)}(X^{(\v)}_i) h_{d}^{(q-l + k- \ell)}(X^{(\w)}_j) e_{\v,i}^{\tilde{\otimes} (d-q + \ell)} \tilde{\otimes} e_{\w,j}^{\tilde{\otimes} (d-r + k - \ell)} \rho_{\v,\w}(j-i)^{q+r-l}}_{L^p} \notag
	\end{align}
	where the~$L^p$-norm in the previous expression is a shorthand for the norm in~$L^p(\Omega; H^{\tilde{\otimes} (K+k)})$.
	Now we fix~$k \in \llbracket 0,K \rrbracket$ and~$\ell \in \llbracket 0,k \rrbracket$ and use the elementary identity
	\begin{equation*}
		\norm[0]{u}_{L^p(\Omega;H^{\tilde{\otimes} A})}^2
		= 
		\norm[1]{ \norm[0]{u}_{H^{\tilde{\otimes} A}}^2}_{L^{\frac{p}{2}}(\Omega)}
	\end{equation*}
	to bound the square of the norm in~\eqref{e:tightness:bound} as follows:
	\begin{align}
		\thinspace & 
		\norm[2]{\frac{1}{N} \sum_{i < j} g_{d}^{(r-l + \ell)}(X^{(\v)}_i) h_{d}^{(q-l + k- \ell)}(X^{(\w)}_j) e_{\v,i}^{\tilde{\otimes} (d-q + \ell)} \tilde{\otimes} e_{\w,j}^{\tilde{\otimes} (d-r + k - \ell)} \rho_{\v,\w}(j-i)^{q+r-l}}_{L^p}^2 \notag \\[0.5em]
		\leq 
		\ & \frac{1}{N^2} \sum_{i < j} \sum_{\tilde{i} < \tilde{j}} \, \abs[0]{\rho_{\v,\w}(j-i)^{q+r-l} \rho_{\v,\w}(\tilde{j}-\tilde{i})^{q+r-l}} \times \notag\\[0.5em]
		\qquad & \times
		\abs[0]{\scal{e_{\v,i}^{\tilde{\otimes} (d-q + \ell)} \tilde{\otimes} e_{\w,j}^{\tilde{\otimes} (d-r + k - \ell)},e_{\v,\tilde{i}}^{\tilde{\otimes} (d-q + \ell)} \tilde{\otimes} e_{\w,\tilde{j}}^{\tilde{\otimes} (d-r + k - \ell)}}} \times \notag\\[0.5em]
		\qquad & \times \norm[1]{
			g_{d}^{(r-l + \ell)}(X^{(\v)}_i) h_{d}^{(q-l + k- \ell)}(X^{(\w)}_j)
			g_{d}^{(r-l + \ell)}(X^{(\v)}_{\tilde{i}}) h_{d}^{(q-l + k- \ell)}(X^{(\w)}_{\tilde{j}})
		}_{L^{\frac{p}{2}}(\Omega)}
		\label{e:tightness:bound:2}
	\end{align}
	where the scalar product is taken in~$H^{\tilde{\otimes} (K+k)}$.
	By Hölder's inequality with parameters
	\begin{equation*}
		p_i = 2p \quad \text{for} \quad i \in \llbracket 1,4 \rrbracket, \quad \frac{1}{p/2} = \frac{2}{p} = \frac{4}{2p} = \frac{1}{2p} + \frac{1}{2p} + \frac{1}{2p} + \frac{1}{2p}\,,
	\end{equation*}
	we can bound the~$L^{\frac{p}{2}}(\Omega)$-norm in the previous expression uniformly in~$i,j,\tilde{i}, \tilde{j}$ as follows:
	\begin{equs}[][e:est_product_g_h]
		\thinspace &
		\norm[1]{
			g_{d}^{(r-l + \ell)}(X^{(\v)}_i) h_{d}^{(q-l + k- \ell)}(X^{(\w)}_j)
			g_{d}^{(r-l + \ell)}(X^{(\v)}_{\tilde{i}}) h_{d}^{(q-l + k- \ell)}(X^{(\w)}_{\tilde{j}})
		}_{L^{\frac{p}{2}}(\Omega)} \\[0.5em]
		\leq \  &
		\norm[1]{g_{d}^{(r-l + \ell)}}_{L^{2p}(\gamma)} \norm[1]{h_{d}^{(q-l + k- \ell)}}_{L^{2p}(\gamma)}
		\norm[1]{g_{d}^{(r-l + \ell)}}_{L^{2p}(\gamma)} \norm[1]{h_{d}^{(q-l + k- \ell)}}_{L^{2p}(\gamma)} \\[0.5em]
		\lesssim \  &
		\norm[1]{g^{(r-l + \ell-d)}}_{L^{2p}(\gamma)} \norm[1]{h^{(q-l + k- \ell-d)}}_{L^{2p}(\gamma)}
		\norm[1]{g^{(r-l + \ell-d)}}_{L^{2p}(\gamma)} \norm[1]{h^{(q-l + k- \ell-d)}}_{L^{2p}(\gamma)}
	\end{equs}
	By Proposition~\ref{p:estimate_hermite_shift}, the expression on the right hand side is finite by assumption~\ref{thm:tightness:1} in the formulation of the theorem.
	
	It remains to show that the double sum in~\eqref{e:tightness:bound:2} is bounded. 
	We now set~$a \coloneqq d-q+\ell$ and~$b \coloneqq d-r+k-\ell$ to find
	\begin{equs}
		\thinspace &
		\scal{e_{\v,i}^{\tilde{\otimes} (d-q + \ell)} \tilde{\otimes} e_{\w,j}^{\tilde{\otimes} (d-r + k - \ell)},e_{\v,\tilde{i}}^{\tilde{\otimes} (d-q + \ell)} \tilde{\otimes} e_{\w,\tilde{j}}^{\tilde{\otimes} (d-r + k - \ell)}}_{H^{\tilde{\otimes} (K+k)}} \\[0.5em]
		\ = \ &
		\sum_{u = 0}^{a \wedge b}  
		\frac{a! b!}{u! (a+b)!}
		\rho_{\v,\w}(\tilde{j}-i)^u \rho_{\v,\v}(\tilde{i}-i)^{a-u} 
		\rho_{\w,\v}(\tilde{i}-j)^{u} \rho_{\w,\w}(\tilde{j}-j)^{b-u} \,.
	\end{equs}
	We will henceforth ignore the combinatorial prefactor and bound the corresponding expression in~\eqref{e:tightness:bound:2} as follows:
	\begin{equs}[][e:tightness:deterministic]
		\sum_{u = 0}^{a \wedge b} \frac{1}{N^2} \sum_{i < j} \sum_{\tilde{i} < \tilde{j}} \, & \abs[0]{\rho_{\v,\w}(j-i)}^{q+r-l} \abs[0]{\rho_{\v,\w}(\tilde{j}-\tilde{i})}^{q+r-l} 
		\abs[0]{\rho_{\v,\w}(\tilde{j}-i)}^u \times \\
		%%%
		\times \ & 
		\abs[0]{\rho_{\v,\v}(\tilde{i}-i)}^{a-u} 
		\abs[0]{\rho_{\w,\v}(\tilde{i}-j)}^{u} \abs[0]{\rho_{\w,\w}(\tilde{j}-j)}^{b-u}
	\end{equs}
	Before we start to bound the expression in~\eqref{e:tightness:deterministic}, let us observe the following: 
	\begin{itemize}
		\item 
		Since~$\ell \geq 0$ and~$k - \ell \geq 0$, it suffices to consider the respective worst case scenarios~$\ell = 0$ and $k =\ell$. 
		In this case, the corresponding expressions for~$a$ and~$b$ read: $a = d - q$, $b = d - r$.
		
		If~$\ell > 0$ or~$k-\ell > 0$, for~$\mathtt{c},\mathtt{d} \in \cbr[0]{\v,\w}$ one can bound the corresponding power of~$\abs[0]{\rho_{\mathtt{c},\mathtt{d}}(\ldots)}$ by~$1$. (Recall that we have assumed~$\rho_{\mathtt{c},\mathtt{c}}(0)=1$, so this bound follows from Cauchy--Schwarz.)
		\item The resulting expression is symmetric in~$q$ and~$r$. W.l.o.g., we assume that~$r \leq q$ which enforces the constraint~$l \leq q \wedge r = r$.
		
		We will focus on the case where~$l = r$.
		This is the hardest because the exponent $q+r-l = q$ is the smallest in that situation.
		\item
		As $q \geq r$,  considering the worst cases $\ell = 0$ and $k - \ell = 0$ we may assume that $a = d_\star -q \leq d_\star - r = b$.
	\end{itemize}
	In view of these bullet points, the expression in~\eqref{e:tightness:deterministic} can be upper bounded as follows:
	\begin{equs}[][e:tightness:deterministic_2]
		\sum_{u = 0}^{d-q} \frac{1}{N^2} \sum_{i < j} \sum_{\tilde{i} < \tilde{j}} \, & \abs[0]{\rho_{\v,\w}(j-i)}^{q} \abs[0]{\rho_{\v,\w}(\tilde{j}-\tilde{i})}^{q} 
		\abs[0]{\rho_{\v,\w}(\tilde{j}-i)}^u \times \\
		%%%%
		\times & \abs[0]{\rho_{\v,\w}(\tilde{i}-i)}^{d-q-u} 
		\abs[0]{\rho_{\w,\v}(\tilde{i}-j)}^{u} \abs[0]{\rho_{\w,\w}(\tilde{j}-j)}^{d-q-u}
	\end{equs}
	In fact, this expression almost equals the one in~\eqref{e:tightness:deterministic} subject to the assumptions in the bullet points; we have only upper bounded 
	\begin{equation*}
		\abs[0]{\rho_{\w,\w}(\tilde{j}-j)}^{b-u} = \abs[0]{\rho_{\w,\w}(\tilde{j}-j)}^{d-r-u} \leq \abs[0]{\rho_{\w,\w}(\tilde{j}-j)}^{d-q-u} \,.
	\end{equation*}
	\smallskip
	\noindent
	\emph{Convention.}
	From here onwards, we will not distinguish the covariance functions~$\rho_{\v,\v}$, $\rho_{\v,\w}$, $\rho_{\w,\v}$, and~$\rho_{\w,\w}$ any more and simply write~$\rho$; otherwise, the resulting expressions become completely unwieldy. 
	One can check that the remainder of this proof only ever uses the integrability assumption~$\rho \in \ell^{d}$, and nothing else, so this abuse of notation does not cause any problems.
	
	We will now forget the summation constraints~$i < j$ and~$\tilde{i} < \tilde{j}$ in~\eqref{e:tightness:deterministic_2} and re-order the summation.
	For fixed~$u \in \llbracket 0,d-q\rrbracket$, we bound the inner sum in ~\eqref{e:tightness:deterministic_2} from above by the following expression:
	\begin{equation} \label{e:tightness:deterministic_3}
		\frac{1}{N^2}
		\sum_{j} 
		\underbrace{\sum_{\tilde{j}}
			\abs[0]{\rho_j(\tilde{j})}^{d-q-u}}_{\eqqcolon C}
		\underbrace{\sum_{\tilde{i}} \abs[0]{\rho_{\tilde{j}}(\tilde{i})}^{q} 
			\abs[0]{\rho_j(\tilde{i})}^{u}}_{\eqqcolon B}
		\underbrace{\sum_{i}\abs[0]{\rho_j(i)}^{q} \abs[0]{\rho_{\tilde{i}}(i)}^{d-q-u} 
			\abs[0]{\rho_{\tilde{j}}(i)}^u}_{\eqqcolon A}
	\end{equation}
	where we have used the notation~$\rho_{v} \coloneqq \rho(\cdot - v)$. Recall that the summations in~$i$, $\tilde{i}$, $j$, and~$\tilde{j}$ are over the set~$\llbracket -N, N \rrbracket$.
	We now set 
	\begin{equation*}
		\beta \coloneqq \frac{d-(q+u)}{d}, \quad \gamma \coloneqq \frac{d-(d-q-u)}{d} = \frac{q+u}{d}, \quad \beta + \gamma = 1 
	\end{equation*}
	and apply Lemma~\ref{lem:sum_hoelder} to show that
	\begin{enumerate}[label=(\roman*)]
		\item \label{e:tightness:i} $A < \infty$ uniformly in $j$,$\tilde{i}$, and~$\tilde{j}$, i.e. there exists a constant~$c > 0$ s.t. $A \leq c$ for all~$j$,$\tilde{i}$, and~$\tilde{j}$, and
		\item \label{e:tightness:ii} $\frac{1}{N^\beta} B < \infty$ uniformly in $j$ and~$\tilde{j}$, and
		\item \label{e:tightness:iii} $\frac{1}{N^\gamma} C < \infty$ uniformly in $j$.
	\end{enumerate}
	Once we have verified these assertions, the claim then follows because~$\beta + \gamma = 1$, i.e. we have one inverse power of~$N$ left to bound~$\frac{1}{N} \sum_{j} 1 \lesssim 1$.
	
	\vspace{0.5em}
	\noindent
	$\triangleright$ For the assertion in~\ref{e:tightness:i}, we set
	\begin{equation*}
		p_1 \coloneqq \frac{d}{q}, \quad p_2 \coloneqq \frac{d}{u}, \quad p_3 \coloneqq \frac{d}{d-q-u}, \quad \frac{1}{p_1} + \frac{1}{p_2} + \frac{1}{p_3} = 1 \,.
	\end{equation*}
	with the understanding that~\enquote{$d/0 \coloneqq \infty$} (i.e., if~$q$, $u$, or $d-q-u$ is zero, the corresponding~$p_i$ equals infinity).  
	By Hölder's inequality for multiple products, we then find
	\begin{equation*}
		A 
		\leq \norm[0]{\rho_j}_{\ell^{d}}^q \norm[0]{\rho_{\tilde{j}}}^u_{\ell^{d}} \norm[0]{\rho_{\tilde{i}}}^{d-q-u}_{\ell^{d}}
	\end{equation*}
	and the claim follows because~$\norm[0]{\rho_v}_{\ell^{d}} \leq \norm[0]{\rho}_{\ell^{d}}$ for each fixed~$v \in \Z$.
	
	\vspace{0.5em}
	\noindent
	$\triangleright$ For the assertion in~\ref{e:tightness:ii}, we set 
	\begin{equation*}
		p \coloneqq \frac{d}{q}, \quad p' \coloneqq \frac{p}{p-1} = \frac{d}{d-q}
	\end{equation*}
	and then, by Hölder's inequality, obtain the following estimate:
	\begin{equs}
		\frac{1}{N^\beta} B 
		& =  
		\frac{1}{N^\beta} \norm[1]{\abs[0]{\rho_{\tilde{j}}}^q \abs[0]{\rho_{j}}^u}_{\ell^1}
		\leq
		\norm[1]{\abs[0]{\rho_{\tilde{j}}}^q}_{\ell^{p}} \frac{1}{N^\beta}  \norm[1]{\abs[0]{\rho_{j}}^u}_{\ell^{p'}}
		=
		\norm[1]{\rho_{\tilde{j}}}_{\ell^{d}}^q \frac{1}{N^\beta} \del[3]{\sum_{k} \abs[0]{\rho_{j}(k)}^{\frac{u d}{d-q}}}^{\frac{d-q}{d}} \\[0.5em]
		& = 
		\norm[1]{\rho_{\tilde{j}}}_{\ell^{d}}^q \del[3]{\frac{1}{N^{\bar{\beta}}} \sum_{k} \abs[0]{\rho_{j}(k)}^{\frac{u d}{d-q}}}^{\frac{d-q}{d}}
	\end{equs}	
	where
	\begin{equation*}
		\bar{\beta} \coloneqq \beta \cdot \frac{d}{d-q} = \frac{d-(q+u)}{d} \cdot \frac{d}{d-q} 
		= \frac{d - \frac{u d}{d-q}}{d} \,. 
	\end{equation*}
	The assertion in~\ref{e:tightness:ii} now follows by Lemma~\ref{lem:sum_hoelder} with~$a \coloneqq \frac{u d}{d-q}$ since~$\beta = \frac{d-a}{d} = \alpha(a)$. 
	
	\vspace{0.5em}
	\noindent
	$\triangleright$ The assertion in~\ref{e:tightness:iii} is a direct consequence of Lemma~\ref{lem:sum_hoelder} with~$a \coloneqq d-q-u$ since~$\gamma = \alpha(a)$. 
	
	\vspace{0.5em}
	The claim of the theorem now follows by the combination of~\eqref{e:tightness_meyer} to~\eqref{e:tightness:deterministic_3} because
	\begin{equation*}
		\abs[0]{\CI_{d}} < \infty, \quad \ell \leq k \leq K \leq 2d, \quad a \vee b \leq 3d 
	\end{equation*}
	and all of those bounds are independent of~$N$; we therefore get $\|\mathbb{S}^{k,\ell}_N(0,1)\|_{L^p(\Omega)} \lesssim_{d,p} 1$ as requested. 
	
    The above covers the case where $s=0$ and $t=1$. For general $s,t \geq 0$ with $0 \leq s \leq t$, we use exactly the same bounds as above, only now the bound for the contribution from the correlations is as in \eqref{e:tightness:deterministic_3}, but with indices $j,\tilde{j},i,\tilde{i}$ running through the set $\llbracket \lfloor Ns \rfloor, \lfloor Nt \rfloor-1 \rrbracket$. 
    The factors $A$,$B$,
    $C$ are then bounded as follows: 
    \begin{enumerate}[label=(\roman*)]
		\item \label{e:tightness:i_bis} $A < \infty$ uniformly in $j$, $\tilde{i}$, $\tilde{j}$, $N$, $s$, and $t$, i.e. there exists a constant~$c > 0$ s.t. $A \leq c$ for all $j$, $\tilde{i}$, $\tilde{j}$, $N$, $s$, and $t$.
		\item \label{e:tightness:ii_bis} $\frac{1}{N^\beta} B \lesssim  \left(\frac{\lfloor Nt \rfloor - \lfloor Ns \rfloor}{N} \right)^\beta$ uniformly in $j$ and~$\tilde{j}$, and
		\item \label{e:tightness:iii_bis} $\frac{1}{N^\gamma} C  \lesssim  \left(\frac{\lfloor Nt \rfloor - \lfloor Ns \rfloor}{N} \right)^{\gamma}$ uniformly in $j$.
	\end{enumerate}
    Since $\beta + \gamma =1$, we finally obtain the requested bound.
\end{proof}

\begin{remark}[On the parameter~$p$] \label{rmk:tightness_p}
	The attentive reader might wonder whether the previous proof truly requires the suboptimal assumption~$p \geq 2$, cf. Remark~\ref{rmk:main}, Point~\ref{rmk:main:integrability}. 
	The answer to that question is quite subtle. 
	First, note that the assumption~$p > 1$ is crucial in Lemma~\ref{lem:tightness_skorokhod} and Corollary~\ref{coro:tighness}: It is required for the Kolmogorov-type theorem in~\cite[Prop.~3.9]{chevyrev_et_al_homogenisation_part_2}. 
	This dovetails nicely with the fact that Meyer's inequality, as applied in~\eqref{e:tightness_meyer}, is valid for~$p > 1$, but fails for~$p=1$.
	However, the previous step---the application of the product formula (see Proposition~\ref{p:divergence_multiplication}) in~\eqref{e:product_formula}---already requires~$p \geq 2$. 
	It seems plausible to expect that the assumption in Proposition~\ref{p:divergence_multiplication}, i.e. that~$f,g \in \DD^{n_1+n_2,2p}$ for~$p \geq 2$, can be relaxed to just require~$p > 1$---but not to~$p \geq 1$, because its proof, again, requires Meyer's inequality. 
	Since we require the assumption~$p \geq 2$ for the f.d.d. convergence anyway, cf. Remark~\ref{rmk:failure_tightness_strategy}, we will not follow this line of thought further.
\end{remark}

\section{Convergence of finite-dimensional distributions} \label{s:convergence_fdd}

After we have established tightness in the previous section, it remains to show the convergence of the finite-dimensional distributions.
The main result in this section is the following theorem:

\begin{theorem}[Convergence of f.d.d.] \label{thm:conv_fdd}
	Let~$d \geq 1$ and assume the following conditions:
	\begin{enumerate}[label=(\arabic*)]
		\item \label{thm:conv_fdd:1}
		For any~$k \in \llbracket 1,m \rrbracket$, $f_k$ has Hermite rank at least~$d$ and satisfies~$f_k \in \mathbb{D}^{d,4}(\gamma)$.
		\item \label{thm:conv_fdd:2}
		For all~$k, \ell \in \llbracket 1,m \rrbracket$, we have~$\sum_{i \in \Z} \abs[0]{\rho_{k,\ell}(i)}^{d} < \infty$, i.e.~$\rho_{k,\ell} \in \ell^{d}(\Z)$.
	\end{enumerate}
	Then, the finite-dimensional distributions of~$\boldsymbol{S}_N = (S_N, \S_N)$ converge to those of a Brownian rough path~$\boldsymbol{B} = (B,\mathbb{B})$ with characteristics~$(\Sigma,\Gamma)$ given in~\eqref{e:Sigma} and~\eqref{e:Gamma}, respectively.
\end{theorem}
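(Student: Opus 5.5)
The plan is to prove Theorem~\ref{thm:conv_fdd} via a Slutsky-type double-limit argument in the chaos cut-off $M$ and $N$. For $M \geq d$ let $f_k^M \coloneqq \pi_{\leq M} f_k$ and let $\boldsymbol{S}^M_N = (S_N^M, \S_N^M)$ be built from $\vec f^M$. We then aim to verify three properties. First, $\lim_{M\to\infty}\limsup_{N\to\infty} \norm[0]{\boldsymbol{S}_N(s,t) - \boldsymbol{S}^M_N(s,t)}_{L^2(\Omega)} = 0$ for each fixed $s<t$. Second, for fixed $M$, the f.d.d.\ of $\boldsymbol{S}^M_N$ converge to those of a Brownian rough path $\boldsymbol{B}^M$ with characteristics $(\Sigma^M,\Gamma^M)$. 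Third, $\boldsymbol{B}^M \to \boldsymbol{B}$ in law as $M \to \infty$. The third step is easy. $\Delta^M_{k,\ell}(u) = \sum_{q=d}^M q!\,c^{(k)}_q c^{(\ell)}_q \rho_{k,\ell}(u)^q$ converges to $\Delta_{k,\ell}(u)$ dominated by $\norm[0]{f_k}_{L^2}\norm[0]{f_\ell}_{L^2}\abs[0]{\rho_{k,\ell}(u)}^d \in \ell^1$, so $\Sigma^M\to\Sigma$ and $\Gamma^M\to\Gamma$. Moreover, $\boldsymbol{B}^M$ is a continuous function of a Brownian motion with covariance $\Sigma^M$ plus the deterministic drift $t\Gamma^M$. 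Combining the three steps in a standard Slutsky/Billingsley triangular-array lemma gives the claim.

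The reduction step, which is the first property, is where $f_k\in\DD^{d,4}$ enters; it is the main obstacle. The first level is handled by the classical Breuer--Major variance bound $\sum_{q>M} q!\,c_q^2\sum_u\abs{\rho(u)}^q \lesssim \sum_{q>M} q!\, c_q^2\, \norm{\rho}_{\ell^d}^d \to 0$. For the second level, write $f = f^M + f^{>M}$. It then suffices to show that $\norm[0]{\frac1N\sum_{i<j} g(X_i^{(\v)})h(X_j^{(\w)})}_{L^2}$ vanishes in the double limit whenever at least one of $g,h$ is a tail $f^{>M}$. Hypercontractivity is unavailable here, so I would compute the second moment directly. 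Each factor is written as $\delta^d(\CS_d g\, e^{\otimes d})$ via Lemma~\ref{l:hermite_shift_representation}, and then Gaussian integration by parts (duality, then Leibniz and Lemma~\ref{lem:commutation_D_delta}) is iterated until all divergences are removed. The outcome is a sum over matchings of products of deterministic covariance factors and expectations of $\CS_d$-derivatives of $g,h$ of total order at most $2d$ per function, evaluated at four points. The expectations are bounded in $L^1$ by Hölder with four $L^4$ norms, using Proposition~\ref{p:estimate_hermite_shift} and $f\in\DD^{d,4}$. The covariance sums are bounded exactly as in the proof of Theorem~\ref{thm:tightness}, via Lemma~\ref{lem:sum_hoelder} and Lemma~\ref{lem:path_sum_covariances}.

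The delicate point is to show smallness rather than mere boundedness. Diagrams in which the covariance structure is irregular vanish as $N\to\infty$ by Proposition~\ref{p:bm83_irregular} in its absolute-value form and Lemma~\ref{lem:path_sum_covariances}. For the regular ones, the probabilistic factor factorises into pairwise expectations of the form $\E[f^{>M}(X)f^{>M}(Y)]$ or $\E[f^{>M}f]$, which are controlled by $\norm[0]{f^{>M}}_{L^2}$ times $\ell^d$-summable covariances and hence tend to zero as $M\to\infty$. The careful bookkeeping of which case each term falls into is where I expect most of the work.

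For the fixed-$M$ step, the limit law lives in a finite inhomogeneous Wiener chaos and is therefore moment-determined, so it suffices to prove convergence of all joint moments of increments. I first split $\S^M_N = \tilde\S^M_N - \frac12 \mathrm{diag}_N$. Here $\tilde\S^M_N$ adds half the diagonal $i=j$ terms, making it the level-two iterated integral of the piecewise linear interpolation. The diagonal $\frac1N\sum_i \vec f^M(X_i)^{\otimes 2}$ converges in $L^2$ to $t\DD^M$: its variance is handled by Corollary~\ref{coro:BM83}, since regular pairings give a $\frac1{N^2}\sum_{i,j}\abs{\Delta^M(i-j)}^2$-type term $\to 0$. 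The moments of the signature of the interpolated path are then computed through Corollary~\ref{coro:BM83}, which reduces them asymptotically to sums over pairings. I would show that odd levels vanish and that higher-order diagonals are negligible, and that only the ladder pairing consistent with $i_1\le i_2<i_3\le\cdots$ survives. Each pair then contributes $\frac12\DD^M+\Gamma^M$, which by the amended Fawcett formula matches the expected signature of the Stratonovich Brownian rough path shifted by $\AA^M$. Extending to joint moments at several times via Chen's relation, and subtracting $\frac t2\DD^M$, gives exactly \eqref{e:Stratonovich_correction}, i.e.\ the Itô form \eqref{e:rp_limit} with characteristics $(\Sigma^M,\Gamma^M)$.
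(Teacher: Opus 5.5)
Your overall architecture matches the paper's: Slutsky in the double limit $N\to\infty$ then $M\to\infty$, reduction to finite chaos by iterated Gaussian integration by parts with diagram bookkeeping, the diagonal law of large numbers, the piecewise-linear lift with the asymptotic Wick theorem, negligibility of higher diagonals and non-ladder pairings, the amended Fawcett formula, and removal of the cut-off. The fixed-$M$ and cut-off parts are fine as sketched.

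\textbf{Main gap (regular diagrams).} The gap is in the reduction step, exactly where you locate the difficulty. For a regular diagram the probabilistic factor $E_G=\mathbb{E}\bigl[\prod_{j=1}^4[\CS_d h_j]^{(a_j)}(X_{i_j})\bigr]$ does \emph{not} factorise into pairwise expectations. The integration-by-parts graph only records the covariance factors that were \emph{produced}; the four Gaussians remain correlated across the two pairs. Factorisation of regular diagrams holds in the diagram formula for Hermite polynomials, but fully expanding $f^{>M}$ into chaoses is exactly what cannot be controlled without hypercontractivity. The regular covariance sum $N^{-2}\sum\lvert\rho(i_2-i_1)\rvert^d\lvert\rho(i_4-i_3)\rvert^d$ stays of order one, and the only a priori bound on $E_G$ is $\prod_j\|h_j\|_{d,4}$, which does not vanish as $M\to\infty$. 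So your argument gives boundedness but not smallness.

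The missing idea is one further integration by parts along the pairing of the levels. For the pairing $\{1,2\},\{3,4\}$ this reads
\[
\mathbb{E}[F_1F_2F_3F_4]=\mathbb{E}[F_1F_2]\,\mathbb{E}[F_3F_4]+\mathbb{E}\bigl[\langle D(F_1F_2),-DL^{-1}(F_3F_4)\rangle_H\bigr].
\]
The first term is bounded by $\prod_j\|[\CS_dh_j]^{(a_j)}\|_{L^2(\gamma)}\lesssim\prod_j\|h_j\|_{L^2(\gamma)}$ by Proposition~\ref{p:estimate_hermite_shift}. This tends to zero as $M\to\infty$, uniformly in $N$, because chaos tails do decay in $L^2$. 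The second term produces a \emph{bridging} covariance $\rho(i_b-i_a)$ with $a\in\{1,2\}$, $b\in\{3,4\}$, multiplied by an expectation bounded by $L^4$-norms for fixed $M$. Then $N^{-2}\sum\lvert\rho\rvert^d\lvert\rho\rvert\lvert\rho\rvert^d\to0$ by Lemma~\ref{lem:path_sum_covariances}.

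\textbf{Secondary error (derivative count).} With four factors the count ``at most $2d$ derivatives per function'' is false. A row can receive $d$ derivatives from each of the other three rows, so up to $3d$ derivatives land on $\CS_dh_j$ (with total at most $4d$). Hölder with four $L^4$-norms would then require $h_j\in\DD^{2d,4}$, which is not assumed. This matters for the irregular diagrams, where you need a bound uniform in the indices that depends only on $\|h_j\|_{d,4}$. The paper (Lemma~\ref{lem:worst_case_estimate}) integrates the excess derivatives on the most differentiated factor back by parts and applies Meyer's inequality in $L^{4/3}$. It combines this with the Ornstein--Uhlenbeck regularisation $h_j^{[r]}=P_{1/r}h_j$, which is needed to justify the commutation relations (these formally require $\CS_dh_j\in\DD^{4d,4}$). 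Together these give bounds depending only on $\|h_j\|_{d,4}$, so one can pass to the limit $r\to\infty$.
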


\subsection{Overview and strategy} \label{s:strategy}

Recall from~\eqref{e:truncated_tensor_algebra} that~$\tilde{T}^{(2)}(\R^m) = \R^m \oplus (\R^m \otimes \R^m)$.
We have to show that the finite-dimensional distributions of~$\boldsymbol{S}_N$ converge to those of~$\boldsymbol{B}$, the latter characterised by~$\Sigma$ and~$\Gamma$.
In other words, we have to show that for any~$l \in \N$ and any sequence of intervals~$(s_i,t_i)_{i = 1}^l \subseteq [0,1]^2$ with~$s_i < t_i$, the convergence 
\begin{equation} \label{e:goal_fdd_conv}
	\CA_N \to \CL \quad \text{in law in} \quad \del[1]{\tilde{T}^{(2)}(\R^m)}^{l} \quad \text{as} \quad N \to \infty
\end{equation}
holds with 
\begin{align}
	\CA_N & \coloneqq (\boldsymbol{S}_N(s_1,t_1), \ldots, \boldsymbol{S}_N(s_l,t_l)) = \del[1]{S_N(s_1,t_1), \mathbb{S}_N(s_1,t_1), \ldots, S_N(s_l,t_l), \mathbb{S}_N(s_l,t_l)} \label{def:AN} \\[0.5em]
	\CL & \coloneqq (\boldsymbol{B}(s_1,t_1), \ldots, \boldsymbol{B}(s_l,t_l)) = \del[1]{B(s_1,t_1), \mathbb{B}(s_1,t_1), \ldots, B(s_l,t_l), \mathbb{B}(s_l,t_l)} \label{def:L}
\end{align}
Let us now briefly explain our strategy which, in full detail, will be implemented in Sections~\ref{s:reduction} to~\ref{s:removing_chaos_cutoff} below. 
The starred parts are those where most of the work is required.  
\begin{enumerate}
	\setcounter{enumi}{-1} 
	\item \label{strategy:0} The vector~$\CL$ is composed of elements in the first and second (inhomogeneous) Wiener--It\^{o} chaos. As such, it is \emph{moment-determined.}
	The problem, however, is that the functions~$f_k$ defining~$S_N$ and~$\S_N$, a priori, have components \emph{in every chaos}.
	Therefore, we cannot rely on hypercontractivity to infer that~$S_N(s,t)$ and~$\S_N(s,t)$ actually have moments of all orders. 
	In particular, the assumption~$f_k \in L^4(\gamma)$ (as implied by~$f_k \in \DD^{d,4}(\gamma)$) is non-trivial.   
	\item \label{strategy:1} \textbf{$^\star$Reduction}. In a first step, we introduce a \emph{chaos truncation} parameter~$M$ and, thereby, reduce the problem to a situation which only involves finite chaos expansions.
	More precisely, we decompose 
	\begin{equation*}
		\CA_N = \CA_N^{M} + \CR_N^M 
	\end{equation*}
	where 
	\begin{align}
		\CA_N^M
		& \coloneqq 	
		\del[1]{S^{M}_N(s_1,t_1), \mathbb{S}^{M}_N(s_1,t_1), \ldots, S^{M}_N(s_l,t_l), \mathbb{S}^{M}_N(s_l,t_l)} \label{e:def_ANM}\\[0.5em]
		%%%
		\CR_N^M 
		& \coloneqq	
		\del[1]{S^{> M}_N(s_1,t_1), \mathbb{S}^{> M}_N(s_1,t_1), \ldots, S^{> M}_N(s_l,t_l), \mathbb{S}^{> M}_N(s_l,t_l)} \label{e:def_RNM}
	\end{align}
Roughly speaking, the vector~$\CA_N^M$ contains all chaos components up to~$M$ and the remainder term~$\CR_N^M$ all those above~$M$, see Section~\ref{s:reduction} for their precise definition.  
Most importantly, it will be shown in Proposition~\ref{p:reduction} below that~$\CR_N^M \to 0$ in~$L^2(\P)$ in a suitable joint limit of taking, first,~$N \to \infty$, and then~$M \to \infty$. ($\triangleright$ Section~\ref{s:reduction})
\item \label{strategy:2} \textbf{LLN on the diagonal.} After the reduction step, the analysis proceeds at a \emph{fixed truncation level}~$M$ with the term~$\CA_N^M$.
With  Remark \ref{rmk:general}~\ref{rmk:general:5} in mind, we only expect corrections to the Stratonovich rough path coming from the \emph{symmetric part}: 
In order to see them, we~\enquote{add in the diagonal} which is, itself, symmetric and write 
\begin{equation*}
	\CA_N^M = \bar{\CA}_N^M - \frac{1}{2} \CD_N^M
\end{equation*}
where
	\begin{align}
		\bar{\CA}_N^M
		& \coloneqq 	
		\del[1]{S^{M}_N(s_1,t_1), \mathbb{Y}^{M}_N(s_1,t_1), \ldots, S^{M}_N(s_l,t_l), \mathbb{Y}^{M}_N(s_l,t_l)} \label{e:def_barANM}\\[0.5em]
		%%%
		\CD_N^M
		& \coloneqq  
		\del[1]{0, \mathbb{D}^{M}_N(s_1,t_1), \ldots, 0, \mathbb{D}^{M}_N(s_l,t_l)}
		\notag
	\end{align}
for
\begin{equation} \label{e:def_DNM}
	\mathbb{D}_N^M(s,t) \coloneqq  \frac{1}{N} \sum_{\lfloor Ns \rfloor \leq i < \lfloor N t \rfloor} \vec{f}^M(X_i) \otimes  \vec{f}^M(X_i), \quad
	%%%
	\mathbb{Y}_N^M(s,t) \coloneqq \mathbb{S}_N^M(s,t) + \frac{1}{2} \DD_N^M(s,t) \,.
\end{equation}	
In Proposition~\ref{p:LLN_diagonal} below, we will show that 
\begin{equation*}
	\DD_N^M(s,t) \to (t-s)\Delta^M(0) \quad \text{in} \quad  L^2(\P) \quad \text{as} \quad N \to \infty \,,
\end{equation*}
where~$\Delta^M$ is defined like~$\Delta$ in~\eqref{e:Delta}, but with~$\vec{f}$ replaced by~$\vec{f}^M$.
The previous convergence can be seen as a \emph{law of large numbers on the diagonal}. ($\triangleright$ Section~\ref{s:LLN_diagonal})
\item \label{strategy:3} \textbf{Relation to moment convergence.} 
In the previous step, we have correctly identified the correction term---but there is another benefit: Instead of looking at the piecewise constant interpolation~$S_N^M$, we may now look at its piecewise linear counterpart~$Y_N^M$ to write 
	\begin{equation} \label{e:strategy_pcw_linear}
		\mathbb{Y}_N^M(s,t) = \int_{\lfloor Ns \rfloor}^{\lfloor Nt \rfloor} \dif Y_N^M(r) \otimes \dif Y_N^M(r) \,.
	\end{equation}
This leads to the decomposition
\begin{equation*}
	\bar{\CA}_N^M = \CB_N^M + \CC_N^M 
\end{equation*}
with 
	\begin{align}
		\CB_N^M
		& \coloneqq 	
		\del[1]{Y^{M}_N(s_1,t_1), \mathbb{Y}^{M}_N(s_1,t_1), \ldots, Y^{M}_N(s_l,t_l), \mathbb{Y}^{M}_N(s_l,t_l)} \,,\label{e:def_BNM}\\[0.5em]
		%%%
		\CC_N^M 
		& \coloneqq 
		\del[1]{S^{M}_N(s_1,t_1) - Y^{M}_N(s_1,t_1), 0, \ldots, S^{M}_N(s_l,t_l) - Y^{M}_N(s_l,t_l), 0} \,. \label{e:def_CNM}
		%%%
	\end{align}	
	We will show in Lemma~\ref{l:ptwise_L2_conv} below that 
	\begin{equation*}
		\CC_N^M \to 0 \quad \text{in} \quad L^2(\P) \quad \text{as} \quad N \to \infty \,.
	\end{equation*}
	The remaining goal is now to show that, for fixed~$M$ and as~$N \to \infty$, that   
	\begin{equation} \label{e:conv_BNM}
		\CB_N^M \Rightarrow \CB^M \coloneqq (B^{M}(s_1,t_1), \bar{\mathbb{B}}^{M}(s_1,t_1), \ldots, B^{M}(s_l,t_l), \bar{\mathbb{B}}^{M}(s_l,t_l))
	\end{equation}
	where\footnote{The matrices~$\Sigma^M$ and~$\AA^M$ are defined like their counterparts in~\eqref{e:Sigma} and~\eqref{e:AA}, respectively, but with~$\vec{f}$ replaced by~$\vec{f}^M$ in~\eqref{e:Delta} and~\eqref{e:Gamma}, respectively. Recall that~$\vec{f}^M$ means that all component functions~$f_k$ of~$\vec{f}$ have been projected onto chaoses of order~$M$ and lower, i.e. we replace~$f_k \rightsquigarrow f_k^M$.}	
	\begin{itemize}
		\item $B^M$ is an $m$-dimensional Brownian motion with \emph{truncated} covariance matrix~$\Sigma^M$, and
		\item $\bar{\mathbb{B}}^M$ its Stratonovich lift corrected by the antisymmetric matrix~$\AA^M$, i.e.~$\bar{\mathbb{B}}^M = \mathbb{B}^{M,\text{\tiny Strat}} + \AA^M$. 
	\end{itemize}
	The benefit of the representation~\eqref{e:strategy_pcw_linear} is that it immediately allows to build the complete \emph{rough path} by setting
	\begin{equs}[][e:strategy_higher_levels]
		\boldsymbol{Y}^{M;(k)}_N(s,t)
		& \coloneqq \int_{\Delta_k(\lfloor Ns \rfloor,\lfloor Nt \rfloor)} \dif Y^M_N(r_1) \otimes \ldots \otimes \dif Y^M_N(r_k), \quad k \geq 1 \,, 
	\end{equs}
	such that 
	\begin{equation*}
		\YY_N^{M;(2)}(s,t) = \mathbb{Y}_N^M(s,t), 
		\quad 
		\YY_N^{M;(1)}(s,t) = Y_N^M(s,t) \,.
	\end{equation*}
	The point is this: 
	Via the shuffle product relation as well as Chen's identity, we can then relate the convergence in~\eqref{e:conv_BNM} to that of certain moments of~$\YY_N^M$.
	More precisely,~\eqref{e:conv_BNM} follows once we show that, for 
		\begin{equation*}
			0 \leq s_1 < t_1 < s_2 < t_2 \ldots < s_l < t_l \leq 1\,,
		\end{equation*}
		and words~$\w_1, \ldots, \w_\ell$ of arbitrary length, we have
		\begin{equation} \label{e:strategy:moment_convergence}
			\lim_{N \to \infty} \E\sbr{\prod_{\ell=1}^{l}  \scal{\boldsymbol{Y}^M_N(s_\ell,t_\ell),\w_\ell}}
			=
			\prod_{\ell=1}^{l}  \E\sbr[1]{\scal{\boldsymbol{\bar{B}}^M(s_\ell,t_\ell),\w_\ell}} \,.
		\end{equation}
		where~$\boldsymbol{\bar{B}}^M$ is the full rough path lift of~$(B^M,\mathbb{\bar B}^M)$ defined similarly to~\eqref{e:strategy_higher_levels}.
		This is the contents of Theorems~\ref{thm:conv_fdd_stratonovich} and~\ref{t:product_case} below ($\triangleright$ Section~\ref{s:limit_exp_signature})
	\item \label{strategy:4} \textbf{$^\star$Convergence of the moments of~$\boldsymbol{B_N^M}$.}
	Most of the remaining work lies in sho\-wing~\eqref{e:strategy:moment_convergence}. 
	We first deal with the case~$l = 1$.
	In that case, for a word~$\v = \v_{1:k}$, the right hand side of~\eqref{e:strategy:moment_convergence} can be computed via a suitable amendment of Fawcett's theorem (see Lemma~\ref{s:limit_exp_signature}), namely:
	\begin{equation} \label{e:strategy_fawcett}
	\E\sbr[1]{\scal{\boldsymbol{\bar{B}}^M(s,t),\v}}
	=
	\1_{k = 2n}
	\frac{(t-s)^{2n}}{2^n n!} \prod_{j=1}^n \del[0]{\Sigma^M_{\v_{2j-1},\v_{2j}} + 2 \AA^M_{\v_{2j-1},\v_{2j}}}
	\end{equation}
	In order to show that the left hand side of~\eqref{e:strategy:moment_convergence} for~$l = 1$ converges to~\eqref{e:strategy_fawcett}, the first step is to \enquote{un-do} the conversion to the continuous case and explicitly compute	
	\begin{equation}
		\boldsymbol{Y}^{M;(k)}_N(s_,t) 
		=
		\frac{1}{N^{k/2}}
		\sum_{i_{1:k} \in \bar{\triangle}^{(k)}_{\lfloor Ns \rfloor, \lfloor Nt \rfloor}}
		\fw(i_{1:k})
		\bigotimes_{\ell=1}^k \vec{f}^M(X_{i_\ell})
	\end{equation}
	where~$\fw(i_{1:k})$ is a certain weight factor and 
	\begin{equation*}
		\bar{\triangle}^{(k)}_{\lfloor Ns \rfloor, \lfloor Nt \rfloor}
		=
		\cbr[0]{i_{1:k}: \lfloor Ns \rfloor \leq i_1 \leq \ldots \leq i_k \leq \lfloor Nt \rfloor - 1} \,.
	\end{equation*}
	This is necessary so that we can make rigorous use of the Breuer--Major argument, Corollary~\ref{coro:BM83} (with~$A_{k,N}(s,t) = \bar{\triangle}^{(k)}_{\lfloor Ns \rfloor, \lfloor Nt \rfloor}$, see the comments in the end of Remark~\ref{rmk:breuer_major}), which leads to the following asymptotic relation as~$N \to \infty$ for the word~$\v = \v_{1:k}$:
	\begin{align}
		\thinspace & 
		\E\sbr[1]{\scal{\YY_N^{M; (k)}(s,t),\v}} \label{e:strategy_breuer_major}\\
		\asymp & \ 
		\frac{\1_{k=2n}}{N^{n}}
		\sum_{p=1}^{2n}
		\sum_{i_{1:2n} \in \bar{\triangle}^{(2n)}_{\lfloor Ns \rfloor, \lfloor Nt \rfloor}} \fw(i_{1:2n})
		\sum_{P\in \CM(2n)}
		\prod_{j=1}^{n}
		\Delta^M_{\v_{P_{2j-1}},\v_{P_{2j}}}\!\del[1]{i_{P_{2j}}-i_{P_{2j-1}}}  \notag \,.
	\end{align}
	The analysis is then three-fold:
	\begin{itemize}
		\item In Proposition~\ref{p:higher_order_diagonals}, we show that any multi-index~$i_{1:2n}$ in~\eqref{e:strategy_breuer_major} which forms blocks of sizes~$3$ and higher~(i.e.~$i_{\ell_1} = \ldots = i_{\ell_r}$ for~$r \geq 3$ and~$i_{\ell_j} \neq i_{\ell_n}$ when~$j \neq n$), gives rise to an asymptotically negligible contribution. 
		In other words: These \emph{higher-order diagonals} asymptotically vanish. ($\triangleright$ Section~\ref{s:higher_diagonals})
		\item We perform a detailed analysis of the pairings~$P \in \CM(2n)$. 
		First, denoting by~$P_\star \coloneqq \cbr[0]{\cbr[0]{1,2}, \cbr[0]{3,4}, \ldots, \cbr[0]{2n-1,2n}}$ the \emph{ladder pairing}, we show that any other pairing~$P \neq P_\star$ in~\eqref{e:strategy_breuer_major} gives rise to an asymptotically vanishing contribution, see Proposition~\ref{p:non_ladder_vanishes}. ($\triangleright$ Section~\ref{s:analysis_pairings})
		\item The sum that remains to analyse is the one in~\eqref{e:strategy_breuer_major}, restricted to~$P = P_\star$ and all multi-indices that form blocks of sizes~$1$ or~$2$: 
		In Proposition~\ref{p:limit_ladder}, we show that its limit is given precisely by the term on the right hand side of~\eqref{e:strategy_fawcett}.
		On a technical level, we will split the analysis into~\emph{bridging} and~\emph{non-bridging} multi-indices, see~\eqref{e:non_bridging_multiindex} for a visualisation; in particular, we will see that only non-bridging multi-indices contribute non-trivially to the limit. ($\triangleright$~Section~\ref{s:analysis_pairings})
	\end{itemize}
	We will then reduce the general case,~$l \geq 1$, to the case~$l=1$ by showing that pairings across simplices (where each simplex is associated with an interval~$[s_\ell,t_\ell]$), vanish asymptotically (Lemma~\ref{l:cross_simplex_pairings}). 
	Finally, all the previous arguments are combined in the proof of~\eqref{e:strategy:moment_convergence} in Section~\ref{s:conv_moments}. 
		\item \label{strategy:5} \textbf{Removing the chaos cutoff.} In the final step, we remove the chaos cutoff and show that, as~$M \to \infty$, we have
		\begin{equation*}
			\Delta^M(0) \to \Delta(0), 
			\quad 
			B^M(s,t) \to B(s,t), 
			\quad 
			\bar{\mathbb{B}}^M(s,t) \to \bar{\mathbb{B}}(s,t) \quad \text{in} \quad L^2(\P) \,
		\end{equation*}
	which finishes the analysis. ($\triangleright$ Section~\ref{s:removing_chaos_cutoff}) 
\end{enumerate}
In summary, the previous strategy leads to the following decomposition of the vector~$\CA_N^M$ as introduced in~\eqref{e:def_ANM}:
\begin{equation} \label{e:decomp_AN}
	\CA_N = \del[1]{S_N(s_1,t_1), \mathbb{S}_N(s_1,t_1), \ldots, S_N(s_l,t_l), \mathbb{S}_N(s_l,t_l)} = \CR_N^M + \CB_N^M + \CC_N^M - \frac{1}{2} \CD_N^M 
\end{equation}
where
\begin{align}
		\CR_N^M 
		& \coloneqq	
		\del[1]{S^{> M}_N(s_1,t_1), \mathbb{S}^{> M}_N(s_1,t_1), \ldots, S^{> M}_N(s_l,t_l), \mathbb{S}^{> M}_N(s_l,t_l)} \label{e:def_RNM}\\[0.5em] 
		%%%
		\CB_N^M
		& \coloneqq 	
		\del[1]{Y^{M}_N(s_1,t_1), \mathbb{Y}^{M}_N(s_1,t_1), \ldots, Y^{M}_N(s_l,t_l), \mathbb{Y}^{M}_N(s_l,t_l)} \label{e:def_BNM}\\[0.5em]
		%%%
		\CC_N^M 
		& \coloneqq 
		\del[1]{S^{M}_N(s_1,t_1) - Y^{M}_N(s_1,t_1), 0, \ldots, S^{M}_N(s_l,t_l) - Y^{M}_N(s_l,t_l), 0} \label{e:def_CNM} \\[0.5em]
		%%%
		\CD_N^M
		& \coloneqq  
		\del[1]{0, \mathbb{D}^{M}_N(s_1,t_1), \ldots, 0, \mathbb{D}^{M}_N(s_l,t_l)} \label{e:def_DNM}
\end{align}

We will rely on the following tailor-made statement, of Slutsky type, about convergence in law:
\begin{proposition} \label{prop:abstract_convergence_statement}
	Let~$\CA_N = \CR_N^M + \CB_N^M + \CC_N^M + \CD_N^M$ where all the sequences of random variables take values in a separable Banach space.
	Further, assume that, for all fixed~$M \geq d$, the following conditions hold:
	\begin{enumerate}
		\myitem{(R1)} \label{ass:R1} The limes superior~$r^M \coloneqq \limsup_{N \to \infty} \norm[0]{\CR_N^M}^2_{L^2(\P)}$ exists.
		\myitem{(B1)} \label{ass:B1} The sequence~$(\CB_N^M)_N$ has a limit \emph{in law} to~$B^M$ as~$N \to \infty$.
		\myitem{(C1)} \label{ass:C1} The sequence~$(\CC_N^M)_N$ converges to~$0$ \emph{in~$L^2(\P)$} as~$N \to \infty$.
		\myitem{(D1)} \label{ass:D1} The sequence~$(\CD_N^M)_N$ has a \emph{deterministic} limit~$\CD^M$ \emph{in probability} as~$N \to \infty$. 
	\end{enumerate}
	Assume further that, as~$M \to \infty$: 
	\begin{enumerate}
		\myitem{(R2)} \label{ass:R2} The deterministic sequence~$r^M$ converges to~$0$.
		\myitem{(B2)} \label{ass:B2} The sequence $\CB^M$ converges to~$\CB$ \emph{in law}. 
		\myitem{(D2)} \label{ass:D2} The deterministic sequence~$\CD^M$ converges to~$\CD$. 
	\end{enumerate}
	Then, the sequence~$(\CA_N)_N$ converges in law to~$\CB + \CD$ as~$N \to \infty$.
\end{proposition}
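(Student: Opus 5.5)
We will prove the claim via the characterisation of convergence in law through bounded Lipschitz test functions. Since the ambient space $E$ is a separable Banach space, weak convergence is metrised by the bounded-Lipschitz distance. It therefore suffices to show that $\E[\varphi(\CA_N)] \to \E[\varphi(\CB+\CD)]$ for every $\varphi: E \to \R$ with $\norm{\varphi}_\infty \le 1$ and $\mathrm{Lip}(\varphi) \le 1$. The plan is a standard $\eps/3$ argument (in the spirit of Billingsley's Theorem 3.2 on approximating sequences) that combines the two limits $N \to \infty$ and then $M \to \infty$.

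\emph{Step 1: fixed $M$.} By~\ref{ass:C1} and~\ref{ass:D1}, the sum $\CC_N^M + \CD_N^M$ converges in probability to the deterministic element $\CD^M$. By~\ref{ass:B1}, $\CB_N^M \Rightarrow \CB^M$. The classical Slutsky lemma in separable metric spaces (joint convergence in law of $(\CB_N^M, \CC_N^M+\CD_N^M)$ to $(\CB^M,\CD^M)$, since the second limit is a constant), combined with the continuous mapping theorem for addition, then gives $\CB_N^M + \CC_N^M + \CD_N^M \Rightarrow \CB^M + \CD^M$ as $N\to\infty$.

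\emph{Step 2: control of the remainder.} For Lipschitz $\varphi$, Cauchy--Schwarz gives
\begin{equation*}
\abs[0]{\E[\varphi(\CA_N)] - \E[\varphi(\CA_N - \CR_N^M)]} \le \E\norm[0]{\CR_N^M} \le \norm[0]{\CR_N^M}_{L^2(\P)} ,
\end{equation*}
so by~\ref{ass:R1} the $\limsup_{N}$ of the left-hand side is bounded by $(r^M)^{1/2}$.

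\emph{Step 3: the limit $M \to \infty$.} By~\ref{ass:B2} and~\ref{ass:D2}, the deterministic shift converges, so $\CB^M + \CD^M \Rightarrow \CB+\CD$; this again follows from Slutsky, or directly because translation by a convergent constant sequence preserves convergence in law. Combining the three steps,
\begin{equation*}
\limsup_{N\to\infty}\abs[0]{\E\varphi(\CA_N) - \E\varphi(\CB+\CD)} \le (r^M)^{1/2} + \abs[0]{\E\varphi(\CB^M+\CD^M) - \E\varphi(\CB+\CD)}
\end{equation*}
for every $M \ge d$. Letting $M \to \infty$, both terms vanish by~\ref{ass:R2} and Step 3, which proves the claim.

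The only mildly delicate point is the justification in Step 1 that convergence in law plus convergence in probability to a constant yields convergence in law of the sum in a general separable Banach space. This follows from tightness of the pair and the fact that the product of a law with a Dirac mass is determined by its marginals. Alternatively, one can argue directly with bounded Lipschitz $\varphi$: split on the event $\{\norm{\CC_N^M+\CD_N^M-\CD^M} > \eta\}$ and use the bound $\abs{\varphi(x+y)-\varphi(x+y')} \le \norm{y-y'}\wedge 2$.

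Finally, note that the statement writes $\CA_N = \CR_N^M + \CB_N^M + \CC_N^M + \CD_N^M$, while~\eqref{e:decomp_AN} carries the sign $-\frac{1}{2}\CD_N^M$. This is harmless: we simply apply the proposition with $\CD_N^M$ replaced by $-\frac12\CD_N^M$.
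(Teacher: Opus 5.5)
Your proof is correct and follows essentially the same route as the paper: a bounded-Lipschitz/triangle-inequality argument (which the paper packages as Lemma~\ref{lem:conv_in_law}), Slutsky at fixed $M$, an $L^1 \le L^2$ bound on the remainder, and then $M \to \infty$. The only cosmetic difference is where $\CC_N^M$ goes: the paper puts it into the $L^2$-small remainder $\eta_N^M = \CR_N^M + \CC_N^M$, whereas you absorb it into the Slutsky part together with $\CD_N^M$, and both groupings work.
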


\begin{remark}
	In our situation of interest, the separable Banach space is given by the finite-dimensional space~$\del[0]{\R^m \oplus (\R^{m})^{\otimes 2}}^{l}$ for~$l \in \N$ and the convergence statements in~\ref{ass:D1} and~\ref{ass:B2} actually hold \emph{in~$L^2(\P)$} rather than in probability resp. in law only.   
\end{remark}
We will see that one can reduce the proof of the previous proposition to the following lemma:
\begin{lemma} \label{lem:conv_in_law}
	Let the random variable~$\chi_N$ have the decomposition~$\chi_N = \theta_N^M + \eta_N^M$ where all the random variables take values in a separable Banach space and the following properties hold:
	\begin{enumerate}[label=(\arabic*)]
		\item \label{lem:conv_in_law_1} For each~$M \geq d$, we have~$\lim_{N \to \infty} \theta_N^M = \theta^M$ in law.
		\item \label{lem:conv_in_law_2} We have~$\lim_{M \to \infty} \theta^M = \theta$ in law.
		\item \label{lem:conv_in_law_3} For each~$M \geq d$, the limes superior $b_M := \limsup_{N \to \infty} \norm[0]{\eta_N^M}_{L^2(\P)}^2$ exists.
		\item \label{lem:conv_in_law_4} We have $\lim_{M \to \infty} b_M = 0$.
	\end{enumerate}
	Then, $\lim_{N \to \infty} \chi_N = \theta$ in law.
\end{lemma}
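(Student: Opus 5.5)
We prove the lemma with the standard characterisation of convergence in law by bounded Lipschitz test functions. Since the space is a separable Banach space (in particular a Polish metric space), by the portmanteau theorem it is enough to show that $\E[\varphi(\chi_N)] \to \E[\varphi(\theta)]$ for every bounded Lipschitz function $\varphi$, with $\norm[0]{\varphi}_\infty \leq 1$ and Lipschitz constant $\operatorname{Lip}(\varphi) \leq 1$ after rescaling. The plan is a three-$\eps$ argument: interpolate between $\chi_N$ and $\theta$ through $\theta_N^M$ and $\theta^M$, choose $M$ large first, and then let $N \to \infty$.

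Fix such a $\varphi$ and $M \geq d$. For every $N$ we split
\begin{equation*}
	\abs[0]{\E[\varphi(\chi_N)] - \E[\varphi(\theta)]}
	\leq \abs[0]{\E[\varphi(\chi_N)] - \E[\varphi(\theta_N^M)]}
	+ \abs[0]{\E[\varphi(\theta_N^M)] - \E[\varphi(\theta^M)]}
	+ \abs[0]{\E[\varphi(\theta^M)] - \E[\varphi(\theta)]} \,.
\end{equation*}

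\emph{First term.} By the Lipschitz property and then Cauchy--Schwarz,
\begin{equation*}
	\abs[0]{\E[\varphi(\chi_N)] - \E[\varphi(\theta_N^M)]} \leq \E\sbr[0]{\norm[0]{\eta_N^M}} \leq \norm[0]{\eta_N^M}_{L^2(\P)} \,.
\end{equation*}
Hence, by~\ref{lem:conv_in_law_3}, its $\limsup$ as $N \to \infty$ is at most $b_M^{1/2}$.

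\emph{Second term.} It tends to $0$ as $N \to \infty$ by~\ref{lem:conv_in_law_1}, since $\varphi$ is bounded and continuous.

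\emph{Third term.} It does not depend on $N$.

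Taking $\limsup_{N\to\infty}$ therefore gives
\begin{equation*}
	\limsup_{N \to \infty} \abs[0]{\E[\varphi(\chi_N)] - \E[\varphi(\theta)]} \leq b_M^{1/2} + \abs[0]{\E[\varphi(\theta^M)] - \E[\varphi(\theta)]}
\end{equation*}
for every $M \geq d$. The left-hand side does not depend on $M$, so we may let $M \to \infty$. Then~\ref{lem:conv_in_law_4} sends the first term to $0$, and~\ref{lem:conv_in_law_2} sends the second term to $0$. This proves the lemma.

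There is no real obstacle. The only points needing care are these. First, $\limsup_N$ must be taken before $M \to \infty$, because the bound in~\ref{lem:conv_in_law_3} is only asymptotic in $N$. Second, it suffices to test against bounded Lipschitz functions, which is valid on metric spaces by the portmanteau theorem. Third, the $L^2$ control in~\ref{lem:conv_in_law_3} already implies the $L^1$ control used in the first term.

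For the subsequent deduction of Proposition~\ref{prop:abstract_convergence_statement}, we would take $\theta_N^M \coloneqq \CB_N^M + \CD_N^M$ and $\eta_N^M \coloneqq \CR_N^M + \CC_N^M$. Assumption~\ref{lem:conv_in_law_1} then follows from~\ref{ass:B1} and~\ref{ass:D1} by the classical Slutsky lemma, which applies because $\CD^M$ is deterministic. Assumption~\ref{lem:conv_in_law_2} follows from~\ref{ass:B2} and~\ref{ass:D2}. Assumptions~\ref{lem:conv_in_law_3} and~\ref{lem:conv_in_law_4} follow from~\ref{ass:R1}, \ref{ass:C1} and~\ref{ass:R2}, via $\norm[0]{a+b}^2 \leq 2\norm[0]{a}^2 + 2\norm[0]{b}^2$. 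If one wants $b_M$ to be a genuine limit rather than a $\limsup$, one can simply work with $\limsup$ throughout, as the argument above does.
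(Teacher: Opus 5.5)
Your proof is correct and is essentially the paper's argument. The paper works with the bounded Lipschitz metric $d$ and the triangle inequality $d(\chi_N,\theta) \leq d(\chi_N,\theta_N^M) + d(\theta_N^M,\theta^M) + d(\theta^M,\theta)$, with $d(\chi_N,\theta_N^M) \leq \|\eta_N^M\|_{L^1} \leq \|\eta_N^M\|_{L^2}$, and then takes $\limsup_{N}$ before $M \to \infty$. That is your three-term split evaluated against a single bounded Lipschitz test function (which also spares you the fact that $d$ metrises convergence in law), and your deduction of the Slutsky-type proposition likewise matches the paper's.
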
	

\begin{proof}
	Denote by~$d$ the bounded Lipschitz metric which is known to metrise convergence in law in general metric spaces (see, for example, \cite[Thm.~11.3.3]{dudley_book}).
	Recall that
	\begin{equation*} \label{e:bdd_lip_metric}
		d(U,V) = \sup_{\norm[0]{f}_{\text{\tiny Lip}} \leq 1} \abs[0]{\E[f(U)] - E[f(V)]} \leq \norm[0]{U-V}_{L^1(\P)}
	\end{equation*}
	
	We then have\footnote{See Remark~\ref{rmk:L1_vs_L2} below for a comment on for the last (trivial) estimate.}
	\begin{equation*}
		d(\chi_N,\theta_N^M) = d(\theta_N^M + \eta_N^M,\theta_N^M) \leq \norm[0]{\eta_N^M}_{L^1(\P)} \leq \norm[0]{\eta_N^M}_{L^2(\P)}.  
	\end{equation*}
	By the triangle equality, we have
	\begin{equation*}
		d(\chi_N,\theta) \leq d(\chi_N,\theta_N^M) + d(\theta_N^M,\theta^M) + d(\theta^M,\theta)
	\end{equation*}
	and, therefore, using all the assumptions,
	\begin{equation} \label{e:conv_law_aux}
		\limsup_{N \to \infty} d(\chi_N,\theta) \leq b_M^{\nicefrac{1}{2}} + d(\theta^M,\theta) \,. 
	\end{equation}
	Note that we have used that
	\begin{equation*}
		\limsup_{N \to \infty} \norm[0]{\eta_N^M}_{L^2(\P)} = \del[1]{\limsup_{N \to \infty} \norm[0]{\eta_N^M}_{L^2(\P)}^2}^{\nicefrac{1}{2}} = b_M^{\nicefrac{1}{2}}
	\end{equation*}
	because the square root function is continuous and increasing on~$[0,\infty)$.
	Taking~$M \to \infty$ in~\eqref{e:conv_law_aux} establishes the claim.
\end{proof}

\begin{proof}[of Proposition~\ref{prop:abstract_convergence_statement}]
	Set~$\theta_N^M \coloneqq \CB_N^M + \CD_N^M$ and~$\eta_N^M \coloneqq \CR_N^M + \CC_N^M$.
	By Slutsky's lemma, we know that: 
	\begin{enumerate}[label=(\arabic*)]
		\item For each fixed~$M \geq d$, due to assumptions~\ref{ass:B1} and~\ref{ass:D1}, we have~$\theta_N^M \to \theta^M \coloneqq \CB^M + \CD^M$ in law as~$N \to \infty$
		\item By assumptions~\ref{ass:B2} and~\ref{ass:D2}, we know that~$\theta^M = \CB^M + \CD^M \to \CB + \CD \eqqcolon \theta$ in law as~$M \to \infty$.
	\end{enumerate}
	Further, by assumptions~\ref{ass:R1} and~\ref{ass:C1} we know that for fixed~$M \geq d$, we have 
	\begin{equs}
		0 \leq b_M = 
		\limsup_{N \to \infty} \norm[0]{\eta_N^M}_{L^2(\P)}^2 \lesssim 
		\limsup_{N \to \infty}\norm[0]{\CR_N^M}_{L^2(\P)}^2 
		+ 
		\limsup_{N \to \infty}\norm[0]{\CC_N^M}^2_{L^2(\P)} 
		 = r^M + 0 < \infty
	\end{equs}
	and, by assumption~\ref{ass:R2}, we have~$r^M \to 0$ as~$M \to \infty$.
	The conclusion now follows from Lemma~\ref{lem:conv_in_law}.
\end{proof}

\begin{remark} \label{rmk:L1_vs_L2}
	The reader might wonder why we use the stronger $L^2(\P)$-norm here even though it seems to be enough to control the~$L^1(\P)$-norm. This is because, for the application we have in mind, there is no good way (we know of) to control the latter (essentially because Hermite expansions do not interact well with absolute values), but we can control the former, see the strategy paragraph on~p.~\pageref{strategy_reduction} and also Remark~\ref{rmk:failure_tightness_strategy}.
\end{remark}

\subsection{Reduction to finite chaos} \label{s:reduction}

The purpose of this subsection is to introduce the vector~$\CR_N^M$ given in~\eqref{e:def_RNM} by
\begin{equation*}
	\CR_N^M 
	\coloneqq	
	\del[1]{S^{> M}_N(s_1,t_1), \mathbb{S}^{> M}_N(s_1,t_1), \ldots, S^{> M}_N(s_l,t_l), \mathbb{S}^{> M}_N(s_l,t_l)} \in \del[1]{\tilde{T}^{(2)}(\R^m)}^l
\end{equation*}
and to show that it satisfies Assumptions~\ref{ass:R1} and~\ref{ass:R2} of Proposition~\ref{prop:abstract_convergence_statement}.
This will be done in Proposition~\ref{p:reduction} below which is the main result of this subsection.

\begin{notation} \label{notation:reduction}
Recall that the components of the~$m$-dim. vector~$S_N(s,t)$ read
\begin{equation*}
	S^k_N(s,t) = \frac{1}{\sqrt{N}} \sum_{i = \lfloor Ns \rfloor}^{\lfloor Nt \rfloor -1}
	f_k(X^{(k)}_i), \quad k \in \llbracket 1,m \rrbracket \,.
\end{equation*}
For~$M \in \N$, we write
\begin{equation*}
	f_k^{M} \coloneqq \sum_{q = d}^M c_q^{(k)} H_q, 
	\qquad 
	f_k^{> M} \coloneqq \sum_{q=M+1}^\infty c_q^{(k)} H_q \,.
\end{equation*}
Furthermore, we let 
\begin{equation} \label{e:1st_order_tail}
	S_N^{k; M}(t) 
	\coloneqq \frac{1}{\sqrt{N}} \sum_{i = \lfloor Ns \rfloor}^{\lfloor Nt \rfloor -1} f_k^{\leq M}(X^{(k)}_i), 
	\qquad 
	S_N^{k; > M}(s,t) \coloneqq \frac{1}{\sqrt{N}} \sum_{i = \lfloor Ns \rfloor}^{\lfloor Nt \rfloor -1} f_k^{> M}(X^{(k)}_i) 
\end{equation}
and, for~$k, \ell \in \llbracket 1,m \rrbracket$
\begin{equation*}
	\mathbb{S}_N^{k,\ell;M}(t) 
	\coloneqq 
	\frac{1}{N} \sum_{\lfloor Ns \rfloor \leq i < j \leq \lfloor Nt \rfloor -1} f_k^{M}(X^{(k)}_i) \, f_\ell^{M}(X^{(\ell)}_j), 
\end{equation*} 
as well as
\begin{align}
	\mathbb{S}_N^{k,\ell;> M}(s,t) 
	& \coloneqq 
	\frac{1}{N} \sum_{\lfloor Ns \rfloor \leq i < j \leq \lfloor Nt \rfloor -1}
	\left(f_{k}^{> M}(X^{(k)}_i) \, f_\ell^{M}(X^{(\ell)}_j) +
		f_k^{M}(X^{(k)}_i) \, f_\ell^{> M}(X^{(\ell)}_j) \right. 
		\label{e:2nd_order_tail:1}  \\
		& \qquad \qquad \qquad \left. + \ f_k^{> M}(X^{(k)}_i) \, f_\ell^{> M}(X^{(\ell)}_j)\right) \notag\\
	& \eqqcolon \mathbb{S}_N^{k,\ell;> M,[1]}(s,t) + \mathbb{S}_N^{k,\ell;> M,[2]}(s,t) + \mathbb{S}_N^{k,\ell;> M,[3]}(s,t) \,.
	\label{e:2nd_order_tail}
\end{align}
With this notation, we then have
\begin{equation*}
	S_N^{>M}(s,t) = \del[1]{S_N^{k;>M}(s,t)}_{k=1}^m, \quad
	\mathbb{S}_N^{>M}(s,t) = \del[1]{\mathbb{S}_N^{k,\ell;>M}(s,t)}_{k,\ell=1}^m
\end{equation*}
as a vector respectively a matrix, and analogously for the corresponding terms with~\enquote{$>M$} replaced by~\enquote{$M$} (for the projection onto chaoses~$\leq M$).
\end{notation} 

\paragraph{Notation.} \label{notation_reduction}
Throughout this section, we will consider~$\w = \w_{1:4}$ for~$\w_{1} = \w_{3} = k$ as well as $\w_{2} = \w_{4} = \ell$.
Furthermore, for~$i_{1:4} \in \Z^4$, we will write~$\biota_{j} \coloneqq (\w_j,i_j)$ such that, for~$j = \cbr[0]{1,2,3,4}$, we have 
\begin{equation} \label{e:assignment:h_strategy}
	X^{(\w_j)}_{i_j} = W(e_{\w_j,i_j})= W(e_{\biota_j}) \quad \text{and} 
	\quad 
	h_j(W(e_{\biota_j})) \coloneqq 
	f_{\w_j}^{>M}(W(e_{\biota_j})) \,.
\end{equation}
Finally, we will also write 
\begin{equation} \label{e:covariance_strategy}
	\rho(\biota_k - \biota_j) \coloneqq \rho_{\w_j,\w_k}(i_k - i_j) \,.
\end{equation}

\begin{remark}[Failure of the \enquote{tightness strategy}.] \label{rmk:failure_tightness_strategy}
This remark further elaborates on the issue touched upon in Remark~\ref{rmk:L1_vs_L2}.
For the moment, let us focus on the third term in~\eqref{e:2nd_order_tail}, i.e. $\mathbb{S}_N^{k,\ell;> M,[3]}(s,t)$; as we will argue in the proof of Proposition~\ref{p:reduction} below, this is actually sufficient.

As discussed in Remark~\ref{rmk:L1_vs_L2}, in principle, it would be sufficient to control its~$L^1(\P)$-norm after taking limits~$N \to \infty$ (first) and~$M \to \infty$ (second); the~$L^1(\P$)-norm (for $s=0$ and $t=1$) reads
\begin{equation} \label{e:L1_norm_reduction_strategy}
	\norm[1]{\mathbb{S}_N^{k,\ell;> M, [3]}}_{L^1(\P)}
	= 
	\frac{1}{N} \E\sbr[3]{\thinspace \abs[3]{\sum_{0 \leq i_1 < i_2 \leq N-1} h_1(W(e_{\biota_1})) h_2(W(e_{\biota_2}))}} \,.
\end{equation} 
Controlling this quantity \emph{precisely} corresponds to the case~$p = 1$ in the proof of Theorem~\ref{thm:tightness}: 
After naively repeating all the steps therein, for~$p=1$ the corresponding norms in~\eqref{e:est_product_g_h} are $L^2(\gamma)$-norms which do go to~$0$ as~$M \to \infty$! 
The proof, however, would then require Meyer's inequality for~$p=1$ to estimate~\eqref{e:tightness_meyer}---and, as we emphasised in Remark~\ref{rmk:tightness_p}, this is \emph{precisely} where it fails.

Compensating the lack of Meyer's inequality for~$p = 1$ is known to be difficult problem, see for example~\cite{addona_muratori_rossi_22}.   
In following their line of thought, one could replace the use of Meyer's inequality in~\eqref{e:tightness_meyer} by \emph{Poincaré's inequality}
\begin{equation} \label{e:poincare}
	\norm[0]{F}_{L^p(\Omega)} \leq \norm[0]{D^d F}_{L^p(\Omega;H^{\symotimes n})} + \sum_{r=0}^{d-1} \norm[0]{\E\sbr[0]{D^r F}}_{H^{\symotimes r}}	
\end{equation}
which works for any~$p \geq 1$. 
Indeed: One can then control the Malliavin derivative terms in~\eqref{e:poincare} like in the proof of Theorem~\ref{thm:tightness}, but controlling the expectation terms only works for~$d \in \cbr[0]{1,2}$ and the argument applies in exactly the same way to the reduction argument as well.
As discussed in Remark~\ref{rmk:main}, Point~\ref{rmk:main:integrability}, this is a structurally similar problem to the one in~\cite{nourdin_nualart_peccati_21} which has recently been overcome by~\cite{angst_dalmao_poly_24}.

Let us finally mention that the Poincaré inequality has also proved intrumental in the (singular) SPDE and rough paths literature~\cite{LOTT24, hairer_steele_bphz, gassiat_klose_2024, bailleul_bruned_26}.
\end{remark}

\paragraph{New strategy.} \label{strategy_reduction}
In light of the Remarks~\ref{rmk:L1_vs_L2} and~\ref{rmk:failure_tightness_strategy}, we now need to study the $L^2(\P)$-norm of $\mathbb{S}_N^{k,\ell;> M,[3]}(s,t)$ in the limits~$N \to \infty$ (first) and~$M \to \infty$ (second): 
\begin{equs}[][e:L2_norm_reduction_strategy]
	\norm[1]{\mathbb{S}_N^{k,\ell;> M, [3]}}_{L^2(\P)}^2
	& = 
	\frac{1}{N^2} \sum_{\substack{0 \leq i_1 < i_2 \leq N-1, \\ 0 \leq i_3 < i_4 \leq N-1}} \E\sbr[4]{\prod_{j=1}^4 h_j(W(e_{\biota_j}))}
\end{equs}
Roughly speaking, the idea is to write
\begin{equation} \label{e:idea_reduction}
	"
	\text{RHS of} \ \eqref{e:L2_norm_reduction_strategy}
	=
	\sum_{G} 
	(N\text{-dep. terms})_{G}
	\times
	(M\text{-dep. expectation terms})_{G}
	"
\end{equation}
where the sum is over \emph{finitely many} diagrams~$G$ in the sense of Definition~\ref{d:diagrams}.
We will then split the analysis into regular and irregular diagrams (in the sense of Definition~\ref{d:diagrams}):
\begin{enumerate}
	\item For each \emph{fixed}~$M$ and any diagram~$G$, the expectation term in~\eqref{e:idea_reduction} is uniformly bounded in~$i_{1:4}$.
	\item For \emph{irregular} diagrams~$G$, we will show that the $N$-dep. terms in~\eqref{e:idea_reduction} go to~$0$ as~$N \to \infty$, for each fixed~$M$.%which is a consequence of \ref{p:bm83_irregular} 
	\item For \emph{regular} diagrams~$G$, the~$N$-dep. terms in~\eqref{e:idea_reduction} have a finite limes superior as~$N \to \infty$. In that case, however the~$M$-dep. expectation terms in~\eqref{e:idea_reduction} go to~$0$ as~$M \to \infty$. 
\end{enumerate}

This strategy will be implemented in the proof of Proposition~\ref{p:reduction} below.
To this end, we will need the following two key lemmas, the first of which rigorously establishes the identity~\eqref{e:idea_reduction} and the second shows Point~$1$ in the previous list.

In the following lemma, one should think of~$h_j$ representing its regularised version $h_j^{[r]} = P_{\nicefrac{1}{r}} h_j$ as defined in Lemma~\ref{lem:OU_regularisation};
see the proof of Proposition~\ref{p:reduction} below for further details.
\begin{lemma}[Conversion to Feynman diagrams] \label{l:representation_feynman_diagrams}
	Let~$d \geq 1$ and, for each~$j=1,\ldots,4$, assume that~$h_j \in \mathbb{D}^{3d,4}(\gamma)$ and that~$h_j$ has Hermite rank at least~$d$.
	Then, we have 
	\begin{equation} \label{e:representation_feynman_diagrams}
		\frac{1}{N^2} \sum_{\substack{0 \leq i_1 < i_2 \leq N-1, \\ 0 \leq i_3 < i_4 \leq N-1}} 
		\E\sbr[4]{\prod_{j=1}^4 h_j(W(e_{\biota_j}))}
		=
		\sum_{G \in \mathring{\Gamma}} C_G \thinspace \sum_{\substack{0 \leq i_{1} < i_2 \leq N-1, \\ 0 \leq i_{3} < i_4 \leq N-1}} \frac{\fC_G(\w,\bq,\bi)}{N^2}  E_G(\w,\bi)
	\end{equation}
	where~$\w = \w_{1:4}$, $\bq = q_{1:4}$, and~$\bi = i_{1:4}$.
	Furthermore,~$C_G$ is a combinatorial factor and
	\begin{enumerate}[label=(\roman*)]
		\item \label{l:representation_feynman_diagrams:i} the sum is over some \emph{finite} set $\mathring{\Gamma}$ of Feynman diagrams~$G$, where~$ \mathring{\Gamma} \subseteq \bigcup_{q_{1:4} \geq d} \Gamma(q_{1:4})$, and the quantity~$\fC_G(\w,\bq,\bi)$ has been defined in~\eqref{e:diagram_formula:CG}, cf. also Proposition~\ref{p:bm83_irregular}.
		\item \label{l:representation_feynman_diagrams:ii} for each~$G \in \mathring{\Gamma}$ in the sum, there exists~$a_{1:4} \in \N_0^4$ satisfying~$0 \leq a_{[1:4]} \leq 4d$ and~$0 \leq a_j \leq 3d$ such that 
		\begin{equation} \label{e2:representation_feynman_diagrams}
			E_G(\w,\bi) = \E\sbr[4]{\prod_{j=1}^4[\CS_{d} h_j]^{(a_j)}(W(e_{\biota_j}))} \,.
		\end{equation}
	\end{enumerate}
\end{lemma}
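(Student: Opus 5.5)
We propose to prove Lemma~\ref{l:representation_feynman_diagrams} by an iterated Gaussian integration by parts, organised level by level. The resulting pairings are recorded as edges of a Feynman diagram.

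\emph{Step 1 (divergence representation).} For fixed $i_{1:4}$, write each factor via Lemma~\ref{l:hermite_shift_representation} as
\begin{equation*}
	h_j(W(e_{\biota_j})) = \delta^d\big([\CS_d h_j](W(e_{\biota_j}))\, e_{\biota_j}^{\otimes d}\big).
\end{equation*}
Since $h_j \in \DD^{3d,4}(\gamma)$, Proposition~\ref{p:estimate_hermite_shift} gives $\CS_d h_j \in \DD^{4d,4}(\gamma)$.

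\emph{Step 2 (first integration by parts).} Apply the duality of Definition~\ref{d:malliavin_divergence} to the first factor:
\begin{equation*}
	\E\Big[\prod_{j=1}^4 h_j\Big] = \E\Big[\big\langle D^d\big(h_2h_3h_4\big),\, [\CS_d h_1](W(e_{\biota_1}))\, e_{\biota_1}^{\otimes d}\big\rangle\Big].
\end{equation*}
Expand $D^d(h_2h_3h_4)$ by the Leibniz rule (as in~\eqref{e:tightness_leibniz}). Each $D$ that falls on $h_j = \delta^d(\cdot)$ is handled with the commutation relation Lemma~\ref{lem:commutation_D_delta}. Every contraction of a copy of $e_{\biota_1}$ with a copy of $e_{\biota_j}$ produces a factor $\rho(\biota_j-\biota_1)$, i.e. an edge between level~$1$ and level~$j$. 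The outcome is a finite sum of terms of the form
\begin{equation*}
	\prod(\rho\text{-factors}) \times \E\Big[[\CS_d h_1](\cdot)\prod_{j\ge 2}\delta^{d-b_j}\big([\CS_d h_j]^{(c_j)}(\cdot)\, e_{\biota_j}^{\otimes (d-b_j)}\big)\Big],
\end{equation*}
where $\sum_j b_j = d$. Each $c_j \le d$ counts the derivatives that landed on $\CS_d h_j$ itself; by Lemma~\ref{lem:commutation_D_delta} these come with additional contracted copies of $e_{\biota_1}$, so they still contribute edges.

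\emph{Step 3 (iteration).} Repeat Step~2 with the remaining divergences of levels $2$, $3$ and $4$. At each stage, choose the lowest-indexed level that still carries an undischarged divergence and integrate it by parts against the rest. Each level starts with $d$ divergence slots, so the procedure terminates after at most $4d$ contractions in total. Two features survive this bookkeeping:
\begin{itemize}
	\item the number of derivatives accumulated on $\CS_d h_j$ is at most $3d$ (only the other three levels can differentiate it, each at most $d$ times), and their total is at most $4d$;
	\item contractions always pair slots of \emph{different} levels, since each level carries a single random variable and no self-pairings arise from duality.
\end{itemize}
When all divergences have been discharged, only $\E[\prod_j[\CS_d h_j]^{(a_j)}(W(e_{\biota_j}))]$ remains, which is exactly $E_G(\w,\bi)$ in~\eqref{e2:representation_feynman_diagrams}.

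\emph{Step 4 (diagrams and collection).} Each complete record of contractions defines a graph on the levels $1,\dots,4$. Level $j$ has $q_j \ge d$ vertices: its $d$ original slots plus one for each derivative slot it contributed. Every vertex has degree one and there are no intra-level edges, so the graph is an element of $\Gamma(q_{1:4})$. The product of $\rho$'s equals $\fC_G(\w,\bq,\bi)$ of~\eqref{e:diagram_formula:CG}, and the binomial multiplicities collect into $C_G$. Since $q_j \le 4d$ and the branching is finite at every step, $\mathring{\Gamma}$ is finite. None of the combinatorics depends on $i_{1:4}$, so we may sum over $i_{1:4}$ and divide by $N^2$, which gives~\eqref{e:representation_feynman_diagrams}.

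\emph{Main obstacle.} The delicate point is the bookkeeping in Step~3, namely ensuring that derivatives hitting the inner factor $[\CS_d h_j]^{(c)}$ still leave that level with at least $d$ edges, so that $q_j \ge d$. This needs careful use of the formula in Lemma~\ref{lem:commutation_D_delta}: there, $\delta^{j-\ell}$ acts only on the original variables, so each of the $d$ original slots of a level is eventually either contracted by duality or consumed in a commutation, and both outcomes produce an edge. A secondary issue is integrability, i.e. justifying each duality step. Hölder's inequality in $L^4$, combined with Corollary~\ref{coro:continuity_div}, handles it as long as at most $4d$ derivatives fall on each $\CS_d h_j$; this is exactly what $h_j\in\DD^{3d,4}$ provides.
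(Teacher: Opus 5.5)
Your proposal is correct and follows the paper's route. You integrate by parts row by row, expand with the Leibniz rule, use the commutation relation of Lemma~\ref{lem:commutation_D_delta}, and record each contraction as an inter-level edge; your vertex count $q_j = d + a_j$ agrees with the paper's ``outgoing plus incoming arrows''. One small slip in Step~2: the constraint should be $\sum_j (b_j + c_j) = d$, not $\sum_j b_j = d$, since in the paper's notation $b_j = \ell_{1j}$, $c_j = r_{1j}-\ell_{1j}$ and $\sum_j r_{1j} = d$.
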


\begin{proof}
	Using integration by parts and the Leibniz rule for the Malliavin derivative (see~\cite[Exercise~2.3.10]{nourdin_peccati_book}), we find 
	\begin{align}
		\thinspace & 
		\E\sbr[4]{\prod_{j=1}^4 h_j(W(e_{\biota_j}))}
		= 
		\E\sbr[4]{\delta^{d}\del[1]{[\CS_{d} h_1](W(e_{\biota_1}))e_{\biota_1}^{\otimes d}} \prod_{j=2}^4 h_j(W(e_{\biota_j}))} \notag \\[0.5em]%\label{e:first_step_reduction_vanishing}\\[0.5em]
		= & \ 
		\sum_{\substack{r_{12}, r_{13}, r_{14} \geq 0, \\ r_{12} + r_{13} + r_{14} = d}} \binom{d}{r_{12}, r_{13}, r_{14}}
		\E\sbr[2]{\sbr[1]{\CS_{d} h_{1}}(W(e_{\biota_1})) \big\langle e_{\biota_1}^{\otimes d}, \widetilde{\bigotimes}_{j=2}^4 D^{r_{1j}} h_j(W(e_{\biota_j})) \big\rangle_{H^{\tilde{\otimes} n}}} \,. \notag
	\end{align}
	The next step is to apply the commutation relation for iterated Malliavin derivatives and iterated divergences, Lemma~\ref{lem:commutation_D_delta}, to each of the terms\footnote{Note that, due to the specific form of~$u$ to which Lemma~\ref{lem:commutation_D_delta} is applied, we do not actually have to distinguish between the variables of~$u$ and those created by the Malliavin derivatives because they coincide.}
	\begin{equation} \label{e:factors_divergence}
		D^{r_{1j}} h_j(W(e_{\biota_j}))	
		=
		D^{r_{1j}} \delta^{d} \del[1]{\sbr[0]{\CS_{d} h_j}(W(e_{\biota_j})) e_{\biota_j}^{\otimes d}}, \quad j=2,3,4,
	\end{equation}
	which leads to the following expression:
	\begin{align}
		\thinspace & 
		\E\sbr[4]{\prod_{j=1}^4 h_j(W(e_{\biota_j}))} \notag \\
		= & \  
		\sum_{\substack{r_{12}, r_{13}, r_{14} \geq 0, \\ r_{12} + r_{13} + r_{14} = d}} 
		\binom{d}{r_{12}, r_{13}, r_{14}}
		\sum_{\ell_{12} = 0}^{d \wedge r_{12}} 
		\sum_{\ell_{13} = 0}^{d \wedge r_{13}}
		\sum_{\ell_{14} = 0}^{d \wedge r_{14}} 
		\del[4]{\prod_{j=2}^4 \binom{d}{\ell_{1j}} \binom{r_{1j}}{\ell_{1j}} \ell_{1j}!} \thinspace
		\times \notag \\
		& \ \times  
		\E\sbr[2]{\sbr[1]{\CS_{d} h_{1}}(W(e_{\biota_1})) \Big\langle e_{\biota_1}^{\otimes d}, \widetilde{\bigotimes}_{j=2}^4 
		\delta^{d-\ell_{1j}}\del[1]{D^{r_{1j} - \ell_{1j}} \sbr[1]{\CS_{d} h_j}(W(e_{\biota_j})) e_{\biota_j}^{\otimes d}} \Big\rangle_{H^{\tilde{\otimes} d}}}
		\label{e:commutation_first_iteration}
		\\
		= & \  
		\sum_{\substack{r_{12}, r_{13}, r_{14} \geq 0, \\ r_{12} + r_{13} + r_{14} = d}} 
		%\binom{d}{r_{12}, r_{13}, r_{14}}
		\binom{d}{r_{1,2:4}}
		\prod_{j=2}^4 \rho_{\w_1,\w_j}(i_j-i_1)^{r_{1j}} \sum_{\ell_{12} = 0}^{r_{12}} 
		\sum_{\ell_{13} = 0}^{r_{13}}
		\sum_{\ell_{14} = 0}^{r_{14}} 
		\del[4]{\prod_{j=2}^4 \binom{d}{\ell_{1j}} \binom{r_{1j}}{\ell_{1j}} \ell_{1j}!}
		\times \notag \\
		& \ 
		\times \E\sbr[2]{\sbr[1]{\CS_{d} h_{1}}(W(e_{\biota_1})) \prod_{j=2}^4 
		\delta^{d-\ell_{1j}}\del[1]{\sbr[1]{\CS_{d} h_j}^{(r_{1j} - \ell_{1j})}(W(e_{\biota_j})) e_{\biota_j}^{\otimes (d - \ell_{1j})}}} 
		\notag
	\end{align}
	Note that we have used~$r_{1,2:4}$ as a placeholder for~\enquote{$r_{12}, r_{13}, r_{14}$}.

	The proof now proceeds by applying the same three steps---integration by parts, Leibniz rule, and the commutation relation for~$D^k$ and~$\delta^j$---to each of the other three factors~$h_j(W(e_{\biota_j}))$ for~$j = 2,3,4$. 
	In doing so, the resulting expressions become completely unwieldy and intransparent very quickly, which is the reason for the \emph{qualitative} formulation of the lemma.\footnote{In the proof of Proposition~\ref{p:reduction} below, we will be more precise on the combinatorics when~$G \in \mathring{\Gamma}$ is \emph{regular} in the sense of Definition~\ref{d:regular_graph}; for \emph{irregular} diagrams, the qualitative statement in this proposition is enough as they correspond to vanishing contributions when~$N \to \infty$.} %that lends itself to a graphical argument. 
	
	That said, we point out that the statement in~\ref{l:representation_feynman_diagrams:ii} is straightforward:
	Since each of the four factors has (at most)~$n$ derivatives to distribute among the other factors and those derivatives cannot hit the term they originate from itself, we immediately find the claimed identity in~\eqref{e2:representation_feynman_diagrams} with~$0 \leq a_{[1:4]} \leq 4d$ and~$0 \leq a_j \leq 3d$. 
	In fact, the worst case situation where~$a_1 = d$,~$a_2 = 3d$, and~$a_3 = a_4 = 0$ can be visualised as follows:
	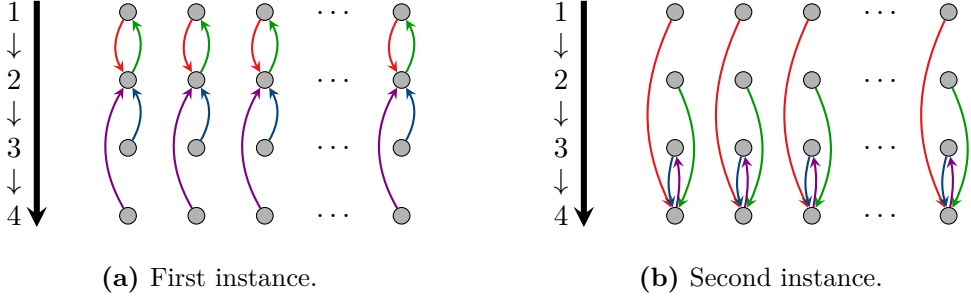
\begin{figure}[h]
		\centering
		\begin{subfigure}{.45\textwidth}
			%\begin{equs}
			\centering
			\begin{tikzpicture}[scale=0.3,baseline=0cm]
				%%%%%%%%
				\node at (1,3)  [] [label={[label distance=0.05cm]90:}] (1) {$1$};
				\node at (1,1.5)  [] [label={[label distance=0.05cm]90:}] (a) {$\downarrow$};
				\node at (1,0)  [] [label={[label distance=0.05cm]90:}] 	(2) {$2$};
				\node at (1,-1.5)  [] [label={[label distance=0.05cm]90:}] (b) {$\downarrow$};
				\node at (1,-3)  [] [label={[label distance=0.05cm]90:}] 	(3) {$3$};
				\node at (1,-4.5)  [] [label={[label distance=0.05cm]90:}] (c) {$\downarrow$};
				\node at (1,-6)  [] [label={[label distance=0.05cm]90:}] 	(4) {$4$};
				%%%%%%%%
				\node at (6,3)  [xibig] [label={[label distance=0.05cm]90:}] (11) {};
				\node at (9,3)  [xibig] [label={[label distance=0.05cm]90:}] (12) {};
				\node at (12,3)  [xibig] [label={[label distance=0.05cm]90:}] (13) {};
				\node at (15,3)   [label={[label distance=0.05cm]90:}] (14) {$\ldots$};
				\node at (18,3)  [xibig] [label={[label distance=0.05cm]90:}] (15) {};
				%%%%%
				\node at (6,0)  [xibig] [label={[label distance=0.05cm]-90:}] (21) {};
				\node at (9,0)  [xibig] [label={[label distance=0.05cm]-90:}] (22) {};
				\node at (12,0)  [xibig] [label={[label distance=0.05cm]-90:}] (23) {};
				\node at (15,0)   [label={[label distance=0.05cm]90:}] (24) {$\ldots$};
				\node at (18,0)  [xibig] [label={[label distance=0.05cm]90:}] (25) {};
				%%%%%
				\node at (6,-3)  [xibig] [label={[label distance=0.05cm]90:}] (31) {};
				\node at (9,-3)  [xibig] [label={[label distance=0.05cm]90:}] (32) {};
				\node at (12,-3)  [xibig] [label={[label distance=0.05cm]90:}] 
				(33) {};
				\node at (15,-3)   [label={[label distance=0.05cm]90:}] (34) {$\ldots$};
				\node at (18,-3)  [xibig] [label={[label distance=0.05cm]90:}] (35) {};
				%%%%%
				\node at (6,-6)  [xibig] [label={[label distance=0.05cm]-90:}] (41) {};
				\node at (9,-6)  [xibig] [label={[label distance=0.05cm]-90:}] (42) {};
				\node at (12,-6)  [xibig] [label={[label distance=0.05cm]-90:}] (43) {};
				\node at (15,-6)   [label={[label distance=0.05cm]90:}] (44) {$\ldots$};
				\node at (18,-6)  [xibig] [label={[label distance=0.05cm]90:}] (45) {};
				%%%%%
				\draw[thick, ->, color=darkred] (11) to [bend right=30] (21);
				\draw[thick, ->, color=darkred] (12) to [bend right=30] (22);
				\draw[thick, ->, color=darkred] (13) to [bend right=30] (23);
				\draw[thick, ->, color=darkred] (15) to [bend right=30] (25);
				%%%%
				%%%%%
				\draw[thick, ->, color=darkgreen] (21) to [bend right=30] (11);
				\draw[thick, ->, color=darkgreen] (22) to [bend right=30] (12);
				\draw[thick, ->, color=darkgreen] (23) to [bend right=30] (13);
				\draw[thick, ->, color=darkgreen] (25) to [bend right=30] (15);
				%%%%
				\draw[thick, ->, color=darkcerulean] (31) to [bend right=30] (21);
				\draw[thick, ->, color=darkcerulean] (32) to [bend right=30] (22);
				\draw[thick, ->, color=darkcerulean] (33) to [bend right=30] (23);
				\draw[thick, ->, color=darkcerulean] (35) to [bend right=30] (25);
				%%%%
				\draw[thick, ->, color=violet] (41) to [bend left=30] (21);
				\draw[thick, ->, color=violet] (42) to [bend left=30] (22);
				\draw[thick, ->, color=violet] (43) to [bend left=30] (23);
				\draw[thick, ->, color=violet] (45) to [bend left=30] (25);
				%%%%%%
				\draw[line width=0.8mm, ->] (2,3.5) to (2,-6.5);
			\end{tikzpicture}\; 
		%\end{equs}	
		%\hspace{1.5cm}
		\subcaption{First instance.}
		\label{subfig:worst_case_1}
		\end{subfigure}
		\hspace{1em}
		\begin{subfigure}{.45\textwidth}
			%\begin{equs}
			\centering
			\begin{tikzpicture}[scale=0.3,baseline=0cm]
				%%%%%%%%
				\node at (1,3)  [] [label={[label distance=0.05cm]90:}] (1) {$1$};
				\node at (1,1.5)  [] [label={[label distance=0.05cm]90:}] (a) {$\downarrow$};
				\node at (1,0)  [] [label={[label distance=0.05cm]90:}] 	(2) {$2$};
				\node at (1,-1.5)  [] [label={[label distance=0.05cm]90:}] (b) {$\downarrow$};
				\node at (1,-3)  [] [label={[label distance=0.05cm]90:}] 	(3) {$3$};
				\node at (1,-4.5)  [] [label={[label distance=0.05cm]90:}] (c) {$\downarrow$};
				\node at (1,-6)  [] [label={[label distance=0.05cm]90:}] 	(4) {$4$};
				%%%%%%%%
				\node at (6,3)  [xibig] [label={[label distance=0.05cm]90:}] (11) {};
				\node at (9,3)  [xibig] [label={[label distance=0.05cm]90:}] (12) {};
				\node at (12,3)  [xibig] [label={[label distance=0.05cm]90:}] (13) {};
				\node at (15,3)   [label={[label distance=0.05cm]90:}] (14) {$\ldots$};
				\node at (18,3)  [xibig] [label={[label distance=0.05cm]90:}] (15) {};
				%%%%%
				\node at (6,0)  [xibig] [label={[label distance=0.05cm]-90:}] (21) {};
				\node at (9,0)  [xibig] [label={[label distance=0.05cm]-90:}] (22) {};
				\node at (12,0)  [xibig] [label={[label distance=0.05cm]-90:}] (23) {};
				\node at (15,0)   [label={[label distance=0.05cm]90:}] (24) {$\ldots$};
				\node at (18,0)  [xibig] [label={[label distance=0.05cm]90:}] (25) {};
				%%%%%
				\node at (6,-3)  [xibig] [label={[label distance=0.05cm]90:}] (31) {};
				\node at (9,-3)  [xibig] [label={[label distance=0.05cm]90:}] (32) {};
				\node at (12,-3)  [xibig] [label={[label distance=0.05cm]90:}] 
				(33) {};
				\node at (15,-3)   [label={[label distance=0.05cm]90:}] (34) {$\ldots$};
				\node at (18,-3)  [xibig] [label={[label distance=0.05cm]90:}] (35) {};
				%%%%%
				\node at (6,-6)  [xibig] [label={[label distance=0.05cm]-90:}] (41) {};
				\node at (9,-6)  [xibig] [label={[label distance=0.05cm]-90:}] (42) {};
				\node at (12,-6)  [xibig] [label={[label distance=0.05cm]-90:}] (43) {};
				\node at (15,-6)   [label={[label distance=0.05cm]90:}] (44) {$\ldots$};
				\node at (18,-6)  [xibig] [label={[label distance=0.05cm]90:}] (45) {};
				%%%%%
				\draw[thick, ->, color=darkred] (11) to [bend right=25] (41);
				\draw[thick, ->, color=darkred] (12) to [bend right=25] (42);
				\draw[thick, ->, color=darkred] (13) to [bend right=25] (43);
				\draw[thick, ->, color=darkred] (15) to [bend right=25] (45);
				%%%%
				%%%%%
				\draw[thick, ->, color=darkgreen] (21) to [bend left=25] (41);
				\draw[thick, ->, color=darkgreen] (22) to [bend left=25] (42);
				\draw[thick, ->, color=darkgreen] (23) to [bend left=25] (43);
				\draw[thick, ->, color=darkgreen] (25) to [bend left=25] (45);
				%%%%
				\draw[thick, ->, color=darkcerulean] (31) to [bend right=15] (41);
				\draw[thick, ->, color=darkcerulean] (32) to [bend right=15] (42);
				\draw[thick, ->, color=darkcerulean] (33) to [bend right=15] (43);
				\draw[thick, ->, color=darkcerulean] (35) to [bend right=15] (45);
				%%%%
				\draw[thick, ->, color=violet] (41) to [bend right=10] (31);
				\draw[thick, ->, color=violet] (42) to [bend right=10] (32);
				\draw[thick, ->, color=violet] (43) to [bend right=10] (33);
				\draw[thick, ->, color=violet] (45) to [bend right=10] (35);
				%%%%%%
				\draw[line width=0.8mm, ->] (2,3.5) to (2,-6.5);
			\end{tikzpicture}\; 
			\subcaption{Second instance.}
			\label{subfig:worst_case_2}
		\end{subfigure}
		\caption{Two instances of the worst-case scenario in terms of \emph{actual} required derivatives, namely~$3d$. 
		In the right figure, however, the difference is that the the commutation relation between~$D$ and~$\delta$, Lemma~\ref{lem:commutation_D_delta}, requires~$4d$ derivatives in an intermediate step to deal with~$D^{3d} \delta^{d} \del[0]{\sbr[0]{\CS_{d}h_4}(W(e_{\biota_4}))e_{\biota_4}^{\otimes d}}$.}
		\label{fig:worst_case_derivatives}
	\end{figure}
	
	In this figure, 
	\begin{itemize}
		\item every row represents one factor~$h_j(W(e_{\biota_j}))$, $j = 1,2,3,4$, 
		\item there are~$d$ nodes in every row, each of which represents once instance of the divergence~$\delta$ in $h_j(W(e_{\biota_j}))	
		=
		\delta^{d} \del[0]{\sbr[0]{\CS_{d} h_j}(W(e_{\biota_j})) e_{\biota_j}^{\otimes d}}$,
		\item the number of incoming arrows per row denotes the number of derivatives which hit that row after originating from the outgoing row of the arrow. 
	\end{itemize}
	For example, the choice
	\begin{equation*}
		r_{12} = d, \quad \ell_{12} = 0, \quad r_{21} = r_{32} = r_{42} = d
	\end{equation*}
	with all other choosable parameters equal to~$0$ leads to this graph in Figure~\ref{subfig:worst_case_1}.
	The procedure that lead to~\eqref{e:commutation_first_iteration} can be iterated as follows:
	\begin{enumerate}[label=(\arabic*)]
		\item The number~$r_{1j}$ denotes the number of derivatives from row~$1$ that hit row~$j$.
		\item \label{item:graphical_formalism_free_legs} Among each of the rows~$j$ that have been hit by derivatives from row~$1$, i.e. if~$r_{1j} > 0$, one chooses a number~$\ell_{1j} \in \llbracket 0, d \wedge r_{1j} \rrbracket$, which then signifies that row~$j$ has~$d-\ell_{1j}$ \enquote{free legs} (in the form of~$\delta^{d-\ell_{1j}}$) left.
		\item We proceed row by row from row~$1$ on top to row~$4$ at the bottom, i.e.~$1 \to 2 \to 3 \to 4$, as indicated by the bold arrow on the left of the diagram.
		\item One chooses numbers~$r_{2j}$, $j \in \llbracket 1,4 \rrbracket\setminus \cbr[0]{2}$ that sum up to~$d-\ell_{12}$, the total number of free derivatives that row~$2$ has to distribute.
		\item We now iterate this procedure until we have reached row~$4$. 
		(Note that, at Step~$i$, one can only choose~$\ell_{ij}$ for~$j > i$ as the procedure has because all the rows~$j < i$ have already distributed all their derivatives.)
		\item Note that it is entirely possible that, in a given iteration step, a row does not distribute any derivatives: 
		For example, the sum in~\eqref{e:commutation_first_iteration} contains the case~$\ell_{12} = r_{12} = d$ which would correspond to the graph in Figure~\ref{subfig:worst_case_1} but with no green arrows.
		\item In general, at each iteration step, every node within a given row that has not yet been hit by a derivative, i.e. has no incoming arrow, must distribute its free derivatives among the \emph{other} rows (i.e. \emph{not} within its own row).
		In particular, a row that has not been hit by any derivative (i.e. has no incoming arrows at all) always distributes the maximal number of~$d$ derivatives among the other rows.
		\item \label{item:graphical_formalism_last}The previous point guarantees that each node has \emph{at least} one outgoing or incoming arrow: It cannot have more than one outgoing arrow, but it can have at most three incoming arrows, each of which originates from a different row.
		In particular, no node can have multiple incoming arrows from the same row.
	\end{enumerate}
	Let us record two observations related to the previous algorithm:
	\begin{itemize}
		\item First, the commutation relation from Lemma~\ref{lem:commutation_D_delta} is always applicable and all the involved objects have enough Malliavin--Sobolev regularity:
		Since we have assumed that~$h_j \in \DD^{3 d,4}(\gamma)$, by Proposition~\ref{p:estimate_hermite_shift}, we can infer that $\CS_{d} h_j \in \DD^{4 d,4}(\gamma)$. 
		Even though~$a_j \leq 3d$ as argued above, this is relevant because the scenario~$D^{3d} \delta^{d} \del[0]{\sbr[0]{\CS_{d}h_4}(W(e_{\biota_4}))e_{\biota_4}^{\otimes d}}$ might occur and requires $4 d$ derivatives, see Figure~\ref{subfig:worst_case_2} for a pictorial representation.
		\item While each choice of~$(r_{ij})_{i,j=1}^4$ and~$(\ell_{ij})_{i,j=1}^4$ with~$r_{ii} = \ell_{ii} = 0$ leads to a unique diagram such as the one in Figure~\ref{fig:worst_case_derivatives}, the reverse is not always true, essentially because of the freedom to \enquote{choose} the parameter~$\ell$ in the commutation relation in Lemma~\ref{lem:commutation_D_delta}.
		However, this is not a problem for us, as we only need one direction of the mapping.
	\end{itemize}

	Let us now use the above graphical formalism to explain why the claim in~\ref{l:representation_feynman_diagrams:i} is true. 
	In fact, point~\ref{item:graphical_formalism_last} in the previous list allows us to map each gram in Figure~\ref{fig:worst_case_derivatives} to a diagram (in the sense of Definition~\ref{d:diagrams}) that arises when computing the expectation of the product of Hermite polynomials with different Gaussians as their input via the \emph{diagram formula}, see Proposition~\ref{p:diagram_formula}.
	
	The key observation is the following: Beside modelling the structure of derivatives hitting each factor, each graph like the one in Figure~\ref{fig:worst_case_derivatives} also encodes a product of correlation functions: Each arrow between row~$j$ and row~$k$ gives rise to a factor~$\rho_{\w_j,\w_k}(i_k - i_j)$, regardless of its direction. 
	The crucial difference to a Feynman diagram from Definition~\ref{d:diagrams} is that, therein, each node is only paired \emph{exactly once} which we can achieve by the following procedure:
	Define
	\begin{equation*}
		q_k \coloneqq \#\cbr[0]{\text{\emph{outgoing} arrows in row}~k} + \#\cbr[0]{\text{\emph{incoming} arrows in row}~k}, \ k \in \llbracket 1,4 \rrbracket 
	\end{equation*}
	and draw a graph like in Figure~\ref{fig:worst_case_derivatives} where
	\begin{itemize}
		\item the~$k$-th row has~$q_k$ nodes, 
		\item each node is paired exactly once, but
		\item each row as a whole has the same number of incoming and outgoing arrows.
	\end{itemize}
	Pictorially, this procedure can be represented as in Figure~\ref{fig:conversion_to_feynman_diagram} where, for concreteness, we symbolise the procedure when~$d = 3$. 
	In this picture, we have~$q_1 = 4$, $q_2 = 6$, $q_3 = 5$, and~$q_4 = 3$ as well as, for $\w = \w_{1:4}$, $\bq = q_{1:4}$, and~$\bi = i_{1:4}$:
	\begin{equs}
		\fC_G(\w, \bq, \bi) 
		& = 
		\rho_{\w_1,\w_2}(i_2 - i_1)^2 \rho_{\w_1,\w_3}(i_3 - i_1) \rho_{\w_1,\w_4}(i_4 - i_1) \times \\[0.5em] 
		%%%
		& \times 
		\rho_{\w_2,\w_3}(i_3 - i_2)^3 \rho_{\w_2,\w_4}(i_4 - i_2) \rho_{\w_3,\w_4}(i_4-i_3) \,.
	\end{equs} 
	It is immediate to convince oneself that this procedure \emph{always} works and the set~$\mathring{\Gamma}$ is defined to be the \emph{finite} set of all Feynman diagrams that are produced in this way.
	Since giving precise formulas would only obfuscate, rather than clarify, our argument, we refrain from doing so.
	\begin{figure}[h]
		\centering
		\begin{subfigure}{.45\textwidth}
			%\begin{equs}
			\centering
			\begin{tikzpicture}[scale=0.3,baseline=0cm]
				%%%%%%%%
				\node at (1,3)  [] [label={[label distance=0.05cm]90:}] (1) {$1$};
				\node at (1,1.5)  [] [label={[label distance=0.05cm]90:}] (a) {$\downarrow$};
				\node at (1,0)  [] [label={[label distance=0.05cm]90:}] 	(2) {$2$};
				\node at (1,-1.5)  [] [label={[label distance=0.05cm]90:}] (b) {$\downarrow$};
				\node at (1,-3)  [] [label={[label distance=0.05cm]90:}] 	(3) {$3$};
				\node at (1,-4.5)  [] [label={[label distance=0.05cm]90:}] (c) {$\downarrow$};
				\node at (1,-6)  [] [label={[label distance=0.05cm]90:}] 	(4) {$4$};
				%%%%%%%%
				%%%%%%%%
				\node at (6,3)  [xibig] [label={[label distance=0.05cm]90:}] (11) {};
				\node at (9,3)  [xibig] [label={[label distance=0.05cm]90:}] (12) {};
				\node at (12,3)  [xibig] [label={[label distance=0.05cm]90:}] (13) {};
				%%%%%
				\node at (6,0)  [xibig] [label={[label distance=0.05cm]-90:}] (21) {};
				\node at (9,0)  [xibig] [label={[label distance=0.05cm]-90:}] (22) {};
				\node at (12,0)  [xibig] [label={[label distance=0.05cm]-90:}] (23) {};
				%%%%%
				\node at (6,-3)  [xibig] [label={[label distance=0.05cm]90:}] (31) {};
				\node at (9,-3)  [xibig] [label={[label distance=0.05cm]90:}] (32) {};
				\node at (12,-3)  [xibig] [label={[label distance=0.05cm]90:}] (33) {};
				%%%%%
				\node at (6,-6)  [xibig] [label={[label distance=0.05cm]-90:}] (41) {};
				\node at (9,-6)  [xibig] [label={[label distance=0.05cm]-90:}] (42) {};
				\node at (12,-6)  [xibig] [label={[label distance=0.05cm]-90:}] (43) {};
				%%%%%
				\draw[thick, ->, color=darkred] (11) to (21);
				\draw[thick, ->, color=darkred] (12) to [bend right=25] (32);
				\draw[thick, ->, color=darkred] (13) to [bend left=25] (43);
				\draw[thick, ->, color=darkgreen] (21) to [bend right=25] (41);
				\draw[thick, ->, color=darkgreen] (22) to [bend left=25] (31);
				\draw[thick, ->, color=darkgreen] (23) to [bend left=25] (13);
				\draw[thick, ->, color=darkcerulean] (32) to [bend right=25] (22);
				\draw[thick, ->, color=darkcerulean] (33) to [bend left=25] (23);
				\draw[thick, ->, color=violet] (42) to (33);
				%%%%%%
				%
				\draw[line width=0.8mm, ->] (2,3.5) to (2,-6.5);
			\end{tikzpicture}\; 
			\subcaption{A diagram which represents iterated integration by parts.}
		\end{subfigure}
		\hspace{1em}
		\begin{subfigure}{.45\textwidth}
			%\begin{equs}
			\centering
			\begin{tikzpicture}[scale=0.3,baseline=0cm]
				%%%%%%%%
				\node at (4,3)  [] [label={[label distance=0.05cm]90:}] (1) {$1$};
				\node at (4,0)  [] [label={[label distance=0.05cm]90:}] 	(2) {$2$};
				\node at (4,-3)  [] [label={[label distance=0.05cm]90:}] 	(3) {$3$};
				\node at (4,-6)  [] [label={[label distance=0.05cm]90:}] 	(4) {$4$};
				%%%%%%%%
				\node at (6,3)  [xibig] [label={[label distance=0.05cm]90:}] (11) {};
				\node at (9,3)  [xibig] [label={[label distance=0.05cm]90:}] (12) {};
				\node at (12,3)  [xibig] [label={[label distance=0.05cm]90:}] (13) {};
				\node at (15,3)  [xibig] [label={[label distance=0.05cm]-90:}] (14) {};
				%%%%%
				\node at (6,0)  [xibig] [label={[label distance=0.05cm]-90:}] (21) {};
				\node at (9,0)  [xibig] [label={[label distance=0.05cm]-90:}] (22) {};
				\node at (12,0)  [xibig] [label={[label distance=0.05cm]-90:}] (23) {};
				\node at (15,0)  [xibig] [label={[label distance=0.05cm]-90:}] (24) {};
				\node at (18,0)  [xibig] [label={[label distance=0.05cm]-90:}] (25) {};
				\node at (21,0)  [xibig] [label={[label distance=0.05cm]-90:}] (26) {};
				%%%%%
				\node at (6,-3)  [xibig] [label={[label distance=0.05cm]90:}] (31) {};
				\node at (9,-3)  [xibig] [label={[label distance=0.05cm]90:}] (32) {};
				\node at (12,-3)  [xibig] [label={[label distance=0.05cm]90:}] (33) {};
				\node at (15,-3)  [xibig] [label={[label distance=0.05cm]90:}] (34) {};
				\node at (18,-3)  [xibig] [label={[label distance=0.05cm]90:}] (35) {};
				%%%%%
				\node at (6,-6)  [xibig] [label={[label distance=0.05cm]-90:}] (41) {};
				\node at (9,-6)  [xibig] [label={[label distance=0.05cm]-90:}] (42) {};
				\node at (12,-6)  [xibig] [label={[label distance=0.05cm]-90:}] (43) {};
				%%%%%
				%%%%%
				\draw[thick, color=darkred] (11) to (21);
				\draw[thick, color=darkred] (12) to [bend right=30] (32);
				\draw[thick, color=darkred] (13) to [bend left=30] (43);
				\draw[thick, color=darkgreen] (24) to (41);
				\draw[thick, color=darkgreen] (22) to [bend left=25] (31);
				\draw[thick, color=darkgreen] (23) to [bend right=25] (14);
				\draw[thick, color=darkcerulean] (33) to (25);
				\draw[thick, color=darkcerulean] (34) to (26);
				\draw[thick, color=violet] (42) to (35);
				%%%%%
				%
			\end{tikzpicture}\; 
			\subcaption{The corresponding Feynman diagram. \phantom{Fill text to force a line break.}}
			\label{subfig:fd_reparametrisation}
		\end{subfigure}
		 \caption{
			Example of the conversion mechanism into Feynman diagrams.
		 }
		\label{fig:conversion_to_feynman_diagram}
	\end{figure}
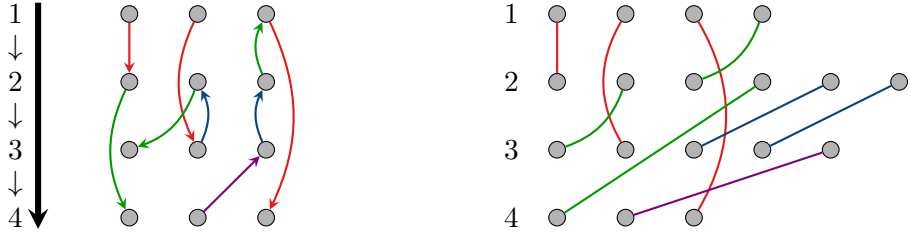

	Finally, note that the sums which originate from the Leibniz rule and the commutation of~$D^k$ and~$\delta^j$ give rise to long expressions with many sums---all of which are, however, \emph{finite}---and the corresponding combinatorial coefficients are collected in the factor~$C_G$, the precise form of which is irrelevant.
	The proof is complete.
\end{proof}

The following lemma provides a uniform-in-$i_{1:4}$ bound for the expectation term in~\eqref{e2:representation_feynman_diagrams}.
\begin{lemma} \label{lem:worst_case_estimate}
	Let~$d \in \N$ and consider~$h_1, \ldots, h_4 \in \mathbb{D}^{3d,4}(\gamma)$, each of which has Hermite rank at least~$d$.
	Consider~$a_{1:4} \in \N_0^4$ such that~$0 \leq a_{[1:4]} \leq 4d$ and~$0 \leq a_k \leq 3d$ for any~$k \in \llbracket 1,4 \rrbracket$.
	Then, for any $\bi = i_{1:4} \in \llbracket 0,N-1 \rrbracket^4$ and~$G \in \mathring{\Gamma}$ as in Lemma~\ref{l:representation_feynman_diagrams}, with~$E_G(\w,\bi)$ given in~\eqref{e2:representation_feynman_diagrams}, the following estimate holds:
	\begin{equation*}
		\abs[0]{E_G(\w,\bi)}
		\lesssim
		\prod_{j=1}^4
		\norm[0]{h_j}_{d,4} 
	\end{equation*}
	where the latter denotes the norm on~$\DD^{d,4}(\gamma)$ and the implicit constant is independent of~$\bi$.
\end{lemma}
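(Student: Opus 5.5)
The strategy is to apply Hölder's inequality with four $L^4(\gamma)$-factors and to let the regularisation property of the Hermite shift, Proposition~\ref{p:estimate_hermite_shift}, absorb $d$ of the derivatives on each factor. Estimate~\eqref{e:estimate_shift} with $n=d$ and $p=4$ gives
\begin{equation*}
	\norm[0]{[\CS_{d} h]^{(a)}}_{L^4(\gamma)} \leq \norm[0]{\CS_{d} h}_{a,4} \lesssim \norm[0]{h}_{(a-d)\vee 0,4},
\end{equation*}
and the right-hand side is controlled by $\norm[0]{h}_{d,4}$ as long as $a \leq 2d$. The argument therefore splits according to whether some $a_j$ exceeds $2d$. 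Since $a_{[1:4]} \leq 4d$, at most one index can satisfy $a_j > 2d$.

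\emph{Case 1: $a_j \leq 2d$ for all $j$.} Each $W(e_{\biota_j})$ is standard Gaussian because $\rho_{\w_j,\w_j}(0)=1$. Hölder's inequality with exponents $(4,4,4,4)$ then gives
\begin{equation*}
	\abs[0]{E_G(\w,\bi)} \leq \prod_{j=1}^4 \norm[0]{[\CS_{d}h_j]^{(a_j)}}_{L^4(\gamma)} \lesssim \prod_{j=1}^4 \norm[0]{h_j}_{d,4}.
\end{equation*}
This bound is uniform in $\bi$, because the marginal law of each factor does not depend on $\bi$.

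\emph{Case 2: exactly one index, say $j=1$, has $a_1 = 2d + k$ with $1 \leq k \leq d$.} Here the plan is to move the $k$ excess derivatives off the first factor by Malliavin duality. Put $F \coloneqq [\CS_{d} h_1]^{(2d)}$, so that $D^k F(W(e_{\biota_1})) = F^{(k)}(W(e_{\biota_1}))\, e_{\biota_1}^{\otimes k}$. This yields the identity
\begin{equation*}
	E_G(\w,\bi) = \E\sbr[1]{\scal{D^k F(W(e_{\biota_1})), \, \Phi\, e_{\biota_1}^{\otimes k}}_{H^{\symotimes k}}} = \E\sbr[1]{F(W(e_{\biota_1}))\, \delta^k\del[0]{\Phi\, e_{\biota_1}^{\otimes k}}},
\end{equation*}
where $\Phi \coloneqq \prod_{j=2}^4 [\CS_{d} h_j]^{(a_j)}(W(e_{\biota_j}))$ and $\norm[0]{e_{\biota_1}}_H = 1$. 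The duality step requires $F \in \DD^{k,4}(\gamma)$, which holds because $\CS_{d} h_1 \in \DD^{4d,4}(\gamma)$ by Proposition~\ref{p:estimate_hermite_shift}. (If needed, one first works with the regularisations $h_j^{[r]}$ from Lemma~\ref{lem:OU_regularisation} and passes to the limit at the end.)

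Next apply Hölder's inequality with exponents $(4, \nicefrac43)$. The first factor is bounded by $\norm[0]{F}_{L^4(\gamma)} \lesssim \norm[0]{h_1}_{d,4}$. For the second factor, use Corollary~\ref{coro:continuity_div} with $p = \nicefrac43 > 1$, so that Meyer's inequality is still available:
\begin{equation*}
	\norm[0]{\delta^k(\Phi\, e_{\biota_1}^{\otimes k})}_{L^{4/3}} \lesssim \sum_{i=0}^k \norm[0]{D^i \Phi}_{L^{4/3}(\Omega;H^{\otimes i})}.
\end{equation*}
Expand $D^i\Phi$ with the Leibniz rule (\cite[Exercise~2.3.10]{nourdin_peccati_book}). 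Every tensor factor is of the form $e_{\biota_j}$ and has norm one, so each resulting term is bounded, via Hölder with three $L^4$-norms, by a product $\prod_{j=2}^4 \norm[0]{[\CS_{d} h_j]^{(a_j+b_j)}}_{L^4(\gamma)}$ with $b_j \geq 0$ and $b_2+b_3+b_4 = i \leq k$.

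\emph{Derivative count (main obstacle).} The crux is that no factor receives more than $2d$ derivatives after this transfer. For $j \geq 2$ we have
\begin{equation*}
	a_j + b_j \leq (4d - a_1) + k = 4d - (2d+k) + k = 2d.
\end{equation*}
Proposition~\ref{p:estimate_hermite_shift} then bounds each factor by $\norm[0]{h_j}_{d,4}$, and every bound is independent of $\bi$. The two points to check carefully are the justification of the duality identity for functions with the given Malliavin--Sobolev regularity, and the use of exponent $\nicefrac43$ rather than $1$ in Meyer's inequality, which is what makes Corollary~\ref{coro:continuity_div} applicable. The total number of terms generated is finite and depends only on $d$, which completes the bound.
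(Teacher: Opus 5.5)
Your proof is correct and follows the paper's argument: the paper also reduces to the case where the maximal index has $a_1\in\llbracket 2d+1,3d\rrbracket$ and moves the $a_1-2d$ excess derivatives onto $e_{\biota_1}^{\otimes(a_1-2d)}$ by duality. It then uses H\"older with exponents $(4,\nicefrac43)$, Meyer's inequality in $L^{4/3}$, the Leibniz rule and the count $a_j+\ell\le 2d$, before invoking Proposition~\ref{p:estimate_hermite_shift}.

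On the duality step you flag, which the paper uses without comment: the issue is integrability rather than smoothness. The product $\prod_{j\ge 2}[\CS_{d}h_j]^{(a_j)}(W(e_{\biota_j}))$ is only in $L^{4/3}$, so OU regularisation with $r\to\infty$ does not help. The natural fix is to prove the estimate for polynomial $h_j$ and extend by density, since both sides are continuous on $\DD^{3d,4}(\gamma)$.
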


\begin{proof}
	We can assume w.l.o.g. that~$a_1 = \max_{j = 1:4} a_j$, otherwise we relabel the indices. 
	
	If~$a_1 \leq 2d$, then we also have~$a_j \leq 2d$ for all~$j = 2,3,4$ since~$a_{[1:4]} \leq 4d$; in that case, there is nothing to prove. In this case, Hölder's inequality combined with Proposition~\ref{p:estimate_hermite_shift} (with~$n = j = d$) immediately implies the claim.
	
	In case~$a_1 \in \llbracket 2d+1, 3d \rrbracket$, we have~$a_j \in \llbracket 0, 4d-a_1 \rrbracket$ for each~$j=2,3,4$. 
	Using integration by parts, Hölder's, and Meyer's inequality (Proposition~\ref{p:meyer_inequality}), we then have that  
	\begin{equs}
		\thinspace &
		\abs[0]{E_G(\w,\bi)}
		=
		\abs[3]{\E\sbr[3]{\prod_{j=1}^4[\CS_{d} h_j]^{(a_j)}(W(e_{\biota_j}))}} \\
		= & \ 
		\abs[3]{\E\sbr[3]{
			\scal{D^{a_1 - 2d} \sbr[1]{\CS_{d} h_1}^{(2d)}(W(e_{\biota_1})), \prod_{j=2}^4[\CS_{d} h_j]^{(a_j)}(W(e_{\biota_j})) e_{\biota_1}^{\otimes (a_1 - 2d)}}_{H^{\otimes (a_1-2d)}}}} \\[0.5em]
		= & \
		\abs[3]{\E\sbr[3]{
			\sbr[1]{\CS_{d} h_1}^{(2d)}(W(e_{\biota_1})) \delta^{a_1 - 2d}\del[3]{\prod_{j=2}^4[\CS_{d} h_j]^{(a_j)}(W(e_{\biota_j})) e_{\biota_1}^{\otimes (a_1 - 2d)}}}} \\[0.5em]
		\leq & \
		\norm[1]{\sbr[1]{\CS_{d} h_1}^{(2d)}}_{L^4(\gamma)} 
		\norm[3]{\delta^{a_1 - 2d}\del[3]{\prod_{j=2}^4[\CS_{d} h_j]^{(a_j)}(W(e_{\biota_j})) e_{\biota_1}^{\otimes (a_1 - 2d)}}}_{L^{\nicefrac{4}{3}}(\Omega)} \\[0.5em]
		\leq & \
		\norm[1]{\sbr[1]{\CS_{d} h_1}^{(2d)}}_{L^4(\gamma)} 
		\sum_{\ell=0}^{a_1 - 2d} \norm[4]{D^\ell \del[4]{\prod_{j=2}^4[\CS_{d} h_j]^{(a_j)}(W(e_{\biota_j})) e_{\biota_1}^{\otimes (a_1 - 2d)}}}_{L^{\nicefrac{4}{3}}(\Omega;H^{\otimes a_1-2d+\ell})}  \,.
	\end{equs}
	We now note that, for each~$j = 2,3,4$, we have
	\begin{equation*}
		a_j + \ell \leq (4d-a_1) + (a_1 - 2d) = 2d, 	
	\end{equation*}
	i.e. the maximum possible number of derivatives~$a_j$ on each factor~$\CS_{d} h_j$ is bounded by~$2d$. 
	The claim follows by using the Leibniz rule for the Malliavin derivative (see~\cite[Exercise~2.3.10]{nourdin_peccati_book}) again to brutally estimate 
	\begin{equs}
		\thinspace &
		\norm[3]{D^\ell \del[3]{\prod_{j=2}^4[\CS_{d} h_j]^{(a_j)}(W(e_{\biota_j})) e_{\biota_1}^{\otimes (a_1 - 2d)}}}_{L^{\nicefrac{4}{3}}(\Omega;H^{\otimes a_1-2d+\ell})} \\
		\lesssim \ & 
		\sum_{\ell_{[2:4]} = \ell} \,
		\norm[3]{\prod_{j=2}^4[\CS_{d} h_j]^{(a_j+\ell_j)}(W(e_{\biota_j}))}_{L^{\nicefrac{4}{3}}(\Omega)} \,,
	\end{equs}
	followed by an application of (the multi-factor) Hölder's inequality (with~$p_j = 4$) and Proposition~\ref{p:estimate_hermite_shift} (with~$n = j = d$, as before). 
\end{proof}

We are now ready to state and prove the main result of this Section~\ref{s:reduction}. 
Recall the conventions introduced in Notation~\ref{notation:reduction}.
\begin{proposition}[Reduction to finite chaos] \label{p:reduction}
	Let~$d \geq 1$ and assume that the fol\-lo\-wing conditions hold:
	\begin{enumerate}[label=(\arabic*)]
		\item \label{thm:reduction:1}
		For any~$k \in \llbracket 1,m \rrbracket$, we have~$f_k \in \mathbb{D}^{d,4}(\gamma)$ and each~$f_k$ has Hermite rank at least~$d$.
		\item \label{thm:reduction:2}
		For all~$k, \ell \in \llbracket 1,m \rrbracket$, we have~$\sum_{i \in \Z} \abs[0]{\rho_{k,\ell}(i)}^{d} < \infty$, i.e.~$\rho_{k,\ell} \in \ell^{d}(\Z)$.
	\end{enumerate}
	Then, for any~$l \in \N$, the vector~$\CR_N^M \in \del[1]{\tilde{T}^{(2)}(\R^m)}^l$ given in~\eqref{e:def_RNM} satisfies Assumptions~\ref{ass:R1} and~\ref{ass:R2} of Proposition~\ref{prop:abstract_convergence_statement}.
\end{proposition}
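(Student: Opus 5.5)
Since $\CR_N^M$ has finitely many components, it suffices to bound $\limsup_{N}$ of the $L^2(\P)$-norm of each component $S^{k;>M}_N(s,t)$ and $\mathbb{S}^{k,\ell;>M,[\iota]}_N(s,t)$, $\iota=1,2,3$, and to show that these limsups vanish as $M\to\infty$. The limsups are finite once we have $N$-uniform bounds, which also gives \ref{ass:R1}.

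\emph{First level.} The chaos decomposition and the diagram formula give the identity
\[
\norm[0]{S^{k;>M}_N(s,t)}_{L^2}^2=\frac1N\sum_{i,j}\sum_{q>M}q!\,(c^{(k)}_q)^2\rho_{k,k}(j-i)^q .
\]
Because $|\rho_{k,k}|\le1$, this is at most $(t-s+o(1))\,\norm[0]{\rho_{k,k}}_{\ell^d}^d\sum_{q>M}q!\,(c_q^{(k)})^2$. The limsup is therefore finite, and it tends to $0$ as $M\to\infty$ since $f_k\in L^2(\gamma)$.

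\emph{Second level.} By $(s,t)$-scaling (Remark~\ref{rmk:breuer_major}) it is enough to treat $s=0$, $t=1$. The three terms in \eqref{e:2nd_order_tail} all have the form \eqref{e:L2_norm_reduction_strategy} with $h_j\in\{f^{M}_{\w_j},f^{>M}_{\w_j}\}$, and at least one tail factor appears in each of the two pairs $(h_1,h_2)$ and $(h_3,h_4)$. Lemma~\ref{l:representation_feynman_diagrams} needs $\DD^{3d,4}$ regularity, but the $h_j$ are only in $\DD^{d,4}$. I therefore replace each $h_j$ by $h_j^{[r]}=P_{1/r}h_j$. This is in every $\DD^{k,4}$ by Lemma~\ref{lem:OU_regularisation}, keeps Hermite rank $\ge d$, and commutes with the projections $\pi_{\le M}$ and $\pi_{>M}$. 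For fixed $N$ the left side of \eqref{e:L2_norm_reduction_strategy} is a finite sum of fourth moments, so it converges as $r\to\infty$ by Hölder, using $h_j^{[r]}\to h_j$ in $L^4$. The bound of Lemma~\ref{lem:worst_case_estimate} involves only $\prod_j\norm[0]{h^{[r]}_j}_{d,4}\lesssim\prod_j\norm[0]{h_j}_{d,4}$, which is uniform in $r$ by contractivity of $P_t$ and Lemma~\ref{lem:OU_regularisation}. Every estimate below is uniform in $r$, so it passes to the limit.

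With the regularised functions, Lemma~\ref{l:representation_feynman_diagrams} writes the quantity as a finite sum over $G\in\mathring\Gamma$ of $C_G\,N^{-2}\sum\fC_G\,E_G$. Lemma~\ref{lem:worst_case_estimate} bounds $|E_G|$ by $\prod_j\norm[0]{h_j}_{d,4}$, uniformly in $\bi$. This is finite for each $M$, because $\norm[0]{f^{>M}}_{d,4}\le\norm[0]{f}_{d,4}+\norm[0]{f^M}_{d,4}<\infty$.
\begin{itemize}
\item For \emph{irregular} $G$, the remaining factor is $N^{-2}\sum|\fC_G|\le\mathring T_G(\w,\bq,N)$ with $l=4$. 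Proposition~\ref{p:bm83_irregular} and the index-constraint remark show it tends to $0$ for each fixed $M$.
\item For \emph{regular} $G$, each level carries at least $d$ legs, so $N^{-2}\sum|\fC_G|$ stays bounded. Either the pairing is $\{1,2\},\{3,4\}$, giving $(N^{-1}\sum_{i,j}|\rho(j-i)|^{q})^2\le\norm[0]{\rho}_{\ell^d}^{2d}$ since $q\ge d$; or it is a cross pairing, which is bounded in the same way.
\end{itemize}
Boundedness alone does not make these terms small; the smallness has to come from $M$.

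\emph{Main obstacle: smallness of the regular contributions.} For regular $G$ the factor $E_G$ must tend to $0$ as $M\to\infty$, uniformly in $\bi$. Here the bound of Lemma~\ref{lem:worst_case_estimate} is too crude, since $\norm[0]{f^{>M}}_{d,4}$ need not vanish; $L^4$ chaos tails do not decay. My approach is to make the regular combinatorics explicit. In a regular diagram on levels paired $\{a,b\},\{c,d\}$, the arrows of the integration-by-parts graph only join paired rows. So $E_G$ should factor, or be reducible by Gaussian integration by parts, into products of two-factor expectations of the form $\E\big[[\CS_d h_a]^{(\alpha)}(X)\,[\CS_d h_b]^{(\beta)}(Y)\big]$, with $\alpha,\beta\le d$ and the total derivative count on a pair at most $2d$. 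If this factorisation fails, I would instead bound the residual cross-correlations by Cauchy--Schwarz on the pair containing the tail factor, reducing again to two-factor $L^2$-type norms.

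Each pair contains a tail factor. That factor is bounded via Cauchy--Schwarz and Proposition~\ref{p:estimate_hermite_shift} by $\norm[0]{\CS_d f^{>M}}_{d,2}\lesssim\norm[0]{f^{>M}}_{\DD^{d,2}}$. This tends to $0$ as $M\to\infty$, because $f\in\DD^{d,2}$ and the $\DD^{d,2}$ norm is diagonal in chaos, equal to $\sum_q(1+q+\dots+q^d)\,q!\,c_q^2$ up to constants. The other factors stay bounded in $L^4$. Finitely many diagrams and bounded constants $C_G$ then give $r^M\to0$, which is \ref{ass:R2}.

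I expect verifying that the regular diagrams arising in Lemma~\ref{l:representation_feynman_diagrams} only ever require $L^2$-type control of the tail factors to be the delicate step.
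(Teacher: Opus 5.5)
Your first-level bound, the OU regularisation, and the irregular diagrams (Lemma~\ref{lem:worst_case_estimate} combined with Proposition~\ref{p:bm83_irregular}) match the paper. The gap is in the regular diagrams, which you rightly flag as the delicate step.

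The target you set there, namely $E_G\to0$ as $M\to\infty$ uniformly in $\bi$, is not available, and your argument for it confuses $\fC_G$ with $E_G$. Regularity of $G$ only says which covariances occur in $\fC_G$. By contrast, $E_G=\E[F_1F_2F_3F_4]$ is still an expectation over four Gaussians that are correlated \emph{across} the pairs, so it does not factor.

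Consider configurations where the two pairs are strongly correlated, e.g.\ $i_1=i_3$, $i_2=i_4$. In the multivariate setting one may even have $X^{(\ell)}_{i+1}=X^{(k)}_i$, so that all four factors are functions of one Gaussian. There $E_G$ is a genuine fourth-order moment. Cauchy--Schwarz on a pair yields quantities like $\E[F_1^2F_2^2]$, i.e.\ $L^4$-type norms of tail functions, which are not controlled by $L^2$ norms and need not vanish as $M\to\infty$. Your assertion that the remaining factors ``stay bounded in $L^4$'' would also have to hold uniformly in $M$, and nothing justifies that. So the fallback does not reduce to two-factor $L^2$ norms.

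The missing idea is to extract only the factorised part, and to let the remainder vanish as $N\to\infty$ at fixed $M$. The paper applies the covariance identity once along the pairing:
\[
\E[F_1F_2F_3F_4]=\E[F_1F_2]\,\E[F_3F_4]+\E\big[\scal{D(F_1F_2),(1-L)^{-1}D(F_3F_4)}_H\big].
\]
\begin{itemize}
\item The first term involves only $\prod_j\norm[0]{[\CS_dh_j]^{(d)}}_{L^2}\lesssim\prod_j\norm[0]{h_j}_{L^2}$. Together with $N^{-2}\sum\abs[0]{\rho}^d\abs[0]{\rho}^d\le\norm[0]{\rho}_{\ell^d}^{2d}$, it is bounded uniformly in $N$ and vanishes as $M\to\infty$.
\item The second term splits, by the Leibniz rule, into four pieces. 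Each piece carries an extra covariance $\rho(\biota_b-\biota_a)$ with $a\in\{1,2\}$, $b\in\{3,4\}$, which bridges the two pairs.
\item The expectation part of each piece is bounded by H\"older's inequality with $L^4$ norms. These are finite for each fixed $M$, and no decay in $M$ is needed.
\item The deterministic part, e.g.\ $N^{-2}\sum\abs[0]{\rho(\biota_2-\biota_1)}^d\abs[0]{\rho(\biota_3-\biota_1)}\abs[0]{\rho(\biota_4-\biota_3)}^d$, tends to $0$ as $N\to\infty$ by Lemma~\ref{lem:path_sum_covariances}.
\end{itemize}
This is exactly the double limit that \ref{ass:R1}--\ref{ass:R2} permit: the remainder becomes small as $N\to\infty$, and $M\to\infty$ is needed only for the factorised part. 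The configurations on which uniform-in-$\bi$ smallness fails are precisely those this bridging argument renders negligible.

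A minor point: for $\mathbb{S}^{k,\ell;>M,[1]}_N$ and the pairing $\{1,3\},\{2,4\}$, only one pair contains tail factors. One small factor suffices, so this is harmless.
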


\begin{proof}
	It is clear that, in order to verify Assumptions~\ref{ass:R1} and~\ref{ass:R2} in Proposition~\ref{prop:abstract_convergence_statement}, we can assume that~$l = 1$ and then proceed componentwise.	
	We start with the $m$-dimensional components of~$\CR_N^M$, in which case the argument is straightforward:
	\begin{equs}
	\norm[1]{S^{k;> M}_N(s,t)}_{L^2(\P)}^2
	& =
	\frac{1}{N} \sum_{i,j = \lfloor Ns \rfloor}^{\lfloor Nt \rfloor - 1} \E\sbr[1]{f^{> M}_k(X^{(k)}_i) f^{> M}_k(X^{(k)}_j)} \\[0.5em]
	& = 
	\frac{1}{N} \sum_{i,j = \lfloor Ns \rfloor}^{\lfloor Nt \rfloor - 1} \sum_{q > M}\sbr[1]{c_q^{(k)}}^2 q! \rho_{k,k}(j-i)^q
	\lesssim \norm[0]{\rho_{k,k}}_{\ell^{d}(\Z)} \norm[0]{f_k^{> M}}^2_{L^2(\gamma)}
\end{equs}
and the RHS (which is uniformly bounded in~$N \in \N$) goes to~$0$ as~$M \to \infty$ because chaos tails decay in~$L^2(\gamma)$.
Since this decay, in general, \emph{does not} hold in~$L^p(\gamma)$ for any~$p > 2$ (see~\cite[Thm.~5.18]{janson}), the argument for the~$(m \times m)$-dimensional components of~$\CR_N^M$ is substantially more involved. 

At first, we observe that we can deal with each matrix component in each of the $(m \times m)$-dimensional entries of~$\CR_N^M$ separately.
To this end, we fix~$k,\ell \in \llbracket 1,m \rrbracket$ and look at~$\mathbb{S}_N^{k,\ell;> M}(s,t)$ (recall the notation in~\eqref{e:2nd_order_tail:1}).
Since the interval boundaries~$s < t$ do not matter at all for our arguments, we will w.l.o.g. assume that~$s= 0$ and~$t=1$ and simply suppress the dependence on~$s$ and~$t$ in our arguments. 
By definition in~\eqref{e:2nd_order_tail},~$\mathbb{S}_N^{k,\ell;> M}$ can be further decomposed into~$\mathbb{S}_N^{k,\ell;> M, [a]}$ for~$a= 1,2,3$.
Since all of these summands can be analysed analogously, we will henceforth only deal with~$\mathbb{S}_N^{k,\ell;> M, [3]}$; 
the only information that is going to be important in our analysis is that at least one of the factors is projected onto chaos components bigger than~$M$, which is the case for all three summands.

Further, recall the notation that we have introduced in the paragraph on p.~\pageref{notation_reduction}. 
In particular, in eq.~\eqref{e:assignment:h_strategy} and~\eqref{e:covariance_strategy} we have set
\begin{equation} \label{e:assignment:h}
	h_j(W(e_{i_j})) = 
	f_{\w_j}^{>M}(W(e_{\biota_j})), \quad \rho(\biota_k - \biota_j) = \rho_{\w_j,\w_k}(i_k - i_j), \quad \biota_j = (\w_j,i_j)
\end{equation}
and, by doubling the components, we have obtained the following identity in~\eqref{e:L2_norm_reduction_strategy}:
\begin{equs}[][e:L2_norm_reduction]
	\norm[1]{\mathbb{S}_N^{k,\ell;> M, [1]}}_{L^2(\P)}^2
	& = 
	\frac{1}{N^2} \sum_{\substack{0 \leq i_1 < i_2 \leq N-1, \\ 0 \leq i_3 < i_4 \leq N-1}} \E\sbr[4]{\prod_{j=1}^4 h_j(W(e_{\biota_j}))}\,.
\end{equs}
Recall also that we write~$\w = \w_{1:4}$, $\bq = q_{1:4}$, and~$\bi = i_{1:4}$.
We now proceed in various steps. 

\vspace{0.5em}
\noindent
$\triangleright$ \textit{Step 1: Regularisation by the OU semigroup.} \label{triangle_OU_regularisation}
This approximation step is inspired by a similar strategy in the proof of~\cite[Thm.~1.2]{nourdin_nualart_peccati_21}, see their pp.~11-12.
Recall from Definition~\ref{d:OU_operator} that~$(P_r)_r$ denotes the OU semigroup and recall that we have assumed that each~$f_k \in \mathbb{D}^{d,4}(\gamma)$ which is inherited by~$f_k^{>M}$ and~$f_k^{> M}$ for each fixed~$M$. 
Thus,~$h_j \in \mathbb{D}^{d,4}(\gamma)$ for~$j \in \llbracket 1,4 \rrbracket$ and, by Lemma~\ref{lem:OU_regularisation} (with~$p = 4$), we know that the regularisations~$h_j^{[r]} = P_{1/r} h_j$ satisfy~$h_j^{[r]} \in \mathbb{D}^{3d,4}(\gamma)$ as well as 
\begin{equation*}
	h_j^{[r]} \overset{r \to \infty}{\longrightarrow} h_j \quad \text{in} \quad \mathbb{D}^{d,4}(\gamma) \,.
	%, \quad 
\end{equation*} 
Therefore, we can replace each~$h_j$ in~\eqref{e:L2_norm_reduction} by~$h_j^{[r]}$ and, if we can show the claimed convergences assuming~$h_j \in \mathbb{D}^{3d,4}(\gamma)$ with bounds that only depend on 
	$\norm[0]{h_j}_{d,4}$,  
then we are done.

\vspace{0.5em}
\noindent
$\triangleright$ \textit{Step 2: Splitting into regular and irregular diagrams.}
Note that, thanks to the previous approximation argument, we are now in the setting of Lemma~\ref{l:representation_feynman_diagrams} above, which implies that 
\begin{equs}[][e:L2_norm_reduction:2]
	\norm[1]{\mathbb{S}_N^{k,\ell;> M, [1]}}_{L^2(\P)}^2
	& = 
	\sum_{G \in \mathring{\Gamma}} C_G \thinspace \sum_{\substack{0 \leq i_{1} < i_2 \leq N-1, \\ 0 \leq i_{3} < i_4 \leq N-1}}  \frac{\fC_G(\w,\bq,\bi)}{N^2} E_G(\w,\bi) 
	\eqqcolon 
	S_{\text{\tiny IR}} + S_{\text{\tiny R}}
	\,.
\end{equs}
where~$S_{\text{\tiny IR}}$ denotes the sum over~\emph{irregular} diagrams, $G \in \mathring{\Gamma} \cap \Gammairreg$, and~$S_{\text{\tiny R}}$ denotes the sum over~\emph{regular} diagrams, $G \in \mathring{\Gamma} \cap \Gammareg$, both in the sense of Definition~\ref{d:regular_graph}.
Recall that both sums are over \emph{finitely many} diagrams since~$\mathring{\Gamma}$ has finite cardinality.

\vspace{0.5em}
\noindent
$\triangleright$ \textit{Step 3: Analysis of irregular diagrams.}
By Lemma~\ref{lem:worst_case_estimate}: 
	\begin{equation*}
		\abs[0]{E_G(\w,\bi)}
		\lesssim
		\prod_{j=1}^4
		\norm[0]{h_j}_{d,4}. 
	\end{equation*}
Recall that the~$h_j$ implicitly depend on~$M$ via~\eqref{e:assignment:h} and that, for each fixed~$M \in \N$, the quantity on the RHS of the previous estimate is finite by assumption and does not depend on~$i_{1:4}$.
Therefore, since the combinatorial factor~$C_G$ is positive as well, we find that 
\begin{equation}\label{e:bound_IR_terms}
	\abs[0]{S_{\text{\tiny IR}}}
	\leq 
	\sum_{G \in \mathring{\Gamma} \cap \Gammairreg} C_G \thinspace \sum_{\substack{0 \leq i_{1} < i_2 \leq N-1, \\ 0 \leq i_{3} < i_4 \leq N-1}} \frac{\abs[0]{\fC_G(\w,\bq,\bi)}}{N^2} \prod_{j=1}^4 \norm[0]{h_j}_{d,4} \,. %\\
\end{equation}
With the set~$A_{4,N}$ defined as
\begin{equation*}
	A_{4,N} \coloneqq \cbr[0]{i_{1:4} \in \Z^4: \ 0 \leq i_1 < i_2 \leq N-1, \ 0 \leq i_3 < i_4 \leq N-1} \,,
\end{equation*}
Proposition~\ref{p:bm83_irregular} now implies that 
\begin{equation*}
	0 \leq
	\lim_{N \to \infty} 
	\sum_{i_{1:4} \in A_{4,N}} \frac{\abs[0]{\fC_G(\w,\bq,\bi)}}{N^2} 
	\leq
	\lim_{N \to \infty} 
	\mathring{T}_G(\w,\bq,N) 
	= 0
\end{equation*}
for each~$G \in \mathring{\Gamma} \cap \Gammairreg$. 
Therefore, since we sum over~\emph{finitely many}~$G \in \mathring{\Gamma} \cap \Gammairreg$, for each fixed~$M \in \N$, we have 
\begin{equation} \label{e:convergence_IR_terms}
	\lim_{N \to \infty} S_{\text{\tiny IR}} = 0 \,.
\end{equation}

\vspace{0.5em}
\noindent
$\triangleright$ \textit{Step 4: Analysis of regular diagrams.}
In order to deal with the regular diagrams, we will use the graphical formalism introduced in Figures~\ref{fig:worst_case_derivatives} and~\ref{fig:conversion_to_feynman_diagram} above. 
A regular diagram looks as follows: 
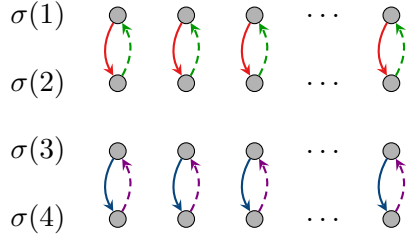
\begin{figure}[h]
		\centering
			\begin{tikzpicture}[scale=0.3,baseline=0cm]
				%%%%%%%%
				\node at (2.5,3)  [] [label={[label distance=0.05cm]90:}] (1) {$\sigma(1)$};
				\node at (2.5,0)  [] [label={[label distance=0.05cm]90:}] 	(2) {$\sigma(2)$};
				\node at (2.5,-3)  [] [label={[label distance=0.05cm]90:}] 	(3) {$\sigma(3)$};
				\node at (2.5,-6)  [] [label={[label distance=0.05cm]90:}] 	(4) {$\sigma(4)$};	
				%%%%%%%%
				%%%%%%%%
				\node at (6,3)  [xibig] [label={[label distance=0.05cm]90:}] (11) {};
				\node at (9,3)  [xibig] [label={[label distance=0.05cm]90:}] (12) {};
				\node at (12,3)  [xibig] [label={[label distance=0.05cm]90:}] (13) {};
				\node at (15,3)   [label={[label distance=0.05cm]90:}] (14) {$\ldots$};
				\node at (18,3)  [xibig] [label={[label distance=0.05cm]90:}] (15) {};
				%%%%%
				\node at (6,0)  [xibig] [label={[label distance=0.05cm]-90:}] (21) {};
				\node at (9,0)  [xibig] [label={[label distance=0.05cm]-90:}] (22) {};
				\node at (12,0)  [xibig] [label={[label distance=0.05cm]-90:}] (23) {};
				\node at (15,0)   [label={[label distance=0.05cm]90:}] (24) {$\ldots$};
				\node at (18,0)  [xibig] [label={[label distance=0.05cm]90:}] (25) {};
				%%%%%
				\node at (6,-3)  [xibig] [label={[label distance=0.05cm]90:}] (31) {};
				\node at (9,-3)  [xibig] [label={[label distance=0.05cm]90:}] (32) {};
				\node at (12,-3)  [xibig] [label={[label distance=0.05cm]90:}] 
				(33) {};
				\node at (15,-3)   [label={[label distance=0.05cm]90:}] (34) {$\ldots$};
				\node at (18,-3)  [xibig] [label={[label distance=0.05cm]90:}] (35) {};
				%%%%%
				\node at (6,-6)  [xibig] [label={[label distance=0.05cm]-90:}] (41) {};
				\node at (9,-6)  [xibig] [label={[label distance=0.05cm]-90:}] (42) {};
				\node at (12,-6)  [xibig] [label={[label distance=0.05cm]-90:}] (43) {};
				\node at (15,-6)   [label={[label distance=0.05cm]90:}] (44) {$\ldots$};
				\node at (18,-6)  [xibig] [label={[label distance=0.05cm]90:}] (45) {};
				%%%%%
				\draw[thick, ->, color=darkred] (11) to [bend right=30] (21);
				\draw[thick, ->, color=darkred] (12) to [bend right=30] (22);
				\draw[thick, ->, color=darkred] (13) to [bend right=30] (23);
				\draw[thick, ->, color=darkred] (15) to [bend right=30] (25);
				%%%%
				%%%%%
				\draw[thick, ->, densely dashed, color=darkgreen] (21) to [bend right=30] (11);
				\draw[thick, ->, densely dashed, color=darkgreen] (22) to [bend right=30] (12);
				\draw[thick, ->, densely dashed, color=darkgreen] (23) to [bend right=30] (13);
				\draw[thick, ->, densely dashed, color=darkgreen] (25) to [bend right=30] (15);
				%%%%
				\draw[thick, ->, color=darkcerulean] (31) to [bend right=30] (41);
				\draw[thick, ->, color=darkcerulean] (32) to [bend right=30] (42);
				\draw[thick, ->, color=darkcerulean] (33) to [bend right=30] (43);
				\draw[thick, ->, color=darkcerulean] (35) to [bend right=30] (45);
				%%%%
				\draw[thick, ->, densely dashed, color=violet] (41) to [bend right=30] (31);
				\draw[thick, ->, densely dashed, color=violet] (42) to [bend right=30] (32);
				\draw[thick, ->, densely dashed, color=violet] (43) to [bend right=30] (33);
				\draw[thick, ->, densely dashed, color=violet] (45) to [bend right=30] (35);
				%%%%%%
				%
			\end{tikzpicture}\; 
		\caption{Structure of regular diagrams. 
		}
		\label{fig:regular_diagrams_derivatives}
	\end{figure}

\vspace{1em}
Figure~\ref{fig:regular_diagrams_derivatives} has the following structural features:
\begin{enumerate}[label=(\arabic*)]
	\item For visual clarity, we have applied a permutation~$\sigma \in \mathfrak{S}(1,2,3,4)$ of the indices~$1$,$2$,$3$, and~$4$. 
	Any regular diagram in the sense of Definition~\ref{d:regular_graph} (see also Figure~\ref{subfig:regular_diagram}) can always be arranged in that way:
	This reflects the fact that we need to match the rows in pairs and then connect the nodes only \emph{within} those pairs.
	We refer to Remark~\ref{rmk:level_labels} for the interpretation of the level labels~$\sigma(1),\ldots,\sigma(4)$ on the left hand side.
	\item According to point~\ref{item:graphical_formalism_free_legs} of the algorithm on p.~\pageref{item:graphical_formalism_free_legs}, each node that has already been hit by a derivative can either contribute a derivative itself or not. This is symbolised by the green resp. violet \emph{dashed} arrows. Note, in particular, that a regular diagram could have any number~$k_g, k_v \in \llbracket 0,d \rrbracket$ of green resp. violet arrows.
\end{enumerate}
For concreteness, we will henceforth consider~$\sigma = \operatorname{Id}$, i.e.~$\sigma(j) = j$ for~$j \in \llbracket 1,4 \rrbracket$, corresponding to the pairing~$12 \sVert[0] 34$. One can deal with all other cases in exactly the same way.
The sum~$S_{\text{\tiny R}}$ then becomes a \emph{finite} sum of terms of the following type:
\begin{equs}
	\thinspace & 
	\CG_N^M(12 \sVert 34) \\
	\coloneqq \ & \frac{1}{N^2} 
	\sum_{\substack{0 \leq i_{1} < i_2 \leq N-1, \\ 0 \leq i_{3} < i_4 \leq N-1}} \rho(\biota_2 - \biota_1)^{d+k_g} \rho(\biota_4 - \biota_3)^{d+k_v} \times \\
	\qquad & \times  \E\sbr[1]{[\CS_{d} h_1]^{(k_g)}(W(e_{\biota_1})) [\CS_{d} h_2]^{(d)}(W(e_{\biota_2})) [\CS_{d} h_3]^{(k_v)}(W(e_{\biota_3}))[\CS_{d} h_4]^{(d)}(W(e_{\biota_4}))}
\end{equs}
The \emph{worst case scenario} in terms of required derivatives is~$k_g = k_v = d$ whereas the worst-case scenario in terms of the covariance powers is~$k_g = k_v = 0$:
Therefore, we will henceforth deal with the term
\begin{equ}
	\bar{\CG}_N^M(12 \sVert 34) 
	\coloneqq \frac{1}{N^2} 
	\sum_{\substack{0 \leq i_{1} < i_2 \leq N-1, \\ 0 \leq i_{3} < i_4 \leq N-1}} \rho(\biota_2 - \biota_1)^{d} \rho(\biota_4 - \biota_3)^{d} \E\sbr[0]{F_1 F_2 F_3 F_4}
\end{equ}
where we have incorporated \emph{both worst case scenarios} and, for notational simplicity, have set
\begin{equation} \label{e:assignment:F}
	F_j \coloneqq [\CS_{d} h_j]^{(d)}(W(e_{\biota_j})), \quad j \in \llbracket 1,4 \rrbracket \,.
\end{equation}

We will now proceed using integration by parts \emph{once} according to how we have paired the rows, i.e. in our case~$\sigma = \operatorname{Id}$, we have pairs~$1 \leftrightarrow 2$ and~$3 \leftrightarrow 4$:
	\begin{equation} \label{e:gaussian_IBP_applied}
		\E\sbr[1]{F_1F_2F_3F_4} 
		= \E\sbr[1]{F_1F_2}\E\sbr[1]{F_3F_4} + \E\sbr[1]{\scal{D(F_1F_2),-DL^{-1}(F_3F_4)}_H}\,.
	\end{equation}
We will now split up the remaining analysis into two parts and define 
\begin{equs}
	\bar{\CG}_N^{M;1}(12 \sVert 34) 
	& \coloneqq \frac{1}{N^2} 
	\sum_{\substack{0 \leq i_{1} < i_2 \leq N-1, \\ 0 \leq i_{3} < i_4 \leq N-1}} \rho(\biota_2 - \biota_1)^{d} \rho(\biota_4 - \biota_3)^{d} \times \\
	& \qquad \qquad \qquad \qquad \times \E\sbr[1]{\scal{D(F_1F_2),-DL^{-1}(F_3F_4)}_H} \,,\\
	\bar{\CG}_N^{M;2}(12 \sVert 34) 
	& \coloneqq \frac{1}{N^2} 
	\sum_{\substack{0 \leq i_{1} < i_2 \leq N-1, \\ 0 \leq i_{3} < i_4 \leq N-1}} \rho(\biota_2 - \biota_1)^{d} \rho(\biota_4 - \biota_3)^{d} \E\sbr[1]{F_1F_2}\E\sbr[1]{F_3F_4} \,.
\end{equs}

\vspace{0.5em}
\noindent
$\triangleright$ \textit{Step 4a: Analysis of the term~$\bar{\CG}_N^{M;1}(12 \sVert 34)$ involving derivatives.}
	Recall that~$L^{-1}$ is the \emph{pseudo}inverse of~$L$, which, by definition, means that~$L^{-1} F = L^{-1}(F - \E\sbr[0]{F})$; therefore, we can always commute~$-DL^{-1}F = (1-L)^{-1}DF$, even if~$\E\sbr[0]{F} \neq 0$.
	Performing this change in the second summand in~\eqref{e:gaussian_IBP_applied} and applying the Leibniz rule for the Malliavin derivative~(see~\cite[Exercise~2.3.10]{nourdin_peccati_book}), we find that
	\begin{equs}[][e:gaussian_IBP_applied:2]
		\thinspace &
		\E\sbr[1]{\scal{D(F_1F_2),-DL^{-1}(F_3F_4)}} = 
		\E\sbr[1]{\scal{D(F_1F_2),(1-L)^{-1}D(F_3F_4)}} \\[0.5em]
		& = \E\sbr[1]{F_1'F_2 (1-L)^{-1}(F_3' F_4)} \rho(\biota_3 - \biota_1)
		+
		\E\sbr[1]{F_1'F_2 (1-L)^{-1}(F_3 F_4')} \rho(\biota_4 - \biota_1) \\[0.5em]
		& \quad +
		\E\sbr[1]{F_1 F_2' (1-L)^{-1}(F_3' F_4)} \rho(\biota_3 - \biota_2)
		+
		\E\sbr[1]{F_1 F_2' (1-L)^{-1}(F_3 F_4')} \rho(\biota_4 - \biota_2) 
	\end{equs} 
	where we have written~$F_j' = \sbr[0]{\CS_{d} h_j}^{(d+1)}(W(e_{\biota_j}))$ for~$F_j$ as given in~\eqref{e:assignment:F} and~$h_j$ as given in~\eqref{e:assignment:h}.
	By our assumption that each component~$f_k$ is in $\mathbb{D}^{d,4}(\gamma)$, we can bound each of the summands in the previous display by Hölder's inequality, applied to the expectations, in a similar way. 
	For example, for the first term we find
	\begin{equs}[][e:gaussian_IBP_applied:3]
		\thinspace & 
		\abs[1]{\E\sbr[1]{F_1'F_2 (1-L)^{-1}(F_3' F_4)}} \abs[0]{\rho(\biota_3 - \biota_1)} \\
		\leq \ & 
		\norm[0]{F_1'}_{L^4(\P)} \norm[0]{F_2}_{L^4(\P)}\norm[0]{F_3'}_{L^4(\P)}\norm[0]{F_4}_{L^4(\P)} \abs[0]{\rho(\biota_3 - \biota_1)} \\
		= \ & 
		\norm[0]{[\CS_{d} f_k^{>M}]^{(d+1)}}_{L^4(\gamma)}^2 \norm[0]{\sbr[0]{\CS_{d} f_\ell^{M}}^{(d)}}_{L^4(\gamma)}^2 \abs[0]{\rho(\biota_3 - \biota_1)} \\
		\leq \ & 
		\norm[0]{f_k^{> M, (1)}}_{L^4(\gamma)}^2 \norm[0]{f_\ell^{M}}_{L^4(\gamma)}^2 \abs[0]{\rho(\biota_3 - \biota_1)}
	\end{equs}
	where the last step is due to Proposition~\ref{p:estimate_hermite_shift} and the RHS is finite for each fixed~$M \in \N$. 
	(In fact, we have only used that~$f_k \in \mathbb{D}^{1,4}(\gamma)$ for this argument.)
	
	Therefore, we find that for each fixed~$M \in \N$, we have the bound 
	\begin{equs}
		\abs[1]{\bar{\CG}_N^{M;1}(12 \sVert 34)} 
		& \lesssim_M \frac{1}{N^2} 
		\sum_{\substack{0 \leq i_{1} < i_2 \leq N-1, \\ 0 \leq i_{3} < i_4 \leq N-1}} \abs[0]{\rho(\biota_2 - \biota_1)}^{d} \abs[0]{\rho(\biota_4 - \biota_3)}^{d} \times \\
		& \quad \times \del[1]{
			\abs[0]{\rho(\biota_3 - \biota_1)}
			+
		 	\abs[0]{\rho(\biota_4 - \biota_1)}
			+
			\abs[0]{\rho(\biota_3 - \biota_2)}
			+
			\abs[0]{\rho(\biota_4 - \biota_2)}}
	\end{equs}
	By Lemma~\ref{lem:path_sum_covariances}, the RHS goes to~$0$ as~$N \to \infty$.
	Note that the previous argument works because, visually, each of the four covariances we produced from the integration-by-parts creates a \enquote{bridge} between the pairs $1 \leftrightarrow 2$ and~$3 \leftrightarrow 4$, for example~$2 \leftrightarrow 1 \boldsymbol{\leftrightarrow} 3 \leftrightarrow 4$ for the first summand.
	
	\vspace{0.5em}
	\noindent
	$\triangleright$ \textit{Step 4b: Analysis of the term~$\bar{\CG}_N^{M;2}(12 \sVert 34)$ involving a product of expectations.}
	For this term, we will show that it is uniformly bounded in~$N \in \N$ but goes to~$0$ as~$M \to \infty$. We have
	\begin{equs}
		\abs[1]{\bar{\CG}_N^{M;2}(12 \sVert 34)}
		& \leq \frac{1}{N^2} 
		\sum_{\substack{0 \leq i_{1} < i_2 \leq N-1, \\ 0 \leq i_{3} < i_4 \leq N-1}} \abs[0]{\rho(\biota_2 - \biota_1)}^{d} \abs[0]{\rho(\biota_4 - \biota_3)}^{d} \abs[0]{\E\sbr[0]{F_1F_2}}\abs[0]{\E\sbr[0]{F_3F_4}} \\
		& \leq \norm[0]{\rho_{\w_1,\w_2}}_{\ell^{d}(\Z)}\norm[0]{\rho_{\w_3,\w_4}}_{\ell^{d}(\Z)} \prod_{j=1}^4 \norm[0]{F_j}_{L^2(\Omega)} \\
		& = \norm[0]{\rho_{\w_1,\w_2}}_{\ell^{d}(\Z)}\norm[0]{\rho_{\w_3,\w_4}}_{\ell^{d}(\Z)} \prod_{j=1}^4 \norm[0]{[\CS_{d} h_j]^{(d)}}_{L^2(\gamma)}  \\
		& \leq \norm[0]{\rho_{\w_1,\w_2}}_{\ell^{d}(\Z)}\norm[0]{\rho_{\w_3,\w_4}}_{\ell^{d}(\Z)} \prod_{j=1}^4 \norm[0]{\CS_{d} h_j}_{d,2} \\
		& \lesssim \norm[0]{\rho_{\w_1,\w_2}}_{\ell^{d}(\Z)}\norm[0]{\rho_{\w_3,\w_4}}_{\ell^{d}(\Z)} \prod_{j=1}^4 \norm[0]{h_j}_{L^2(\gamma)} 
	\end{equs}
	where the penultimate estimate is trivial and the last one is due to Proposition~\ref{p:estimate_hermite_shift}, eq.~\eqref{e:estimate_shift} (with~$k=n=d$ and~$p=2$.)
	The RHS is uniformly bounded in~$N \in \N$ and goes to~$0$ as~$M \to \infty$ because at least two factors~$h_j$ contain projections onto chaoses~$> M$ (recall that~$h_j$ depends on~$M$ via~\eqref{e:assignment:h})and chaos tails converge in~$L^2(\gamma)$.
	
	\vspace{0.5em}
	\noindent
	$\triangleright$ \textit{Conclusion of Step 4.}
	In summary, we have written the finite sum~$S_{\text{\tiny R}}$ over terms like~$\bar{\CG}_N^{M}(12 \sVert 34)$ which we have then split into two parts again: 
	\begin{enumerate}[label=(\arabic*)]
		\item One which, for fixed~$M \in \N$, goes to~$0$ as~$N \to \infty$. These are the terms corresponding to~$\bar{\CG}_N^{M;1}(12 \sVert 34)$.
		\item One which is uniformly bounded in~$N \in \N$ where the bound goes to~$0$ as~$M \to \infty$. These are the terms corresponding to~$\bar{\CG}_N^{M;2}(12 \sVert 34)$.
	\end{enumerate}
	
	\vspace{0.5em}
	\noindent
	$\triangleright$ \textit{Summary.} 
	Recall that~$S_{\text{\tiny IR}}$ and~$S_{\text{\tiny R}}$ have been introduced in~\eqref{e:L2_norm_reduction:2} and, implicitly, depend both on~$N$ and~$M$.
	All the previous steps combined imply that  
	\begin{equs}
		\lim_{N \to \infty} S_{\text{\tiny IR}} = 0 \quad \text{for all} \ M \geq d, \quad 
		\lim_{M \to \infty} \limsup_{N \to \infty} S_{\text{\tiny R}} = 0 \,.		
	\end{equs}
	The proof is complete.
\end{proof}

\subsection{A law of large numbers on the diagonal}  \label{s:LLN_diagonal}

In this section, we verify Assumption~\ref{ass:D1} of Proposition~\ref{prop:abstract_convergence_statement} and characterise the limit~$\CD^M$. 
This corresponds to Part~\ref{strategy:2} of the strategy outlined on p.~\pageref{strategy:2}.

In order to further motivate the definition of~$\CD_N^M$ in~\eqref{e:def_DNM}, observe that it arises from the following decomposition:
\begin{equs}[][e:decomp_SymSSN]
	\del[1]{\operatorname{Sym} \S_N^M}
	& =
	\frac{1}{2N}\del[4]{
		\sum_{0 \leq i < j \leq N-1} \vec{f}^M(X_i) \otimes \vec f^M(X_j) + \vec f^M(X_j) \otimes \vec f^M(X_i)
	} \\
	& = 
	\frac{1}{2N} \del[4]{\sum_{i=0}^{N-1} \vec f^M(X_i)}^{\otimes 2} - \frac{1}{2N} \sum_{i=0}^{N-1} \vec f^M(X_i) \otimes \vec f^M(X_i) 
	=
	\frac{1}{2} \del[1]{S_N^M}^{\otimes 2} - \frac{1}{2} \DD_N^M
\end{equs}
where we have written~$\DD_N^M \coloneqq \DD_N^M(0,1)$ for 
\begin{equation} \label{d:D_N}
	\mathbb{D}^M_N(s,t) 
	\coloneqq  
	\frac{1}{N} \sum_{i=\lfloor Ns \rfloor}^{\lfloor Nt \rfloor-1} \vec{f}(X_i) \otimes  \vec{f}(X_i)
\end{equation}

The following proposition is the main result of this section and can be seen as a \emph{law of large numbers on the diagonal}.
Recall that~$ \Delta^M(0) = \E\sbr[0]{\vec{f}^M(X_1) \otimes \vec{f}^M(X_1)}$.

\begin{proposition}[Diagonal LLN] \label{p:LLN_diagonal}
	We have
	\begin{equation} \label{p:LLN_diagonal:e1}
		\lim_{N \to \infty} \mathbb{D}^M_N(s,t)
		=
		(t-s) \thinspace \Delta^M(0) \, \quad \text{in} \quad L^2(\P) \,.
	\end{equation}
	As a consequence, Proposition~\ref{prop:abstract_convergence_statement}, Assumption~\ref{ass:D1} is verified with 
	\begin{equation} \label{e:def_lim_DM}
		\CD^M \coloneqq (0,(t_1-s_1),0,(t_2-s_2), \ldots, 0,(t_l - s_l)) \Delta^M(0) \in \del[1]{\tilde{T}^{(2)}(\R^m)}^l \,.	
	\end{equation}
\end{proposition}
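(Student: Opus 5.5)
We prove the $L^2(\P)$ convergence componentwise: fix $k,\ell \in \llbracket 1,m \rrbracket$ and consider $\DD^{M;k,\ell}_N(s,t) = \frac1N \sum_{i=\lfloor Ns\rfloor}^{\lfloor Nt\rfloor-1} f_k^M(X^{(k)}_i) f_\ell^M(X^{(\ell)}_i)$. (In~\eqref{d:D_N} the functions should carry the superscript~$M$.) By stationarity, $\E[\DD^{M;k,\ell}_N(s,t)] = \frac{\lfloor Nt\rfloor - \lfloor Ns\rfloor}{N}\Delta^M_{k,\ell}(0) \to (t-s)\Delta^M_{k,\ell}(0)$. It therefore suffices to show that the variance tends to zero, that is,
\begin{equation*}
\frac{1}{N^2}\sum_{i,j=\lfloor Ns\rfloor}^{\lfloor Nt\rfloor-1} \Big(\E\big[g(i)g(j)\big] - \E[g(i)]\,\E[g(j)]\Big) \longrightarrow 0,
\end{equation*}
where $g(i) \coloneqq f_k^M(X^{(k)}_i) f_\ell^M(X^{(\ell)}_i)$.

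Since $M$ is fixed, everything lies in finite chaos. Expand $f_k^M$ and $f_\ell^M$ into Hermite polynomials and apply the diagram formula (Proposition~\ref{p:diagram_formula}) to the four-factor expectation $\E\big[H_{q_1}(X^{(k)}_i)H_{q_2}(X^{(\ell)}_i)H_{q_3}(X^{(k)}_j)H_{q_4}(X^{(\ell)}_j)\big]$, with levels $1,2$ carrying index $i$ and levels $3,4$ carrying index $j$. Diagrams in which no edge crosses between $\{1,2\}$ and $\{3,4\}$ factorise exactly into $\E[g(i)]\,\E[g(j)]$ and cancel with the subtracted mean product. Every remaining diagram contains at least one edge between $\{1,2\}$ and $\{3,4\}$, and each such edge contributes a factor $\rho_{\cdot,\cdot}(\pm(j-i))$. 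Edges within $\{1,2\}$ contribute $\rho_{k,\ell}(0)$ or similar constants, all bounded by $1$ by Cauchy--Schwarz, and every other factor is also bounded by $1$.

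Hence each surviving term is bounded by $\frac{1}{N^2}\sum_{i,j} |\rho_{a,b}(j-i)|$ for some $a,b$, times finitely many coefficients $|c_q|$ and combinatorial constants. Since $\rho_{a,b} \in \ell^d(\Z)$, we have $\rho_{a,b}(u) \to 0$ as $|u|\to\infty$, so Lemma~\ref{lem:elemantary_sum_covariances} makes this bound vanish. The sums over $q_{1:4} \le M$ and over diagrams are finite, so the variance tends to zero. Alternatively, one can invoke Corollary~\ref{coro:BM83} with $l=4$ and $A_{4,N}$ the set of indices with $i_1=i_2$ and $i_3=i_4$. The Wick pairing $\{1,2\},\{3,4\}$ reproduces the mean product, and the other two pairings carry $\Delta^M(j-i)$ factors, which are handled by the same lemma. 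The direct diagram argument, however, avoids the normalisation mismatch between $N^{-2}$ and $N^{-l/2}$, so I would use that.

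The assertion about Assumption~\ref{ass:D1} then follows, since $L^2$ convergence implies convergence in probability to the deterministic vector $\CD^M$ in~\eqref{e:def_lim_DM}. The only delicate point is checking that the non-crossing diagrams reproduce $\E[g(i)]\,\E[g(j)]$ exactly, so that no uniform-in-$N$ remainder survives. This holds because the diagram formula applied to $\E[g(i)]$ alone is precisely the sum over the within-pair diagrams.
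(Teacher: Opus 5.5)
Your proof is correct, but it reaches the variance bound by a different route from the paper's. In Lemma~\ref{l:diagonal_variance}, the paper applies Corollary~\ref{coro:BM83} with $l=4$ and $A_{4,N}=\{i_1=i_2,\ i_3=i_4\}$. Irregular diagrams are discarded through the Breuer--Major estimate (Proposition~\ref{p:bm83_irregular}). The Wick pairing $\{1,2\},\{3,4\}$ cancels the squared mean. The two remaining pairings are then bounded by $N^{-2}\sum_{|r|<N}(N-|r|)\,|\Delta^M_{a,b}(r)\,\Delta^M_{c,e}(r)| = O(N^{-1})$, using $\Delta^M\in\ell^1$.

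You instead apply the diagram formula exactly and observe that the non-crossing diagrams reproduce $\E[g(i)]\,\E[g(j)]$ identically. You then dispose of every other diagram, regular or irregular, with a single crossing factor $|\rho_{a,b}(j-i)|$ and Lemma~\ref{lem:elemantary_sum_covariances}.

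What each route buys:
\begin{itemize}
\item Your argument is more elementary, since it never needs the Breuer--Major irregular-diagram proposition.
\item It is also more general: it only uses $\rho_{a,b}(u)\to 0$ (a mixing-type condition), not $\rho\in\ell^d(\mathbb{Z})$. The price is that it gives no rate.
\item The paper's route gives an explicit $O(N^{-1})$ bound on the surviving Wick part.
\item The paper's route also reuses the asymptotic-Wick machinery that drives the rest of Section~\ref{s:moment_computations}.
\end{itemize}

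One small correction to your closing remark: there is no normalisation mismatch on the Corollary~\ref{coro:BM83} route. For $l=4$ one has $N^{-l/2}=N^{-2}$, which is exactly the normalisation of the variance, and this is precisely how the paper proceeds.
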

\begin{proof}
	This is an immediate consequence of Lemmas~\ref{lem:fdd_expectation} and~\ref{l:diagonal_variance} below.
\end{proof}

In order to close the proof of Proposition~\ref{p:LLN_diagonal}, we require the following two lemmas: 
The first shows that~$\DD_N^M$ converges to the right quantity \emph{in expectation} and the second shows that its \emph{variance vanishes} as~$N \to \infty$.
\begin{lemma} \label{lem:fdd_expectation}
	For fixed~$M$ and~$s,t \in [0,1]$ with~$s < t$, we have
	\begin{equation*}
		\lim_{N \to \infty} \E\sbr[1]{\DD^M_N(s,t)}
		=
		(t-s) \thinspace \Delta^M(0) \,.
	\end{equation*}
\end{lemma}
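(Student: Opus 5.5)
By stationarity this is a pure counting statement; no Breuer--Major machinery is needed. I would write out the expectation componentwise. For $k,\ell \in \llbracket 1,m \rrbracket$ we have
\begin{equation*}
	\E\sbr[1]{\DD^{M;k,\ell}_N(s,t)}
	= \frac{1}{N} \sum_{i=\lfloor Ns \rfloor}^{\lfloor Nt \rfloor-1} \E\sbr[1]{f_k^M(X^{(k)}_i) f_\ell^M(X^{(\ell)}_i)} .
\end{equation*}
Here $\DD^M_N$ is understood with $\vec{f}^M$ in place of $\vec{f}$, consistent with~\eqref{e:def_DNM}. The pair $(X^{(k)}_i, X^{(\ell)}_i)$ is a centred Gaussian vector with unit variances and covariance $\rho_{k,\ell}(0)$. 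Its law does not depend on $i$, by stationarity of the $\R^m$-valued sequence $(X_i)_{i\in\Z}$.

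Hence each summand equals $\E\sbr[0]{f_k^M(X^{(k)}_1) f_\ell^M(X^{(\ell)}_1)} = \Delta^M_{k,\ell}(0)$. Via the diagram formula (Proposition~\ref{p:diagram_formula} with $l=2$), this is explicitly $\sum_{q=d}^M q!\, c^{(k)}_q c^{(\ell)}_q \rho_{k,\ell}(0)^q$. The quantity is finite, since $f_k^M$ is a polynomial and in particular lies in $L^2(\gamma)$. Only the diagonal terms $q_1=q_2$ survive, because Hermite polynomials of different orders are orthogonal under a correlated standard Gaussian pair.

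It follows that
\begin{equation*}
	\E\sbr[1]{\DD^M_N(s,t)} = \frac{\lfloor Nt \rfloor - \lfloor Ns \rfloor}{N}\, \Delta^M(0).
\end{equation*}
The prefactor lies in $\sbr[0]{t-s-\tfrac{1}{N},\, t-s+\tfrac{1}{N}}$ and therefore converges to $t-s$, which proves the claim.

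The main care point is the stationarity claim for the joint pair $(X^{(k)}_i, X^{(\ell)}_i)$. It holds because the covariances $\rho_{\cdot,\cdot}(j-i)$ in~\eqref{e:multivariate_covariance} depend only on $j-i$. There is no genuine obstacle in this lemma. The real work of Proposition~\ref{p:LLN_diagonal} lies in the variance estimate of Lemma~\ref{l:diagonal_variance}.
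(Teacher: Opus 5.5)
Your proof is correct and is essentially the paper's argument. Stationarity makes every summand equal to $\Delta^M(0)$, which gives $\E\sbr[1]{\DD^M_N(s,t)} = \frac{\lfloor Nt \rfloor - \lfloor Ns \rfloor}{N}\,\Delta^M(0) \to (t-s)\,\Delta^M(0)$. Reading $\DD^M_N$ with $\vec{f}^M$ in place of $\vec{f}$, and writing $\Delta^M_{k,\ell}(0)$ explicitly via the diagram formula, are harmless additions.
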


\begin{proof}
	By stationarity, we have 
	\begin{equation*}
		\E\sbr[1]{\DD^M_N(s,t)}
		= 
		\frac{1}{N} \sum_{i=\lfloor Ns \rfloor}^{\lfloor Nt \rfloor -1} \E\sbr[1]{\vec{f}(X_i) \otimes  \vec{f}(X_i)}
		= 
		\frac{\lfloor Nt \rfloor - \lfloor Ns \rfloor}{N} \Delta^M(0) \,.
	\end{equation*}
	The claim follows immediately. 
	% %
\end{proof}

\begin{lemma} \label{l:diagonal_variance}
	For each~$M \geq d$ and~$k,\ell \in \llbracket 1,m \rrbracket$, we have 
	\begin{equation}
		\lim_{N \to \infty} \V\del[1]{\DD_N^{M;k,\ell}}	= 0 \,.
	\end{equation}
\end{lemma}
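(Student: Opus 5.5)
We write the variance as a double sum and apply Corollary~\ref{coro:BM83}. For fixed $M$, set $Z_i \coloneqq f^M_k(X^{(k)}_i) f^M_\ell(X^{(\ell)}_i)$. Then $\DD_N^{M;k,\ell} = \frac{1}{N}\sum_{i} Z_i$ with $i$ ranging over $\llbracket \lfloor Ns \rfloor, \lfloor Nt \rfloor -1\rrbracket$; by Remark~\ref{rmk:breuer_major} we may take $s=0$, $t=1$. Therefore
\begin{equation*}
	\V\del[1]{\DD_N^{M;k,\ell}} = \frac{1}{N^2}\sum_{i,j=0}^{N-1} \del[1]{\E\sbr[0]{Z_i Z_j} - \E\sbr[0]{Z_i}\E\sbr[0]{Z_j}} ,
\end{equation*}
and by stationarity $\E\sbr[0]{Z_i}\E\sbr[0]{Z_j} = \Delta^M_{k,\ell}(0)^2$.

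The first term is a four-fold product over the double-indexed set $A_{4,N} \coloneqq \cbr[0]{(i_1,i_2,i_3,i_4) \in \square_{4,N}: i_1 = i_2, \ i_3 = i_4}$ with words $\w = (k,\ell,k,\ell)$:
\begin{equation*}
	\frac{1}{N^2}\sum_{i,j}\E\sbr[0]{Z_iZ_j} = \frac{1}{N^2}\sum_{\bi \in A_{4,N}} \E\sbr[3]{\prod_{r=1}^4 f^M_{\w_r}(X^{(\w_r)}_{i_r})} .
\end{equation*}
This is exactly the left-hand side of Corollary~\ref{coro:BM83} with $l=4$, so $n=2$ and the normalisation $N^{-l/2}=N^{-2}$ matches. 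Asymptotically, only the three pairings in $\CM(4)$ survive.

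\begin{itemize}
	\item The pairing $\cbr[0]{\cbr[0]{1,2},\cbr[0]{3,4}}$ gives $\frac{1}{N^2}\sum_{i,j}\Delta^M_{k,\ell}(0)^2 = \Delta^M_{k,\ell}(0)^2$. This cancels the subtracted mean term exactly.
	\item The pairings $\cbr[0]{\cbr[0]{1,3},\cbr[0]{2,4}}$ and $\cbr[0]{\cbr[0]{1,4},\cbr[0]{2,3}}$ give terms of the form $\frac{1}{N^2}\sum_{i,j} \Delta^M_{k,k}(j-i)\Delta^M_{\ell,\ell}(j-i)$ and $\frac{1}{N^2}\sum_{i,j}\Delta^M_{k,\ell}(j-i)\Delta^M_{\ell,k}(j-i)$.
\end{itemize}

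To show the cross terms vanish, use $\abs[0]{\Delta^M_{a,b}(u)} \le C_M \abs[0]{\rho_{a,b}(u)}^d$. This holds because $\Delta^M_{a,b}(u) = \sum_{q=d}^M q!\, c_q^{(a)}c_q^{(b)}\rho_{a,b}(u)^q$ and $\abs[0]{\rho_{a,b}}\le 1$ by Cauchy--Schwarz. Since $\rho_{a,b}\in\ell^d(\Z)$, we get $\Delta^M_{a,b}\in\ell^1(\Z)$, and $\Delta^M_{a,b}$ is bounded. The product of the two $\Delta^M$ factors is therefore summable in $u=j-i$, so each cross term is $O(1/N)$. Alternatively, the product tends to $0$ as $\abs[0]{u}\to\infty$, and Lemma~\ref{lem:elemantary_sum_covariances} applies directly.

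The main point to check carefully is the applicability of Corollary~\ref{coro:BM83} with the diagonal constraint set $A_{4,N}$. Remark~\ref{rmk:breuer_major} states that any $A_{l,N}\subseteq\square_{l,N}$ is allowed, since the irregular-diagram bound uses absolute values. Coinciding indices only produce factors $\rho_{k,\ell}(0)$, which are bounded by $1$ and so cause no problem. We must also confirm that the exact cancellation of the $\cbr[0]{\cbr[0]{1,2},\cbr[0]{3,4}}$ pairing holds at the level of the asymptotic relation $\asymp$; this is clear because that contribution equals $\Delta^M_{k,\ell}(0)^2\cdot N^2/N^2$ exactly. Combining the three pairings, the variance tends to $0$.
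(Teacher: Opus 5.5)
Your proof is correct and follows the paper's argument. You apply Corollary~\ref{coro:BM83} on the diagonal index set $A_{4,N}=\{i_1=i_2,\ i_3=i_4\}$ with the word $(k,\ell,k,\ell)$, cancel the pairing $\{\{1,2\},\{3,4\}\}$ exactly against the squared mean $\Delta^M_{k,\ell}(0)^2$, and bound the two remaining pairings by $O(N^{-1})$ using $\Delta^M\in\ell^1(\Z)$, just as the paper does.
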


\begin{proof}
	By definition of~$\DD_N^{M;k,\ell}$, we have 
	\begin{equs}
		\thinspace 
		& \V\del[1]{\DD_N^{M;k,\ell}} 
		= 
		\V\del[4]{\frac{1}{N}\sum_{i=0}^{N-1} f_k^M(X^{(k)}_i)f_\ell^M(X^{(\ell)}_i)} \\
		= \ &  
		\sum_{i,j=0}^{N-1} \E\sbr[1]{f_k^M(X^{(k)}_i)f_\ell^M(X^{(\ell)}_i)f_k^M(X^{(k)}_j)f_\ell^M(X^{(\ell)}_j)} \\
		%%%%
		& \qquad \qquad \qquad - 
		\E\sbr[1]{f_k^M(X^{(k)}_i)f_\ell^M(X^{(\ell)}_i)} \E\sbr[1]{f_k^M(X^{(k)}_j)f_\ell^M(X^{(\ell)}_j)}
	\end{equs}
	Recall that~$\square_{4,N} = \llbracket 0, N-1 \rrbracket^4$ and let~$A_{4,N}$ be the set given by 
	\begin{equation*}
		A_{4,N}
		 \coloneqq \cbr[1]{\boldsymbol{i} = i_{1:4} \in \square_{4,N}: i_1 = i_2, \ i_3 = i_4} \,,
	\end{equation*}
	and note that 
	\begin{equs}
		\thinspace & 
		\sum_{i,j=0}^{N-1} \E\sbr[1]{f_k^M(X^{(k)}_i)f_\ell^M(X^{(\ell)}_i)f_k^M(X^{(k)}_j)f_\ell^M(X^{(\ell)}_j)} \\
		= \ & 
		\sum_{i_{1:4} \in A_{4,N}} \E\sbr[1]{f_k^M(X^{(k)}_{i_1})f_\ell^M(X^{(\ell)}_{i_2})f_k^M(X^{(k)}_{i_3})f_\ell^M(X^{(\ell)}_{i_4})} \,.
	\end{equs}
	By Corollary~\ref{coro:BM83}, we therefore have, as~$N \to \infty$, 
	\begin{equs}
		\V\del[1]{\DD_N^{M;k,\ell}} 
		\asymp 
		\frac{1}{N^2} \sum_{i,j=0}^{N-1} 
			& \left( \E\sbr[1]{f_k^M(X^{(k)}_i)f_k^M(X^{(k)}_j)} \E\sbr[1]{f_\ell^M(X^{(\ell)}_i)f_\ell^M(X^{(\ell)}_j)} \right. \\[0.5em]
			& 
			+ \left.
			\E\sbr[1]{f_k^M(X^{(k)}_i)f_\ell^M(X^{(\ell)}_j)} \E\sbr[1]{f_k^M(X^{(k)}_j)f_\ell^M(X^{(\ell)}_i)}
			\right)
	\end{equs}
	where we emphasise that each of the expectations in the previous expression contain~$i \leftrightarrow j$ pairings that will produce covariances encoded in~$\Delta^M_{k,k}(j-i)$, $\Delta^M_{\ell,\ell}(j-i)$,~$\Delta^M_{k,\ell}(j-i)$, and~$\Delta^M_{\ell,k}(j-i) = \Delta^M_{k,\ell}(i-j)^\top$.
	Indeed: We then have 
	\begin{equs}[][e:var_comp]
		\V\del[1]{\DD_N^{M;k,\ell}}
		& \asymp 
		\frac{1}{N^2} 
			\sum_{i,j=0}^{N-1}\del[2]{	
				\Delta^{M}_{k,k}(j-i) \Delta^{M}_{\ell,\ell}(j-i)
				+
				\Delta^{M}_{k,\ell}(j-i)\Delta^{M}_{\ell,k}(j-i)
			} \\
		& = 	
		\frac{1}{N^2} 
			\sum_{r=-(N-1)}^{N-1} (N-r) \del[2]{	
				\Delta^{M}_{k,k}(r) \Delta^{M}_{\ell,\ell}(r)
				+
				\Delta^{M}_{k,\ell}(r) \Delta^{M}_{\ell,k}(r)
			} 
	\end{equs}
	By assumption, we know that 
	\begin{equation*}
		\Delta^M_{\v,\w} \in \ell^1(\Z) \quad \text{for all} \quad \v,\w \in \cbr[0]{k,\ell} \subseteq \llbracket 1, m \rrbracket	
	\end{equation*}
	and so the RHS of~\eqref{e:var_comp} is~$O(N^{-1})$ by Cauchy--Schwarz. 
	This completes the proof. 
\end{proof}

\subsection{Moment computations} \label{s:moment_computations}

By the results in the last section, we explicitly control the diagonal~$\DD_N^M$ in~$L^2(\P)$ as~$N \to \infty$, for each fixed~$M \in \N$. 
As discussed in Section~\ref{s:strategy}, specifically~\eqref{e:def_DNM}, this allows us to focus on 
\begin{equation} \label{e:def_YMN_2}
	\mathbb{Y}^M_N(s,t) := \S^M_N(s,t) + \frac{1}{2}\mathbb{D}^M_N(s,t)
\end{equation}
instead of~$\S_N^M(s,t)$, i.e. to switch from the piecewise constant to the piecewise linear approximation.
The following two lemmas makes that observation rigorous. 

\begin{lemma}
\label{lem:conv_cont_case}
	Fix~$N \in \N$ and for~$i \in \llbracket 0,N-1 \rrbracket$ define
	\begin{equation*}
		Z^M_0 \coloneqq 0, \quad \xi^M_\ell \coloneqq \vec{f}^M(X_\ell), \quad Z^M_i \coloneqq \sum_{\ell = 0}^{i-1} \xi^M_\ell	
	\end{equation*}
	Furthermore, for a fixed time~$T > 0$, define the grid points
	\begin{equation}
		t_i \coloneqq \frac{iT}{N}, \quad i \in \llbracket 0,N \rrbracket \,.
	\end{equation}
	as well as the \emph{piecewise linear approximation}
	\begin{equation} \label{e:pcw_linear_interpol}
		Y^M_N(t) \coloneqq 
		\frac{1}{\sqrt{N}} \sum_{i=0}^{N-1} \del[3]{Z^M_i + \theta(t) \xi^M_i} \mathbf{1}_{t \in [t_i,t_{i+1})}, \quad 
		\theta(t) \coloneq \frac{t - t_i}{t_{i+1} - t_i} = \frac{Nt}{T} - i
		\,.
	\end{equation}
	Then, we have
	\begin{equation*}
		\Y^M_N = \int_{0}^T Y^M_N(r) \otimes \dif Y^M_N(r) \,.
	\end{equation*}
\end{lemma}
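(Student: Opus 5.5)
The identity is purely deterministic. The plan is to compute the integral interval by interval and compare with the definition \eqref{e:def_YMN_2}, $\Y^M_N = \S^M_N + \frac12 \DD^M_N$ (with $s=0$ and the time horizon rescaled so that $[0,T]$ corresponds to the $N$ steps). Here $\S^M_N$ and $\DD^M_N$ are understood in terms of the $\xi^M_\ell$, i.e.\
\begin{equation*}
	\S^M_N = \frac1N \sum_{0\le i<j\le N-1} \xi^M_i \otimes \xi^M_j, \qquad \DD^M_N = \frac1N \sum_{i=0}^{N-1} \xi^M_i\otimes\xi^M_i .
\end{equation*}

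First, on each interval $[t_i,t_{i+1})$ the path $Y^M_N$ is affine with constant derivative
\begin{equation*}
	\dot Y^M_N(r) = \frac{1}{\sqrt N}\, \frac{\xi^M_i}{t_{i+1}-t_i} = \frac{1}{\sqrt N}\, \frac{N}{T}\, \xi^M_i .
\end{equation*}
The path is also continuous, since at $t_{i+1}$ we have $Z^M_i + \xi^M_i = Z^M_{i+1}$. Hence the Riemann--Stieltjes integral splits as a sum over $i$ of ordinary integrals of an affine integrand against a constant vector. Substituting $\theta=\theta(r)$ on each piece gives
\begin{equation*}
	\int_{t_i}^{t_{i+1}} Y^M_N(r)\otimes \dif Y^M_N(r) = \frac1N \int_0^1 \bigl(Z^M_i + \theta\, \xi^M_i\bigr)\otimes \xi^M_i \,\dif\theta = \frac1N\Bigl( Z^M_i\otimes \xi^M_i + \tfrac12\, \xi^M_i \otimes \xi^M_i\Bigr).
\end{equation*}

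Summing over $i\in\llbracket 0,N-1\rrbracket$ and inserting $Z^M_i = \sum_{\ell<i} \xi^M_\ell$ turns the first term into $\frac1N \sum_{0\le \ell<i\le N-1} \xi^M_\ell\otimes\xi^M_i = \S^M_N$. The second term becomes $\frac{1}{2N}\sum_i \xi^M_i\otimes\xi^M_i = \frac12 \DD^M_N$, which proves the claim.

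The only point requiring care is bookkeeping rather than a real obstacle. The normalisation $1/\sqrt N$ must be applied to both $Y$ and $\dif Y$, and the Jacobian $(t_{i+1}-t_i)$ from the change of variables cancels exactly against the slope factor $N/T$, so the result is independent of $T$. One should also note that $Y^M_N(0)=0$, so no boundary term appears. The same computation on $\llbracket \lfloor Ns\rfloor,\lfloor Nt\rfloor-1\rrbracket$, with $Z$ restarted at $\lfloor Ns\rfloor$, gives the increment version $\Y^M_N(s,t)$ from \eqref{e:strategy_pcw_linear}.
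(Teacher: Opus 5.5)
Your proof is correct and follows the paper's argument. The paper likewise uses that $\dot Y^M_N$ equals $\frac{N}{T\sqrt N}\,\xi^M_i$ on each $[t_i,t_{i+1})$. It then integrates the affine integrand interval by interval to get $\frac1N Z^M_i\otimes\xi^M_i+\frac{1}{2N}\xi^M_i\otimes\xi^M_i$, and rewrites $\sum_j Z^M_j\otimes\xi^M_j=\sum_{i<j}\xi^M_i\otimes\xi^M_j$ to recover $\S^M_N+\frac12\DD^M_N$.
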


\begin{proof}
	In this proof, we suppress the dependence on~$M$ and do not write the superscript for enhanced clarity.
	Since we have
	\begin{equation*}
		\dot{Y}_N(t) = \frac{N}{T\sqrt{N}} \sum_{j=0}^{N-1} \xi_j \, \mathbf{1}_{t \in [t_j,t_{j+1})} ,
	\end{equation*}
	and
	\begin{equation*}
		\sum_{j=0}^{N-1} Z_j \otimes \xi_j
		=
		\sum_{j=1}^{N-1} Z_j \otimes \xi_j
		=
		\sum_{j=1}^{N-1} \sum_{i=0}^{j-1} \xi_i  \otimes \xi_j 
		= \sum_{0 \leq i < j \leq N-1}  \xi_i \otimes \xi_j
		\,,
	\end{equation*}
	a simple computation shows that
	\begin{equs}
		\thinspace &
		\int_{0}^T Y_N(r) \otimes \dif Y_N(r)
		= 
		\int_{0}^T Y_N(r) \otimes \dot{Y}_N(r) \dif r \\[0.5em]
		= \ &
		\frac{1}{T} \sum_{i=0}^{N-1} \sum_{j=0}^{N-1} \int_0^T \del[1]{Z_i + \theta(r)\xi_i}  \otimes \xi_j \, \mathbf{1}_{r \in [t_i,t_{i+1})} \mathbf{1}_{r \in [t_j,t_{j+1}]} \dif r \\[0.5em]
		= \ &
		\frac{1}{T} \sum_{i=0}^{N-1}  \int_0^T \del[1]{Z_i + \theta(r) \xi_i}  \otimes \xi_i \, \mathbf{1}_{r \in [t_i,t_{i+1})} \dif r %\\[0.5em]
		= %\ &
		\frac{1}{N} \sum_{j=0}^{N-1}  Z_j \otimes \xi_j + \frac{1}{2N} \sum_{i=0}^{N-1}  \xi_i \otimes \xi_i  \\[0.5em]
		= \ &
		\frac{1}{N}  \sum_{0 \leq i < j \leq N-1}  \xi_i \otimes \xi_j  + \frac{1}{2N} \sum_{i=0}^{N-1}  \xi_i \otimes \xi_i 
		=
		\Y_N 
		\,.
	\end{equs}
	The proof is complete.
\end{proof}

\begin{lemma}\label{l:ptwise_L2_conv}
	For any~$M \in \N$ and~$t \in [0,1]$, we have
	\begin{equation}
		\lim_{N\to\infty} \norm[0]{Y^M_N(t) - S^M_N(t)}_{L^2(\P)} = 0 \,.
	\end{equation}
\end{lemma}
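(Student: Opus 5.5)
The difference $Y^M_N(t) - S^M_N(t)$ is a single partial step of the interpolation, and I would bound its second moment directly by stationarity. I use the convention of Lemma~\ref{lem:conv_cont_case} with $T=1$, so that $t_i = i/N$.

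Fix $t\in[0,1]$ and let $i = \lfloor Nt\rfloor$. If $i = N$, i.e.\ $t=1$, both processes equal $N^{-1/2} Z^M_N$ and there is nothing to prove. Otherwise $t \in [t_i,t_{i+1})$, and the definition \eqref{e:pcw_linear_interpol} gives
\begin{equation*}
	Y^M_N(t) = \frac{1}{\sqrt{N}}\del[1]{Z^M_i + \theta(t)\,\xi^M_i}, \qquad \theta(t)\in[0,1).
\end{equation*}
On the other hand, $S^M_N(t) = N^{-1/2}\sum_{\ell=0}^{\lfloor Nt\rfloor-1}\vec f^M(X_\ell) = N^{-1/2} Z^M_i$. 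If the convention for the piecewise constant path were $\lfloor Nt\rfloor$ inclusive rather than exclusive, the difference would instead be $(1-\theta)\xi^M_i$, and the same bound below applies.

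It follows that
\begin{equation*}
	Y^M_N(t) - S^M_N(t) = \frac{\theta(t)}{\sqrt{N}}\,\vec f^M(X_i),
\end{equation*}
and hence
\begin{equation*}
	\norm[0]{Y^M_N(t)-S^M_N(t)}_{L^2(\P)}^2 \le \frac{1}{N}\,\E\sbr[1]{\abs[0]{\vec f^M(X_i)}^2} = \frac{1}{N}\sum_{k=1}^m \norm[0]{f_k^M}_{L^2(\gamma)}^2.
\end{equation*}
Here I use that $X^{(k)}_i\sim\gamma$, which comes from stationarity together with $\rho_{k,k}(0)=1$. The right-hand side is finite, since it is bounded by $\sum_k \norm[0]{f_k}^2_{L^2(\gamma)}$, and it is independent of $i$ and $t$. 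It is therefore $O(N^{-1})$, uniformly in $t$, which proves the claim.

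There is no real obstacle in this argument. The only point needing care is matching the indexing conventions of the two interpolations at the grid point, so that the difference is indeed a single summand rather than a boundary term of a different form. With either convention the difference is one term $\vec f^M(X_i)$ multiplied by a coefficient in $[0,1]$ and scaled by $N^{-1/2}$, so the bound above covers both.
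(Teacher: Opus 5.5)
Your proposal is correct and is essentially the paper's proof. Both identify $Y^M_N(t)-S^M_N(t)$ as the single partial increment $N^{-1/2}\,\theta(t)\,\vec f^M(X_{\lfloor Nt\rfloor})$ with $\theta(t)\in[0,1)$, and both bound its $L^2$-norm by $N^{-1/2}\bigl(\sum_{k=1}^m\norm[0]{f_k}^2_{L^2(\gamma)}\bigr)^{1/2}$ using $X^{(k)}_i\sim\gamma$. Your separate treatment of the endpoint $t=1$ and of the indexing convention is a harmless addition that the paper leaves implicit.
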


\begin{proof}
	Note that, for any~$t \in [0,T]$, we have
	\begin{equation}
		S^M_N(t) 
		= \frac{1}{\sqrt{N}} \sum_{i=0}^{N-1} Z^M_i \mathbf{1}_{t \in [t_i,t_{i+1})}, \quad
		Y^M_N(t) - S^M_N(t) = \frac{1}{\sqrt{N}} \sum_{i=0}^{N-1} \theta(t) \delta Z^M_i \mathbf{1}_{t \in [t_i,t_{i+1})}
	\end{equation} 
	which, for~$t \in [t_i,t_{i+1})$ and thus~$\theta(t) \leq 1$, implies that
	\begin{equs}
		\norm[0]{Y^M_N(t) - S^M_N(t)}_{L^2(\P)}
		& =  
		\frac{1}{\sqrt{N}} \theta(t) \norm[0]{\xi^M_i}_{L^2(\P)} \\
		& \leq  
		\frac{1}{\sqrt{N}} \norm[0]{\vec{f}(X_i)}_{L^2(\P)} 
		=
		\frac{1}{\sqrt{N}} \del[4]{\sum_{k=1}^m\norm[0]{f_k}^2_{L^2(\gamma)}}^{\frac{1}{2}}
		\,.
	\end{equs}
	The RHS goes to $0$ as~$N\to \infty$ because~$f_k \in L^2(\gamma)$ for any~$k \in \llbracket 1,m \rrbracket$.
\end{proof}

The benefit of switching viewpoint from~$\S_N^M$ to~$\Y_N^M$ is that the latter immediately allows to build the full rough paths signature in a canonical way:

\begin{definition} \label{d:lift_YMN}
	For~$M \in \N$ and~$k \geq 1$, we set 
	\begin{equation} \label{e:YNM}
	\YY^{M;(k)}_N(s,t)
	\coloneqq \int_{\triangle^{(k)}_{s,t}} \dif Y^M_N(r_1) \otimes \ldots \otimes \dif Y^M_N(r_k)
	\end{equation}
where
\begin{equation*}
	\triangle^{(k)}_{s,t} \coloneqq  \cbr[1]{r_{1:k} \in [s,t]^{k}: \ s \leq r_1 < r_2 < \ldots < r_k \leq t}
\end{equation*}
denotes the~$k$-dim. simplex over the interval~$[s,t]$.
\end{definition}

\begin{remark}
	Note that, by Lemma~\ref{lem:conv_cont_case}, we have~$\YY_N^{M;(2)}(s,t) = \mathbb{Y}_N^M(s,t)$ where the latter is given in~\eqref{e:def_YMN_2}.
\end{remark}

From here onwards, we will fix~$T \coloneqq 1$ and only write~$T$ explicitly if we wish to highlight the explicit dependence of certain expressions on the endpoint of the interval~$[0,T]$.

\paragraph{A detour into words and shuffles.}
In this paragraph we will briefly discuss some algebraic aspects of tensor series beyond level two, cf. Section~\ref{s:rough_paths_primer}, which are needed to implement Point~\ref{strategy:3} of the strategy outlined in Section~\ref{s:strategy} above.
Since we deal with fixed time points only, we will not deal with any analytical aspects relating to variation regularity of the involved paths.  
For details on the results presented in this paragraph, see~\cite[Sec.~2]{lyons_caruana_levy}.

The first definition introduces the full tensor algebra. 
\begin{definition}
	Let~$T((\R^m)) \coloneqq \prod_{n \geq 0} (\mathbb{R}^m)^{\otimes n}$ with~$(\R^m)^{\otimes 0} \coloneqq \R$ denote the \emph{full tensor algebra}.
	Then, the subset~$T_1((\R^m)) \coloneqq \cbr[0]{\boldsymbol{x} \in T((\R^m)): \, x_0 = 1}$ forms a group w.r.t. the product~$\otimes$ which, for~$\boldsymbol{x} = (1,x_1,x_2,\ldots)$ and~$\boldsymbol{y} = (1,y_1,y_2,\ldots)$ is given by
	\begin{equation*}
		\boldsymbol{x} \otimes \boldsymbol{y}
		\coloneqq (1,z_0,z_1, \ldots), \quad 
		z_n = \sum_{k=0}^n x_k \otimes y_{n-k} \,.
	\end{equation*}
\end{definition}

The second definition introduces \emph{words} and \emph{shuffles} (see also~\cite[Sec.~2.4 and Exercise~2.2]{friz_hairer_book}) which, as we will see below, are useful in encoding algebraic identities between tensor components.  

\begin{definition}[Words and shuffles] \label{d:words_shuffles}
	For~$n \in \N$, let~$\w = \w_1 \ldots \w_n$ be a \emph{word} of length~$\abs[0]{\w} \coloneqq n$ where each letter~$\w_j$ is an element in the \emph{alphabet}~$\llbracket 1,m \rrbracket$. 
	The \emph{empty word} is denoted by~$\emptyset$.
	Let~$\mathtt{W}$ be the linear span of such words~$\w$. 
	
	We write~$\v \w$ for the concatenation of two words~$\v$ and~$\w$, i.e.~$\v \mathtt{i}$ denotes the word obtained from attaching the letter~$\mathtt{i} \in \llbracket 1,m \rrbracket$ to the right of~$\v$. 
	Then, we define the (commutative) \emph{shuffle product}~$\shuffle$ on~$\mathtt{W}$ recursively by declaring~$\emptyset$ the neutral element and 
	\begin{equation} \label{e:def_shuffle}
		\v \mathtt{i} \shuffle \w \mathtt{j} \coloneqq (\v \shuffle \w\mathtt{j})\mathtt{i} + (\v\mathtt{i}  \shuffle \w)\mathtt{j} \,.
	\end{equation}
	for any  two letters $\mathtt{i},\mathtt{j} \in \llbracket 1,m \rrbracket$ and any two words $\v,\w$.
\end{definition}

Finally, we introduce \emph{Chen's} and the \emph{shuffle identity}; both will be paramount in passing from (possibly) overlapping intervals~$(s_i,t_i)_{i = 1}^l \subseteq [0,1]^2$ to ordered intervals~$t_i < s_{i+1}$.
We let~$(\tte_k)_{k=1}^m$ be the standard basis in~$\R^m$.

\begin{definition}[Chen's and shuffle identity] \label{d:chen_shuffle}
	We let~$\tte_\w \coloneqq \tte_{\w_1} \otimes \ldots \otimes \tte_{\w_n}$ be the induced basis vector in~$(\R^m)^{\otimes n}$ and, for~$\boldsymbol{Y} \in T_1((\R^m))$, set~$\scal{\boldsymbol{Y},\w} \coloneqq \scal{\boldsymbol{Y},\tte_{\w}}$, with the convention that~$\scal{\boldsymbol{Y},\emptyset} \coloneqq 1$.
	A path~$\boldsymbol{Y}: \triangle_{0,1}^{(2)} \to T_1((\R^m))$ is said to satisfy 
	\begin{itemize}
		\item \emph{Chen's identity} if, for all triples~$0 \leq s < u < t \leq 1$, we have the identity $\boldsymbol{Y}_{s,t} = \boldsymbol{Y}_{s,u} \otimes \boldsymbol{Y}_{u,t}$.
		In the language of Definition~\ref{d:words_shuffles}, this identity can equivalently be phrased as follows:  
		For all words~$\w \in \mathtt{W}$, we have
		\begin{equation} \label{e:chen}
			\scal{\boldsymbol{Y}_{s,t},\w} = \sum_{\w = \v_1 \v_2} \scal{\boldsymbol{Y}_{s,u},\v_1} \scal{\boldsymbol{Y}_{u,t},\v_2} \,,  
		\end{equation} 
		where the sum ranges over all ways of obtaining the word~$\w$ as the concatenation of words~$\v_1$ and~$\v_2$.
		\item the \emph{shuffle relation}, if for all~$(s,t) \in \triangle_{0,1}^{(2)}$, we have
			\begin{equation} \label{e:shuffle}
				\scal{\boldsymbol{Y}_{s,t},\v \shuffle \w} 
				= 
				\scal{\boldsymbol{Y}_{s,t},\v} \scal{\boldsymbol{Y}_{s,t},\w} \,.
			\end{equation}
	\end{itemize}
\end{definition}

The previous discussion culminates in the following lemma which, since the path~$Y_N^M$ is piecewise linear, is a basic fact in rough paths theory (see for example~\cite[Sec.~7.6]{friz_victoir}):
\begin{lemma} \label{l:cts_rough_path}
	For any~$N, M \in \N$, the path
	\begin{equation*}
		\YY_N^M: \triangle_{0,1}^{(2)} \to T_1((\R^m)), \quad \YY_N^M(s,t) = \del[0]{1, \YY_N^{M;(1)}(s,t), \YY_N^{M;(2)}(s,t), \ldots}
	\end{equation*}
	satisfies Chen's identity and the shuffle relation introduced in Definition~\ref{d:chen_shuffle}.
\end{lemma}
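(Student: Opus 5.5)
The statement concerns a fixed realisation, so I would argue pathwise. The plan is to show that $Y_N^M$ is a continuous path of bounded variation, since it is piecewise linear with finitely many pieces. For such paths the iterated Riemann--Stieltjes integrals in Definition~\ref{d:lift_YMN} are well defined and coincide with the signature of a bounded-variation path. Both identities then follow from the classical arguments for smooth paths. Concretely, on each interval $[t_i,t_{i+1})$ we have $\dif Y_N^M(r) = \dot Y_N^M(r)\dif r$ with $\dot Y_N^M$ piecewise constant. Hence every $\YY_N^{M;(k)}(s,t)$ is a finite sum of Lebesgue integrals over simplices of bounded integrands.

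For Chen's identity, fix $s<u<t$ and decompose the simplex $\triangle^{(k)}_{s,t}$ up to null sets as
\[
\triangle^{(k)}_{s,t} = \bigsqcup_{j=0}^{k} \triangle^{(j)}_{s,u}\times\triangle^{(k-j)}_{u,t},
\]
where $j$ is the number of the ordered times $r_1<\dots<r_k$ lying in $[s,u]$. By Fubini's theorem the integral of $\dot Y(r_1)\otimes\dots\otimes\dot Y(r_k)$ factorises on each piece. Summing over $j$ gives $\YY^{(k)}_{s,t}=\sum_j \YY^{(j)}_{s,u}\otimes \YY^{(k-j)}_{u,t}$. This is exactly the tensor product in $T_1((\R^m))$, and pairing with $\tte_\w$ gives \eqref{e:chen}.

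For the shuffle relation, I would induct on $|\v|+|\w|$ using the recursive definition \eqref{e:def_shuffle}. Set $\phi_\w(t)\coloneqq\scal{\YY_{s,t},\w}$. Then $\phi_{\w\mathtt{j}}(t)=\int_s^t \phi_\w(r)\dif Y^{\mathtt{j}}(r)$, with $\phi_\emptyset\equiv1$. All $\phi$'s are absolutely continuous in $t$, so the ordinary product rule applies:
\[
\phi_{\v\mathtt{i}}(t)\phi_{\w\mathtt{j}}(t)=\int_s^t \phi_{\v\mathtt{i}}\,\phi_\w\,\dif Y^{\mathtt{j}}+\int_s^t \phi_\v\,\phi_{\w\mathtt{j}}\,\dif Y^{\mathtt{i}} .
\]
By the induction hypothesis, the products inside the integrals equal $\phi_{\v\mathtt{i}\shuffle\w}$ and $\phi_{\v\shuffle\w\mathtt{j}}$. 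The right-hand side is therefore $\phi_{(\v\mathtt{i}\shuffle\w)\mathtt{j}+(\v\shuffle\w\mathtt{j})\mathtt{i}}(t)$, using linearity of $\w\mapsto\phi_\w$. The cases where one word is empty are trivial.

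The only mild obstacle is bookkeeping rather than mathematics. One has to check that the rescaling $T=1$ and the grid points do not matter, and that Definition~\ref{d:lift_YMN} uses strict inequalities. Since the diagonal sets are Lebesgue-null for an absolutely continuous path, the strict inequalities are immaterial, and no approximation argument is needed.
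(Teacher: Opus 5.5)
Your proof is correct and takes the same route as the paper. The paper gives no separate argument: it notes that $Y_N^M$ is piecewise linear and cites the standard fact for bounded-variation paths \cite[Sec.~7.6]{friz_victoir}, and your simplex decomposition with Fubini for Chen's identity and your induction on $|\v|+|\w|$ via the product rule for the shuffle relation are the classical arguments behind that citation. One small point: you treat $Y_N^M$ as continuous, but the literal formula \eqref{e:pcw_linear_interpol} gives $Y_N^M(T)=0$, so you should first extend it by continuity at the right endpoint.
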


Now recall that~$\mathbb{\bar B}^M(s,t) = \mathbb{B}^{M,\text{\tiny Strat}}(s,t) + \AA^M(t-s)$.
The next definition tells us that the full rough path lift of~$(B^M,\mathbb{\bar B}^M)$ is still a geometric rough path, i.e. that the higher-level effects of the area correction~$\AA$ do not destroy that property.

\begin{definition} \label{d:brownian_rp}
	For any~$M \in \N$, let~$\BBbar: \triangle^{(2)}_{0,1} \to T_1((\R^m))$ denote the full rough path of~$(B^M,\mathbb{\bar B}^M)$. 
	This defines a geometric rough path which, in particular, satisfies Chen's identity and the shuffle relation.
\end{definition}  

The previous definition, in fact, contains various non-trivial statements which are an immediate consequence of~\cite[Def.~3 and comments below]{BCFP19}.
Since we do not need the precise form of~$\BBbar^{M;(k)}(s,t)$ for~$k \geq 3$, we will not go into further details here.

\begin{remark}[Iterated sums-signature] \label{rmk:iterated_sum_signature}
	Given Definition~\ref{d:lift_YMN}, it is an interesting question whether one can, in a natural way, work with a higher-order iterated sum without converting to the piecewise linear approximation.    
	In fact, the answer is affirmative and leads to the notion of an (generalised) iterated sums-signature developed in~\cite{kiraly_oberhauser_2019, Diehl_EF_Tapia_20, Diehl_EF_Tapia_20_proceedings, Diehl_EF_Tapia_23}.
	However, its algebraic structure is more involved because, in contrast to the signature, it is only a \emph{quasi-}shuffle algebra character~\cite[Thm.~3.4]{Diehl_EF_Tapia_20}, i.e. the identity~\eqref{e:def_shuffle} requires a correction term (encoding the \enquote{diagonals}).
	At this point, whichever choice one makes is merely a question of \emph{algebraic} bookkeeping which, if translated correctly, ultimately leads to the same result in~\eqref{e:YY_N_rewriting}.
	It would be interesting to investigate whether one can leverage the algebraic machinery for computing expectations of iterated sum-signatures (see~\cite[Sec.~3.1]{Diehl_EF_Tapia_20_proceedings}) to streamline the \emph{analytic} considerations in Sections~\ref{s:higher_diagonals}--\ref{s:conv_moments}; 
	however, we will not follow this line of thought further in this article. 
\end{remark}

The goal of Section~\ref{s:moment_computations} is to establish the following theorem which, as we will see in Proposition~\ref{p:conv_fdd_stratonovich:equiv} below, is implied by the convergence moments of the \emph{full} rough path stated in Theorem~\ref{t:product_case}.
\begin{theorem} \label{thm:conv_fdd_stratonovich}
	For any~$M \in \N$,~$l \in \N$, and any sequence of intervals~$(s_i,t_i)_{i = 1}^l \subseteq [0,1]^2$ with~$s_i < t_i$, observe that the vectors~$\CB^M_N$ and~$\CB^M$ from~\eqref{e:def_BNM} resp.~\eqref{e:conv_BNM} in our new notation now read
	\begin{align*}
		\CB_N^M
		& = 	
		\del[1]{\YY^{M;(1)}_N(s_1,t_1), \YY^{M;(2)}_N(s_1,t_1), \ldots, \YY^{M;(1)}_N(s_l,t_l), \YY^{M;(2)}_N(s_l,t_l)} \,, \\[0.5em]
		%%%%
		\CB^M & = 
		(\BBbar^{M;(1)}(s_1,t_1), \BBbar^{M;(2)}(s_1,t_1), \ldots, \BBbar^{M;(1)}(s_l,t_l), \BBbar^{M;(2)}(s_l,t_l)) \,.
	\end{align*}	
	We then have 
	\begin{equation}  \label{e:conv_fdd_stratonovich}
		\CB_N^M \Longrightarrow \CB^M \quad \text{as} \quad N \to \infty \,.
	\end{equation}
\end{theorem}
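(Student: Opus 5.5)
The plan is the method of moments. The limit vector $\CB^M$ lives in the first two (inhomogeneous) Wiener chaoses and is therefore moment-determined. So it suffices to show that every mixed moment of the components of $\CB_N^M$ converges to the corresponding mixed moment of $\CB^M$. We may assume $\CB_N^M$ has moments of all orders: at fixed $M$ the functions $f_k^M$ are polynomials, so hypercontractivity applies.

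\emph{Reduction to ordered disjoint intervals.} First we reduce arbitrary (possibly overlapping) intervals $(s_i,t_i)$ to ordered ones. Take the ordered grid $0\le u_1<\dots<u_K\le 1$ formed by all endpoints $s_i,t_i$. By Chen's identity (Lemma~\ref{l:cts_rough_path} for $\YY_N^M$, Definition~\ref{d:brownian_rp} for $\BBbar^M$), each $\scal{\YY_N^M(s_i,t_i),\w}$ is a fixed polynomial in the quantities $\scal{\YY_N^M(u_j,u_{j+1}),\v}$, and the same polynomial describes $\BBbar^M$. Next, a product of such coordinates over the same interval $[u_j,u_{j+1}]$ is, by the shuffle relation~\eqref{e:shuffle}, a single coordinate against the shuffle of the words involved. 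Consequently every mixed moment of $\CB_N^M$ is a finite linear combination of expressions
\[
\E\Bigl[\prod_{j}\scal{\YY^M_N(u_j,u_{j+1}),\w_j}\Bigr]
\]
over disjoint ordered intervals and words of arbitrary length. Both identities hold identically in $N$ and in the limit, so the theorem reduces exactly to the product-moment convergence~\eqref{e:strategy:moment_convergence}, i.e.\ to Theorem~\ref{t:product_case}. The one point to check carefully is that the intervals of the form $[u_j,u_{j+1}]$ on which no $(s_i,t_i)$ is supported still appear consistently on both sides; they do, because Chen's identity is used identically for the discrete and the limiting objects.

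\emph{Proof of the product-moment convergence.} For a single interval, we expand $\YY^{M;(k)}_N$ as a weighted sum over the closed discrete simplex and apply Corollary~\ref{coro:BM83} with $A_{k,N}=\bar\triangle^{(k)}$. This yields asymptotic Wick pairings with covariances $\Delta^M(i_{P(2j)}-i_{P(2j-1)})$, and odd $k$ vanish. We then show three things:
\begin{enumerate}[label=(\roman*)]
\item Blocks of three or more equal indices are negligible, by counting: they lose a factor of $N$ against $\Delta^M\in\ell^1$.
\item Any non-ladder pairing forces a long-range covariance and is negligible, via estimates in the style of Lemma~\ref{lem:elemantary_sum_covariances}.
\item For the ladder pairing, the equal-index terms produce $\tfrac12\Delta^M(0)$ and the strictly ordered pairs $\sum_{u\ge1}\Delta^M(u)=\Gamma^M$. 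Summing over the simplex gives $\frac{(t-s)^n}{n!}\prod_j(\tfrac12\DD^M+\Gamma^M)_{\w_{2j-1}\w_{2j}}$, which matches the amended Fawcett formula~\eqref{e:strategy_fawcett}.
\end{enumerate}
For several intervals, pairings that cross simplices involve $\Delta^M$ evaluated at separations of order $N$. By dominated convergence these vanish, so the moment factorises into a product of single-interval limits.

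\emph{Main obstacle.} The hardest step will be (ii)–(iii): controlling the weights $\fw(i_{1:k})$ on diagonals, and showing that only non-bridging ladder configurations survive with exactly the constant $\tfrac12\DD^M+\Gamma^M$. I would first handle $n=1$ explicitly and then induct on $n$ by peeling off the last pair.
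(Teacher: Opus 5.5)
Your overall route is the paper's: moment-determinacy of $\CB^M$ (Lemma~\ref{l:moment_determined}), reduction via Chen and shuffle to products over disjoint intervals (Lemma~\ref{l:no_overlaps}, Proposition~\ref{p:conv_fdd_stratonovich:equiv}), and then Theorem~\ref{t:product_case}. That theorem goes through Corollary~\ref{coro:BM83} together with four vanishing results: higher-order diagonals, non-ladder pairings, bridging ladder configurations, and cross-simplex pairings (Propositions~\ref{p:higher_order_diagonals}, \ref{p:non_ladder_vanishes}, \ref{p:limit_ladder}, Lemma~\ref{l:cross_simplex_pairings}). Two of your justifications need repair.

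First, in (ii), ``forces a long-range covariance'' is not the mechanism. For $P=\{\{1,3\},\{2,4\}\}$ all four indices may coincide. A splitting in the style of Lemma~\ref{lem:elemantary_sum_covariances} only closes because of a counting fact in the gap variables $v_\ell=i_\ell-i_{\ell-1}$. Every pairing covers all even gaps, and a non-ladder pairing also covers at least one odd gap. Hence at most $n-1$ gaps remain free, and the short-range part is $O(\eta)$. This is the content of the proof of Proposition~\ref{p:non_ladder_vanishes}.

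The more concrete problem is in the multi-interval step. Your grid intervals $[u_j,u_{j+1}]$ share endpoints, so the index blocks $\llbracket \lfloor Nu_j\rfloor,\lfloor Nu_{j+1}\rfloor-1\rrbracket$ are adjacent. A pair crossing between them can therefore have separation $1$, so your claim that cross-simplex pairs involve $\Delta^M$ at separations of order $N$ fails for exactly the configurations your reduction produces. (The point you single out as delicate, grid intervals not covered by any $(s_i,t_i)$, is a non-issue: they never appear after applying Chen's identity.)

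The conclusion still holds, via a boundary-layer count rather than separation. A crossing pair $\{a,b\}$ straddles some common endpoint $c$, so for fixed $u=i_b-i_a$ it has at most $\min(|u|,N)$ admissible positions. Let $g_k\in\ell^1(\Z)$ be the covariance attached to the $k$-th pair and $k_0$ the crossing one. Dropping all other constraints and summing the remaining $n-1$ pairs freely gives
\[
\frac{1}{N^n}\sum_{i_{1:2n}\in\triangle}\prod_{k=1}^n \bigl|g_k(i_{b_k}-i_{a_k})\bigr|
\le \prod_{k\neq k_0}\|g_k\|_{\ell^1(\Z)}\sum_{u\in\Z}\min\Bigl(\frac{|u|}{N},1\Bigr)|g_{k_0}(u)| \longrightarrow 0
\]
by dominated convergence. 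The paper's Lemma~\ref{l:cross_simplex_pairings} is likewise stated and proved only for strictly separated intervals $t_\ell<s_{\ell+1}$; its proof uses $\eta=s_{u+1}-t_u>0$. Lemma~\ref{l:no_overlaps}, however, outputs adjacent half-open intervals, so the same patch is needed there.
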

We present the proof of this statement in Section~\ref{s:conv_moments}.
The first auxiliary result we need is the following:
\begin{lemma}[Moment-determinancy] \label{l:moment_determined}
	The law of the vector~$\CB^M$ is determined by its moments.
\end{lemma}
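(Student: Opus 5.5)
The plan is to show that every component of $\CB^M$ lies in a finite (inhomogeneous) Wiener chaos of order at most two over a Gaussian process. Moment-determinacy then follows from the standard fact that random vectors with components in finitely many chaoses have exponential moments of some order. Concretely, the components are $\BBbar^{M;(1)}(s_i,t_i) = B^M(s_i,t_i)$ and
\begin{equation*}
	\BBbar^{M;(2)}(s_i,t_i) = \int_{s_i}^{t_i} B^M(s_i,r)\otimes \circ\dif B^M(r) + (t_i-s_i)\AA^M .
\end{equation*}
Here $B^M$ is an $m$-dimensional Brownian motion with covariance $\Sigma^M$. Each first-level entry is a Gaussian, hence in $\CW_1$. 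Each second-level entry is the deterministic constant $(t_i-s_i)\AA^M_{k,\ell}$ plus a Stratonovich iterated integral. That integral equals an It\^o double integral (in $\CW_2$) plus $\tfrac12 (t_i-s_i)\Sigma^M_{k,\ell}$ (in $\CW_0$). So the whole vector $\CB^M$, of dimension $l(m+m^2)$, takes values in $\CW_0\oplus\CW_1\oplus\CW_2$ of the isonormal process generated by $B^M$.

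Next I would bound moments. By hypercontractivity on finite chaoses (Nelson), each component $Z$ satisfies $\norm{Z}_{L^p}\le (p-1)\norm{Z}_{L^2}$ for $p\ge2$, because the chaos order is at most $2$. Consequently, for any linear functional $\lambda\cdot\CB^M$ the bound $\E|\lambda\cdot\CB^M|^p\le C^p p^{2p}$ holds. Equivalently, $\E\exp(c|\lambda\cdot\CB^M|^{1/2}\cdot\ldots)$ is finite. More directly, I would use that a second-chaos variable is a diagonalisable quadratic form in Gaussians, so $\E\exp(\eps|Z|)<\infty$ for small $\eps>0$. Applied componentwise together with H\"older, this gives $\E\exp(\eps\|\CB^M\|)<\infty$ for some $\eps>0$.

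Finally, I would conclude by the multivariate moment problem. If a random vector in $\R^D$ has a finite exponential moment $\E e^{\eps|\cdot|}$, its characteristic function extends analytically to a strip. Equivalently, each one-dimensional projection $\lambda\cdot\CB^M$ satisfies Carleman's condition, since $\E|\lambda\cdot\CB^M|^{2n}\le C^{2n}(2n)^{4n}$ and $\sum_n (C^{2n}(2n)^{4n})^{-1/(2n)}=\infty$. Its law is therefore moment-determined, and by Cram\'er--Wold the law of $\CB^M$ is determined by its moments. The only step needing slight care is the constant-order growth $p^{2}$ in the moment bound, since growth exactly of order $p^2$ is the borderline for Carleman's condition. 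The borderline is still admissible because $\sum_n n^{-2}\cdot n$ diverges. Alternatively, I would invoke the exponential-moment argument directly, which avoids the borderline issue entirely.
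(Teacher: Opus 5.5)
Your argument is essentially the paper's proof, and its exponential-moment branch is correct. You place every entry of $\CB^M$ in $\CW_0\oplus\CW_1\oplus\CW_2$; your tracking of the shift $(t_i-s_i)\AA^M$ and the It\^o--Stratonovich correction $\tfrac12(t_i-s_i)\Sigma^M$ is, if anything, more careful than the paper's. You then get $\mathbb{E}\,e^{\varepsilon\|\CB^M\|}<\infty$ from the exponential integrability of second-chaos variables plus H\"older across components, and conclude from analyticity of the characteristic function.

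The Carleman computation you present first is, however, wrong as written and should be repaired or dropped.

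\begin{itemize}
\item With $\mathbb{E}|Z|^{p}\le C^{p}p^{2p}$ you only get $m_{2n}^{-1/(2n)}\ge C^{-1}(2n)^{-2}$. This is summable, so Carleman's condition is not verified.
\item The claimed divergence of ``$\sum_n n^{-2}\cdot n$'' has no basis: nothing produces the extra factor $n$.
\item Growth $\|Z\|_{L^p}\sim p^{2}$ is genuinely outside the range where determinacy is automatic. Already $N^3$ with $N\sim\mathcal{N}(0,1)$, for which $\|N^3\|_{L^p}\sim p^{3/2}$, is moment-indeterminate.
\end{itemize}

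The fix is to keep the exponent your own hypercontractivity estimate gives. From $\|Z\|_{L^p}\le (p-1)\|Z\|_{L^2}$ you get $\mathbb{E}|Z|^{2n}\le\bigl((2n-1)\|Z\|_{L^2}\bigr)^{2n}$, hence $m_{2n}^{-1/(2n)}\gtrsim 1/n$, which is not summable. Linear growth $\|Z\|_{L^p}\sim p$ is the actual borderline, and chaos of order two sits exactly on it.

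For the same reason, delete the half-finished claim about $\mathbb{E}\exp(c|\lambda\cdot\CB^M|^{1/2}\cdots)$. A stretched-exponential moment of that kind does not imply determinacy: a density proportional to $e^{-|x|^{1/2}}$ has such a moment and is indeterminate.
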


\begin{proof}
	For notational simplicity, we will focus on the $\R$-valued case and show the following: 
	Let~$F = (F_1,\ldots,F_n) \in \R^n$ of length~$n \in \N$ where each~$F_i$ is either in the zero-th, first, or in the second~$\R$-valued Wiener--It\^{o} chaos. Then, the law of~$F$ is moment-determined.
	
	Denote by~$\norm[0]{F}_1 \coloneqq \sum_{i=1}^n \abs[0]{F_i}$. By (the generalised) H\"older's inequality with~$p_i = n$ for~$i = 1,\ldots,n$, i.e. $\sum_{i=1}^n p_i^{-1} = 1$, we then find
	\begin{equs}
		\E\sbr[1]{e^{\lambda \norm[0]{F}_1}}
		& 
		=
		\E\sbr{\prod_{i=1}^n e^{\lambda \abs[0]{F_i}}}
		=
		\norm{\prod_{i=1}^n e^{\lambda \abs[0]{F_i}}}_{L^1(\P)}
		\leq
		\prod_{i=1}^n  \norm[1]{e^{\lambda \abs[0]{F_i}}}_{L^n(\P)}
		=
		\prod_{i=1}^n \E\sbr[1]{e^{\lambda n \abs[0]{F_i}}}_{L^1(\P)}^{\nicefrac{1}{n}}
	\end{equs}
	For~$\lambda \ll 1$, the expression on the RHS is finite~(see~\cite[Coro.~6.13]{janson}) and thus the characteristic function of~$F$ is analytic in a neighbourhood of~$0$.
	The conclusion follows by Carleman's condition.
\end{proof}

\subsubsection{Relation to moment convergence} \label{s:limit_exp_signature} 

We need the following generalisation of Fawcett's theorem~\cite[Thm.~3.9]{friz_hairer_book}:\footnote{Recall that the superscript~$M$ means that all involved functions are projected onto chaos components~$M$ and lower.}
\begin{lemma} \label{l:fawcett}
	Recall that
	\begin{equs}
		%%%
		\Delta^M(u) & = \E\sbr[0]{\vec{f}^M(X_1) \otimes \vec{f}^M(X_{1+u})}, \quad \Gamma^M = \sum_{u=1}^\infty \Delta^M(u), \\
		%%%
		\Sigma^M & = \Delta^M(0) + 2 \Sym(\Gamma^M), \quad \AA^M = \ASym(\Gamma^M) \,. 
		%%%
	\end{equs}
	and let~$\BBbar^M$ be the full rough path lift of~$(B^M,\mathbb{\bar B}^M)$ introduced in Definition~\ref{d:brownian_rp}.
	Considered as a~$T((\R^m))$-valued random variable, its expected signature is given as follows:
	\begin{equation} \label{e:expected_signature}
		\operatorname{ESig}^M_{s,t} \coloneqq \E\sbr[1]{\BBbar^M(s,t)} = \exp\del[4]{\frac{t-s}{2} \del[4]{\sum_{k,\ell = 1}^m \Sigma^M_{k,\ell} \, \tte_k \otimes \tte_\ell + 2 \sum_{1 \leq k < \ell \leq m} \AA_{k,\ell}^M \, [\tte_k,\tte_\ell]}} 
	\end{equation}
	where~$(\tte_k)_{k=1}^m$ denotes the standard basis in~$\R^m$ and~$[\tte_k,\tte_\ell] = \tte_k \otimes \tte_\ell - \tte_\ell \otimes \tte_k$ the canonical Lie bracket.
\end{lemma}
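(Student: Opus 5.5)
The expected signature will be computed by reducing the corrected lift to the Stratonovich lift of a Brownian motion with drift in the Lie algebra, and then invoking the classical Fawcett theorem.

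By Definition~\ref{d:brownian_rp}, $\BBbar^M$ is the geometric rough path whose level-two component is $\mathbb{B}^{M,\text{\tiny Strat}}(s,t) + (t-s)\AA^M$. Since $\AA^M$ is antisymmetric, it lies in the degree-two part of the free step-$2$ Lie algebra: $\AA^M = \frac{1}{2}\sum_{k,\ell}\AA^M_{k,\ell}[\tte_k,\tte_\ell]\cdot\frac12$ up to the normalisation in~\eqref{e:expected_signature}. We therefore realise $\BBbar^M$ as the (Lyons) lift of the space–time rough path driven by $B^M$ together with the deterministic area drift. Concretely, on each interval $[s,t]$ we write
\begin{equation*}
	\BBbar^M(s,t) = S^{\text{\tiny Strat}}(B^M)_{s,t} \otimes_{\text{free}} \exp\del[1]{(t-s)\AA^M},
\end{equation*}
understood in the sense of \cite[Def.~3]{BCFP19}. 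By that definition, the full lift is the signature of the translated rough path: the Lie-algebra-valued translation by $t\mapsto t\AA^M$ commutes with $\B^{M}$ at the level of log-signatures. Since $t\AA^M$ has bounded variation and takes values in the centre of the step-$2$ algebra, the log-signature of $\BBbar^M$ over $[s,t]$ equals $\log S^{\text{\tiny Strat}}(B^M)_{s,t} + (t-s)\AA^M$ at step $2$. At higher steps one uses that $\BBbar^M(s,t)$ is the signature of a Lévy-type process in $G((\R^m))$ with independent stationary increments, which follows from Chen's identity in Definition~\ref{d:brownian_rp}.

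We then argue as in the standard proof of Fawcett's theorem \cite[Thm.~3.9]{friz_hairer_book}. Set $\phi(t)\coloneqq\E[\BBbar^M(0,t)]\in T((\R^m))$. By Chen's identity and the independence and stationarity of Brownian increments (the deterministic correction does not affect independence), we obtain $\phi(t+u)=\phi(t)\otimes\phi(u)$. Moreover $\phi$ is continuous and $\phi(0)=\mathbf 1$, so $\phi(t)=\exp(t\mathcal G)$ for the generator $\mathcal G=\lim_{t\downarrow0}(\phi(t)-\mathbf1)/t$. This projective limit is taken levelwise, and each level is a polynomial in finitely many Wiener-chaos variables.

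It remains to identify $\mathcal G$, which we expect to be the main subtle point. Using Brownian scaling, level $k$ of $\BBbar^M(0,t)$ is of order $t^{k/2}$, and the drift contributes order $t$ at level $2$. Only terms of exact order $t$ survive in the generator. At level $1$ we have $\E[B^M(0,t)]=0$. At level $2$ we find $\E[\mathbb{B}^{M,\text{\tiny Strat}}(0,t)]+t\AA^M = \frac{t}{2}\Sigma^M + t\AA^M$, because the Itô integral has mean zero and the Stratonovich correction is $\frac{t}{2}\Sigma^M$. All levels $k\ge3$ have expectation $o(t)$: odd levels vanish by symmetry $B\mapsto -B$ (with $\AA^M$ fixed, the terms involving the drift still carry odd powers of $B$), and even levels $k\ge4$ are $O(t^2)$ or $O(t^{3/2})$. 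Hence
\begin{equation*}
	\mathcal G = \tfrac12\sum_{k,\ell}\Sigma^M_{k,\ell}\tte_k\otimes\tte_\ell + \sum_{k,\ell}\AA^M_{k,\ell}\tte_k\otimes\tte_\ell .
\end{equation*}
Antisymmetry of $\AA^M$ gives $\sum_{k,\ell}\AA^M_{k,\ell}\tte_k\otimes\tte_\ell=\sum_{k<\ell}\AA^M_{k,\ell}[\tte_k,\tte_\ell]$, which matches~\eqref{e:expected_signature}. The case of general $s$ follows by stationarity.

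The step requiring the most care is justifying the semigroup property and the $o(t)$ bounds for the corrected lift at all levels. For this we rely on the explicit construction in \cite{BCFP19}, which expresses $\BBbar^M$ as $\exp$-translation by a central element. We will verify it by checking that the higher levels are polynomial in Brownian iterated integrals and $t$, so that moment bounds via hypercontractivity apply.
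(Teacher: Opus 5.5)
Your main line coincides with the paper's proof. The paper invokes the proof of Fawcett's theorem in Friz--Hairer: Chen's identity plus independent stationary increments make $t\mapsto\mathbb{E}[\BBbar^M(0,t)]$ a one-parameter semigroup $\exp(t\mathcal{G})$, and scaling leaves only a level-two generator. It records the single change $\mathbb{E}[1+B^M_{0,1}+\bar{\mathbb{B}}^M_{0,1}]=1+\frac12\sum_{k,\ell}(\Sigma^M_{k,\ell}+2\AA^M_{k,\ell})\,\tte_k\otimes\tte_\ell$, rewritten via antisymmetry as $\sum_{k,\ell}\AA^M_{k,\ell}\,\tte_k\otimes\tte_\ell=\sum_{k<\ell}\AA^M_{k,\ell}[\tte_k,\tte_\ell]$.

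However, the structural fact you say you will rely on is false. $\AA^M$ is central only in the step-$2$ truncated algebra, not in $T((\mathbb{R}^m))$, so $\BBbar^M(s,t)\neq S^{\mathrm{Strat}}(B^M)_{s,t}\otimes\exp((t-s)\AA^M)$. That product is not even multiplicative: comparing its values on $[s,u]$ and $[u,t]$ with its value on $[s,t]$, the level-three parts differ by $(u-s)\bigl(\AA^M\otimes B^M_{u,t}-B^M_{u,t}\otimes\AA^M\bigr)$. This does not vanish as soon as $\Sigma^M\neq0\neq\AA^M$. Its expectation would also be the wrong object, namely $\exp(\frac{\tau}{2}\Sigma^M)\otimes\exp(\tau\AA^M)$ with $\tau=t-s$. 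That differs from the claimed $\exp(\tau(\frac12\Sigma^M+\AA^M))$ at level four by $\frac{\tau^2}{4}(\Sigma^M\otimes\AA^M-\AA^M\otimes\Sigma^M)$, which is nonzero whenever $\Sigma^M,\AA^M\neq0$. Your bracket normalisation of $\AA^M$ also carries a spurious factor $\frac12$; the correct identity is the one you use at the end.

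Replace this with the construction you mention in passing. $\BBbar^M$ is the translation, in the sense of \cite{BCFP19}, of the Stratonovich lift by $r\mapsto r\AA^M$. On $[s,t]$ it solves $\mathrm{d}\mathbf{Y}_r=\mathbf{Y}_r\otimes(\circ\,\mathrm{d}B^M_r+\AA^M\,\mathrm{d}r)$ with $\mathbf{Y}_s=\mathbf{1}$. Equivalently, it is the image of the Stratonovich signature of the space--time path $(B^M_r,r)$ under the algebra morphism sending the time letter to $\AA^M$. This gives you what you need:
\begin{itemize}
\item $\BBbar^M(s,t)$ is a functional of $(B^M_r-B^M_s)_{r\in[s,t]}$ alone, which yields independence and stationarity of increments. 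Chen's identity by itself does not give this.
\item Each level-$k$ term contains $j$ factors $\mathrm{d}B^M$ and $i$ factors $\AA^M\,\mathrm{d}r$ with $j+2i=k$. This gives exact scaling $\pi_k\mathbb{E}[\BBbar^M(0,t)]=t^{k/2}\pi_k\mathbb{E}[\BBbar^M(0,1)]$ for the level-$k$ projection $\pi_k$.
\item The same structure gives the vanishing of odd levels under $B^M\mapsto-B^M$.
\end{itemize}
With these facts your semigroup/generator argument goes through as written.
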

\begin{proof}
	The proof is analogous to that of~\cite[Thm.~3.9]{friz_hairer_book}, except that 
	\begin{equs}[][e:computation_signature]
		\thinspace &
		\E\sbr[0]{1 + B^M_{0,1} + \mathbb{\bar B}^M_{0,1}} = 1 + \frac{1}{2} \sum_{i,j=1}^m \del[0]{\Sigma^M_{k,\ell} + 2 \AA^M_{k,\ell}}\tte_k \otimes \tte_\ell \\
		= \ & 
		1 + \frac{1}{2} \sum_{i,j=1}^m \Sigma^M_{k,\ell} \tte_k \otimes \tte_\ell + \sum_{1 \leq k < \ell \leq m} \AA_{k,\ell}^M \, [\tte_k,\tte_\ell]
	\end{equs}
	where the last step uses the antisymmetry of~$\AA^M$.
	We also refer to~\cite[Rmk.~9]{friz_gassiat_lyons_15} and the references therein for further links to the literature.
\end{proof}

For~$s,t \in [0,1]$ such that~$s < t$, let us compute the component of the expected signature~$\ESig^M_{s,t}$ in~$(\R^m)^{\otimes 2n}$; the components in odd tensor powers are~$0$, as is obvious from~\eqref{e:expected_signature}.
For 
\begin{equation*}
	\tilde{\Sigma}^M \coloneqq \Sigma^M + 2 \AA^M = \Delta^M(0) + 2\Gamma^M
\end{equation*}
we have
\begin{equs}
	\ESig^M_{s,t} 
	& = \exp\del[4]{\frac{t-s}{2} \sum_{k,\ell = 1}^m \tilde{\Sigma}^M_{k,\ell}\, \tte_k \otimes \tte_\ell}
	=
	\sum_{l=0}^\infty \frac{1}{l!} \del[4]{\frac{t-s}{2} \sum_{k,\ell = 1}^m \tilde{\Sigma}^M_{k,\ell} \, \tte_k \otimes \tte_\ell}^{\otimes l} \\
	& =
	\sum_{l=0}^\infty \frac{(t-s)^l}{2^l l!} \sum_{k_{1:l}, \ell_{1:l}=1}^m \prod_{j=1}^l \tilde{\Sigma}^M_{k_j,\ell_j} \, \bigotimes_{j=1}^l \tte_{k_j} \otimes \tte_{\ell_j} \\
	& =
	\sum_{l=0}^\infty \frac{(t-s)^l}{2^l l!} \sum_{\w_{1:2l} = 1}^m \prod_{j=1}^l \tilde\Sigma^M_{\w_{2j-1},\w_{2j}} \, \bigotimes_{i=1}^{2l} \tte_{\w_i} \,.
\end{equs}
where we have used the first line in~\eqref{e:computation_signature}, i.e. antisymmetry of~$\AA^M$, to express the full exponent in~\eqref{e:expected_signature} in terms of~$(\tte_k \otimes \tte_\ell)_{k,\ell = 1}^m$.  
For~$n \in \N$ and a fixed word $\w= \w_{1:2n} \in \llbracket 1,m \rrbracket^{2n}$, the corresponding component of~$\ESig_{s,t}^M$ is then given by:
\begin{equation} \label{e:expec_sig_w}
	\scal{\ESig^M_{s,t},\w}
	= \frac{(t-s)^n}{2^n n!} \prod_{j=1}^n \tilde{\Sigma}^M_{\w_{2j-1},\w_{2j}} 
	= 
	\frac{(t-s)^n}{2^n n!} \prod_{j=1}^n \del[1]{\Delta^M_{\w_{2j-1},\w_{2j}}(0) + 2 \Gamma^M_{\w_{2j-1},\w_{2j}}} 
\end{equation}

\begin{remark}[Ladder pairing] \label{rmk:signaure_letter_pairing}
	Note that the product over~$j \in \llbracket 1,n \rrbracket$ in~\eqref{e:expec_sig_w} is always such that
	\begin{equation*}
		\contraction[1ex]{}{\w}{_1\,}{\w}
		\contraction[1ex]{\w_1\,\w_2\,}{\w}{_3\,}{\w}
		\contraction[1ex]{\w_1\,\w_2\,\w_3\,\w_4\;\cdots\;}{\w}{_{2n-1}\,}{\w}
		\w_1\,\w_2\,\w_3\,\w_4\;\cdots\;\w_{2n-1}\,\w_{2n}
		\quad\longleftrightarrow\quad 
		P_\star = 
		\contraction[1ex]{}{1}{\,}{2}
		\contraction[1ex]{1\,2\,}{3}{\,}{4}
		\contraction[1ex]{1\,2\,3\,4\;\cdots\;}{(2n\!-\!1)}{\,}{(2n)}
		1\,2 \,3\,4\;\cdots\;(2n\!-\!1)\,(2n) \,.
	\end{equation*}
	We will call this (unordered) pairing~$P_\star$ the \emph{ladder pairing}. 
\end{remark}

The following lemma shows that we can restrict ourselves to the case without overlaps.
It is essentially a consequence of Chen's and the shuffle identity.
\begin{lemma}[No overlaps] \label{l:no_overlaps}
	For~$i = 1,2$, fix lengths~$k_i \in \N$ and consider words~$\w_i$ in letters~$\llbracket 1,m \rrbracket$ such that $\abs[0]{\w_i} = k_i$.
	Further, let~$\bar s_i, \bar t_i \in [0,1]$ such that~$\bar s_i < \bar t_i$ and~$[\bar s_1,\bar t_1] \cap [\bar s_2,\bar t_2] \neq \emptyset$.
	Then, the product~$\scal{\YY^M_N(\bar s_1,\bar t_1),\w_1}\scal{\YY^M_N(\bar s_2,\bar t_2),\w_2}$ can be expressed as as a finite sum of terms
	\begin{equation*}
		\prod_{i} \scal{\YY^M_N(I_i),\w_{I_i}}
	\end{equation*}
	where the product is over a finite number of indices~$i \in \N$, intervals~$I_i = [s_i,t_i)$ such that~$\bigcup_{i} I_i= [\bar s_1,\bar t_1] \cup [\bar s_2,\bar t_2]$ and~$I_i \cap I_j = \emptyset$ if~$i \neq j$, and suitable words~$\w_{I_i}$ built from~$\w_{1}$ and~$\w_{2}$.
	The analogous statement holds if we replace each instance of~$\YY^M_N$ by~$\BBbar^M$. 
\end{lemma}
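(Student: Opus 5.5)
Two overlapping intervals $[\bar s_1,\bar t_1]$ and $[\bar s_2,\bar t_2]$ cut their union into at most three consecutive, pairwise disjoint half-open pieces. The plan is to split each factor along these pieces with Chen's identity, and then to merge the two factors that live on the same piece with the shuffle relation. By Lemma~\ref{l:cts_rough_path}, $\YY^M_N$ satisfies both identities. By Definition~\ref{d:brownian_rp}, so does $\BBbar^M$. The argument below only uses these two identities and never looks at $\YY^M_N$ explicitly, so the statement for $\BBbar^M$ follows by the same argument.

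First I fix the endpoints. Order the four points $\bar s_1,\bar t_1,\bar s_2,\bar t_2$ as $u_0\le u_1\le u_2\le u_3$ and discard repetitions. The overlap assumption gives $u_0=\min(\bar s_1,\bar s_2)$ and $u_3=\max(\bar t_1,\bar t_2)$. Each of the two original intervals is then a union of consecutive pieces $[u_{j-1},u_j)$. Iterating Chen's identity~\eqref{e:chen} gives
\[
\scal{\YY^M_N(\bar s_i,\bar t_i),\w_i}=\sum_{\w_i=\v_{i,1}\cdots\v_{i,r_i}}\prod_{a=1}^{r_i}\scal{\YY^M_N(J_{i,a}),\v_{i,a}} ,
\]
where $J_{i,1},\dots,J_{i,r_i}$ are the pieces making up $[\bar s_i,\bar t_i]$. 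The sum runs over all deconcatenations of $\w_i$, so it is finite, and empty subwords contribute the factor $1$.

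Next I multiply the two expansions. Each resulting term is a product of factors indexed by pieces, and any piece covered by both intervals carries one factor $\scal{\YY^M_N(J),\v}$ from each. The shuffle relation~\eqref{e:shuffle} replaces such a pair by $\scal{\YY^M_N(J),\v\shuffle\v'}$. The shuffle is a finite integer combination of words, so by linearity this becomes a finite sum of terms $\scal{\YY^M_N(J),\w_J}$ with $\w_J$ built from letters of $\w_1$ and $\w_2$. After this step every term is a product over the disjoint pieces $I_i$, which cover $[\bar s_1,\bar t_1]\cup[\bar s_2,\bar t_2]$, with exactly one word per piece. This is the claimed representation.

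I do not expect a real obstacle; the only care needed is bookkeeping. One has to use half-open intervals and drop degenerate pieces when endpoints coincide. One should also note that the rough-path increments are defined on closed simplices while the pieces are written as $[s,t)$. This causes no problem for the continuous paths $Y^M_N$ and $B^M$, since increments do not see single points.
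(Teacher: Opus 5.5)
Your proposal is correct and follows the paper's proof: you split each factor with Chen's identity at the interior endpoints, merge the two factors on the overlapping piece with the shuffle identity, and note that the same argument applies to the geometric rough path $\BBbar^M$. The only difference is presentational: you treat all endpoint orderings at once by sorting the four endpoints, while the paper writes out the case $\bar s_1<\bar s_2<\bar t_1<\bar t_2$ and states that the others are analogous.
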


\begin{proof}
	For the sake of concreteness, we will only deal with the case where~$\bar s_1 < \bar s_2 < \bar t_1 < \bar t_2$; one can deal with the other cases of possible overlaps in an analogous way.
	By Lemma~\ref{l:cts_rough_path}, we can apply Chen's identity~\eqref{e:chen} to find
	\begin{equs}
		\scal{\YY^M_N(\bar s_1,\bar t_1),\w_1}
		& =
		\sum_{\w_1 = w_1^a w_1^b} \scal{\YY^M_N(\bar s_1,\bar s_2),\w_1^a}\scal{\YY^M_N(\bar s_2,\bar t_1),\w_1^b} \\
		\scal{\YY^M_N(\bar s_2,\bar t_2),\w_2}
		& =
		\sum_{\w_2 = w_2^a w_2^b} \scal{\YY^M_N(\bar s_2,\bar t_1),\w_2^a} \scal{\YY^M_N(\bar t_1,\bar t_2),\w_2^b}
	\end{equs}
	and then, by the shuffle identity~\eqref{e:shuffle}, 
	\begin{equation*}
		\scal{\YY^M_N(\bar s_2,\bar t_1),\w_1^b} \scal{\YY^M_N(\bar s_2,\bar t_1),\w_2^a}
		= 
		\scal{\YY^M_N(\bar s_2,\bar t_1),\w_1^b \shuffle \w_2^a} \,.
	\end{equation*}
	As a consequence, we have 
	\begin{equs}
		\thinspace
		&
		\scal{\YY^M_N(\bar s_1,\bar t_1),\w_1}\scal{\YY^M_N(\bar s_2,\bar t_2),\w_2} \\
		= \ & 
		\sum_{\w_1 = w_1^a w_1^b} \sum_{\w_2 = w_2^a w_2^b}
		\scal{\YY^M_N(\bar s_1,\bar s_2),\w_1^a}
		\scal{\YY^M_N(\bar s_2,\bar t_1),\w_1^b \shuffle \w_2^a}
		\scal{\YY^M_N(\bar t_1,\bar t_2),\w_2^b} 
	\end{equs}
	and the first claim follows.
	Since Chen's and the shuffle product identity are true for any geometric  rough path, the claim also holds for~$\BBbar^M$.
\end{proof}

As alluded to earlier, the main result of Section~\ref{s:moment_computations}, Theorem~\ref{thm:conv_fdd_stratonovich}, will be obtained as a consequence of the following theorem. 

\begin{theorem} \label{t:product_case}
	Let~$l \in \N$ and consider the vector~$k_{1:l} \in \N^l$.
	Further, for~$i \in \llbracket 1,l \rrbracket$, consider the words~$\w_i \in \llbracket 1,m \rrbracket^{k_i}$, i.e.~$\abs[0]{\w_i} = k_i$, as well as the \emph{pairwise disjoint} intervals~$[s_i, t_i] \subseteq [0,1]$.  
	Then, we have
	\begin{equation} \label{e:product_case}
		\lim_{N \to \infty} \E\sbr[4]{\prod_{i=1}^l \scal{\YY^M_N(s_i,t_i), \w_i}} = \prod_{i=1}^l \scal{\ESig^M_{s_i,t_i},\w_i} \,.
	\end{equation}
\end{theorem}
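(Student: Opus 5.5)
Our plan is to express the left-hand side of~\eqref{e:product_case} as an explicit weighted sum over multi-indices, apply the asymptotic Wick formula of Corollary~\ref{coro:BM83}, and then analyse which pairings and which index configurations survive.

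\emph{Step 1: rewriting as discrete sums.} Since $Y^M_N$ is piecewise linear, each iterated integral $\scal{\YY^M_N(s_i,t_i),\w_i}$ expands over the closed discrete simplex $\bar\triangle^{(k_i)}_{\lfloor Ns_i\rfloor,\lfloor Nt_i\rfloor}$. Concretely, it equals $N^{-k_i/2}\sum \fw(i_{1:k_i})\prod_j f^M_{\w_{i,j}}(X^{(\w_{i,j})}_{i_j})$. Here the weight $\fw$ is $\prod_{\text{blocks}} 1/|\text{block}|!$ over the maximal blocks of equal consecutive indices, because a linear segment contributes $\xi^{\otimes r}/r!$ to level $r$. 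Boundary effects from the non-integer endpoints $Ns_i$ are of order $N^{-1/2}$ in $L^2$, and we discard them as in Lemma~\ref{l:ptwise_L2_conv}. Multiplying over $i$, the expectation becomes a sum over $A_{K,N}=\prod_i\bar\triangle^{(k_i)}$ with $K=\sum_i k_i$. The weights are bounded, and Proposition~\ref{p:bm83_irregular} is stated with absolute values, so it still applies. Corollary~\ref{coro:BM83} therefore replaces the expectation by $\1_{K=2n}\sum_{P\in\CM(2n)}\prod_j\Delta^M_{\w_{P(2j-1)},\w_{P(2j)}}(i_{P(2j)}-i_{P(2j-1)})$.

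\emph{Step 2: pruning.} Since $\Delta^M\in\ell^1(\Z)$, every factor $\Delta^M(i_a-i_b)$ pins $i_a$ to within $O(1)$ of $i_b$. The remaining summation therefore has at most $n$ free indices, and the normalisation is exactly $N^{-n}$. We prune in three stages.
\begin{enumerate}[label=(\roman*)]
	\item Pairings connecting indices from different intervals $[s_i,t_i]$ vanish. The intervals are disjoint, so such a pair forces $|i_a-i_b|\gtrsim N\cdot\mathrm{dist}$, and the $\ell^1$ tail of $\Delta^M$ kills it; touching endpoints are handled by a $\delta N$ cutoff as in Lemma~\ref{lem:elemantary_sum_covariances}. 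Consequently each $k_i$ must be even, and the expectation factorises asymptotically into single-interval terms. This reduces the problem to $l=1$.
	\item For $l=1$, configurations with a block of three or more equal indices lose one free summation while keeping $N^{-n}$, so they are $O(N^{-1})$.
	\item For a pairing $P\neq P_\star$, the ordering $i_1\le\dots\le i_{2n}$ forces indices that are not paired to each other to lie within $O(1)$ of each other. Take for instance the crossing or nesting pattern $\{1,3\},\{2,4\}$: here $i_2$ is squeezed between $i_1$ and $i_3$, which are close to one another. This again costs a free index. The error term is controlled by truncating $\sum_{|u|>R}|\Delta^M(u)|$ and sending $R\to\infty$.
\end{enumerate}

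\emph{Step 3: the ladder limit.} Under $P_\star$ with blocks of size at most two, an equality $i_{2j}=i_{2j+1}$ across two pairs again loses a free index, so only equalities $i_{2j-1}=i_{2j}$ remain. Write $u_j=i_{2j}-i_{2j-1}\ge0$. The outer indices $i_{2j-1}$ range over an $n$-simplex of volume $(N(t-s))^n/n!$. Each pair contributes $\frac12\Delta^M_{\w_{2j-1}\w_{2j}}(0)$ when $u_j=0$ (weight $1/2!$), and $\sum_{u\ge1}\Delta^M(u)=\Gamma^M$ otherwise. The product is $\frac{(t-s)^n}{n!}\prod_j(\tfrac12\Delta^M(0)+\Gamma^M)$, which matches \eqref{e:expec_sig_w}. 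The sums over $u_j$ are truncated at $R$ and the truncation removed using $\Delta^M\in\ell^1$.

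\emph{Main obstacle.} We expect the hardest part to be Step~2(iii). It requires a uniform combinatorial argument, over all non-ladder pairings of ordered indices, that some free index is lost. We will first try induction on $n$ via the position of the partner of index $1$. If that partner is not $2$, then $i_2\in[i_1,i_{P(1)}]$ lies within $O(1)$ of $i_1$ and is therefore not free. If the partner is $2$, we strip the pair $\{1,2\}$ and recurse on the remaining indices.
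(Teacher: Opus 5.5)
Your proposal is correct and has the same architecture as the paper's proof: the asymptotic Wick formula from Corollary~\ref{coro:BM83} (with the bounded weights $\fw$ absorbed into the absolute-value bound of Proposition~\ref{p:bm83_irregular}), removal of cross-interval pairings and factorisation as in Lemma~\ref{l:cross_simplex_pairings}, and then Propositions~\ref{p:higher_order_diagonals}, \ref{p:non_ladder_vanishes} and~\ref{p:limit_ladder} for a single interval. The substantive differences are two: your Step~2(iii) replaces the paper's gap-variable change of coordinates with a shorter sandwich argument (at the first non-ladder position $2j-1$ the partner $b$ exceeds $2j$, so $0\le i_{2j}-i_{2j-1}\le i_b-i_{2j-1}$ merges two pairs at a cost of $N^{-1}\sum_{u\le N}(u+1)|\Delta^M(u)|=o(1)$), and your $\delta N$ treatment of touching endpoints in Step~2(i) also covers the adjacent half-open intervals produced by Lemma~\ref{l:no_overlaps}, which the paper's Lemma~\ref{l:cross_simplex_pairings}, assuming $t_\ell<s_{\ell+1}$, does not literally handle.
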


\begin{remark} \label{rmk:ordering_intervals}
	By commutativity of the product in~\eqref{e:product_case}, going forward, we will without loss of generality assume that the intervals in Theorem~\ref{t:product_case} are \emph{ordered} in the following way:
	\begin{equation*}
		0 \leq s_1 < t_1 < s_2 < t_2 \ldots < s_l < t_l \leq 1\,.
	\end{equation*}
\end{remark}

\begin{proposition} \label{p:conv_fdd_stratonovich:equiv}
	Theorem~\ref{t:product_case} implies Theorem~\ref{thm:conv_fdd_stratonovich}. 
\end{proposition}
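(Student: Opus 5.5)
We deduce Theorem~\ref{thm:conv_fdd_stratonovich} from Theorem~\ref{t:product_case} by the method of moments. The limit vector $\CB^M$ is moment-determined by Lemma~\ref{l:moment_determined}, since all its components lie in Wiener chaoses of order at most two. It therefore suffices to show that every mixed moment of $\CB_N^M$ converges to the corresponding mixed moment of $\CB^M$. A standard argument then gives convergence in law. Tightness of $(\CB_N^M)_N$ follows from uniform boundedness of second moments, since each component sits in a finite inhomogeneous chaos and so has uniformly bounded moments by hypercontractivity (at fixed $M$). Any subsequential limit has the same moments as $\CB^M$, and by moment determinacy it must equal $\CB^M$ in law. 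Convergence of moments along the subsequence also needs uniform integrability, which again follows from hypercontractivity for a fixed finite chaos truncation.

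A mixed moment of $\CB_N^M$ is, by multilinearity, a finite linear combination of expectations of products
\begin{equation*}
	\prod_{j=1}^{J} \scal{\YY^M_N(s_{a_j},t_{a_j}),\v_j}
\end{equation*}
with words $\v_j$ of length one or two and indices $a_j \in \llbracket 1,l \rrbracket$. The same is true for $\CB^M$, with $\YY_N^M$ replaced by $\BBbar^M$. We first group together the factors belonging to the same interval. Using the shuffle relation (Lemma~\ref{l:cts_rough_path}, respectively Definition~\ref{d:brownian_rp}), each such group becomes a single linear functional $\scal{\YY^M_N(s_a,t_a),\v_{a,1}\shuffle\cdots}$. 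Next, the intervals $(s_i,t_i)$ may overlap or share endpoints. We refine them into the partition generated by all the endpoints and apply Lemma~\ref{l:no_overlaps} iteratively, together with Chen's identity and the shuffle identity. Together these express the product as a finite sum, with coefficients independent of $N$, of products $\prod_i \scal{\YY^M_N(I_i),\w_{I_i}}$ over pairwise disjoint intervals $I_i$. Closed versus half-open endpoints are immaterial here, since the paths are continuous. Crucially, this expansion uses only algebraic identities valid for any geometric rough path, so the identical expansion holds with $\BBbar^M$ in place of $\YY^M_N$.

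Theorem~\ref{t:product_case} now gives, for each term,
\begin{equation*}
	\E\Big[\prod_i \scal{\YY^M_N(I_i),\w_{I_i}}\Big] \to \prod_i \scal{\ESig^M_{I_i},\w_{I_i}}.
\end{equation*}
It remains to identify the right-hand side with $\E[\prod_i \scal{\BBbar^M(I_i),\w_{I_i}}]$. This holds because $B^M$ is a Brownian motion and $\BBbar^M$ is built from its increments plus a deterministic correction $(t-s)\AA^M$. Consequently $\BBbar^M(I_i)$ is measurable with respect to the increments of $B^M$ on $I_i$, and these are independent across disjoint intervals. Hence the expectation factorises, and each factor equals $\scal{\ESig^M_{I_i},\w_{I_i}}$ by Lemma~\ref{l:fawcett}. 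Summing the finitely many terms yields convergence of all mixed moments.

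We expect the main obstacle to be the bookkeeping in the reduction to disjoint intervals. One must check that Lemma~\ref{l:no_overlaps}, stated for two words, extends by induction to arbitrarily many overlapping intervals with an expansion common to $\YY_N^M$ and $\BBbar^M$. The independence/factorisation step for $\BBbar^M$ is where Definition~\ref{d:brownian_rp} must be used with care, namely the fact that higher levels are determined by increments on the interval.
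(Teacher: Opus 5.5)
This is essentially the paper's proof: moment determinacy via Lemma~\ref{l:moment_determined}, basis expansion of the tensor monomials, the shuffle identity on each interval, Chen's identity and Lemma~\ref{l:no_overlaps} to reach disjoint intervals, then Theorem~\ref{t:product_case}; you also make explicit the identification $\prod_i \scal{\ESig^M_{I_i},\w_{I_i}} = \E[\prod_i \scal{\BBbar^M(I_i),\w_{I_i}}]$ through independence of Brownian increments and Lemma~\ref{l:fawcett}, which the paper leaves implicit. One caveat, shared with the paper: Chen's identity produces \emph{adjacent} intervals, whereas Theorem~\ref{t:product_case} is stated for disjoint closed intervals (and Lemma~\ref{l:cross_simplex_pairings} is proved using a strictly positive gap), so continuity of the paths does not settle this point; it can be repaired by inserting a gap of length $\varepsilon$ via Chen's identity and bounding every term with a factor on an $\varepsilon$-interval by H\"older's inequality together with $\lim_{N\to\infty}\E[\scal{\YY^M_N(J),\mathtt{u}}^{2q}] = \scal{\ESig^M_J,\mathtt{u}^{\shuffle 2q}} = O(|J|^{q|\mathtt{u}|})$, which follows from the case $l=1$ of Theorem~\ref{t:product_case} and the shuffle identity.
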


\begin{proof}
	Consider intervals~$[\bar{s}_i, \bar{t}_i] \subseteq [0,1]$ which, as in Theorem~\ref{thm:conv_fdd_stratonovich}, are not necessarily disjoint.
	
	For~$(r_\ell^1,r_\ell^2)_{\ell=1}^l \subseteq \N^2$, we define the corresponding \emph{tensor monomial}~$G = G\del[1]{(r_\ell^1,r_\ell^2)_{\ell=1}^l}$ as follows:
		\begin{equation} \label{e:tensor_monomial}
			G(Y_1,\mathbb{Y}_1, \ldots, Y_l,\mathbb{Y}_l) \coloneqq \bigotimes_{\ell=1}^l Y_\ell^{\otimes r_\ell^1} \otimes \mathbb{Y}_{\ell}^{\otimes r_{\ell}^2} \in \bigotimes_{\ell=1}^l (\R^m)^{\otimes r_\ell^1} \otimes (\R^m)^{\otimes (2 r_\ell^2)} \,.
		\end{equation}
	Since the law of corresponding vector~$\CB^M$ (w.r.t. the intervals~$[\bar{s}_i,\bar{t}_i]$ that are suppressed in the notation) is moment-determined by Lemma~\ref{l:moment_determined}, the convergence~\eqref{e:conv_fdd_stratonovich} claimed in Theorem~\ref{thm:conv_fdd_stratonovich} is equivalent to showing that, for each tensor monomial~$G$ as in~\eqref{e:tensor_monomial}, we have 
	\begin{equs}[][e:tensor_mononomial:conv]
			\lim_{N \to \infty} & \E\sbr[1]{G\del[1]{\YY^{M;(1)}_N(\bar s_1,\bar t_1),\YY^{M;(2)}_N(\bar s_1,\bar t_1), \ldots, \YY^{M;(1)}_N(\bar s_l,\bar t_l),\YY^{M;(2)}_N(\bar s_l,\bar t_l)}} \\
			= \ &
			\E\sbr[1]{G\del[1]{\BBbar^{M;(1)}(\bar s_1,\bar t_1),\BBbar^{M;(2)}(\bar s_1,\bar t_1), \ldots, \BBbar^{M;(1)}(\bar s_l,\bar t_l),\BBbar^{M;(2)}(\bar s_l,\bar t_l)}}
	\end{equs}
	
	Equivalently, we can expand both expectations in a basis and then show that the previous convergence holds for all the coefficients.
	To that end, let the word~$\w$ be given by
		\begin{equation*}
			\w \coloneqq \bigsqcup_{\ell=1}^l \bigsqcup_{a = 1}^{r_\ell^1} \w^{1,a}_\ell \sqcup \bigsqcup_{b = 1}^{r_\ell^2} \w^{2,b}_\ell
		\end{equation*}
		where~$\abs[0]{\w^{1,a}_\ell} = 1$  and~$\abs[0]{\w^{2,b}_\ell} = 2$ for each~$(\ell,a,b) \in  \llbracket 1,l \rrbracket \ltimes \del[1]{\llbracket 1,r_\ell^1 \rrbracket \times \llbracket 1,r_\ell^2 \rrbracket}$.
	We then have 
	\begin{align}
			\thinspace & 
			\left\langle\E\sbr[1]{G\del[1]{\YY^{M;(1)}_N(\bar s_1,\bar t_1),\YY^{M;(2)}_N(\bar s_1,\bar t_1), \ldots, \YY^{M;(1)}_N(\bar s_l,\bar t_l),\YY^{M;(2)}_N(\bar s_l,\bar t_l)}},\w \right\rangle \notag \\
			= \ &  
			\E\sbr{\prod_{\ell=1}^l \prod_{a=1}^{r_\ell^1}  \scal{\YY^M(\bar s_\ell,\bar t_\ell),\w_\ell^{1,a}} \prod_{b=1}^{r_\ell^2}\scal{\YY^M(\bar s_\ell,\bar t_\ell),\w_{\ell}^{2,b}}} \label{e:tensor_mononomial:basis} \\[1em]
			= \ &  
			\E\sbr{\prod_{\ell=1}^l  \scal{\YY^M(\bar s_\ell,\bar t_\ell),\w_\ell^{1}} \scal{\YY^M(\bar s_\ell,\bar t_\ell),\w_{\ell}^{2}}} \notag
	\end{align}
	where the last equality is due to the shuffle product identity~\eqref{e:shuffle} and
		\begin{equation*}
			\w_\ell^i \coloneqq \shuffle_{a=1}^{r_\ell^i} \w_{\ell}^{i,r_\ell^i}, \quad i = 1,2, \quad \ell = 1, \ldots, l \,.
		\end{equation*}
		Note that, by abuse of notation, the latter is actually not just one word but, in fact, a \emph{sum over words}.	
		By linearity of the expectation, we can then write 
		\begin{equs}[][e:tensor_mononomial:basis_2]
			\thinspace & 
			\left\langle\E\sbr[1]{G\del[1]{\YY^{M;(1)}_N(\bar s_1,\bar t_1),\YY^{M;(2)}_N(\bar s_1,\bar t_1), \ldots, \YY^{M;(1)}_N(\bar s_l,\bar t_l),\YY^{M;(2)}_N(\bar s_l,\bar t_l)}},\w \right\rangle \\
			= \ &  
			\E\sbr{\prod_{\ell=1}^{l}  \scal{\YY^M_N(\bar s_\ell,\bar t_\ell),\w_\ell}} 
		\end{equs}
	where~$\w_\ell \coloneqq \w_{\ell}^1 \shuffle \w_{\ell}^2$ is, again, a suitable sum of words. 	
		
	Finally, by a repeated application of Lemma~\ref{l:no_overlaps}, one can now find some~$k \in \N$ and \emph{disjoint} intervals $I_j = [s_j,t_j]$ for~$j \in \llbracket 1,k \rrbracket$ as well as corresponding words~$\w_{I_j}$ such that
		\begin{equs}[][e:tensor_mononomial:basis_3]
			\thinspace & 
			\left\langle\E\sbr[1]{G\del[1]{\YY^{M;(1)}_N(s_1,t_1),\YY^{M;(2)}_N(s_1,t_1), \ldots, \YY^{M;(1)}_N(s_l,t_l),\YY^{M;(2)}_N(s_l,t_l)}},\w \right\rangle \\
			= \ &  
			\E\sbr{\prod_{j=1}^{k}  \scal{\YY^M_N(s_j, t_j),\w_{I_j}}}
		\end{equs}
	The previous identity also holds when replacing each instance of~$\YY_N^M$ by~$\BBbar^M$. 	
	Therefore, the convergence in~\eqref{e:tensor_mononomial:conv} is exactly the statement of Theorem~\ref{t:product_case} and the proof of the proposition is complete.
\end{proof}

In light of Proposition~\ref{p:conv_fdd_stratonovich:equiv}, we will spend the remainder of this section proving Theorem~\ref{t:product_case}. 

\subsubsection{Higher-order diagonals vanish} \label{s:higher_diagonals}

The definition of~$\YY_N^M(s,t)$ in~\eqref{e:YNM} as an integral rather than a sum was convenient because, by Lemma~\ref{l:cts_rough_path}, we could then use Chen's identity and the shuffle product relation, both defining properties of a geometric rough path.---However, as we will now have to compute moments and show that they converge to the limit claimed in Theorem~\ref{t:product_case}, we need to convert back to the discrete setting so as to leverage the combinatorial simplification implied by Corollary~\ref{coro:BM83} (which is, itself, a consequence of the Breuer--Major result, Proposition~\ref{p:bm83_irregular}.)
In this subsection, we will only deal with the case~$s=0$ and~$t = 1$; the adaptation of our arguments to the general case is straightforward.

Before we proceed, we need to introduce some notation. 
Recall that, for~$l \in \N$ and~$u,v \in \R$ such that~$u < v$, the (closure of) the $l$-simplex between~$u$ and~$v$ is given by  
	\begin{equation*}
		\bar{\triangle}_{u,v}^{(l)} 
		=  
		\cbr[1]{ r_{1:l} \in [u,v]^{l}: \ u \leq r_1 \leq r_2 \leq \ldots \leq r_l \leq v} 
	\end{equation*}
where we emphasise that some (or all) of the index components~$r_{1:l}$ could agree.
The following definition makes that observation precise by introducing a block decomposition for the index components.
\begin{definition}[Block map, indices, simplex, and weights] \label{d:block_map_indices}
	For~$l \in \N$, assume that the set~$\llbracket 1,l \rrbracket$ can be decomposed into~$p \in \llbracket 1,l \rrbracket$ blocks, that is:
	\begin{equation} \label{e:block_decomposition}
		\llbracket 1,l \rrbracket = \sqcup_{\ell=1}^p B_\ell, 
		\quad B_\ell = \cbr[0]{b^\ell_{1}, \ldots, b^\ell_{a_\ell}}, \quad b_1^\ell < \ldots < b_{a_\ell}^\ell 
	\end{equation} 
	Note that we have ordered the block components ascendingly and that we have set~$a_\ell \coloneqq \abs[0]{B_\ell}$ for~$\ell \in \llbracket 1,p \rrbracket$. 
	We then define the \emph{block map}~$\beta$ by 
	\begin{equation*}
		\beta: \llbracket 1, l \rrbracket \to \llbracket 1,p \rrbracket, \quad \beta(r) \coloneqq \ell \quad \text{iff} \quad r \in B_\ell 
	\end{equation*}
	and the \emph{block indices} $j_1 < \ldots < j_p$ by\footnote{Note that for $r_1,r_2\in \llbracket 1, 2n \rrbracket$, the fact that $r_1,r_2\in B_\ell$ is equivalent to $i_{r_1}=i_{r_2}$  and therefore to~$\beta(r_1)=\beta(r_2)$, so the definition of the block indices is consistent.}
	\begin{equation*}
		i_r = j_{\beta(r)}, 
		\quad \text{i.e.} \quad  
		a_\ell = \#\cbr[0]{r \in \llbracket 1,l \rrbracket: \ \beta(r) = \ell} \,.
	\end{equation*}
	For~$u,v \in \R$ such that~$u < v$, we then introduce the \emph{block simplex}~$\bar{\triangle}_{u,v}^{(l)}(B_{1:p})$ as follows:
	\begin{equation} \label{e:simplex_blocks}
		\bar{\triangle}_{u,v}^{(l)}(B_{1:p}) 
		\coloneqq 
		\cbr[0]{\bi = i_{1:l} \in \bar{\triangle}_{u,v}^{(l)}: \ i_j = i_k \quad \text{iff} \quad \beta(j) = \beta(k)}
	\end{equation}
	Finally, for a multi-index~$i_{1:l} \in \bar{\triangle}_{u,v}$ with block decomposition given in~\eqref{e:block_decomposition}, we define its~\emph{weight}~$\fw(i_{1:k})$ to be the volume of the simplex formed by its blocks, i.e. 
	\begin{equation*}
		\fw(i_{1:l}) \coloneqq \prod_{\ell=1}^p \frac{1}{a_\ell!} \,.
	\end{equation*}
\end{definition}

Let us present an example that illustrates the previous definition.
\begin{example} \label{ex:blocks}
	Assume~$l = 10$ and~$\llbracket 1,l \rrbracket = B_1 \sqcup B_2 \sqcup B_3 \sqcup B_4$ with
	\begin{equation*}
		B_1 = \cbr[0]{1,6,8}, \quad
		B_2 = \cbr[0]{2,5}, \quad
		B_3 = \cbr[0]{3,4,9}, \quad
		B_4 = \cbr[0]{7,10} \,
	\end{equation*}
	We can visually represent the block decomposition as
	\begin{equation*}
			\cbox{mutedorange}{\upshape{1}}
			\,
			\cbox{mutedgreen}{\upshape{2}}
			\,
			\cbox{mutedcyan}{\upshape{3}}
			\,
			\cbox{mutedcyan}{\upshape{4}}
			\,
			\cbox{mutedgreen}{\upshape{5}}
			\,
			\cbox{mutedorange}{\upshape{6}}
			\,
			\cbox{mutedmagenta}{\upshape{7}}
			\,
			\cbox{mutedorange}{\upshape{8}}
			\,
			\cbox{mutedcyan}{\upshape{9}}
			\,
			\cbox{mutedmagenta}{\upshape{10}}
			\quad 
			\longleftrightarrow 
			\quad 
			i_{1:10} = (j_1,j_2,j_3,j_3,j_2,j_1,j_4,j_1,j_3,j_4) \,.
	\end{equation*}
	where indices in the same block are of the same colour. 
\end{example}

We can use the block notation to compute the quantity~$\YY_N^{M; (k)}(0,T)$, given in~\eqref{e:YNM} as an iterated integral, explicitly.
To that end, recall from Lemma~\ref{lem:conv_cont_case} and its proof that~$\xi^M_j = \vec{f}^M(X_j)$ and that
\begin{equation*}
	\dot{Y}^M_N(t) = \frac{N}{T\sqrt{N}} \sum_{j=0}^{N-1} \xi^M_j \, \mathbf{1}_{t \in [t_j,t_{j+1})} \,,
\end{equation*}
One then immediately obtains the following representation for~$\YY_N^{M; (k)}(0,T)$:   
\begin{equation*}
	\YY_N^{M; (k)}(0,T)
	=
	\frac{N^{k/2}}{T^k}
	\sum_{0 \leq i_1 \leq \ldots \leq i_k \leq N-1}
	\del[4]{\bigotimes_{\ell=1}^k \vec{f}^M(X_{i_\ell})}
	\int_{\triangle_{0,T}^{(k)}} \prod_{\ell=1}^k
	\1_{r_\ell \in [t_{i_\ell},t_{i_\ell+1})} \dif r_{1:k} \,.
\end{equation*}
Grouping equal indices into blocks, with $p$ be the number of blocks and $a_\ell$ the size of the $\ell$-th block
(with $a_1+\cdots+a_p=k$), the simplex integral computes to
\begin{equation*}
	\int_{\triangle_{0,T}^{(k)}} \prod_{\ell=1}^k \1_{\cbr[0]{r_\ell \in [t_{j_\ell},t_{j_\ell+1}) }} \dif r_{1:k}
	=
	\prod_{\ell=1}^p \frac{T^{a_\ell}}{N^{a_\ell} a_\ell!} 
	= 
	\frac{T^k}{N^k} 
	\prod_{\ell=1}^p \frac{1}{a_\ell!} 
	= 
	\frac{T^k}{N^k} 
	\fw(i_{1:l})
\end{equation*}
which gives
\begin{equs}[][e:YY_N_rewriting]
	\YY_N^{M; (k)}(0,1)
	& = \frac{1}{N^{k/2}}
	\sum_{0\le j_1<\cdots<j_p \leq N-1}
	\bigotimes_{\ell=1}^p \frac{(\vec{f}^M(X_{j_\ell}))^{\otimes a_\ell}}{a_\ell!} \\ 
	%%%
	& =
	\frac{1}{N^{k/2}}
	\sum_{0 \leq i_1 \leq \ldots \leq i_k \leq N-1}
	\fw(i_{1:k})
	\bigotimes_{\ell=1}^k \vec{f}^M(X_{i_\ell})
\end{equs}
We will use that representation to prove the following proposition.
It states that, index configurations~$i_{1:k}$ where~$k$ is odd or where there is at least one block of size~$a \geq 3$ (called a \enquote{higher-order diagonal}) give rise to asymptotically vanishing contributions.

\begin{proposition}[Higher-order diagonals vanish] \label{p:higher_order_diagonals}
	For any~$s,t \in [0,1]$ with~$s < t$ and~$l \in \N$, we have 
	\begin{align}
		\thinspace & 
		\E\sbr[1]{\scal{\YY_N^{M; (l)}(s,t),\w}} \label{e:higher_order_diagonals}\\
		\asymp & \ 
		\frac{\1_{l=2n}}{N^{n}}
		\sum_{p=1}^{2n}
		\sum_{\substack{B_{1:p}, \\ \abs[0]{B_\ell} \in \cbr[0]{1,2}}}
		\sum_{i_{1:2n}\in \bar{\triangle}(B_{1:p})} \fw(i_{1:2n})
		\sum_{P\in \CM(2n)}
		\prod_{j=1}^{n}
		\Delta^M_{\w_{P_{2j-1}},\w_{P_{2j}}}\!\del[1]{i_{P_{2j}}-i_{P_{2j-1}}}  \,. \notag
	\end{align}
	as~$N \to \infty$, where~$\bar{\triangle}(B_{1:p}) \coloneqq \bar{\triangle}^{(2n)}_{\lfloor Ns \rfloor, \lfloor Nt \rfloor}(B_{1:p})$ as introduced in Definition~\ref{d:block_map_indices}.
\end{proposition}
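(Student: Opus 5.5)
We start from the explicit representation \eqref{e:YY_N_rewriting} (with the obvious shift to $[\lfloor Ns\rfloor,\lfloor Nt\rfloor)$), which gives
\begin{equation*}
	\E\sbr[1]{\scal{\YY_N^{M;(l)}(s,t),\w}}
	=
	\frac{1}{N^{l/2}} \sum_{p=1}^{l} \sum_{B_{1:p}} \sum_{\bi \in \bar{\triangle}(B_{1:p})} \fw(\bi)\, \E\sbr[4]{\prod_{r=1}^l f^M_{\w_r}\del[1]{X^{(\w_r)}_{i_r}}} \,,
\end{equation*}
where the sum over block decompositions is finite and independent of $N$, and $0<\fw(\bi)\le 1$ depends only on $B_{1:p}$. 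For each fixed $B_{1:p}$ we apply Corollary~\ref{coro:BM83} with $A_{l,N} \coloneqq \bar{\triangle}(B_{1:p})$; the weight is a constant on this set, so it can be pulled out. This immediately yields the indicator $\1_{l=2n}$ (odd $l$ has no regular diagrams) and replaces the expectation by the Wick sum over $P \in \CM(2n)$ of products of pairwise expectations. Each pairwise expectation equals $\Delta^M_{\w_{P_{2j-1}},\w_{P_{2j}}}(i_{P_{2j}}-i_{P_{2j-1}})$ by the diagram formula and the definition of $\Delta^M$ in \eqref{e:Delta}, with the sign convention \eqref{e:multivariate_covariance}. This identifies the right-hand side of \eqref{e:higher_order_diagonals} except that the block sizes are still unrestricted.

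It then remains to show that, for every block decomposition with some $a_\ell = \abs[0]{B_\ell} \ge 3$ and every $P\in\CM(2n)$,
\begin{equation*}
	\frac{1}{N^n}\sum_{\bi \in \bar{\triangle}(B_{1:p})} \prod_{j=1}^n \abs[1]{\Delta^M_{\w_{P_{2j-1}},\w_{P_{2j}}}(i_{P_{2j}}-i_{P_{2j-1}})} \longrightarrow 0 \,.
\end{equation*}
This is the only step requiring care, and it is a counting argument. Since $\Delta^M$ is bounded and $\Delta^M \in \ell^1(\Z)$ (Remark~\ref{rmk:main}), we bound the sum by a graph sum. The vertices are the $p$ distinct block indices $j_1<\dots<j_p$, and each pair of $P$ gives an edge between $\beta(P_{2j-1})$ and $\beta(P_{2j})$; loops contribute the constant $\Delta^M(0)$. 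For every connected component of this multigraph, we keep one spanning tree, bound all other edge factors by $\norm[0]{\Delta^M}_{\ell^\infty}$, and sum out the tree edges via $\ell^1$. This bounds the sum by $C\,N^{c}$, where $c$ is the number of connected components.

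Every component containing $v$ vertices carries at least $v$ vertex-slots, since each block has $a_\ell\ge1$. A component with only blocks of size one and no loops must contain at least two vertices and hence at least one edge. Summing degrees gives $2n = \sum_\ell a_\ell$, and every component uses at least two slots. Therefore $c \le \#\{\text{components}\} \le n$, with equality only if every component consumes exactly two slots. That forces each component to be either a single block of size $2$ or two singleton blocks joined by one edge. In particular, if some $a_\ell \ge 3$, the component containing $B_\ell$ uses at least three slots, so it actually uses at least four slots (the total $2n$ is even, and every component's slot count is even since it equals twice its edge count). Hence $c \le n-1$ and the contribution is $O(N^{-1})$.

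The expected obstacle is making this component count rigorous, in particular showing that slot counts per component are even (each edge contributes two slots) and handling loops inside a block. With that in hand, all higher-diagonal terms vanish, and what remains is exactly the sum over $\abs[0]{B_\ell}\in\{1,2\}$ stated in \eqref{e:higher_order_diagonals}.
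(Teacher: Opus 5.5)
Your proposal is correct and follows the paper's proof. You apply Corollary~\ref{coro:BM83} on each block simplex with the constant weight pulled out, then bound the block-graph sum via spanning trees and Lemma~\ref{l:tree_convolution} to get $O(N^{c-n})$, with $c\le n-1$ whenever some block has size at least $3$. Your single parity count of element-slots per connected component of the full block graph replaces the paper's two-case split; it is also slightly more careful, because it counts blocks carrying only intra-block pairs as separate components (each contributes a free factor $N$), which the paper's count of components of $\CG_\ext$ alone does not track.
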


For the proof, we need the concept of a \emph{block graph} associated with a matching~$P$ and the block decomposition~$B_{1:p}$ presented in Definition~\ref{d:block_map_indices}.
Recall that~$\beta$ denotes the block map introduced therein. 
\begin{definition}[(External) Block graph] \label{d:block_graph}
	Let~$l = 2n$ for some~$n \in \N$ and assume that~$\llbracket 1,2n \rrbracket$ can be decomposed into blocks~$B_1,\ldots,B_p$ in the sense of Definition~\ref{d:block_map_indices}.
	Consider further a perfect matching~$P \in \CM(2n)$ in the sense of Definition~\ref{d:matchings}, that is,~$P = \cbr[0]{P_{2r-1},P_{2r}}_{r=1}^n$.
	Then, the \emph{block graph}~$\CG = (V(\CG),E(\CG))$ associated with~$\llbracket 1,2n \rrbracket$, the blocks~$B_{1:p}$, and the matching~$P$ is defined as follows:
	\begin{itemize}
		\item The vertex set~$V(\CG) \coloneqq \llbracket 1,p \rrbracket$ is formed by the labels of the blocks.
		\item The edge set~$E(\CG)$ is defined as follows: 
		For each~$r\in \llbracket 1,n \rrbracket$ and couple $(P_{2r-1},P_{2r})$, we draw an edge between the blocks $\beta(P_{2r-1})$ and $\beta(P_{2r})$.
	\end{itemize}
	The edge set~$E(\CG)$ can be decomposed into
	\begin{itemize}
		\item \emph{internal edges} (or \emph{loops})~$E_{\internal}(\CG)$ given by those edges formed by the intra-block pairings, i.e. the ones with~$\beta(P_{2r-1}) = \beta(P_{2r})$.
		\item \emph{external edges}~$E_\ext$ formed by inter-block pairings, i.e. those satisfying~$\beta(P_{2r-1}) \neq \beta(P_{2r})$.
	\end{itemize}
	In a natural way, the sets of internal (resp. external) edges induces the set~$V_{\internal}(\CG)$ (resp.~$V_{\ext}(\CG)$) of internal (resp. external) vertices.
	We call the sub-graph~$\CG_\ext = (E_\ext,V_\ext) \subseteq \CG$ the \emph{external block graph}.
\end{definition}

The previous definition becomes much clearer from the following illustrative example.
\begin{example}
	Assume the setting of Example~\ref{ex:blocks} and consider~$P \in \CM(10)$ given by~$P = \{\cbr[0]{1,9}, \cbr[0]{2,7}, \cbr[0]{3,4}, \cbr[0]{5,10}, \cbr[0]{6,8}\}$. 
	We can add~$P$ to the visualisation of Example~\ref{ex:blocks} in the following way:  
	\begin{equation*}
			%
			%P = 
			\bcontraction[2ex]{}{\cbox{mutedorange}{\upshape{1}}}{\,\cbox{mutedgreen}{\upshape{2}}\,\cbox{mutedcyan}{\upshape{3}}\,\cbox{mutedcyan}{\upshape{4}}\,\cbox{mutedgreen}{\upshape{5}}\,\cbox{mutedorange}{\upshape{6}}\,\cbox{mutedmagenta}{\upshape{7}}\,\cbox{mutedorange}{\upshape{8}}\,}{\cbox{mutedcyan}{\upshape{9}}}
			\bcontraction[1ex]{\cbox{mutedorange}{\upshape{1}}\,\cbox{mutedgreen}{\upshape{2}}\,}{\cbox{mutedcyan}{\upshape{3}}}{\,}{\cbox{mutedcyan}{\upshape{4}}}
			\contraction[3ex]{\cbox{mutedorange}{\upshape{1}}\,\cbox{mutedgreen}{\upshape{2}}\,\cbox{mutedcyan}{\upshape{3}}\,\cbox{mutedcyan}{\upshape{4}}\,}{\cbox{mutedgreen}{\upshape{5}}}{\,\cbox{mutedorange}{\upshape{6}}\,\cbox{mutedmagenta}{\upshape{7}}\,\cbox{mutedorange}{\upshape{8}}\,\cbox{mutedcyan}{\upshape{9}}\,}{\cbox{mutedmagenta}{\upshape{10}}}
			\contraction[2ex]{\cbox{mutedorange}{\upshape{1}}\,}{\cbox{mutedgreen}{\upshape{2}}}{\,\cbox{mutedcyan}{\upshape{3}}\,\cbox{mutedcyan}{\upshape{4}}\,\cbox{mutedgreen}{\upshape{5}}\,\cbox{mutedorange}{\upshape{6}}\,}{\cbox{mutedmagenta}{\upshape{7}}}
			\contraction[1ex]{\cbox{mutedorange}{\upshape{1}}\,\cbox{mutedgreen}{\upshape{2}}\,\cbox{mutedcyan}{\upshape{3}}\,\cbox{mutedcyan}{\upshape{4}}\,\cbox{mutedgreen}{\upshape{5}}\,}{\cbox{mutedorange}{\upshape{6}}}{\,\cbox{mutedmagenta}{\upshape{7}}\,}{\cbox{mutedorange}{\upshape{8}}}
			\cbox{mutedorange}{\upshape{1}}\,\cbox{mutedgreen}{\upshape{2}}\,\cbox{mutedcyan}{\upshape{3}}\,\cbox{mutedcyan}{\upshape{4}}\,\cbox{mutedgreen}{\upshape{5}}\,\cbox{mutedorange}{\upshape{6}}\,\cbox{mutedmagenta}{\upshape{7}}\,\cbox{mutedorange}{\upshape{8}}\,\cbox{mutedcyan}{\upshape{9}}\,\cbox{mutedmagenta}{\upshape{10}}
	\end{equation*}
	The corresbonding block graph~$\CG$ then looks as follows: 
  	\begin{center}
		\begin{tikzpicture}[scale=0.8]
			\node[circle,draw,thick,fill=mutedorange!50, minimum size=6mm] (B1) at (0,3) {$B_1$};
			\node[circle,draw,thick,fill=mutedgreen!50,minimum size=6mm] (B2) at (3,3) {$B_2$};
			\node[circle,draw,thick,fill=mutedcyan!50,minimum size=6mm] (B3) at (0,0) {$B_3$};
			\node[circle,draw,thick,fill=mutedmagenta!50,minimum size=6mm] (B4) at (3,0) {$B_4$};
			\node (1) at (2,1.5) {{\footnotesize $(2,7)$}};
			\node (2) at (4,1.5) {{\footnotesize $(5,10)$}};
			
			\draw[thick,black] (B1) -- (B3) node[midway,left,font=\footnotesize] {$(1,9)$};
			
			\draw[thick,black] (B2) to[bend left=20] (B4) node[midway, left=2mm,font=\footnotesize] {};% {$(2,7)$};
			
			\draw[thick,black, densely dashed] (B3) to[loop below,looseness=5, out=230,in=310] (B3) node[below=4mm,font=\footnotesize] {$(3,4)$};
			
			\draw[thick,black] (B2) to[bend right=20] (B4) node[midway,below right,font=\footnotesize] {}; 
			
			\draw[thick,black, densely dashed] (B1) to[loop above,looseness=5,out=50,in=130] (B1) node[above=4mm,font=\footnotesize] {$(6,8)$};
			
		\end{tikzpicture}
	\end{center}
	The sub-graph~$\CG_{\ext}$ is obtained by removing the (dashed) self-edges corresponding to the pairs~$(3,4)$ and~$(6,8)$. 
	The number of connected components of~$\CG_{\ext}$ is~$2 = c \leq n = 5$, and likewise for~$\CG$.
\end{example}

As a final ingredient, we need the following elementary lemma.

\begin{lemma}[Tree convolution bound] \label{l:tree_convolution}
	If $T$ is a tree on $m$ vertices and each edge $e\in T$ carries a kernel $K_e\in \ell^1(\Z)$, then
	\begin{equation}  \label{e:tree_convolution}
		\sum_{x_1,\dots,x_m\in\{0,\dots,N-1\}}
		\prod_{e=(u,v)\in T} K_e(x_u-x_v)
		\;\le\;
		N\prod_{e\in T}\norm{K_e}_{\ell^1(\Z)}.
	\end{equation}
\end{lemma}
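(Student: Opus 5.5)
The plan is to prove the tree convolution bound \eqref{e:tree_convolution} by induction on the number of vertices $m$, stripping off one leaf at a time. Two preliminary remarks set this up. First, we read the kernels as nonnegative; otherwise we replace each $K_e$ by $\abs[0]{K_e}$, which only enlarges the left-hand side and leaves the $\ell^1$-norms unchanged. Second, for $m=1$ the tree has no edges, the product is empty and equals $1$, and the sum over $x_1\in\{0,\dots,N-1\}$ gives exactly $N$. This settles the base case.

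For the induction step, let $m\ge 2$. A finite tree always has a leaf, so pick one, say $v$, with unique neighbour $u$ and connecting edge $e_0=(u,v)$. The remaining graph $T'=T\setminus\{v\}$ is a tree on $m-1$ vertices. Since $x_v$ appears only in the factor $K_{e_0}(x_u-x_v)$, we may sum over $x_v$ first:
\[
\sum_{x_v=0}^{N-1} K_{e_0}(x_u-x_v)\le \sum_{z\in\Z}\abs[0]{K_{e_0}(z)}=\norm{K_{e_0}}_{\ell^1(\Z)} .
\]
This bound is uniform in $x_u$. The constraint $x_v\in\{0,\dots,N-1\}$ is only used to enlarge the sum to all of $\Z$. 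Also, the orientation convention $x_u-x_v$ versus $x_v-x_u$ is irrelevant, because $z\mapsto -z$ preserves the $\ell^1$-norm.

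After this summation, what remains is the sum over $x_{V(T')}$ of $\prod_{e\in T'}K_e$, multiplied by $\norm{K_{e_0}}_{\ell^1}$. The induction hypothesis applied to $T'$ bounds that sum by $N\prod_{e\in T'}\norm{K_e}_{\ell^1}$. Combining the two factors yields \eqref{e:tree_convolution}.

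There is no real obstacle here. The only points needing care are the existence of a leaf in any tree with at least two vertices, and the fact that the leaf variable enters exactly one kernel. That second fact is what makes it legitimate to factorise the sum before applying the induction hypothesis.
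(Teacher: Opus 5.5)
Your proof is correct and is essentially the paper's argument: the paper roots the tree, sums out each non-root vertex before its parent (so the vertex being summed is always a leaf of what remains) using $\sum_{x_v}K_{e_v}(x_v-x_{p(v)})\le\norm{K_{e_v}}_{\ell^1(\Z)}$, and then picks up the factor $N$ from the root, which is exactly your leaf-stripping induction. Your explicit reduction to $\abs[0]{K_e}$ is a worthwhile addition, because iterating these bounds requires nonnegative kernels; the paper leaves this implicit, and in its application the kernels are $\abs[0]{\Delta^M}$ anyway.
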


\begin{proof}
We root the tree $T$ at an arbitrary vertex, say vertex $1$. This induces a parent-child relationship: For each vertex $v \neq 1$, there is a unique parent $p(v)$, and the edge connecting $v$ to its parent is denoted $e_v = (v, p(v))$. Since $T$ has $m$ vertices, it has exactly $m-1$ edges, each of the form $e_v$ for $v \in \{2, \ldots, m\}$.

The left hand side of~\eqref{e:tree_convolution}, denoted by~$S$, can then be written as follows:
\begin{equation*}
	S = \sum_{x_1, \ldots, x_m \in \{0, \ldots, N-1\}} \prod_{v=2}^{m} K_{e_v}(x_v - x_{p(v)}) \,.	
\end{equation*}
We now sum over vertices in an order where children are processed before their parents. 
W.l.o.g, we may assume the vertices are labelled so that we sum in the order $x_m, x_{m-1}, \ldots, x_2, x_1$.

Consider summing over $x_v$ for any vertex $v$ different from the root. After all descendants of $v$ have been summed out, the only factor in the product that depends on $x_v$ is $K_{e_v}(x_v - x_{p(v)})$. For fixed $x_{p(v)} \in \{0, \ldots, N-1\}$, we substitute $y = x_v - x_{p(v)}$ to obtain
\begin{equation*}
	\sum_{x_v = 0}^{N-1} K_{e_v}(x_v - x_{p(v)}) 
	= \sum_{y = -x_{p(v)}}^{N-1 - x_{p(v)}} K_{e_v}(y) 
	\leq \sum_{y \in \mathbb{Z}} |K_{e_v}(y)| 
	= \|K_{e_v}\|_{\ell^1(\mathbb{Z})} \,.
\end{equation*}
Applying this bound iteratively for $v = m, m-1, \ldots, 2$, each summation over $x_v$ contributes a factor of at most $\|K_{e_v}\|_{\ell^1(\mathbb{Z})}$.

After summing over $x_2, \ldots, x_m$, it remains to sum over the root $x_1$. Since no edge kernel depends on $x_1$ alone (the root has no parent edge), this summation simply contributes a factor of $N$. Therefore,
\begin{equation*}
	S \leq N \cdot \prod_{v=2}^{m} \|K_{e_v}\|_{\ell^1(\mathbb{Z})} = N \prod_{e \in T} \|K_e\|_{\ell^1(\mathbb{Z})} \,,	
\end{equation*}
as claimed.
\end{proof}

We are now ready to prove Proposition~\ref{p:higher_order_diagonals}.

\begin{proof}[of Proposition~\ref{p:higher_order_diagonals}]
	For simplicity, we focus on the case~$s=0$ and~$t=1$; the general case follows analogously. 

	By~\eqref{e:YY_N_rewriting} and Corollary~\ref{coro:BM83} (with~$A_{l,N} = \triangle(B_{1:p}) \coloneqq \triangle_{0,N}^{(l)}(B_{1:p})$), 
	we have as~$N \to \infty$ that
	\begin{align}
		\thinspace
		&
		\E\sbr[1]{\scal{\YY_N^{M; (l)}(0,1),\w}} \notag\\
		= \ & 
		\frac{1}{N^{\nicefrac{l}{2}}}
		\sum_{p=1}^{l}\;
		\sum_{B_{1:p}}
		\;
		\sum_{i_{1:l} \in \triangle(B_{1:p})} \fw(i_{1:k})
		\E\sbr[4]{\prod_{r=1}^{2n} f_{\w_r}^{M}(X^{(\w_r)}_{i_r})} \label{e:aux_1}\\
		\asymp \ & 
		\frac{\1_{l=2n}}{N^{n}}
		\sum_{p=1}^{2n}
		\sum_{B_{1:p}}
		\sum_{i_{1:2n}\in \triangle(B_{1:p})} \fw(i_{1:k})
		\sum_{P\in \CM(2n)}
		\prod_{j=1}^{n}
		\E\sbr[2]{
		f^{M}_{\w_{P_{2j-1}}}\del[1]{X^{(\w_{P_{2j-1}})}_{i_{P_{2j-1}}}}\,
		f^{M}_{\w_{P_{2j}}}\del[1]{X^{(\w_{P_{2j}})}_{i_{P_{2j}}}}} \notag\\
		= \ & 
		\frac{\1_{l = 2n}}{N^{n}}
		\sum_{p=1}^{2n}
		\sum_{B_{1:p}}
		\sum_{i_{1:2n}\in \triangle(B_{1:p})} \fw(i_{1:k})
		\sum_{P\in \CM(2n)}
		\prod_{j=1}^{n}
		\Delta^M_{\w_{P_{2j-1}},\w_{P_{2j}}}\!\del[1]{i_{P_{2j}}-i_{P_{2j-1}}}. \notag
	\end{align}
	where the sum over blocks~$B_{1:p}$ is such that~$\bigsqcup_{\ell=1}^p B_\ell = \llbracket 1,2n \rrbracket$. 

	Recall the definition of the block map~$\beta$ and the block indices~$j_{1:p}$ in Definition~\ref{d:block_map_indices}.
	For a fixed pairing~$P \in \CM(2n)$ and a fixed number~$p$ of blocks~$B_{1:p}$, we are now interested in looking at
	\begin{equs}[][e:CN_PBw]
		\Lambda_N
		& =
		\Lambda_N(P,B_{1:p},\w) 
		\coloneqq
		\frac{1}{N^{n}}
		\sum_{i_{1:2n}\in \triangle(B_{1:p})} \fw(i_{1:2n})
		\prod_{r=1}^{n}
		\Delta^M_{\w_{P_{2r-1}},\w_{P_{2r}}}\del[1]{i_{P_{2r}}-i_{P_{2r-1}}} \\[0.5em]
		& =  
		\frac{C}{N^{n}}
		\sum_{0 \leq j_1 < \ldots < j_p \leq N-1} \fw(j_{\beta(1):\beta(2n)})
		\prod_{\substack{r=1, \\ \beta(P_{2r-1}) \neq \beta(P_{2r})}}^{n}
			\Delta^M_{\w_{P_{2r-1}},\w_{P_{2r}}}\del[1]{j_{\beta(P_{2r})}-j_{\beta(P_{2r-1})}}
	\end{equs}
	where the finite constant~$C$ in the previous equality is given by the \emph{intra-block pairings}
	\begin{equation} \label{e:intra_block_pairs}
		C 
		= C(P,B_{1:p},\w)
		\coloneqq 
		\prod_{\substack{r=1, \\ \beta(P_{2r-1}) = \beta(P_{2r})}}^{n}
		\Delta^M_{\w_{P_{2r-1}},\w_{P_{2r}}}(0) \,.
	\end{equation}
	Since, for fixed~$n \in \N$, there are only \emph{finitely many} block configurations~$B_{1:p}$ such that~$\llbracket 1,2n \rrbracket = \bigsqcup_{\ell=1}^p B_\ell$, by~\eqref{e:aux_1}, the claim follows if we can show the statement: 
	Suppose that, with~$a_\ell \coloneqq \abs[0]{B_\ell}$, the block configuration~$B_{1:p}$ is such that~$a_{\star} \coloneqq \max_{\ell = 1:p} a_\ell \geq 3$. 
	Then, for any pairing~$P \in \CM(2n)$ and word~$\w \in \llbracket 1,m \rrbracket^{2n}$, we have 
	\begin{equation} \label{e:higher_diag_new}
		\lim_{N\to\infty} \Lambda_N(P,B_{1:p},\w) = 0 \,.
	\end{equation}
	The rest of the proof deals with the claim in~\eqref{e:higher_diag_new}.
	
	Looking at the absolute value of~$\CC_N$, we can remove the simplex-constraints and analyse an iterated sum instead:
	\begin{equation} \label{e:bound:CN}
		\abs[0]{\Lambda_N}
		\leq  
		\frac{\abs[0]{C}}{N^{n}}
		\sum_{0 \leq j_{1:p} \leq N-1} 
		\prod_{\substack{r=1, \\ \beta(P_{2r-1}) \neq \beta(P_{2r})}}^{n}
		\abs[1]{\Delta^M_{\w_{P_{2r-1}},\w_{P_{2r-1}}}\del[1]{j_{\beta(P_{2r})}-j_{\beta(P_{2r-1})}}}
	\end{equation}
	
	Note that the assumption that~$a_\star \geq 3$ implies~$n \geq 2$.
	We now distinguish two cases: 
	First, beside~$a_\star \geq 3$, assume in addition that~$a_\ell \geq 2$ for all~$\ell \in \llbracket 1,p \rrbracket$. In this case, we have
	\begin{equation*}
		k = 2n = a_{[1:p]} \geq 2p + 1 \quad \Longleftrightarrow \quad p \leq \frac{2n-1}{2} = n - \frac{1}{2}
	\end{equation*}
	In this case, using the fact that~$\abs[0]{\rho(k)} \leq 1$  we can brutally bound, 
	\begin{equs}
		\abs[0]{\Lambda_N}
		& 
		\leq  
		\abs[0]{C} N^{p-n}
		\sum_{0 \leq j_{1:p} \leq N-1} 
		\prod_{\substack{r=1, \\ \beta(P_{2r-1}) \neq \beta(P_{2r})}}^{n}
		\abs[1]{\Delta^M_{\w_{P_{2r-1}},\w_{P_{2r}}}(0)}
		=
		O(N^{-\frac{1}{2}})
	\end{equs}
	and the claim follows. 
	\vspace{0.5em}
	
	The second case is much more complicated: In addition to~$a_\star \geq 3$, we assume that there exists one~$\ell \in \llbracket 1,p \rrbracket$ such that~$a_\ell = 1$.
	In this scenario, we cannot a priori rule out that~$p > n$; for example we have~$p = 2n - 2$ if there is one block of size~$3$ and the rest are singletons.
	We see that a much finer analysis is required.
	
	Let~$\CG_{\ext}$ denote the corresponding external block graph introduced in Definition~\ref{d:block_graph}.
	We can then rephrase the bound in~\eqref{e:bound:CN} as follows:
	\begin{equation} \label{e:tree_kernels}
		\abs[0]{\Lambda_N}
		\lesssim
		\Theta_N(\CG_{\ext})
		\coloneqq 
		\frac{1}{N^{n}}
		\sum_{0 \leq j_{1:p} \leq N-1} 
		\prod_{e=(u,v) \in E_\ext} K_e(j_v-j_u),
		\quad 
		K_e(\cdot) \coloneqq \abs[0]{\Delta^M_{\mathfrak{v}_1(e),\mathfrak{v}_2(e)}(\cdot)},
	\end{equation}
	where the pair of letters $(\mathfrak{v}_1(e),\mathfrak{v}_2(e))$ is induced by the edge $e$ (i.e. by the corresponding pair $\cbr[0]{P(2r-1),P(2r))}$; its precise form is not important for our argument.
	The only crucial fact is that, for each~$e \in E(\CG)$, we have~$K_e \in \ell^1(\Z)$ thanks to our assumption that~$\rho \in \ell^d(\Z)$.
	
	Denote by~$c$ the number of connected components of the graph $\CG_\ext$ which we label by $C_r = (V_r,E_r)$, $r = 1, \ldots, c$.
	Then
	\begin{equation*}
		\prod_{e=(u,v)\in E(\CG)} K_e(j_v-j_u)
		=
		\prod_{\ell=1}^{c}\;
		\prod_{e=(u,v) \in E_\ell} K_e(j_v-j_u)
	\end{equation*}
	and, denoting by~$T_C \subseteq E(C)$ a spanning tree for a connected component~$C$, we have
	\begin{equs}[][e:estimate_tree]
		\prod_{e=(u,v)\in E_\ext(C)} K_e(j_v-j_u)
		& \leq 
		\del[4]{\prod_{e\in T_C} K_e(j_v-j_u)}
		\del[4]{\prod_{e\in E_\ext(C)\setminus T_C} \norm{K_e}_{\ell^\infty(\Z)}} \\
		& \lesssim
		\prod_{e\in T_C} K_e(j_v-j_u) \,.
	\end{equs}
	We are now in a position to apply Lemma~\ref{l:tree_convolution} which, combined with~\eqref{e:estimate_tree}, implies that
	\begin{align}
		\Theta_N(\CG_\ext) & 
		\lesssim
		\frac{1}{N^{n}}
		\sum_{0 \leq j_{1:p} \leq N-1} 
		\,
		\prod_{\ell=1}^{c}
		\prod_{e=(u,v)\in T_{C_\ell}} K_e(j_v-j_u) \notag\\
		& =		
		\frac{1}{N^{n}}
		\prod_{\ell=1}^{c}
		\del[4]{
			\sum_{(j_v)_{v \in V_\ell} \subseteq \llbracket 0, N-1 \rrbracket^{\abs[0]{V_\ell}}} 
			\,
			\prod_{e=(u,v)\in T_{C_\ell}} K_e(j_v-j_u)} \label{e:bound_ANG}
		\\
		&
		\leq N^{c-n} \prod_{e \in F} \norm[0]{K_e}_{\ell^1(\Z)}, \quad F \coloneqq \bigcup_{\ell=1}^{c} T_{C_\ell}, \notag
	\end{align}
	where the factorisation into a product over connected components holds because, by definition, they are disjoint.
	Note that we have also introduced a spanning forest~$F$ corresponding to the spanning trees~$(T_{C_\ell})_{\ell = 1}^c$.
	
	In the final step, we now show that~$c \leq n-1$. 
	To that end, we first observe that any perfect matching~$P \in \CM(2n)$ has exactly~$n$ edges. Each connected component of the external block graph~$\CG_\ext$ contains at least one of the edges of~$P$, i.e.~$c \leq n$.
	Equality happens if and only if \emph{every} connected component of~$\CG_\ext$ contains \emph{exactly one} edge of the matching~$P$.
	Therefore, if there is at least one internal edge, we already know that~$c \leq n-1$. 

	Assume now that there are no internal edges, i.e. that there are~$n$ external edges and that~$c = n$. 
	Schematically, any graph of this type looks as follows: 
	\begin{center}
		\begin{tikzpicture}[scale=0.3,baseline=0cm]
				%%%%%%%%
				\node at (6,3)  [xibig] [label={[label distance=0.05cm]90:}] (11) {};
				\node at (9,3)  [xibig] [label={[label distance=0.05cm]90:}] (12) {};
				\node at (12,3)  [xibig] [label={[label distance=0.05cm]90:}] (13) {};
				\node at (15,3)   [label={[label distance=0.05cm]90:}] (14) {$\ldots$};
				\node at (18,3)  [xibig] [label={[label distance=0.05cm]90:}] (15) {};
				%%%%%
				\node at (6,0)  [xibig] [label={[label distance=0.05cm]-90:}] (21) {};
				\node at (9,0)  [xibig] [label={[label distance=0.05cm]-90:}] (22) {};
				\node at (12,0)  [xibig] [label={[label distance=0.05cm]-90:}] (23) {};
				\node at (15,0)   [label={[label distance=0.05cm]90:}] (24) {$\ldots$};
				\node at (18,0)  [xibig] [label={[label distance=0.05cm]90:}] (25) {};
				%%%%%
				\draw[thick] (11) -- node[left] {$1$} (21);
				\draw[thick] (12) -- node[left] {$2$} (22);
				\draw[thick] (13) -- node[left] {$3$} (23);
				\draw[thick] (15) -- node[left] {$n$} (25);
				%%%%
				%%%%%
				%
			\end{tikzpicture}\; 
	\end{center}
	where we have labeled the connected components. In particular, any vertex~$v \in V_\ext(\CG)$ has degree~$1$ which, since there are no internal edges, means that each block is a singleton, i.e.~$a_\star = 1$.
	This is a contradiction to the assumption that~$a_\star \geq 3$.

	In summary, we always get~$c \leq n-1$ and, by~\eqref{e:bound_ANG}, that~$\Theta_N(\CG_\ext) = O(N^{-1})$. 
	The claim now follows from~\eqref{e:tree_kernels}.
\end{proof}

\subsubsection{Analysis of pairings} \label{s:analysis_pairings}

In the previous subsection, we have investigated which \enquote{diagonals} (or blocks) in~\eqref{e:higher_order_diagonals} contribute asymptotically; now, we will take a closer look at the pairings~$P \in \CM(2n)$. 

To this end, recall the definition of the discrete closed simplex
\begin{equation*}
	\bar{\triangle}_{\lfloor Ns \rfloor,\lfloor Nt \rfloor}^{(2n)}
	=
	\cbr[1]{i_{1:2n} \in \Z^{2n}: \ \lfloor Ns \rfloor \leq i_1 \leq i_2 \leq \ldots \leq i_{2n} \leq \lfloor Nt \rfloor -1} \,. 
\end{equation*}

\begin{definition}
	Fix~$n \in \N$ and a word~$\w = \w_{1:2n} \in \llbracket 1,m \rrbracket^{2n}$. 
	As before, let~$\CM(2n)$ be the set of pairwise matchings between vertices labelled by~$1, \ldots, 2n$. 
	Then, for any (unordered) matching~$P \in \CM(2n)$ given by 
	\begin{equation*}
		P = \cbr[1]{\cbr[0]{p_{2j-1},p_{2j}}: \ j \in \llbracket 1,n \rrbracket}
	\end{equation*}
	and~$s,t \in [0,1]$ such that~$s < t$, we define
	\begin{equation} \label{e:def:INst}
		\CI^{M;s,t}_N(P,\w) \coloneqq \frac{1}{N^n} \sum_{i_{1:2n} \in \bar{\triangle}_{\lfloor Ns \rfloor,\lfloor Nt \rfloor}^{(2n)}} \fw(i_{1:2n}) \, \prod_{j=1}^{n}  \Delta_{\w_{p_{2j-1}}, \w_{p_{2j}}}^M(i_{p_{2j}} - i_{p_{2j-1}}) \,.
	\end{equation}
\end{definition}

\begin{example}
	Let~$n = 3$ and~$P \in \CM(2n) \equiv \CM(6)$ be given by
	\begin{equation*}
		P = 
		\contraction[3ex]{}{1}{\,2\,3\,4\,5\,}{6}
		\contraction[2ex]{1\,}{2}{\,3\,}{4}
		\contraction[1ex]{1\,2\,}{3}{\,4\,}{5}
		1\,2\,3\,4\,5\,6, 
		\quad \text{i.e.} \quad
		P = \cbr[1]{\cbr[0]{1,6}, \cbr[0]{2,4}, \cbr[0]{3,5}}
	\end{equation*}
	Then, for any fixed~$\w = \w_{1:6} \in \llbracket 1,m \rrbracket^6$, we have
	\begin{equation*}
		\CI^{M;s,t}_N(P,\w) 
		= \frac{1}{N^3} \sum_{i_{1:6} \in \bar{\triangle}^{(6)}_{\lfloor Ns \rfloor,\lfloor Nt \rfloor}} \fw(i_{1:6}) \, \Delta_{\w_1,\w_6}^M(r_6 - r_1) \Delta_{\w_2,\w_4}^M(r_4 - r_2) \Delta_{\w_3,\w_5}^M(r_5 - r_3) \,.
	\end{equation*}
\end{example}

The first main result in this subsection states that non-ladder pairings vanish asymptotically.
\begin{proposition} \label{p:non_ladder_vanishes}
	Let~$n \in \N$ and~$\w = \w_{1:2n} \in \llbracket 1,m \rrbracket^{2n}$ and recall from Remark~\ref{rmk:signaure_letter_pairing} that
	\begin{equation} \label{e:ladder}
		P_\star = 
		\contraction[1ex]{}{1}{\,}{2}
		\contraction[1ex]{1\,2\,}{3}{\,}{4}
		\contraction[1ex]{1\,2\,3\,4\;\cdots\;}{(2n\!-\!1)}{\,}{(2n)}
		1\,2 \,3\,4\;\cdots\;(2n\!-\!1)\,(2n),
		\quad \text{i.e.} \quad
		P_\star = \cbr[1]{\cbr[0]{2j-1,2j}: j \in \llbracket 1,n \rrbracket}
	\end{equation}
	For any~$M \in \N$ and any~$s,t \in [0,1]$ such that~$s < t$ and~$P \neq P_\star$, we have that
	\begin{equation*}
		\lim_{N \to \infty} \CI^{M;s,t}_N(P,\w)
		=
		0
	\end{equation*}
\end{proposition}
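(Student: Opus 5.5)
The plan is to bound $\abs[0]{\CI^{M;s,t}_N(P,\w)}$ by a sum over index configurations in which the ordering constraint of the simplex is exploited. Since the weight $\fw$ is bounded by $1$ and the kernels satisfy $\Delta^M_{\v,\w'} \in \ell^1(\Z)$ (the summability used in Section~\ref{s:higher_diagonals}), we set $K(u) \coloneqq \max_{\v,\w'} \abs[0]{\Delta^M_{\v,\w'}(u)}$, so that $K \in \ell^1(\Z) \cap \ell^\infty(\Z)$. We then have
\begin{equation*}
	\abs[0]{\CI^{M;s,t}_N(P,\w)} \leq \frac{1}{N^n} \sum_{i_{1:2n} \in \bar{\triangle}^{(2n)}} \prod_{j=1}^n K\del[0]{i_{p_{2j}} - i_{p_{2j-1}}} .
\end{equation*}
The key structural observation is that $P \neq P_\star$ forces some pair $\cbr[0]{a,b}$ of $P$ with $a<b$ to \emph{enclose} an index $c$, i.e.\ $a < c < b$, whose partner $c'$ is paired elsewhere. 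This holds because if no pair encloses anything, every pair is of the form $\cbr[0]{a,a+1}$, and the pairs then tile $\llbracket 1,2n \rrbracket$ exactly as $P_\star$ does.

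Fix such a pair $\cbr[0]{a,b}$ with an enclosed index $c$, where $c$ is paired with $c'$. We free the pairs as in Lemma~\ref{l:tree_convolution}. For every pair other than $\cbr[0]{c,c'}$ we sum the larger-index variable against $K$, which gains a factor $\norm[0]{K}_{\ell^1}$, and the smaller-index variable gives at most $N$. For the pair $\cbr[0]{c,c'}$ we instead use the ordering $i_a \leq i_c \leq i_b$. Once $i_a$ is fixed and $i_b - i_a$ is weighted by $K$, the number of admissible values of $i_c$ is at most $\abs[0]{i_b - i_a}+1$. This turns the would-be free sum over $i_c$, which would cost $N$, into a factor $(\abs[0]{u}+1)$ attached to $K(u)$ with $u = i_b - i_a$.

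Counting powers, the $n$ pairs contribute $N^{n-1}$ from the free root variables of the pairs other than $\cbr[0]{c,c'}$. The pair $\cbr[0]{c,c'}$ contributes no root factor $N$, since $i_c$ is confined and $i_{c'}$ is summed against $K$. This gives the bound
\begin{equation*}
	\abs[0]{\CI^{M;s,t}_N} \lesssim N^{-1} \sum_{\abs[0]{u} \leq N} (\abs[0]{u}+1) K(u) \, \norm[0]{K}_{\ell^1}^{n-1} .
\end{equation*}
The step I expect to be delicate is that $K \in \ell^1$ alone does not make $\sum_{u} \abs[0]{u} K(u)$ finite. I would handle this with the Lemma~\ref{lem:elemantary_sum_covariances}-type argument: split $\abs[0]{u} \leq \delta N$ and $\abs[0]{u} > \delta N$. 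The first part gives at most $\delta N \norm[0]{K}_{\ell^1}$. The second part gives at most $N \sum_{\abs[0]{u} > \delta N} K(u)$, which is $o(N)$ by the tail decay of an $\ell^1$ sequence. Hence $N^{-1}\sum_{\abs[0]{u}\le N}(\abs[0]{u}+1)K(u) \leq \delta\norm[0]{K}_{\ell^1} + o(1)$, which tends to zero since $\delta$ is arbitrary.

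Some bookkeeping is needed to check that the iterative freeing remains valid. This includes the cases where $c'$ also lies between $a$ and $b$, and where $a$, $b$ are themselves involved in other constraints. We only ever drop ordering constraints except the single constraint $i_a \leq i_c \leq i_b$. The sums over the remaining pairs are performed first, with each pair's difference summed in $\ell^1$ and its root summed over at most $N$ values. The pair $\cbr[0]{a,b}$ is summed last with $i_c$ counted inside it. The rescaling for general $s<t$ is immediate.
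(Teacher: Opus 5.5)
Your proof is correct, but it takes a more local route than the paper.

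\textbf{The paper's route.} The paper passes to gap variables $v_\ell = i_\ell - i_{\ell-1}$. It shows that every even gap, and exactly when $P \neq P_\star$ at least one odd gap, enters the kernel arguments, so $r \geq n+1$ gaps are constrained and at most $n-1$ are free. It sums out the free gaps and performs the unimodular change of variables $u_k = y_{[I_k]}$. It then bounds the fibre of the remaining $r-n \geq 1$ gaps by $(u_{[1:n]})^{r-n}$. Finally it splits according to whether $\max_k u_k \geq \eta(N-1)$, handled by the tail of an $\ell^1$ sequence, or not, which gives a factor $\eta^{r-n}$.

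\textbf{Your route.} You keep the original indices and discard every ordering constraint except the single betweenness $i_a \leq i_c \leq i_b$ supplied by one pair with $b \geq a+2$. The sum then factorises over pairs, and you conclude with the Kronecker-type fact $N^{-1}\sum_{|u|\leq N}(|u|+1)K(u) \to 0$ for $K \in \ell^1(\Z)$.

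\textbf{Comparison.} The mechanism is the same in both proofs. Your nested pair is what produces the paper's extra odd gap, your factor $|u|+1$ replaces the fibre bound, and your $\delta$-split mirrors the split into $J_N^{\CA}(\eta)$ and $J_N^{\CB}(\eta)$. Your version needs only one nesting and no linear algebra, and it gives the explicit bound $N^{-1}\|K\|_{\ell^1}^{n-1}\sum_{|u|\leq N}(|u|+1)K(u)$. The paper's bookkeeping instead tracks all $r-n$ excess gaps at once, and its $J_N^{\CA}$-part is what the proof of Lemma~\ref{l:cross_simplex_pairings} reuses.

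\textbf{Extension to Lemma~\ref{l:cross_simplex_pairings}.} Your technique adapts to that lemma just as easily. It even covers intervals that merely touch, $t_\ell = s_{\ell+1}$, which is the configuration Lemma~\ref{l:no_overlaps} actually produces. A cross-simplicial pair straddling a common endpoint $T$ has $i_a < T \leq i_b$. So for fixed $u = i_b - i_a$ there are at most $u$ admissible values of $i_a$, and the same Kronecker step applies. The paper's argument, by contrast, uses a strictly positive separation $s_{\ell+1} - t_\ell$.
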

The proof of this proposition is elementary but rather lengthy and technical; it is therefore postponed to Appendix~\ref{s:technical_proofs}.
In order to compute the asymptotic contribution of the ladder pairing, we need the following auxiliary result.

\begin{lemma} \label{l:simplex_nt}
	For~$n \in \N$ and~$s,t \in [0,1]$ with~$s < t$ as well as~$r_{1:n} \in 0, N-1 \rrbracket^{n}$, we define the set
	\begin{equation} \label{e:UNstr}
		U_N^{(n)}(s,t,r_{1:n}) \coloneqq 
		\cbr[1]{
			(u_1,v_1,\ldots,u_n,v_n) \in \bar{\triangle}_{\lfloor Ns 		\rfloor,\lfloor Nt \rfloor}^{(2n)}: \ 
			 v_j - u_j = r_j
			}	
	\end{equation}
	Then, we have 
	\begin{equation*}
		\frac{\# U_N^{(n)}(s,t,r_{1:n})}{N^n} \lesssim 1, \quad \lim_{N \to \infty} \frac{\# U_N^{(n)}(s,t,r_{1:n})}{N^n} = \frac{(t-s)^n}{n!} \,.
	\end{equation*}
\end{lemma}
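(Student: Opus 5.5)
The plan is to reparametrise the set $U_N^{(n)}(s,t,r_{1:n})$ by the left endpoints $u_{1:n}$ alone. Since $v_j = u_j + r_j$ is determined by $u_j$, the map $(u_1,v_1,\ldots,u_n,v_n)\mapsto u_{1:n}$ is injective on $U_N^{(n)}(s,t,r_{1:n})$. The membership constraint $\lfloor Ns\rfloor \le u_1\le v_1\le u_2\le \ldots\le v_n\le \lfloor Nt\rfloor -1$ then translates into the following conditions:
\begin{equation*}
	\lfloor Ns \rfloor \leq u_1, \quad u_{j+1} \geq u_j + r_j \ (j \in \llbracket 1,n-1 \rrbracket), \quad u_n + r_n \leq \lfloor Nt \rfloor - 1 \,.
\end{equation*}
The condition $v_j \ge u_j$ is automatic because $r_j \ge 0$.

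\emph{Upper bound.} Forgetting all constraints except $u_j\in\llbracket \lfloor Ns\rfloor,\lfloor Nt\rfloor-1\rrbracket$ together with the ordering $u_1\le u_2\le \ldots\le u_n$ gives
\begin{equation*}
	\# U_N^{(n)}(s,t,r_{1:n}) \leq \#\bar\triangle^{(n)}_{\lfloor Ns\rfloor,\lfloor Nt\rfloor} = \binom{L+n-1}{n}, \quad L \coloneqq \lfloor Nt\rfloor-\lfloor Ns\rfloor \leq N \,.
\end{equation*}
Since $\binom{L+n-1}{n}\le (N+n)^n/n!$, we obtain $\# U_N^{(n)}/N^n \lesssim 1$ uniformly in $r_{1:n}$ and $N$.

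\emph{Limit.} For fixed $r_{1:n}$, substitute $w_1 = u_1-\lfloor Ns\rfloor$ and $w_{j+1} = u_{j+1}-u_j-r_j$, together with a final slack variable. This is a stars-and-bars change of variables, and it shows that the set is in bijection with the nondecreasing $n$-tuples in an interval of length $L-R$, where $R\coloneqq r_{[1:n]}$ (up to $\pm1$ boundary shifts). Hence $\# U_N^{(n)} = \binom{L-R+n-1+O(1)}{n}$ when $L>R$, and it vanishes otherwise. Because $R$ is fixed and $L/N\to t-s$, the quotient converges to $(t-s)^n/n!$.

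I do not expect a serious obstacle. The only care needed is in the bookkeeping of the off-by-one shifts and the ordering conventions (weak versus strict inequalities), and these do not affect the leading order $N^n$. If the statement is intended with $r_{1:n}$ depending on $N$, only the uniform bound survives; the limit should then be understood pointwise for fixed $r_{1:n}$, which is the form needed for dominated convergence in the ladder computation.
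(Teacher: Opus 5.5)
Your argument is correct and follows essentially the paper's route. Both eliminate $v_j = u_j + r_j$, pass to nonnegative gap variables, and count integer points in a discrete simplex; the uniform bound and the limit $(t-s)^n/n!$ then follow from the resulting binomial coefficient. The one difference is that the paper's proof counts with strict inequalities $v_j < u_{j+1}$, which is the non-bridging constraint actually used in the ladder computation, and gets $\binom{[L-R]_+}{n}$. You count the set with the weak inequalities as literally defined and get $\binom{L-R+n-1}{n}$. This bounded shift in the binomial coefficient does not affect the order-$N^n$ asymptotics.
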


\begin{proof}
	Set $a \coloneqq u_1$, write~$v_j = u_j + r_j$, and note that there exist~$b_{1:n-1} \in \N_0^{n-1}$ such that 
	\begin{equation*}
		u_{j+1} = v_j + 1 + b_j = u_j + r_j + 1 + b_j \,.
	\end{equation*}
	By a telescopic sum argument, for any~$k \geq 1$ we then find 
	\begin{equation*}
		u_k = a + r_{[1:k-1]} + b_{[1:k-1]} + k-1, 
		\quad 
		v_k = a + r_{[1:k]} + b_{[1:k-1]} + k-1 \,.
	\end{equation*} 
	For~$k = n$, the constraint~$v_n \leq \lfloor Nt \rfloor - 1$ combined with~$a \geq \lfloor Ns \rfloor$ then reads 
	\begin{equation*}
		\lfloor Ns \rfloor \leq a + b_{[1:n-1]} \leq \lfloor Nt \rfloor - n - r_{[1:n]} 
	\end{equation*}
	and, by a simple change of variables, this is equivalent to finding the number if integer solutions~$\bar{a}, \bar{b}_{1:n-1} \geq 0$ to the following equation: 
	\begin{equation*}
		0 \leq \bar a + \bar b_{[1:n-1]} \leq \lfloor Nt \rfloor - \lfloor Ns \rfloor - n - r_{[1:n]} 
	\end{equation*}
	By the well-known volume formula for the discrete simplex, we therefore find
	\begin{equation*}
		\# U_N^{(n)}(s,t,r_{1:n})
		=
		\binom{\sbr[1]{\lfloor Nt \rfloor - \lfloor Ns \rfloor - r_{[1:n]}}_+}{n} \,;
	\end{equation*}
	both claims now follow immediately.
\end{proof}

The second main result in this subsection shows that, as desired, the ladder pairing asymptotically gives rise to the expected signature.

\begin{proposition} \label{p:limit_ladder}
	Recall the definition of~$\CI_{N}^{M;s,t}(P,\w)$ in~\eqref{e:def:INst} and let~$P_\star$ be the ladder pairing given in~\eqref{e:ladder}.  
	For any~$M \in \N$, we then have
	\begin{equation*} 
		\lim_{N \to \infty} \CI_{N}^{M;s,t}(P_\star,\w)
		= \scal{\ESig^M_{s,t},\w} 
		=
		\frac{(t-s)^n}{2^n n!}  \prod_{j=1}^n (\Sigma^M_{\w_{2j-1},\w_{2j}} + 2 \AA^M_{\w_{2j-1},\w_{2j}}) \,.
	\end{equation*}
\end{proposition}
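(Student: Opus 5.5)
The plan is to decompose the ladder sum according to the pattern of coincidences between consecutive indices, compute each piece via Lemma~\ref{l:simplex_nt}, and sum. For $P_\star$, write $(u_j,v_j)\coloneqq(i_{2j-1},i_{2j})$ and $r_j\coloneqq v_j-u_j\geq 0$, so the summand is $\fw(i_{1:2n})\prod_{j=1}^n\Delta^M_{\w_{2j-1},\w_{2j}}(r_j)$.

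\textbf{Step 1 (reduce the weights).} By Proposition~\ref{p:higher_order_diagonals}, more precisely by its proof applied to the fixed pairing $P_\star$, configurations with a block of size at least $3$ contribute $o(1)$. We may therefore restrict to multi-indices all of whose blocks have size $1$ or $2$. There are two kinds of size-two blocks. \emph{Bridging} coincidences are $v_j=u_{j+1}$, i.e.\ equalities across different ladder rungs. \emph{Non-bridging} ones are $u_j=v_j$, i.e.\ $r_j=0$. I will show that bridging coincidences are negligible. Such a coincidence removes one free summation variable while the normalisation stays $N^{-n}$. By the tree-convolution argument of Lemma~\ref{l:tree_convolution} (or directly), the remaining sum over $u_1$ and the $r_j$ is bounded by $N^{n-1}\prod_j\|\Delta^M\|_{\ell^1}$, because the gaps $u_{j+1}-v_j$ are otherwise free and one of them is now frozen. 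Hence these configurations contribute $O(N^{-1})$. This uses only $\Delta^M\in\ell^1(\Z)$, which holds by Remark~\ref{rmk:main}.

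\textbf{Step 2 (compute the main term).} On the remaining set, $\fw=\prod_{j}\del[0]{\tfrac12}^{\1_{r_j=0}}$, and the indices satisfy $u_1\le v_1<u_2\le v_2<\dots$, which is exactly the set $U_N^{(n)}(s,t,r_{1:n})$ of Lemma~\ref{l:simplex_nt} up to the already-discarded bridging coincidences. Thus the main term equals
\begin{equation*}
\sum_{r_{1:n}\ge 0}\frac{\#U_N^{(n)}(s,t,r_{1:n})}{N^n}\prod_{j=1}^n\del[1]{\tfrac12}^{\1_{r_j=0}}\Delta^M_{\w_{2j-1},\w_{2j}}(r_j)+o(1).
\end{equation*}
Lemma~\ref{l:simplex_nt} gives a uniform bound on $\#U_N^{(n)}(s,t,r_{1:n})/N^n$ together with its pointwise limit $(t-s)^n/n!$. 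Since $\prod_j|\Delta^M(r_j)|$ is summable over $r_{1:n}\in\N_0^n$, dominated convergence yields the limit
\begin{equation*}
\frac{(t-s)^n}{n!}\prod_{j=1}^n\del[1]{\tfrac12\Delta^M_{\w_{2j-1},\w_{2j}}(0)+\Gamma^M_{\w_{2j-1},\w_{2j}}}.
\end{equation*}
Using $\tfrac12\Delta^M(0)+\Gamma^M=\tfrac12(\Sigma^M+2\AA^M)$, this is $\frac{(t-s)^n}{2^n n!}\prod_{j=1}^n\del[0]{\Sigma^M+2\AA^M}_{\w_{2j-1},\w_{2j}}$, which matches~\eqref{e:expec_sig_w}.

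\textbf{Main obstacle.} The delicate point is Step~1: keeping exact track of the weights and confirming that every bridging equality really costs a factor $N^{-1}$ rather than only changing a constant. I would handle this by enumerating the subsets of $\{1,\dots,n-1\}$ at which $v_j=u_{j+1}$, and bounding each resulting count by the same change of variables as in the proof of Lemma~\ref{l:simplex_nt}: each frozen gap $b_j=-1$ lowers the binomial degree by one.
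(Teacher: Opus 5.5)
Your proposal is correct and follows the paper's proof essentially step by step. You discard blocks of size $\geq 3$ via Proposition~\ref{p:higher_order_diagonals}, show that bridging multi-indices ($i_{2j}=i_{2j+1}$) contribute $O(N^{-1})$ because one summation variable is frozen, and evaluate the non-bridging part through Lemma~\ref{l:simplex_nt} and dominated convergence with weight $\fw(r)=\1_{r\neq 0}+\tfrac12\1_{r=0}$. Two of your refinements are accurate and sharpen points the paper leaves implicit: you invoke the \emph{proof} of Proposition~\ref{p:higher_order_diagonals}, which handles each fixed pair $(P_\star,B_{1:p})$ separately, rather than its statement, and you note that the count in Lemma~\ref{l:simplex_nt} is really over strict inequalities between rungs.
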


\begin{proof}
	For the sake of brevity, we only show the claim when~$s=0$ and~$t=1$; the general case is proved mutatis mutandis. Throughout this proof, we will write
	\begin{equation*}
		g_j \coloneqq \Delta^M_{\w_{2j-1},\w_{2j}} \in \ell^1(\Z) \,.
	\end{equation*} 
	By Proposition~\ref{p:higher_order_diagonals}, we know that the indices~$i_{2n} \in \bar{\triangle} \coloneqq \bar{\triangle}^{(2n)}_{0, N-1}$ can only form blocks of sizes~$1$ or~$2$. 
	We now call any such index~$i_{1:2n}$ \emph{bridging} if there exists a~$j \in \llbracket 1,n-1 \rrbracket$ such that~$i_{2j} = i_{2j+1}$, i.e. if a block of size~$2$ forms a bridge between two steps of the ladder pairing~$P_\star$.
	For example, we have
	\begin{equs}[][e:non_bridging_multiindex]
			%%%
			i_{1:6} \quad 
			& \triangleq \quad  
			\contraction[1ex]{}{\cbox{mutedorange}{1}}{\,}{\cbox{mutedorange}{2}}
			\contraction[1ex]{\cbox{mutedorange}{1}\,\cbox{mutedpurple}{2}\,}{\cbox{mutedpurple}{3}}{\,}{\cbox{mutedgreen}{4}}
			\contraction[1ex]{\cbox{mutedorange}{1}\,\cbox{mutedpurple}{2}\,\cbox{mutedpurple}{3}\,\cbox{mutedgreen}{4}\,}{\cbox{mutedcyan}{5}}{\,}{\cbox{mutedcyan}{6}}
			\cbox{mutedorange}{1}\,\cbox{mutedpurple}{2}\,\cbox{mutedpurple}{3}\,\cbox{mutedgreen}{4}\,\cbox{mutedcyan}{5}\,\cbox{mutedcyan}{6}
			%%%
			\quad \triangleq \quad \text{a bridging multi-index} \ (\text{through} \, B_2 = \cbr[0]{2,3})
			\\
			%%%%%
			%%%%%
			%%%%%
			i_{1:6} \quad 
			& \triangleq \quad 
			\contraction[1ex]{}{\cbox{mutedorange}{1}}{\,}{\cbox{mutedorange}{2}}
			\contraction[1ex]{\cbox{mutedorange}{1}\,\cbox{mutedpurple}{2}\,}{\cbox{mutedmagenta}{3}}{\,}{\cbox{mutedgreen}{4}}
			\contraction[1ex]{\cbox{mutedorange}{1}\,\cbox{mutedpurple}{2}\,\cbox{mutedmagenta}{3}\,\cbox{mutedgreen}{4}\,}{\cbox{mutedcyan}{5}}{\,}{\cbox{mutedcyan}{6}}
			\cbox{mutedorange}{1}\,\cbox{mutedpurple}{2}\,\cbox{mutedmagenta}{3}\,\cbox{mutedgreen}{4}\,\cbox{mutedcyan}{5}\,\cbox{mutedcyan}{6}
			%%%
			%%%	
			\quad \triangleq \quad \text{a non-bridging multi-index}
			%%%
	\end{equs}

	We then define\footnote{Note that, in line with our convention in~\eqref{e:multivariate_covariance}, it is important that we write~$i_{2j} - i_{2j-1}$ here (which corres\-ponds to~$g_j = \Delta^M_{\w_{2j-1},\w_{2j}}$), and not~$i_{2j-1} - i_{2j}$ (which would correspond to~$\Delta^M_{\w_{2j},\w_{2j-1}} = \del[0]{\Delta^M_{\w_{2j-1},\w_{2j}}}^{\top}$).} 
	\begin{equs}
		\CI_N^{M;0,1;\texttt{b}}(P_\star,\w)
		& \coloneqq 
		\frac{1}{N^{n}}
		\sum_{\substack{i_{1:2n} \in \bar{\triangle}, \\ i_{1:2n} \, \text{\tiny is bridging}}}
		\fw(i_{1:2n})
		\prod_{j=1}^{n}
		g_j\del[0]{i_{2j} - i_{2j-1}}, \\
		%%%
		\CI_N^{M;0,1; \texttt{n-b}}(P_\star,\w)
		& \coloneqq 
		\CI_{N}^{M;0,1}(P_\star,\w) - \CI_N^{M;0,1;\texttt{b}}(P_\star,\w)
	\end{equs}
	and divide the analysis into the corresponding cases: 

	\vspace{0.5em}
	\noindent
	$\triangleright$ \textit{Case~1: Bridging multi-indices.}
	Let~$j_\star$ be the minimal~$j \in \llbracket 1,n-1 \rrbracket$ for which~$i_{2j} = i_{2j+1}$.   
	Upon noting that~$\abs[0]{\fw(i_{1:2n})} \leq 1$, we brutally bound~$\CI_N^{s,t;\texttt{b}}(P_\star,\w)$ to get rid of the simplex constraints in the following way: 
	\begin{equs}[][e:touching_bound_1]
		\abs[0]{\CI_N^{M;0,1;\texttt{b}}(P_\star,\w)}
		& \leq
		\frac{1}{N^n} \sum_{\substack{0 \leq i_{1:2n} \leq N-1, \\ i_{2j_\star = i_{2j_\star+1}}}} \prod_{j=1}^n \abs[0]{g_j(i_{2j} - i_{2j-1})} \,.	
	\end{equs}
	Without loss of generality, we now assume that~$j_\star = 1$, otherwise we relabel the indices; this is permitted because, after the previous bound,  the indices are now independent of each other (apart from~$i_{2j_\star}$ and~$i_{2j_\star+1}$).
	Since~$j_\star = 1$, we know that~$i_2 = i_3$ and isolate the corresponding part of the sum in~\eqref{e:touching_bound_1} which involves those indices:
	\begin{equs}
		\sum_{i_1, i_4 = 0}^{N-1} \sum_{i_2 = i_3} \abs[0]{g_1(i_1-i_2)}\abs[0]{g_2(i_3-i_4)}
		\leq 
		\sum_{i_2 = 0}^{N-1} \norm[0]{g_1}_{\ell^1(\Z)} \norm[0]{g_2}_{\ell^1(\Z)}
		=
		N \norm[0]{g_1}_{\ell^1(\Z)} \norm[0]{g_2}_{\ell^1(\Z)}
	\end{equs}
	In combination with~\eqref{e:touching_bound_1}, this leads to the estimate
	\begin{align}
		\abs[0]{\CI_N^{M;0,1;\texttt{b}}(P_\star,\w)}
		& 
		\leq
		\frac{1}{N^{n-1}} \norm[0]{g_1}_{\ell^1(\Z)} \norm[0]{g_2}_{\ell^1(\Z)} \sum_{0 \leq i_{5:2n} \leq N-1} \prod_{j=3}^n \abs[0]{g_j(i_{2j} - i_{2j-1})} \label{e:touching_bound_2} \\[0.5em]
		%%%
		& \leq 
		\frac{1}{N^{n-1}} \norm[0]{g_1}_{\ell^1(\Z)} \norm[0]{g_2}_{\ell^1(\Z)} 
		\sum_{-(N-1) \leq r_{3:n} \leq N-1} \
		\sum_{\substack{0 \leq i_{5:2n} \leq N-1, \\ i_{2j} - i_{2j-1} = r_j}} \prod_{j=3}^n \abs[0]{g_j(r_j)} \notag \\[0.5em]
		%%%
		& \lesssim \frac{1}{N} \norm[0]{g_1}_{\ell^1(\Z)} \norm[0]{g_2}_{\ell^1(\Z)}  \prod_{j=3}^n \sum_{r=-(N-1)}^{N-1} \abs[0]{g_j(r)}
		\leq
		\frac{1}{N} \prod_{j=1}^n \norm[0]{g_j}_{\ell^1(\Z)} \notag \,.
	\end{align}
	where we have used that, for all~$r_{3:n} \in \Z^{n-2}$, we have the estimate 
	\begin{equation*}
		\sum_{\substack{0 \leq i_{5:2n} \leq N-1, \\ i_{2j} - i_{2j-1} = r_j}} 1 
		\lesssim N^{n-2} \,.
	\end{equation*} 
	From~\eqref{e:touching_bound_2}, we find that
	\begin{equation} \label{e:touching_vanishes}
		\lim_{N \to \infty}	\CI_N^{M;0,1;\texttt{b}}(P_\star,\w)
		=
		0 \,,
	\end{equation}
	i.e. bridging multi-indices do not contribute asymptotically.

	\vspace{0.5em}
	\noindent
	$\triangleright$ \textit{Case~2: Non-bridging multi-indices.}
	Since any index~$i_{1:2n} \in \bar{\triangle}$ whose components form blocks of size~$3$ and higher has an asymptotically vanishing contribution (see Proposition~\ref{p:higher_order_diagonals}), we need only consider the situation when there blocks of sizes~$1$ or~$2$. 
	If, additionally, the multi-index~$i_{1:2n}$ is non-bridging, the summation constraint becomes
	\begin{equation*}
		0 \leq u_1 \leq v_1 < u_2 \leq v_2 < \ldots < u_n \leq v_n \leq N-1, 
		\quad 
		\del[1]{u_j \coloneqq i_{2j-1}, \ v_j \coloneqq i_{2j}, \ j \in \llbracket 1,n \rrbracket}  
	\end{equation*}
	with corresponding index weight
	\begin{equation*}
		\fw(i_{1:2n}) = \prod_{j=1}^n \fw(u_j,v_j), \quad \fw(u_j,v_j) \coloneqq \1_{u_j < v_j} + \frac{1}{2} \1_{u_j = v_j} \,.
	\end{equation*}
	Accordingly, for~$\fw(r) \coloneqq \1_{r \neq 0} + \frac{1}{2} \1_{r = 0}$ we find
	\begin{equs}[][e:non-touching:1]
		\CI_N^{M;0,1; \texttt{n-b}}(P_\star,\w)
		& =
		\frac{1}{N^n} \sum_{u_1 \leq v_1 < \ldots < u_n \leq v_n} \prod_{j=1}^n \fw(u_j,v_j) g_j(v_j - u_j) \\[0.5em]
		%%%
		& = 
		\frac{1}{N^n} \sum_{0 \leq r_{1:n} \leq N-1} \, 
		\sum_{\substack{u_1 \leq v_1 < \ldots < u_n \leq v_n, \\ r_j = v_j - u_j}} \prod_{j=1}^n \fw(r_j) g_j(r_j) \\[0.5em]
		%%%
		& = 
		\sum_{0 \leq r_{1:n} \leq N-1} \, 
		\frac{\# U_N^{(n)}(0,1,r_{1:n})}{N^n} 
		\prod_{j=1}^n \fw(r_j) g_j(r_j) \,.
	\end{equs}
	where~$U_N^{(n)}(0,1,r_{1:n})$ is the set we have introduced in~\eqref{e:UNstr}.  
	By Lemma~\ref{l:simplex_nt}, we know that
	\begin{equation*}
		\frac{\# U_N^{(n)}(0,1,r_{1:n})}{N^n} \lesssim 1, 
		\quad 
		\lim_{N \to \infty} \frac{\# U_N^{(n)}(0,1,r_{1:n})}{N^n}  = \frac{1}{n!}
	\end{equation*}
	and, by the standing assumptions in this article, that  
	\begin{equation*}
		\prod_{j=1}^n \sum_{r \in \N_0} \abs[0]{\fw(r) g_j(r)} 
		\leq 
		\prod_{j=1}^n \norm[0]{g_j}_{\ell^1(\Z)} 
		< \infty \,.
	\end{equation*}
	Hence, by dominated convergence, eq.~\eqref{e:non-touching:1}, and the definition of~$g_j$, we find that 
	\begin{equs}
		\thinspace 
		& 
		\lim_{N \to \infty}
		\CI_N^{M;0,1; \texttt{n-b}}(P_\star,\w) \\
		= \ & 
		\frac{1}{n!} 
		\prod_{j=1}^n
		\sum_{r \in \N_0} \, 
		\fw(r) g_j(r)
		=
		\frac{1}{n!} 
		\prod_{j=1}^n
		\sum_{r \in \N_0} \, 
		\del[2]{\1_{r \neq 0} + \frac{1}{2} \1_{r = 0}} \Delta^M_{\w_{2j-1}, \w_{2j}}(r) \\[0.5em]
		%%%
		= \ & 
		\frac{1}{n!} 
		\prod_{j=1}^n
		\frac{1}{2}
		\del[3]{\Delta^M_{\w_{2j-1},\w_{2j}}(0)
			+
			2 \sum_{r \in \N} \Delta^M_{\w_{2j-1}, \w_{2j}}(r) 
		} \\[0.5em]
		%%%
		= \ & 
		\frac{1}{2^n n!} 
		\prod_{j=1}^n
		\del[0]{\Delta^M_{\w_{2j-1}, \w_{2j}}(0) + 2 \Gamma^M_{\w_{2j-1},\w_{2j}}}
		%%%
		= 
		\frac{1}{2^n n!} 
		\prod_{j=1}^n
		\del[0]{\Sigma^M_{\w_{2j-1},\w_{2j}}(0) + 2 \AA^M_{\w_{2j-1}, \w_{2j}}}
	\end{equs}
	which agrees with~\eqref{e:expec_sig_w} (when~$s = 0$ and~$t = 1$).
	The proof is complete.
\end{proof}

\subsubsection{Convergence of moments} \label{s:conv_moments}
Recall that the remaining goal of Section~\ref{s:moment_computations} is to show Theorem~\ref{t:product_case} (which, by Proposition~\ref{p:conv_fdd_stratonovich:equiv}, then implies Theorem~\ref{thm:conv_fdd_stratonovich}). 

The following proposition establishes Theorem~\ref{t:product_case} in the specific case when~$l = 1$, i.e. the situation when there is just one single interval~$[s,t]$. 
Below, we will then see how to reduce the general case~$l > 1$ to this scenario.
\begin{proposition} \label{p:exp_sig}
	Fix a length~$l \in \N$, a word~$\w = \w_{1:l} \in \llbracket 1,m \rrbracket^l$, and let~$\YY_N^M(s,t)$ be given as in~\eqref{e:YNM} resp.~\eqref{e:YY_N_rewriting}. 
	Then, we have
	\begin{equation*}
		\lim_{N \to \infty} \E\sbr[1]{\scal{\YY_N^M(s,t),\w}}
		=
		\begin{cases}
			0 & \quad \text{if} \quad l \ \text{odd} \\
			\scal{\operatorname{ESig}^M_{s,t},\w} & \quad \text{if} \quad l=2n 
		\end{cases}
	\end{equation*}
	where~$\operatorname{ESig}$ is the expected signature of the Stratonovich Brownian motion~$B^M$ as given in~\eqref{e:expec_sig_w}.
\end{proposition}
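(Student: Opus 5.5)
The plan is to assemble the three preceding results: Proposition~\ref{p:higher_order_diagonals}, Proposition~\ref{p:non_ladder_vanishes}, and Proposition~\ref{p:limit_ladder}. The starting point is the discrete representation~\eqref{e:YY_N_rewriting} of $\YY_N^{M;(l)}(s,t)$, which rewrites the iterated integral as a weighted sum over the closed simplex $\bar{\triangle}^{(l)}_{\lfloor Ns\rfloor,\lfloor Nt\rfloor}$. Combined with Corollary~\ref{coro:BM83}, this representation is already the content of Proposition~\ref{p:higher_order_diagonals}. That proposition yields
\begin{equation*}
	\E\sbr[1]{\scal{\YY_N^{M}(s,t),\w}} \asymp \frac{\1_{l=2n}}{N^{n}}\sum_{p}\sum_{B_{1:p},\,\abs[0]{B_\ell}\in\cbr[0]{1,2}}\ \sum_{i_{1:2n}\in\bar{\triangle}(B_{1:p})}\fw(i_{1:2n})\sum_{P\in\CM(2n)}\prod_{j=1}^n\Delta^M_{\w_{P_{2j-1}},\w_{P_{2j}}}\del[1]{i_{P_{2j}}-i_{P_{2j-1}}}.
\end{equation*}

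The odd case then follows immediately: for $l$ odd the right-hand side vanishes because of the indicator, so the limit is $0$. This matches~\eqref{e:expected_signature}, whose odd tensor components are zero.

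For $l=2n$, I would first reinsert the blocks of size $\geq 3$. Their contribution is asymptotically negligible; this is exactly what~\eqref{e:higher_diag_new} shows in the proof of Proposition~\ref{p:higher_order_diagonals}. After reinsertion, the sum over all block configurations recombines into a sum over the whole simplex, and the expression becomes
\begin{equation*}
	\E\sbr[1]{\scal{\YY_N^{M}(s,t),\w}} \asymp \sum_{P\in\CM(2n)}\CI^{M;s,t}_N(P,\w).
\end{equation*}
This step uses that $\fw$ is defined blockwise and that the simplex is the disjoint union of the block simplices. Since $\CM(2n)$ is finite, the limit can be taken term by term.

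Proposition~\ref{p:non_ladder_vanishes} then removes every $P\neq P_\star$. For the remaining ladder pairing, Proposition~\ref{p:limit_ladder} gives $\CI^{M;s,t}_N(P_\star,\w)\to\scal{\ESig^M_{s,t},\w}$, which is the claimed value~\eqref{e:expec_sig_w}.

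I expect the main subtlety to be bookkeeping rather than analysis. The definition of $\CI_N^{M;s,t}$ uses the full closed simplex, whereas Proposition~\ref{p:higher_order_diagonals} restricts to blocks of sizes $1$ and $2$. Switching between the two requires the estimate $\Lambda_N(P,B_{1:p},\w)\to 0$, summed over the finitely many configurations with $a_\star\geq 3$, and this is already established in that proof. A second point to check is that the $\asymp$ relation from Corollary~\ref{coro:BM83} is legitimate on the shifted box $[\lfloor Ns\rfloor,\lfloor Nt\rfloor)$; Remark~\ref{rmk:breuer_major} provides this.
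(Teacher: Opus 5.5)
Your proposal is correct and follows the paper's proof. You reduce via the Breuer--Major asymptotic Wick formula to $\sum_{P\in\CM(2n)}\CI^{M;s,t}_N(P,\w)$, which settles odd $l$; you discard $P\neq P_\star$ by Proposition~\ref{p:non_ladder_vanishes}; and you identify the ladder limit by Proposition~\ref{p:limit_ladder}. The only difference is that the paper applies Corollary~\ref{coro:BM83} directly, whereas you first restrict to blocks of size $1,2$ and then reinsert the larger blocks via~\eqref{e:higher_diag_new}; this detour is harmless and in fact makes the blockwise handling of the weight $\fw$ explicit, which the paper's display~\eqref{e:conv_exp_sig_0} omits.
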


\begin{proof}
	The proof is a combination of the statements in the previous subsections. 
	First, we note that, by Corollary~\ref{coro:BM83} (with~$A_{l,N}(s,t) = \bar{\triangle}^{(l)}_{\lfloor Ns \rfloor, \lfloor Nt \rfloor}$, cf. the comments at the end of Remark~\ref{rmk:breuer_major}), we have  
	\begin{align}
		\thinspace &
		\E\sbr[1]{\scal{\YY_N^{M}(s,t),\w}}  \label{e:conv_exp_sig_0}
		= 
		\frac{1}{N^{\nicefrac{l}{2}}}
		\sum_{i_{1:l} \in \bar{\triangle}^{(l)}_{\lfloor Ns \rfloor, \lfloor Nt \rfloor}}
		\E\sbr[4]{\prod_{r=1}^{l} f_{\w_r}^{M}(X^{(\w_r)}_{i_r})} \\
		%%%
		\asymp \ & 
		\frac{\1_{l = 2n}}{N^{n}}
		\sum_{i_{1:2n} \in \bar{\triangle}^{(2n)}_{\lfloor Ns \rfloor, \lfloor Nt \rfloor}}
		\sum_{P\in \CM(2n)}
		\prod_{j=1}^{n}
		\E\sbr[1]{
			f^{M}_{\w_{P(2j-1)}}\del[1]{X^{(\w_{P(2j-1)})}_{i_{P(2j-1)}}}\,
			f^{M}_{\w_{P(2j)}}\del[1]{X^{(\w_{P(2j)})}_{i_{P(2j)}}}} \notag \\
		%%%
		= \ & 
		\frac{\1_{l = 2n}}{N^{n}}
		\sum_{i_{1:2n} \in \bar{\triangle}^{(2n)}_{\lfloor Ns \rfloor, \lfloor Nt \rfloor}}
		\sum_{P\in \CM(2n)}
		\prod_{j=1}^{n}
		\Delta^M_{\w_{P(2j-1)},\w_{P(2j)}}\!\del[1]{i_{P(2j)}-i_{P(2j-1)}}  \,. \notag
	\end{align}
	which already settles the case if~$l$ is odd. 
	
	It remains to deal with the case when~$l$ is even, i.e.~$l = 2n$ for some~$n \in \N$. 
	In that case, we rewrite expression in~\eqref{e:conv_exp_sig_0} with~$\CI_N^{s,t}(P,\w)$ as given in~\eqref{e:def:INst} and apply Proposition~\ref{p:non_ladder_vanishes} to conclude that
	\begin{equation} \label{e:conv_exp_sig_1}
		\E\sbr[1]{\scal{\YY_N^{M}(s,t),\w}} 
		=
		\sum_{P\in \CM(2n)} \CI_N^{M;s,t}(P,\w) 
		\asymp
		\CI_N^{M;s,t}(P_\star,\w)  
	\end{equation}
	as~$N \to \infty$, where~$P_\star$ is the ladder partition recalled in the afore-mentioned proposition. 

	Finally, Proposition~\ref{p:limit_ladder}, combined with~\eqref{e:conv_exp_sig_1} shows that 
	\begin{equation} \label{e:conv_exp_sig_2}
		\lim_{N \to \infty} \E\sbr[1]{\scal{\YY_N^{M}(s,t),\w}} 
		= \lim_{N \to \infty}
		\CI_N^{M;s,t}(P_\star,\w)  
		=
		\scal{\ESig^M_{s,t},\w} 
	\end{equation}
	as claimed. The proof is complete.
\end{proof}

The following lemma is the analogue to Proposition~\ref{p:non_ladder_vanishes} in case of multiple (disjoint) simplices. 
It will allow us to reduce the genuine product case ($l > 1$) to the case of a single interval ($l = 1$) that we have just dealt with. 

\begin{lemma}[Cross-simplex pairings vanish] \label{l:cross_simplex_pairings}
	Let~$l \in \N$ and consider the vector~$k_{1:l} \in \N^l$.
	Further, for~$\ell \in \llbracket 1,l \rrbracket$, consider the words~$\w_\ell \in \llbracket 1,m \rrbracket^{k_\ell}$, i.e.~$\abs[0]{\w_\ell} = k_\ell$, as well as the intervals~$[s_\ell, t_\ell) \subseteq [0,1]$ such that~$t_\ell < s_{\ell+1}$ for any index $\ell \in \llbracket 1,l-1 \rrbracket$.

	Assume that~$k_{[1:l]}$ is even, set~$2n \coloneqq k_{[1:l]}$, and consider the simplices
	\begin{equation}\label{e:cross_simplex_def}
		\triangle_\ell \coloneqq \triangle^{(k_\ell)}_{\lfloor N s_\ell \rfloor, \lfloor N t_\ell \rfloor}, \quad \ell = \llbracket 1,l \rrbracket, \quad
		\triangle \coloneqq \bigcup_{\ell=1}^l \triangle_\ell \,, 
	\end{equation}
	and label the elements in~$\triangle$ by~$i_{1:2n}$.
	Further assume that the a matching~$P \in \CM(2n)$ has \emph{cross-simplicial connections}, i.e. there there is at least one pair~$\{a,b\} \in P$ such that~$a \leq k_{\ell}$ and~$b \geq k_{\ell}+ 1$ for some index~$i \in \llbracket 1,l \rrbracket$.
	Then, for any~$M \in \N$ and 
	\begin{equation*}
		\CI^{M;\triangle}_N(P,\w) \coloneqq \frac{1}{N^n} \sum_{i_{1:2n} \in \triangle} \prod_{j=1}^{n} \Delta^{M}_{\w_{p_{2j-1}},\w_{p_{2j}}}(i_{p_{2j}} - i_{p_{2j}})  \,,
	\end{equation*}
	we have
	\begin{equation*}
		\lim_{N \to \infty} \CI^{M;\triangle}_N(P,\w) = 0 \,.
	\end{equation*}
	In other words: Pairings with cross-simplicial connections are asymptotically negligible.
	
	We denote by~$\CM_{\times}(2n)$ all the pairings that do have cross-simplicial connections, and by $\CM_-(2n) \coloneqq \CM(2n) \setminus \CM_{\times}(2n)$ those that do not.
\end{lemma}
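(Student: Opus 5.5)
The idea is to mimic the bridging estimate in the proof of Proposition~\ref{p:limit_ladder}: a single cross-simplicial pair forces the two paired indices to lie in disjoint index ranges separated by a gap of order $N$. That costs one power of $N$ relative to the trivial count, or rather yields an $o(1)$ factor. Concretely, I would first bound $\abs[0]{\CI^{M;\triangle}_N(P,\w)}$ by dropping all simplex orderings \emph{within} each $\triangle_\ell$ but keeping the constraint that indices attached to the $\ell$-th word lie in $I_\ell^N \coloneqq \llbracket \lfloor Ns_\ell\rfloor,\lfloor Nt_\ell\rfloor-1\rrbracket$. The weights $\fw \le 1$ are dropped as well. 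This gives
\begin{equation*}
	\abs[0]{\CI^{M;\triangle}_N(P,\w)} \le \frac{1}{N^n}\prod_{\{a,b\}\in P}\ \sum_{i_a\in I^N_{\ell(a)},\, i_b\in I^N_{\ell(b)}} \abs[0]{g_{a,b}(i_b-i_a)}, \qquad g_{a,b}\coloneqq \Delta^M_{\w_a,\w_b},
\end{equation*}
where $\ell(a)$ denotes the simplex containing position $a$. The factorisation is legitimate because, after removing the orderings, each pair involves its own two free indices only.

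Next I would estimate each factor. For an internal pair ($\ell(a)=\ell(b)$), the factor is at most $N\norm[0]{g_{a,b}}_{\ell^1(\Z)}$, as in Lemma~\ref{l:tree_convolution}. Here $g_{a,b}\in\ell^1(\Z)$ by Remark~\ref{rmk:main} (projection), i.e.\ $\Delta^M\in\ell^1(\Z)$. For a cross pair with $\ell(a)<\ell(b)$, we have $t_{\ell(a)}<s_{\ell(b)}$, so $i_b-i_a \ge \lfloor N s_{\ell(b)}\rfloor-\lfloor Nt_{\ell(a)}\rfloor+1 \ge \delta N$ for some $\delta>0$ and $N$ large. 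Hence the factor is at most
\begin{equation*}
	N\sum_{\abs[0]{r}\ge \delta N}\abs[0]{g_{a,b}(r)} = o(N),
\end{equation*}
since $\ell^1$ tails vanish. Multiplying the $n$ factors gives $\abs[0]{\CI^{M;\triangle}_N(P,\w)}\le N^{-n}\cdot N^{n}\cdot o(1)\cdot\prod\norm[0]{g}_{\ell^1}\to0$, because at least one cross pair exists by assumption.

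The main obstacle I expect is bookkeeping rather than analysis: the definition of cross-simplicial connection in the statement is phrased loosely via $k_\ell$. I would restate it through the map $\ell(\cdot)$ sending a global position to its simplex, using the cumulative sums $k_{[1:\ell]}$, and check that the indices of distinct simplices indeed live in the disjoint, $\delta N$-separated ranges $I^N_\ell$. Since the intervals are ordered and closed-disjoint (Remark~\ref{rmk:ordering_intervals}), this is immediate. If one prefers not to use the gap, Lemma~\ref{lem:elemantary_sum_covariances}-type reasoning would also suffice: a cross factor is $\le \sum_{i,j\le N}\abs[0]{g(i-j)}=o(N^2)$ only, which is too weak. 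So the separation gap is genuinely what is used, and I would rely on it.
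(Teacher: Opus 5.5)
Your proof is correct, and it takes a more elementary route than the paper's.

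\textbf{The paper's route.} The paper only sketches the argument. It passes to the gap variables $v_\ell=i_\ell-i_{\ell-1}$ of the globally ordered multi-index and observes that a cross-simplicial pair forces some gap to be at least $\eta N$, with $\eta=s_{u+1}-t_u>0$. It then reruns the $J^{\mathcal{A}}_N(\eta)$ part of the proof of Proposition~\ref{p:non_ladder_vanishes}: the unimodular change of variables $\Phi$, the fibre count, and the $\ell^1$-tail bound.

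\textbf{Your route.} You discard the intra-simplex orderings and the weights, so the domain becomes a product of the boxes $I^N_{\ell(a)}$. Each index then occurs in exactly one kernel, so the sum factorises over the pairs of $P$. The estimate reduces to $N\|g\|_{\ell^1}$ per internal pair and $N\sum_{|r|\ge\delta N}|g(r)|=o(N)$ per cross pair.

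\textbf{Why the relaxation loses nothing here.} In Proposition~\ref{p:non_ladder_vanishes} it is the ordering that kills non-ladder pairings; in a box, every pairing contributes at order one. For cross pairs, only the separation between the simplices matters, and that separation survives the relaxation.

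\textbf{What each approach buys.} The paper's route buys uniformity with the non-ladder proof. Yours buys a complete short proof that makes transparent that only inter-simplex separation and $\ell^1$ tails are used. It also buys robustness. For a cross pair with $t_{\ell(a)}\le s_{\ell(b)}$, the number of $(i_a,i_b)\in I^N_{\ell(a)}\times I^N_{\ell(b)}$ with $i_b-i_a=r$ is at most $\min(r,N)$. Hence the cross factor is at most $\sum_{r\ge1}\min(r,N)|g(r)|$, which is $o(N)$ by dominated convergence even when $t_\ell=s_{\ell+1}$. This matters because Lemma~\ref{l:no_overlaps} actually produces abutting half-open intervals. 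In that case the paper's argument, which relies on $\eta>0$, does not apply verbatim.
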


The proof of the previous lemma is deferred to Appendix~\ref{s:technical_proofs}.
Finally, we are ready to give the proof of Theorem~\ref{thm:conv_fdd_stratonovich}.

\begin{proof}[of Theorem~\ref{thm:conv_fdd_stratonovich}]
	By Proposition~\ref{p:conv_fdd_stratonovich:equiv}, it suffices to show Theorem~\ref{t:product_case} which, by Proposition~\ref{p:exp_sig}, holds for~$l = 1$.
	
	In light of Lemma~\ref{l:no_overlaps} and Remark~\ref{rmk:ordering_intervals}, recall that we may assume without loss of generality that~$t_\ell < s_{\ell+1}$ for any index $\ell \in \llbracket 1,l-1 \rrbracket$, that is: The intervals are disjoint and ordered but there may be gaps between them. 
	
	Let~$\triangle$,~$\w = \w_1 \ldots \w_l$, and~$\CI_N^\triangle(P,\w)$ as in Lemma~\ref{l:cross_simplex_pairings} and recall that~$k_\ell = \abs[0]{\w_\ell}$ denotes the length of the word~$\w_\ell$. Then, for~$k \coloneqq k_{[1:l]}$, we clearly have~$\abs[0]{\w} = k$.
	By Corollary~\ref{coro:BM83} (with~$A_{l,N} = \triangle$ given in~\eqref{e:cross_simplex_def}), we then find\footnote{Note that~$\scal{\YY^M_N(s_i,t_i), \w_i}$ contains~$f^M_{\w_{i,1}}, \ldots, f^M_{\w_{i,k_i}}$ for~$\w_i = \w_{i,1} \ldots \w_{i,k_i}$, i.e.~$k_i$ instances of the component functions of~$\vec{f}^M$. Accordingly, the product over~$i=1$ to~$l$ contains~$k = k_{[1:l]}$ such factors, which is why the indicator~$\1_{k=2n}$ appears, and not~$\1_{l = 2n}$.} 
	\begin{align}
		\E\sbr[4]{\prod_{i=1}^l \scal{\YY^M_N(s_i,t_i), \w_i}}
		& \asymp
		\frac{\1_{k=2n}}{N^n} \sum_{P \in \CM(2n)} \sum_{i_{1:2n} \in \triangle} \prod_{j=1}^n \E\sbr[1]{f^M_{\w_{p_{2j-1}}}\del[1]{X^{(\w_{p_{2j-1}})}_{r_{p_{2j-1}}}} f^M_{\w_{p_{2j}}}\del[1]{X^{(\w_{p_{2j}})}_{r_{p_{2j}}}}} \notag \\[0.5em]
		& =
		\sum_{P \in \CM(2n)} \CI^{M;\triangle}_N(P,\w) \1_{k=2n} 
		%
		%& 
		\asymp
		\sum_{P \in \CM_-(2n)} \CI^{M;\triangle}_N(P,\w) \1_{k=2n} \,.
		\label{e:prod_case_1}
	\end{align}
	as~$N \to \infty$, where the last step uses the conclusion of Lemma~\ref{l:cross_simplex_pairings} to restrict the sum to pairings without cross-simplicial connections.
	
	Note that this already settles the case if~$k$ is odd. Indeed: If~$k$ is odd, then there exists at least one~$\ell \in \llbracket 1,l \rrbracket$ such that~$k_\ell = \abs[0]{\w_\ell}$ is odd. Accordingly, since~$\scal{\ESig^M_{s_\ell,t_\ell},\w_{\ell}} = 0$ by Lemma~\ref{l:fawcett}, by Proposition~\ref{p:exp_sig} we then find  
	\begin{equation} \label{e:product_odd_case}
		\lim_{N \to \infty}\E\sbr[4]{\prod_{i=1}^l \scal{\YY^M_N(s_i,t_i), \w_i}}
		= 0 
		= \prod_{i=1}^l \scal{\ESig^M_{s_i,t_i},\w_i} \,.
	\end{equation}
	
	It remains to prove the claim when~$k$ is even, i.e.~$k = 2n$ for some~$n \in \N$. 
	Since we are looking at \emph{perfect} matchings, note that~$P \in \CM_-(2n)$ automatically implies that~$k_\ell = 2n_\ell$ for any~$\ell \in \llbracket 1,l \rrbracket$, for otherwise there would be at least one pairing connecting different simplices.
	In particular, we can \emph{uniquely} decompose~$P \in \CM_-(2n)$ as 
	\begin{equation*}
		P = \bigsqcup_{\ell=1}^{2n} P^{\ell}, \quad P^\ell \in \CM(2n_\ell) \,,
	\end{equation*}
	where~$P^\ell = \cbr[1]{\{p^\ell_{2j-1},p^\ell_{2j}\}}_{j=1}^{n_\ell}$ pairs the variables cor\-res\-pon\-ding to the simplex~$\triangle_\ell$, i.e. $p^\ell_{2j-1}, p^\ell_{2j} \in \llbracket 1,2n_\ell \rrbracket$ for any~$j \in \llbracket 1,n_\ell \rrbracket$ and $\ell \in \llbracket 1,l \rrbracket$.
	As a consequence, we have
	\begin{equation*}
		\CI^{M;\triangle}_N(P,\w) 
		= 
		\prod_{\ell=1}^l \CI^{M;\triangle_\ell}_N(P^\ell,\w_\ell)
	\end{equation*}
	and then 
	\begin{equation} \label{e:prod_case_2}
		\sum_{P \in \CM_-(2n)} \CI^{M;\triangle}_N(P,\w) 
		= 
		\sum_{\substack{P^1, \ldots, P^{2n}, \\ P^\ell \in \CM(2n_\ell)}} \prod_{\ell=1}^l \CI^{M;\triangle_\ell}_N(P^\ell,\w_\ell)
		=
		\prod_{\ell=1}^l \sum_{P \in \CM(2n_\ell)}  \CI^{M;\triangle_\ell}_N(P,\w_\ell) 
	\end{equation}
	Note that, by definition, $\CI^{M;\triangle_\ell}_N(P,\w_\ell) = \CI^{M;s_\ell,t_\ell}_N(P,\w_\ell)$ where the latter is given in~\eqref{e:def:INst}.
	The combination of~\eqref{e:prod_case_1} and~\eqref{e:prod_case_2} then gives, for~$k = 2n$, that  
	\begin{equation} \label{e:prod_case_3}
		\E\sbr[4]{\prod_{i=1}^l \scal{\YY^M_N(s_i,t_i), \w_i}}
		\asymp
		\prod_{\ell=1}^l \sum_{P \in \CM(2n_\ell)}  \CI^{M;s_\ell,t_\ell}_N(P,\w_\ell)  
		= 
		\prod_{\ell=1}^l \E\sbr[1]{\scal{\YY_N^{M}(s_\ell,t_\ell),\w_\ell}}
	\end{equation}
	where the last equality follows from the definitions.
	Recall that~$\abs[0]{\w_\ell} = k_\ell = 2n_\ell$ for any~$	\ell \in \llbracket 1,l \rrbracket$; therefore, the claim now follows from Proposition~\ref{p:exp_sig} in the case where~$k$ is even. 
	The proof is complete.	
\end{proof}

\subsection{Removing the chaos cut-off} \label{s:removing_chaos_cutoff}

In a final step, we need to remove the chaos cut-off on the limiting vectors~$\CD^M$ and~$\CB^M$, corresponding to the assumptions~\ref{ass:D2} and~\ref{ass:B2} in Proposition~\ref{prop:abstract_convergence_statement}, respectively. 

\medskip
The following lemma deals with assumption~\ref{ass:D2}.

\begin{lemma} \label{lem:conv_DM}
	Recall that~$\CD^M$ has been introduced in~\eqref{e:def_lim_DM} and defined~$\CD$ in the same way, but with~$\Delta^M(0)$ replaced by~$\Delta(0)$, such that~$\CD^M$ and~$\CD$ are deterministic (tensor-valued) vectors.
	Then, we have~$\CD^M \to \CD$ as~$M \to \infty$.
\end{lemma}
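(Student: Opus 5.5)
Since $\CD^M$ and $\CD$ are deterministic vectors in the finite-dimensional space $\del[1]{\tilde{T}^{(2)}(\R^m)}^l$, whose only nonzero entries are $(t_i-s_i)\Delta^M(0)$ resp. $(t_i-s_i)\Delta(0)$, it suffices to show $\Delta^M(0)\to\Delta(0)$ entrywise, i.e. $\Delta^M_{k,\ell}(0)\to\Delta_{k,\ell}(0)$ for each fixed $k,\ell\in\llbracket 1,m\rrbracket$. The factors $(t_i-s_i)$ are fixed and finitely many, so nothing else needs to be controlled.

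By definition, $\Delta^M_{k,\ell}(0)=\E\sbr[0]{f_k^M(X^{(k)}_1)f_\ell^M(X^{(\ell)}_1)}$. Since $f_k^M$ is the projection of $f_k$ onto chaoses of order at most $M$, and $X^{(k)}_1\sim\gamma$, we have $f_k^M(X^{(k)}_1)\to f_k(X^{(k)}_1)$ in $L^2(\P)$. The same holds for $\ell$. Write
\[
f_k^Mf_\ell^M-f_kf_\ell=(f_k^M-f_k)f_\ell^M+f_k(f_\ell^M-f_\ell),
\]
with all functions evaluated at the respective Gaussians. Cauchy--Schwarz then gives
\[
\abs[0]{\Delta^M_{k,\ell}(0)-\Delta_{k,\ell}(0)}\le\norm[0]{f_k^{>M}}_{L^2(\gamma)}\norm[0]{f_\ell}_{L^2(\gamma)}+\norm[0]{f_k}_{L^2(\gamma)}\norm[0]{f_\ell^{>M}}_{L^2(\gamma)},
\]
using $\norm[0]{f^M_\ell}_{L^2}\le\norm[0]{f_\ell}_{L^2}$. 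Both terms vanish as $M\to\infty$ because chaos tails decay in $L^2(\gamma)$.

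Alternatively, one can use the explicit series $\Delta_{k,\ell}(0)=\sum_{q\ge d}q!\,c^{(k)}_qc^{(\ell)}_q\rho_{k,\ell}(0)^q$, which converges absolutely since $\abs[0]{\rho_{k,\ell}(0)}\le1$ and $\sum_q q!\abs[0]{c^{(k)}_qc^{(\ell)}_q}<\infty$ by Cauchy--Schwarz. In this form, $\Delta^M$ is simply its partial sum.

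There is no real obstacle: unlike the reduction step, only $L^2$ quantities at a single time appear, so no control of $L^p$ chaos tails is needed.
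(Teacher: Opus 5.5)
Your proof is correct. Your main argument takes a slightly different route from the paper's, and the paper's route is essentially your ``alternative'' paragraph.

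You split $f_k^Mf_\ell^M-f_kf_\ell$ bilinearly and apply Cauchy--Schwarz in $L^2(\P)$. This bounds the difference by a tail norm times a full norm.

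The paper works directly with the Hermite series. By orthogonality of Wiener chaoses, only the terms $q>M$ survive in $\Delta_{k,\ell}(u)-\Delta^M_{k,\ell}(u)$. Cauchy--Schwarz on the coefficient sequences, together with $\abs[0]{\rho_{k,\ell}(u)}\le 1$ and $q\ge d$, then gives, for every $u\in\Z$,
\[
\abs[0]{\Delta^M_{k,\ell}(u)-\Delta_{k,\ell}(u)}\le \norm[0]{f_k^{>M}}_{L^2(\gamma)}\norm[0]{f_\ell^{>M}}_{L^2(\gamma)}\abs[0]{\rho_{k,\ell}(u)}^d .
\]
The same orthogonality shows that the first term of your split, $\E\sbr[0]{f_k^{>M}(X^{(k)}_1)f_\ell^M(X^{(\ell)}_1)}$, is in fact identically zero. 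Your bound is therefore valid but not sharp.

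For $u=0$, both estimates suffice, and yours is the more elementary. What the coefficient-level estimate buys is the summable weight $\abs[0]{\rho_{k,\ell}(u)}^d$. The paper reuses exactly this bound in Corollary~\ref{coro:convergence_AA}, summing over $u\ge 1$ to get $\AA^M\to\AA$. Your $L^2(\P)$ bound does not decay in $u$, so it would not carry over to that step.
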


\begin{proof}
	By definition of~$\CD^M$ and~$\CD$, the claim is equivalent to showing that~$\Delta^M(0) \to \Delta(0)$ as~$M \to \infty$. 
	However, this is immediate: For each~$k, \ell \in \llbracket 1,m \rrbracket$ and each~$M \geq d$, by Cauchy--Schwarz and the fact that~$\abs[0]{\rho_{k,\ell}(u)} \leq 1$, we have 
	\begin{equs}[][e:estimate_conv_DM]
		\abs[0]{\Delta^M_{k,\ell}(u) - \Delta_{k,\ell}(u)} 
		& =
		\abs[3]{\sum_{q \geq M} q! c_q^{(k)} c_q^{(\ell)} \rho_{k,\ell}(u)^q}
		\leq 
		\sum_{q \geq M} q! \del[1]{c_q^{(k)}}^2 \sum_{q \geq M} q! \del[1]{c_q^{(\ell)}}^2 \abs[0]{\rho_{k,\ell}(u)}^d \\[0.5em]
		& =
		\norm[0]{f_k^M}_{L^2(\gamma)} \norm[0]{f_\ell^M}_{L^2(\gamma)} \abs[0]{\rho_{k,\ell}(u)}^d \,.
	\end{equs}
	for any~$u \in \Z$, i.e. in particular for~$u = 0$.
	The quantity on the RHS goes to~$0$ as~$M \to \infty$ because~$f_k, f_\ell \in L^2(\gamma)$ and chaos tails convergence in~$L^2(\gamma)$.
\end{proof}

We record the following corollary.
\begin{corollary} \label{coro:convergence_AA}
	For the deterministic matrix~$\AA$ defined as~\eqref{e:AA} and~~$\AA^M$ defined in the same way, but with~$\vec{f}$ replaced by the projection~$\vec{f}^M$, we have~$\AA^M \to \AA$ as~$M \to \infty$.
\end{corollary}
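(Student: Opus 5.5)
The plan is to reduce the claim to the entrywise convergence $\Gamma^M \to \Gamma$ as $M \to \infty$. Since $\AA^M = \ASym(\Gamma^M)$ and $\AA = \ASym(\Gamma)$, and $\ASym$ is a linear map on finite-dimensional matrices, this reduction is immediate.

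Recall $\Gamma^M = \sum_{u \geq 1} \Delta^M(u)$ and $\Gamma = \sum_{u\geq 1}\Delta(u)$. For fixed $k,\ell \in \llbracket 1,m \rrbracket$ we then have
\begin{equation*}
	\abs[0]{\Gamma^M_{k,\ell} - \Gamma_{k,\ell}} \leq \sum_{u=1}^\infty \abs[0]{\Delta^M_{k,\ell}(u) - \Delta_{k,\ell}(u)} .
\end{equation*}
To control each summand we use the estimate \eqref{e:estimate_conv_DM} from the proof of Lemma~\ref{lem:conv_DM}. That estimate holds uniformly for every $u \in \Z$. Applying Cauchy--Schwarz to the Hermite coefficients, together with $\abs[0]{\rho_{k,\ell}(u)}^q \leq \abs[0]{\rho_{k,\ell}(u)}^d$ for $q \geq d$ (this uses $\abs[0]{\rho_{k,\ell}}\leq 1$), it reads
\begin{equation*}
	\abs[0]{\Delta^M_{k,\ell}(u) - \Delta_{k,\ell}(u)} \leq \norm[0]{f_k^{>M}}_{L^2(\gamma)} \norm[0]{f_\ell^{>M}}_{L^2(\gamma)} \abs[0]{\rho_{k,\ell}(u)}^d .
\end{equation*}
Here we should be careful to write the chaos tails $f^{>M}$, which the displayed estimate labels loosely. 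The bound factorizes into an $M$-dependent prefactor times a $u$-dependent term, so summing over $u \geq 1$ gives
\begin{equation*}
	\abs[0]{\Gamma^M_{k,\ell} - \Gamma_{k,\ell}} \leq \norm[0]{f_k^{>M}}_{L^2(\gamma)} \norm[0]{f_\ell^{>M}}_{L^2(\gamma)} \norm[0]{\rho_{k,\ell}}_{\ell^d(\Z)}^d .
\end{equation*}
The right-hand side is finite by assumption~\ref{thm:main:2}, and it tends to $0$ as $M\to\infty$ because chaos tails of $f_k,f_\ell \in L^2(\gamma)$ vanish in $L^2(\gamma)$.

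The only point requiring care is that the bound be summable in $u$ uniformly in $M$. This is exactly what the factorization above provides, so there is no real obstacle. Finally, since the entrywise convergence of $\Gamma^M$ to $\Gamma$ holds for all of the finitely many pairs $(k,\ell)$, it also holds in any matrix norm, and applying $\ASym$ yields $\AA^M \to \AA$.
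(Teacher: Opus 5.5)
Your proof is correct and is essentially the paper's argument. Both apply the uniform-in-$u$ tail bound \eqref{e:estimate_conv_DM} and sum it against $\norm[0]{\rho_{k,\ell}}_{\ell^d(\Z)}^d$; the only cosmetic difference is that the paper writes $\AA_{k,\ell}$ directly as $\frac{1}{2}\sum_{u\geq 1}(\Delta_{k,\ell}(u)-\Delta_{k,\ell}(-u))$, whereas you pass through $\Gamma^M\to\Gamma$ and the linearity of $\ASym$. Your remark that the prefactor should be the chaos tails $\norm[0]{f_k^{>M}}_{L^2(\gamma)}\norm[0]{f_\ell^{>M}}_{L^2(\gamma)}$ (Cauchy--Schwarz with square roots, over $q>M$) correctly fixes the notational slip in the paper's display.
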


\begin{proof}
	Since~$\Delta_{k,\ell}(u)^{\top} = \Delta_{\ell,k}(u) = \Delta_{k,\ell}(-u)$ and analogously for~$\Delta^M$, by~\eqref{e:estimate_conv_DM} we have 
	\begin{equs}
		\abs[0]{\AA^M_{k,\ell} - \AA_{k,\ell}}
		& \leq
		\frac{1}{2} \sum_{u = 1}^\infty \del[1]{\abs[0]{\Delta^M_{k,\ell}(u) - \Delta_{k,\ell}(u)} + \abs[0]{\Delta^M_{k,\ell}(-u) - \Delta_{k,\ell}(-u)}} \\
		%%%
		& \leq 
		\norm[0]{f_k^M}_{L^2(\gamma)} \norm[0]{f_\ell^M}_{L^2(\gamma)} \norm[0]{\rho_{k,\ell}}_{\ell^d(\Z)}^d \,.
	\end{equs}
	The conclusion follows like in Lemma~\ref{lem:conv_DM}.
\end{proof}

We need another lemma to show that assumption~\ref{ass:B2} in Proposition~\ref{prop:abstract_convergence_statement} is satisfied.

\begin{lemma} \label{lem:cutoff_removal_RP}
	Suppose that~$\boldsymbol{\tilde B} = (B,\mathbb{B}^{\text{\tiny Strat}})$ where~$B$ is an $m$-dimensional Brownian motion with covariance matrix~$\Sigma$ and $\mathbb{B}^{\text{\tiny Strat}}$ its Stratonovich lift, and analogously for~$\boldsymbol{\tilde B}^M = (B^M,\mathbb{\bar B}^{\text{\tiny Strat},M})$ with covariance matrix~$\Sigma^M$.
	If~$\Sigma^M \to \Sigma$ as~$M \to \infty$, then or any~$s,t \in [0,1]$ such that~$s < t$, we have
	\begin{equation*}
		\lim_{M \to \infty} B^M(t) = B(t), \quad 
		\lim_{M \to \infty} \mathbb{B}^{\text{\tiny Strat}, M}(s,t) = \mathbb{ B}^{\text{\tiny Strat}}(s,t) \quad \text{in} \quad L^2(\P) \,. 
	\end{equation*}
\end{lemma}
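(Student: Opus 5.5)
The natural route is a coupling: both objects are built on the same probability space from a single standard Brownian motion. Let $W$ be a standard $m$-dimensional Brownian motion. Set
\[
B^M \coloneqq (\Sigma^M)^{1/2} W, \qquad B \coloneqq \Sigma^{1/2} W,
\]
where $(\cdot)^{1/2}$ is the symmetric positive semidefinite square root. Since the statement concerns convergence in $L^2(\P)$, such a realisation is implicitly part of the claim, and it is consistent with the use of the lemma only for~\ref{ass:B2}, which needs just convergence in law. The matrices $\Sigma^M$ and $\Sigma$ are covariance matrices, hence positive semidefinite. The square root is continuous on the cone of positive semidefinite matrices; for example $\norm{A^{1/2}-B^{1/2}} \leq \norm{A-B}^{1/2}$ in operator norm. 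Therefore $\Sigma^M \to \Sigma$ implies $R^M \coloneqq (\Sigma^M)^{1/2} \to R \coloneqq \Sigma^{1/2}$.

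For the first level this gives
\[
\E\sbr[0]{\abs[0]{B^M(t) - B(t)}^2} = t \,\norm[0]{R^M - R}_{\mathrm{HS}}^2 \to 0 .
\]

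For the second level, linearity of the Stratonovich lift under linear maps gives
\[
\mathbb{B}^{\text{\tiny Strat},M}(s,t) = (R^M \otimes R^M)\, \mathbb{W}^{\text{\tiny Strat}}(s,t),
\]
and likewise with $R$ in place of $R^M$. Here $\mathbb{W}^{\text{\tiny Strat}}(s,t) = \int_s^t W(s,r)\otimes \circ \dif W(r)$. Equivalently, componentwise $\mathbb{B}^{\text{\tiny Strat},M}_{k,\ell}(s,t) = \sum_{a,b} R^M_{ka}R^M_{\ell b}\, \mathbb{W}^{\text{\tiny Strat}}_{a,b}(s,t)$. This identity holds because the Stratonovich (or It\^o) integral is bilinear and commutes with deterministic matrices. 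Hence
\[
\norm[0]{\mathbb{B}^{\text{\tiny Strat},M}(s,t) - \mathbb{B}^{\text{\tiny Strat}}(s,t)}_{L^2(\P)} \leq \norm[0]{R^M\otimes R^M - R\otimes R}\, \norm[0]{\mathbb{W}^{\text{\tiny Strat}}(s,t)}_{L^2(\P)} .
\]
The second factor is finite, of order $(t-s)$, since $\mathbb{W}^{\text{\tiny Strat}}$ lies in the first two Wiener chaoses. The first factor tends to $0$ by the bound
\[
\norm{R^M\otimes R^M - R\otimes R} \leq \norm{R^M - R}\,\del[0]{\norm{R^M} + \norm{R}} .
\]

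The only point needing care is the continuity of the matrix square root at singular $\Sigma$, since $\Sigma$ may be degenerate. This is handled by the H\"older-$\tfrac12$ operator-monotonicity bound above. Alternatively, one can avoid square roots altogether: compute $\E\sbr[0]{\abs{\mathbb{B}^{M}-\mathbb{B}}^2}$ via a joint Gaussian construction with joint covariance $\begin{pmatrix}\Sigma^M & C\\ C^\top & \Sigma\end{pmatrix}$, which yields the same conclusion.

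Finally, the lemma together with Corollary~\ref{coro:convergence_AA} gives~\ref{ass:B2}. Adding the deterministic correction $(t-s)\AA^M \to (t-s)\AA$ preserves convergence, and $L^2(\P)$ convergence implies convergence in law.
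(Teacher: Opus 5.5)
Your proposal is correct and is essentially the paper's argument: realise $B = AW$, $B^M = A^M W$ from one standard Brownian motion $W$, use linearity of the lift, $\mathbb{B}^{\text{\tiny Strat},M}(s,t) = A^M\,\mathbb{W}(s,t)\,(A^M)^\top$, and conclude from continuity of the matrix square root. The only cosmetic difference is the inequality used: the paper controls the factors via the Powers--St\o{}rmer inequality $\|A^M-A\|_{\mathrm{HS}}^2 \le \|\Sigma^M-\Sigma\|_1$ in trace norm, whereas you use the operator-norm H\"older-$\tfrac12$ bound $\|(\Sigma^M)^{1/2}-\Sigma^{1/2}\| \le \|\Sigma^M-\Sigma\|^{1/2}$; both handle degenerate $\Sigma$, and your explicit choice of the symmetric square root is what either inequality requires.
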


\begin{proof}
	Let~$W$ be a standard Brownian motion in~$\R^m$, since both~$\Sigma^M$ and~$\Sigma$ are non-negative, we may write
	\begin{equation*}
		B = A W, \quad B^M = A^M W, \quad AA^{\top} = \Sigma, \quad A^M (A^M)^{\top} = \Sigma^M \,.
	\end{equation*} 
	
	For the Brownian motion, i.e. the first order component, we have
	\begin{equs}
		\thinspace &
		\norm[0]{B^M(t) - B(t)}_{L^2(\P)}^2 
		=
		\E\sbr[0]{\abs[0]{B^M(t) - B(t)}^2} \\
		= \ & 
		\E\sbr[1]{\operatorname{Tr}\sbr[0]{(A^M - A) W(t) W(t)^\top (A^M - A)^\top}} \\[0.5em]
		= \ & 
		\operatorname{Tr}\sbr[1]{(A^M - A) \E\sbr[0]{W(t) W(t)^\top} (A^M - A)^\top}
		=
		t \norm[0]{A^M - A}_{\operatorname{HS}}^2 
		\leq t \norm[0]{\Sigma^M - \Sigma}_{1}
	\end{equs}
	where the last estimate is a due to the classical Powers-St\o{}rmer inequality. Finally, the RHS goes to~$0$ as~$M \to \infty$ by assumption.
	
	For the second order component, we note that
	\begin{equs}
		\mathbb{B}^{\text{\tiny Strat}}(s,t) 
		& = \int_{s}^t B(r) \otimes \circ \dif B(r)
		= A \mathbb{W}(s,t) A^{\top}, \quad \mathbb{W}(s,t) \coloneqq \del[3]{\int_{s}^t W(r) \dif W(r)}
	\end{equs}
	and analogously for~$\mathbb{B}^{\text{\tiny Strat}, M}(s,t)$. Therefore, we find
	\begin{equs}
		\E\sbr[1]{\norm[0]{\mathbb{B}^{\text{\tiny Strat}, M}(s,t) - \mathbb{B}^{\text{\tiny Strat}}(s,t)}^2_{\operatorname{HS}}}
		& \lesssim
		\norm[0]{A^M - A}_{\operatorname{HS}}^2 \E\sbr[1]{\norm[0]{\mathbb{W}(s,t)}_{\operatorname{HS}}^2} \del[1]{\norm[0]{A^M}_{\operatorname{HS}}^2 + \norm[0]{A}_{\operatorname{HS}}^2} \\
		& \leq
		\norm[0]{\Sigma^M - \Sigma}_{1} \E\sbr[1]{\norm[0]{\mathbb{W}(s,t)}_{\operatorname{HS}}^2} \del[1]{\norm[0]{\Sigma^M}_{1} + \norm[0]{\Sigma}_{1}}
	\end{equs}
	where we have again used the Powers-St\o{}rmer inequality. 
	The RHS goes to~$0$ as~$M \to \infty$ by assumption because it also implies that~$\sup_{M \in \N}\norm[0]{\Sigma^M}_1 < \infty$.
\end{proof}

Finally, we immediately obtain the following corollary.

\begin{corollary} \label{coro:L2conv_strat_area_correction}
	Let~$B$ a Brownian motion with covariance matrix~$\Sigma$ and consider $\mathbb{\bar B}(s,t) = \mathbb{B}^{\text{\tiny Strat}}(s,t) + \AA (t-s)$, and analogously for~$(B^M, \mathbb{\bar B}^M)$.
	Then, 
	\begin{equation*}
		\lim_{M \to \infty} \mathbb{\bar{B}}^M(s,t) = \mathbb{\bar B}(s,t) \quad \text{in} \quad L^2(\P) \,. 
	\end{equation*}
	As a consequence, Assumption~\ref{ass:B2} in Proposition~\ref{prop:abstract_convergence_statement} is satisfied.
\end{corollary}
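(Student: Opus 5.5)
The plan is to deduce both convergences from Lemma~\ref{lem:cutoff_removal_RP} and Corollary~\ref{coro:convergence_AA}. The reason this is possible is that the random parts of $\mathbb{\bar B}^M$ and $\mathbb{\bar B}$ are Stratonovich lifts, and the corrections $\AA^M(t-s)$ and $\AA(t-s)$ are deterministic.

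\emph{Step 1: the hypothesis $\Sigma^M \to \Sigma$.} Recall that $\Sigma^M = \Delta^M(0) + 2\Sym(\Gamma^M)$. Lemma~\ref{lem:conv_DM} gives $\Delta^M(0) \to \Delta(0)$. For $\Gamma^M = \sum_{u \geq 1}\Delta^M(u)$, the estimate~\eqref{e:estimate_conv_DM} bounds $\abs[0]{\Delta^M_{k,\ell}(u) - \Delta_{k,\ell}(u)}$ by $\norm[0]{f_k^{>M}}_{L^2(\gamma)}\norm[0]{f_\ell^{>M}}_{L^2(\gamma)}\abs[0]{\rho_{k,\ell}(u)}^d$; that display writes $f^M$, but it means the tail projections. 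Summing over $u$ and using $\rho_{k,\ell} \in \ell^d(\Z)$ gives
\[
\abs[0]{\Gamma^M_{k,\ell}-\Gamma_{k,\ell}} \lesssim \norm[0]{f_k^{>M}}_{L^2(\gamma)}\norm[0]{f_\ell^{>M}}_{L^2(\gamma)}\norm[0]{\rho_{k,\ell}}_{\ell^d}^d \to 0 .
\]
This is the same argument as in the proof of Corollary~\ref{coro:convergence_AA}. Hence $\Sigma^M \to \Sigma$. Since $\Sigma^M$ is the covariance of $B^M$, it is non-negative definite, so the square-root representation used in Lemma~\ref{lem:cutoff_removal_RP} is available.

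\emph{Step 2: apply Lemma~\ref{lem:cutoff_removal_RP}.} Realise $B = AW$ and $B^M = A^M W$ on a common probability space with the same standard Brownian motion $W$. The lemma then yields $\mathbb{B}^{\text{\tiny Strat},M}(s,t) \to \mathbb{B}^{\text{\tiny Strat}}(s,t)$ in $L^2(\P)$, and also $B^M(s,t) \to B(s,t)$ in $L^2(\P)$. By the triangle inequality,
\[
\norm[0]{\mathbb{\bar B}^M(s,t) - \mathbb{\bar B}(s,t)}_{L^2(\P)} \leq \norm[0]{\mathbb{B}^{\text{\tiny Strat},M}(s,t)-\mathbb{B}^{\text{\tiny Strat}}(s,t)}_{L^2(\P)} + (t-s)\abs[0]{\AA^M-\AA}.
\]
The first term vanishes by the lemma and the second by Corollary~\ref{coro:convergence_AA}.

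\emph{Step 3: conclude (B2).} Do this for each of the finitely many intervals $(s_i,t_i)$, on one common space. Then the vector $\CB^M$ converges to $\CB$ in $L^2(\P)$, hence in probability, hence in law. Here $\CB$ is built from $(B,\mathbb{\bar B})$ on the intervals $(s_i,t_i)$, which is the object appearing in the decomposition~\eqref{e:decomp_AN}. Convergence in law depends only on the laws, so the coupling is harmless.

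The main point of care, rather than a real obstacle, is consistency. The coupling through one $W$ must be the one used in Lemma~\ref{lem:cutoff_removal_RP}, and $\Sigma^M \to \Sigma$ must be checked and not simply assumed. The latter comes down to the summability of the $\Delta^M - \Delta$ differences over $u$, which the $\ell^d$ assumption provides.
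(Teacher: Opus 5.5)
Your proposal is correct and follows the paper's route: the paper's proof simply cites Lemma~\ref{lem:cutoff_removal_RP} and Corollary~\ref{coro:convergence_AA}, exactly as you do. Your Step~1 adds details the paper leaves implicit. It checks the hypothesis $\Sigma^M \to \Sigma$ via $\Delta^M(0)\to\Delta(0)$ and by summing over $u$ the tail-corrected version of \eqref{e:estimate_conv_DM}, and it makes the common coupling through $W$ explicit.
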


\begin{proof}
	This is a direct consequence of Lemma~\ref{lem:cutoff_removal_RP} and Corollary~\ref{coro:convergence_AA}.
\end{proof}

\subsection{Proof of Theorem~\ref{thm:conv_fdd}} \label{s:proof_fdd_convergence}

Finally, we have gathered all the tools to prove the main result of this section, the convergence of the finite-dimensional distributions, as outlined in the strategy in Section~\ref{s:strategy}.
\begin{proof}[of Theorem~\ref{thm:conv_fdd}]
	Let~$l \in \N$, consider the intervals~$(s_i,t_i)_{i = 1}^l \subseteq [0,1]^2$ with $s_i < t_i$, and define the vectors~$\CA_N$ and~$\CL$ as in~\eqref{def:AN} and~\ref{def:L}, respectively. 
	Then, the assertion of the theorem is identical to the convergence statement in~\eqref{e:goal_fdd_conv}.
	
	To this end, for~$M \geq d$, recall the decomposition
	\begin{equation*}
		\CA_N = \CR_N^M + \CB_N^M + \CC_N^M - \frac{1}{2} \CD_N^M
	\end{equation*}
	from~\eqref{e:decomp_AN} where 
	\begin{itemize}
		\item $\CR_N^M$ is given in~\eqref{e:def_RNM},~\eqref{e:1st_order_tail}, and~\eqref{e:2nd_order_tail:1},
		\item $\CB_N^M$ is given in~\eqref{e:def_BNM}, \eqref{e:pcw_linear_interpol}, and~\eqref{e:def_YMN_2}, 
		\item $\CC_N^M$ is given in~\eqref{e:def_CNM} and~\eqref{e:pcw_linear_interpol}, and
		\item $\CD_N^M$ is given in~\eqref{e:def_DNM} and~\eqref{d:D_N}.
	\end{itemize}
	We now collect all the results from Section~\ref{s:convergence_fdd} which show that the prerequisites of Proposition~\ref{prop:abstract_convergence_statement} are met.
	We proceed in chronological order:  
	\begin{itemize}
		\item Re~$\CR_N^M$: The assumptions~\ref{ass:R1} and~\ref{ass:R2} on~$\CR_N^M$ are verified in Section~\ref{s:reduction}, specifically Proposition~\ref{p:reduction}.
		\item Re~$\CD_N^M$: The assumption~\ref{ass:D1} is verified in Proposition~\ref{p:LLN_diagonal}. 
		Assumption~\ref{ass:D2} is shown to hold true in Lemma~\ref{lem:conv_DM}.
		\item Re~$\CC_N^M$: Assumption~\ref{ass:C1} is verified in Lemma~\ref{l:ptwise_L2_conv}. 
		\item Re~$\CB_N^M$: Assumption~\ref{ass:B1} holds true because of Theorem~\ref{thm:conv_fdd_stratonovich}. 
		Assumption~\ref{ass:B2} is verified in Corollary~\ref{coro:L2conv_strat_area_correction}.
	\end{itemize}
	The claim now follows by Proposition~\ref{prop:abstract_convergence_statement}.
\end{proof}

\appendix

\section{Modifications to the proof by Breuer and Major} \label{a:proof_BM83}

In this section, we present the changes in the proof of~\cite[Proposition on~p.~433]{breuer_major_83} that are required to establish our Proposition~\ref{p:bm83_irregular}, cf.~Remark~\ref{rmk:breuer_major}.
After two steps, we will reach a point after which one can follow Breuer and Major's strategy verbatim; we will refrain from repeating those arguments. 

\begin{proof}
	Our proof mildly modifies two steps in the proof by Breuer and Major.
	
	\vspace{0.5em}
	\noindent
	$\triangleright$ \textit{Step 1: Vertical reordering the levels.}
	Recall that~$l$ is the number of levels and~$\bq = q_{q:l}$.
	For a diagram $G \in \Gamma[\bq]$, and a permutation $\pi$ of $\llbracket 1, l \rrbracket$, $\pi G$ is defined as the diagram with levels $q_{\pi^{-1}(1)}, \ldots, q_{\pi^{-1}(l)} $ (so that the $\pi(j)$-th level of $\pi G$ has cardinality $q_j$), and such that
	$$ w \in E(G) \iff \pi(w) \in \pi E(G)$$
	where, for an edge $w := \{(\ell_1,k_1), (\ell_2,k_2) \}$ we write $\pi(w) := \{(\pi(j_1),k_1), (\pi(j_2),k_2)\}$. Note now that, for an edge $w \in E(G)$, we have
	$$d_i(\pi(w)) = \pi(d_i(w)), \qquad i =1,2. $$
	Moreover, have that $\mathring{T}_{\pi G}(\pi(\w),\pi(\bq),N) = \mathring{T}_G(\w,\bq,N)$ where $\pi(\w) = (\w_{\pi(1)},\ldots,\w_{\pi(l)})$ and~$\pi(\bq) = (q_{\pi(1)},\ldots,q_{\pi(l)})$. 
	Indeed, we have
	\begin{equs}
		\mathring{T}_{\pi G}(\pi(\w),\pi(\bq),N) 
		&=  \frac{1}{N^{\nicefrac{l}{2}}} \sum_{1 \leq i_{1:l} \leq N} \prod_{w \in \pi E(G)} \abs[0]{\rho_{\w_{d_1(w)},\w_{d_2(w)}}(i_{d_2(w)} - i_{d_1(w)})} \\ 
		&= \frac{1}{N^{\nicefrac{l}{2}}} \sum_{1 \leq i_{1:l} \leq N} \prod_{w' \in E(G)} \abs[0]{\rho_{\w_{d_1(\pi(w'))},\w_{d_2(\pi(w'))}}(i_{d_2(\pi(w'))} - i_{d_1(\pi(w'))})}\\
		&= \frac{1}{N^{\nicefrac{l}{2}}} \sum_{1 \leq i_{1:l} \leq N} \prod_{w' \in E(G)} \abs[0]{\rho_{\w_{\pi(d_1(w'))},\w_{\pi(d_2(w'))}}(i_{\pi(d_2(w'))} - i_{\pi(d_1(w'))})} \\
		& = \frac{1}{N^{\nicefrac{l}{2}}} \sum_{1 \leq i'_{1:l} \leq N} \prod_{w' \in E(G)} \abs[0]{\rho_{\w'_{d_1(w')},\w'_{d_2(w')}}(i'_{d_2(w')} - i'_{d_1(w')})} \\
		& = \mathring{T}_{G}(\w,\bq,N) \,.
	\end{equs}
	In the previous calculation, we have changed the variables $w' = \pi^{-1}(w)$ in the second line and $i'_r \coloneqq i_{\pi(r)}$ as well as~$\w'_r \coloneqq \w_{\pi(r)}$ for $r \in \llbracket 1, l \rrbracket$ in the penultimate step.

\vspace{0.5em}
\noindent
$\triangleright$ \textit{Step 2: Rewriting~$\mathring{T}_G(\w,\bq,N)$.}
For~$\bq = q_{1:l}$ and~$G \in \Gamma(\bq)$, we define
\begin{equation*}
k_G: \llbracket 1,l \rrbracket \to \N, \quad 
k_G(r) \coloneqq \abs[0]{\cbr[0]{w \in E(G): \thinspace d_1(w) = r}}
\end{equation*}
i.e.~$k_G(r)$ is the number of all edges~$w \in E(G)$ starting in row~$r$.
(Note that, since for an edge $w = ((i,q_i),(j,q_j)) \in G(V)$, we have imposed~$i < j$, this definition is indeed meaningful.)

By the inequality between the arithmetic and the geometric mean, for each~$r \in \llbracket 1,l \rrbracket$ for which~$k_G(r) \geq 1$, we can now simply estimate
\begin{equation*}
\prod_{\substack{w \in E(G), \\ d_1(w) = r}} \abs[0]{\rho_{\w_r,\w_{d_2(w)}}(i_{d_2(w)} - i_r)}
\leq
\frac{1}{k_G(r)} \sum_{\substack{w \in E(G), \\ d_1(w) = r}} \abs[0]{\rho_{\w_r,\w_{d_2(w)}}(i_{d_2(w)} - i_r)}^{k_G(r)} \,. 
\end{equation*}
In case~$k_G(r) = 0$, the product on the LHS of the previous display is the empty product, i.e. equal to~$1$. 
Summing in~$\mathring{T}_G(\w,\bq,N)$ over~$0 \leq i_1 \leq N-1$ for fixed~$i_{2:l}$, we have
\begin{equs}
\mathring{T}_G(\w, \bq,N)
& \leq
\frac{1}{N^{\nicefrac{l}{2}}} 
\sup_{b \in \llbracket 1,m \rrbracket} \sup_{0 \leq j \leq N-1} \del[4]{\sum_{i_1 = 0}^{N-1} \abs[0]{\rho_{\w_1,b}(j- i_1)}^{k_G(1)}} \times \\
& \qquad \times \sum_{0 \leq i_{2:l} \leq N-1} \prod_{r=2}^{l} \prod_{\substack{w \in E(G), \\ d_1(w) = r}} \abs[0]{\rho_{\w_r,\w_{d_2(w)}}(i_{d_2(w)} - i_r)}^{k_G(r)} \,.
\end{equs} 
We now iterate this estimate for~$i_{2:l}$ to find that
\begin{equs}[][e:estimate_TG]
\mathring{T}_G(\w,\bq,N)
& \leq
\frac{1}{N^{\nicefrac{l}{2}}} 
\prod_{r=1}^l
\sup_{b_r \in \llbracket 1,m \rrbracket}
\sup_{0 \leq j_r \leq N-1} \del[4]{\sum_{i_r = 0}^{N-1} \abs[0]{\rho_{\w_r,b_r}(j_r - i_r)}^{k_G(r)}} \\
%%%
& \leq
\frac{1}{N^{\nicefrac{l}{2}}} 
\prod_{r=1}^l
\sup_{a_r, b_r \in \llbracket 1,m \rrbracket}
\sup_{0 \leq j_r \leq N-1} \del[4]{\sum_{i_r = 0}^{N-1} \abs[0]{\rho_{a_r,b_r}(j_r - i_r)}^{k_G(r)}} \\
%%%
& \leq
\frac{1}{N^{\nicefrac{l}{2}}} 
\prod_{r=1}^l
%\sup_{a, b \in \llbracket 1,m \rrbracket}
\sum_{\abs[0]{i} \leq N} \abs[0]{\rho_{a_r^*,b_r^*}(i)}^{k_G(r)}
\end{equs}
where, in the last estimate, we have used that for any~$w \in E(G)$ and~$0 \leq \bi \leq N-1$, we know that $j_{r} - i_{r} \in \llbracket -(N-1),(N-1) \rrbracket$, i.e.~$\abs[0]{j_{r} - i_{r}} \leq N-1$. 
In addition, we have written~$a_r^*$ and~$b_r^*$ for the respective indices where the supremum (in fact, it is a maximum) is attained.

Apart from notational differences, the right hand side in~\eqref{e:estimate_TG} is identical to~\cite[eq.~(2.17)]{breuer_major_83}, except for the fact that each covariance factor~$\rho_{a_r^*,b_r^*}$ depends on~$r$. 
However, it is immediate to see that the only property of~$\rho_{a_r^*,b_r^*}$ that is required is that it is an element in~$\ell^d(\Z)$.
Therefore, Proposition~\ref{p:bm83_irregular} is established by following the remaining arguments in the proof of~\cite[Proposition on~p.~433]{breuer_major_83}.
\end{proof}

\section{Technical proofs} \label{s:technical_proofs}

In this appendix, we prove the technical results that involve pairings, i.e. Proposition~\ref{p:non_ladder_vanishes} and Lemma~\ref{l:cross_simplex_pairings}.

\subsection{Proof that non-ladder pairings vanish}

In this subsection, we establish Proposition~~\ref{p:non_ladder_vanishes}, i.e. the statement that non-ladder pairings do not contribute asymptotically.

\begin{proof}[of Proposition~\ref{p:non_ladder_vanishes}]
	We first assume that~$s = 0$ and~$t = 1$. The case of general~$s$ and~$t$ will be reduced to this scenario at the end of this proof.
	For~$n \in \N$, we will  denote any pairing~$P \in \CM(2n)$ by~$P = \cbr[1]{\cbr[0]{a_k,b_k}}_{k=1}^n$ where we assume that~$a_k < b_k$. For~$k \in \llbracket 1,n \rrbracket$, we will then set 
	\begin{equation*}
		g_k \coloneqq \Delta^M_{\w_{a_k},\w_{b_k}} 
	\end{equation*}
	which satisfies~$g_k \in \ell^1(\Z)$ by the integrability assumption that~$\cbr[0]{\rho_{k,\ell}}_{k,\ell=1}^m \subseteq \ell^d(\Z)$.
	With the shorthand~$\triangle_N = \triangle^{(2n)}_{0,N-1}$, we may then rewrite~$\CI^{M;0,1}_N(P,\w)$ (defined in~\eqref{e:def:INst}) as follows: 
	\begin{equation} \label{e:I_N(P)}
		\CI_N(P) \coloneqq \CI^{M;0,1}_N(P,\w) = \frac{1}{N^n} \sum_{i_{1:2n} \in \triangle_N} \prod_{k=1}^{n} g_k(i_{b_k} - i_{a_k}) \,.
	\end{equation}
	We then~$i_0 \coloneqq 0$ and, for~$\ell \in \llbracket 1,2n \rrbracket$, introduce the new \emph{gap variables}~$v_\ell \coloneqq i_\ell - i_{\ell-1}$ and we say that the gap~$v_\ell$ is \emph{even} (\emph{odd}) if~$\ell$ is even (odd).
	The new variables satisfy the following constraints:
	\begin{equation*}
		v_\ell \geq 0, \quad \sum_{\ell=1}^{2n} v_\ell = i_{2n} \leq N-1, \quad i_j = \sum_{\ell=0}^j v_\ell, \quad i_b - i_a = \sum_{\ell=a+1}^b v_\ell \,.
	\end{equation*}
	For~$r \in \llbracket 1, 2n\rrbracket$, we define a new set of constraints 
	\begin{equation*}
		\mathring{\triangle}_N^{(r)} \coloneqq \cbr[1]{v_{1:r} \coloneqq (v_1,\ldots,v_{r}) \in \N^{r}: \ v_\ell \geq 0, \ v_{[1:r]} \leq N-1}, \quad v_{[1:r]} \coloneqq \sum_{\ell=1}^{r} v_\ell \,,
	\end{equation*}
	and then obtain
	\begin{equation} \label{e:I_N_trafo}
		\CI_{N}(P) 
		= \frac{1}{N^n} \sum_{v_{1:2n} \in \mathring{\triangle}_{N}^{(2n)}} \prod_{k=1}^{n} g_k\del[4]{\sum_{\ell = a_k+1}^{b_k} v_\ell} %\\
		%
		%& 
		= \frac{1}{N^n} \sum_{v_{1:2n} \in \mathring{\triangle}_{N}^{(2n)}} \prod_{k=1}^{n} g_k\del[0]{v_{[I_k]}} 
	\end{equation}
	where, in the last equality, we have set~$I_k \coloneqq \llbracket a_k+1, b_k \rrbracket$ and~$v_{[I_k]} \coloneqq \sum_{\ell \in I_k} v_\ell$ to simplify the notation. 

	\vspace{-1em}
	\paragraph{Observations about the pairings.}
	Before we proceed, we record the following observations about the pairings~$P \in \CM(2n)$. 
	\begin{enumerate}[label=(\roman*)]
		\item \label{i_gap} For any~$P \in \CM(2n)$, every even gap appears in~$\CI_{N}(P)$, that is: For every~$j \in \llbracket 1,n \rrbracket$,  there exists a~$k \in \llbracket 1,n \rrbracket$ such that~$a_k + 1 \leq 2j \leq b_k$. 
		
		Indeed, since~$2j$ is even, the set~$\llbracket 1, 2j-1 \rrbracket$ has odd cardinality and therefore cannot be perfectly matched within itself. As a consequence, at least one element~$a$ in that set needs to be matched with another element~$b \geq 2j$, i.e.~$a + 1 \leq 2j \leq b$.
		
		As a consequence, the gap~$v_{2j}$ appears in the sum within~$g_k$ in~\eqref{e:I_N_trafo}.
		\item \label{ii_gap} An odd gap appears in~$P \in \CM(2n)$ if and only if~$P \neq P_\star$.
		
		Indeed, if~$P = P_\star$, i.e.~$\cbr[0]{a_k,b_k} = \cbr[0]{2k-1,2k}$ for every~$k \in \llbracket 1,n \rrbracket$, then $a_k + 1 = b_k = 2k$. Therefore, we have~$\sum_{\ell = a_k +1}^{b_k} v_\ell = v_{b_k} = v_{2k}$ and only the even gaps appear in~\eqref{e:I_N_trafo}.
		
		In contrast, if~$P \neq P_\star$, there exists at least one~$k \in \llbracket 1,n \rrbracket$ such that~$b_k \geq a_k + 2$.
		Therefore, the set~$I_k$ contains at least one even and one odd number.
	\end{enumerate}
	Recall that we have assumed~$P \neq P_\star$ in the claim of Proposition~\ref{p:non_ladder_vanishes}.
	We now let 
	\begin{equation*}
		\CG \coloneqq \cbr[1]{\ell \in \llbracket 1,2n \rrbracket: \ \exists k \in \llbracket 1,n \rrbracket \ \text{s.t.} \ \ell \in I_k} \subseteq \llbracket 1,2n \rrbracket
	\end{equation*}
	denote the set of \emph{all} gap variables appearing in the product in~\ref{e:I_N_trafo}.
	By observations~\ref{i_gap} and~\ref{ii_gap} above, we know that all the even indices in~$ \llbracket 1,2n \rrbracket$ and at least one odd index belongs to~$\CG$.
	Therefore, denoting by~$M \coloneqq \llbracket 1,2n \rrbracket \setminus \CG$ the set of~\emph{free gaps}, we have
	\begin{equation} \label{e:gaps_cardinalities}
		r \coloneqq \abs[0]{\CG} \geq n+1, \quad m \coloneqq \abs[0]{M} \leq n-1, \quad m+r = 2n \,.
	\end{equation}
	In a first step, we now relabel the variables~$v_{1:2n} = (h_{1:m}, y_{1:r})$ with~$h_{1:m} = (v_\ell)_{\ell \in M}$ and $y_{i:r} = (v_\ell)_{\ell \in \CG}$ and \enquote{sum out} the free gaps variables~$h_{1:m}$ in~\eqref{e:I_N_trafo}. 
	It is a classical combinatorial fact that, for fixed~$y_{1:r}$, we have 
	\begin{equs}[][e:simplex_volume]
		\#\cbr[1]{h_{1:m} \geq 0: \ h_{[1:m]} \leq N - 1- y_{[1:r]}} 
		& = 
		\binom{N - 1 - y_{[1:r]} + m}{m} \1_{y_{[1:r]} \leq N-1} \\[0.5em]
		& \lesssim N^m \1_{y_{[1:r]} \leq N-1} 
	\end{equs}
	and thus 
	\begin{equation} \label{e:I_N_trafo:other}
		\abs[0]{\CI_{N}(P)}
		\lesssim 
		N^{m-n} 
		\sum_{y_{1:r} \in \mathring{\triangle}_{N}^{(r)}} \prod_{k=1}^{n} \abs[0]{g_k\del[0]{y_{[I_k]}}} \dif y_{1:r}
		\eqqcolon
		J_N(P) \,.
	\end{equation}
	Fix~$\eta \in (0,1)$. We now split the domain of summation into the subsets
	\begin{equs}
		\CA_N(\eta) & \coloneqq \cbr[1]{y_{1:r} \in \mathring{\triangle}_{N}^{(r)}, \ \max_{k=1,\ldots,n} y_{[I_k]} \geq \eta (N-1)} \\
		\CB_N(\eta) & \coloneqq \cbr[1]{y_{1:r} \in \mathring{\triangle}_{N}^{(r)}, \ \max_{k=1,\ldots,n} y_{[I_k]} < \eta (N-1)} 
	\end{equs}
	and, suppressing the dependence on~$P$, write 
	\begin{equation} \label{e:decomp_J_N}
		J_N(P) 
		=
		J_N^\CA(\eta) + J_N^\CB(\eta)
	\end{equation}
	for
	\begin{equation} \label{e:decomp_J_N_parts}
		J_N^\CC(\eta) \coloneqq 
	 	N^{m-n} \sum_{y_{1:r} \in \CC_N(\eta)} \prod_{k=1}^{n} \abs[0]{g_k\del[0]{y_{[I_k]}}}, \quad \CC \in \cbr[0]{\CA,\CB} \,.
	\end{equation}
	Before we separately bound the terms~$J_N^\CA(\eta)$ and~$J_N^\CB(\eta)$, we analyse the change of variables that will be relevant in both cases.
	\paragraph{Change of variables.}
	For~$k \in \llbracket 1,n \rrbracket$ and~$\ell \in \llbracket 1,r \rrbracket$, let~$u_k \coloneqq y_{[I_k]}$ and observe that
	\begin{equation*}
		u_{1:n} = C y_{1:r}, \quad C \in \cbr[0]{0,1}^{n \times r}, \quad C_{k,\ell} \coloneqq \1_{\ell \in I_k} \,. 
	\end{equation*}
	Recall that~$P = (a_k,b_k)_{k=1}^n$ and, w.l.o.g., we can order the pairings in such a way that
	\begin{equation*}
		b_1 < b_2 < \ldots < b_n \,.
	\end{equation*}
	Since~$I_k = \llbracket a_k+1,b_k \rrbracket$, we definitely have that~$b_k \in I_k$ and hence~$k \in \CG$ for any~$k \in \llbracket 1,n \rrbracket$.
	The matrix~$C$ is an~$(n \times r)$-matrix and~$n$ of its~$r$ columns---recall from~\eqref{e:gaps_cardinalities} that $r \geq n+1$---correspond to the indices~$b_{1:n}$: We extract them in the submatrix~$B \in \cbr[0]{0,1}^{n \times n}$ of~$C$. 
 	More precisely, for~$k, j \in \llbracket 1,n \rrbracket$, we set~$B_{k,j} \coloneqq \1_{b_j \in I_k}$, i.e.
	\begin{equation*}
		B_{k,j} = 1  \quad \Longleftrightarrow \quad b_j \in I_k \quad \Longleftrightarrow \quad a_k + 1 \leq b_j \leq b_k \,.
	\end{equation*}  
	Since we have ordered the~$b_j$'s ascendingly, this means that~$B_{k,j} = 0$ for~$j > k$ and~$B_{k,k} = 1$ for any~$k \in \llbracket 1,n \rrbracket$, i.e.~$B$ is lower triangular and its diagonal only has unit entries.
	As a consequence, $\det B = 1$ and~$B$ is invertible.
	
	We have extracted~$n$ of the~$r$ columns of~$C$ to form the matrix~$B$. W.l.o.g., we now reorder the coordinates of~$y = y_{1:r}$ in such a way that
	\begin{equation} \label{e:decomp_y}
		y_{1:r} = 
		\begin{pmatrix}
			y_B \\
			y_R
		\end{pmatrix}, \quad 
		C = (B, R) \quad 
		\text{for}
		\quad
		y_B \in \N_0^n, \ y_R \in \N_0^{r-n}, \ B \in \cbr[0]{0,1}^{n \times n}, R \in \cbr[0]{0,1}^{n \times (r-n)} \,.
	\end{equation}
	In this way, we have
	\begin{equation*}
		u_{1:n} = C y_{1:r} = B y_B + R y_R \in \N_0^n 
	\end{equation*}
	and the linear transformation
	\begin{equation} \label{e:change_of_var}
		\begin{cases}
		\Phi: & \N_0^n \times \N_0^{r-n} \to \N_0^n \times \N_0^{r-n}, \\
		& (y_B, y_R) \mapsto (u_{1:n}, y_R) = (B y_B + R y_R, y_R)
		\end{cases}
	\end{equation}
	can be represented via the block-matrix
	\begin{equation*}
		\begin{pmatrix}
			B & R \\
			0 & I
		\end{pmatrix}, \quad
		0 \in \N_0^{(r-n) \times n}, \quad I \in \N_0^{(r-n) \times (r-n)} \,.
	\end{equation*}
	As a consequence, we have~$\det \Phi = \det B \det I = \det B = 1$ and~$\Phi$ is an invertible, linear change of variables.% with Jacobian~$1$, i.e. $\dif y_{1:r} = \dif u_{1:n} \dif y_R$.
	
	\medskip
	With these preparations at hand, we can now bound the term~$J_N^\CA(\eta)$ and~$J_N^\CB(\eta)$ given in~\eqref{e:decomp_J_N_parts}, starting with the former.
	
	\vspace{-0.5em}
	\paragraph{(a) The bound on~$\boldsymbol{J_N^\CA(\eta)}$.} 
	By a trivial union bound, we have
	\begin{equation*}
		\1_{\max_{k=1,\ldots,n} y_{[I_k]} \geq \eta (N-1)} \leq \sum_{k=1}^n \1_{y_{[I_k]} \geq \eta (N-1)}
	\end{equation*}
	and therefore
	\begin{equation} \label{e:rhs_JA}
		J_N^\CA(\eta) 
		\leq 
		N^{m-n} \sum_{k=1}^n \sum_{\substack{y_{1:r} \in \mathring{\triangle}_{N}^{(r)}, \\ y_{[I_k]} \geq \eta (N-1)}} \prod_{k=1}^{n} \abs[0]{g_k\del[0]{y_{[I_k]}}} \,.
	\end{equation}
	Now we perform the change of variables given by~$\Phi$ in~\eqref{e:change_of_var}. Recall that this entails that $u_{1:n} \coloneqq (y_{[I_1]}, \ldots, y_{[I_n]})$ and thus we have for all~$j \in \llbracket 1,n \rrbracket$ that~$u_j = y_{[I_j]} \leq y_{[1:r]} \leq (N-1)$ where the last bound comes from the definition of the domain of summation~$\mathring{\triangle}^{(r)}_N$.
	Under this change of variables, the constraint~$y_{1:r} = (y_B,y_R) \geq 0$ becomes a constraint on~$(u_{1:n}, y_R)$. 
	More precisely, for fixed~$u_{1:n} \geq 0$, we can define the fibre
	\begin{equation*}
		F(u_{1:n}) \coloneqq \cbr[1]{y_R \in \N_0^{r-n}: \ y_B = B^{-1}\del[1]{u_{1:n} - R y_R} \geq 0} \,.
	\end{equation*}
	We can obtain a relatively crude, but sufficient bound on the cardinality of this fibre in the following way: Note that
	\begin{equation*}
		u_{[1:n]} = \sum_{k=1}^n y_{[I_k]} = \sum_{k=1}^n \sum_{\ell \in I_k} y_\ell = \sum_{\ell \in \CG} c_\ell y_\ell
	\end{equation*}
	where~$c_\ell \coloneqq \abs[0]{\cbr[0]{k: \ell \in I_k}} \geq 1$ counts the number of occurences of the gap~$\ell \in \CG$. As a consequence, recalling the decomposition of~$y_{1:r}$ from~\eqref{e:decomp_y}, we find
	\begin{equation*}
		y_{[R]} \leq y_{[1:r]} \equiv \sum_{\ell \in \CG} y_\ell \leq u_{[1:n]} 
	\end{equation*}
	where we have used the notation~$y_{[R]} = \sum_{\ell \in R} y_\ell$ to sum up all the coordinates of the subvector~$y_R$.
	As a consequence, we have
	\begin{equs}
		F(u)
		& \subseteq 
		\cbr[1]{y_R \in \N_0^{r-n}: y_{[R]} \leq u_{[1:n]} , \ y_B = B^{-1}\del[1]{u_{1:n} - R y_R} \geq 0}  \\[0.3em]
		& \subseteq 
		\cbr[1]{y_R \in \N_0^{r-n}: y_{[R]} \leq u_{[1:n]}} \,. 
	\end{equs}
	Like in~\eqref{e:simplex_volume}, we then get the bound
	\begin{equation} \label{e:volume_fibre}
		\#(F(u_{1:n})) 
		\lesssim \del[0]{u_{[1:n]}}^{r-n}
		\leq
		\del[0]{nN}^{r-n}
	\end{equation}
	where the last inequality is due to the fact that~$u_j \leq N-1$ for any~$j \in \llbracket 1,n \rrbracket$, as we have argued above.
	In combination with~\eqref{e:rhs_JA}, we then have
	\begin{align}
		J_N^\CA(\eta) 
		& \leq 
		N^{m-n} \sum_{k=1}^n \sum_{\substack{y_{1:r} \in \mathring{\triangle}_{N}^{(r)}, \\ y_{[I_k]} \geq \eta (N-1)}} \prod_{k=1}^{n} \abs[0]{g_k\del[0]{y_{[I_k]}}} \notag \\
		%%%
		& \leq N^{m-n} 
		\sum_{k=1}^n \sum_{\substack{u_k \in [\eta (N-1),N-1], \\ u_k \in \N_0}} 
		\sum_{u_{1:n\setminus\cbr[0]{k}} \in \mathring{\triangle}_{N}^{(n-1)}} \#(F(u_{1:n})) \prod_{j=1}^n \abs[0]{g_j(u_j)} \label{e:calc_JA}
		\\
		%%%
		& \leq n^{r-n} N^{m+r-2n} \sum_{k=1}^n 
		\sum_{\substack{u \in [\eta (N-1),N-1], \\ u \in \N_0}}  
		\abs[0]{g_k(u)} 
		\sum_{u_{1:n\setminus\cbr[0]{k}} \in \mathring{\triangle}_{N}^{(n-1)}} \prod_{j=1}^n \abs[0]{g_j(u_j)} \notag
		\\
		%%%
		& \leq n^{r-n} N^{m+r-2n} 
		\prod_{j=1}^n \norm[0]{g_j}_{\ell^1(\Z)}
		\sum_{k=1}^n \frac{1}{\norm[0]{g_k}_{\ell^1(\Z)}} \sum_{\substack{u \in [\eta (N-1),N-1], \\ u \in \N_0}}  
		\abs[0]{g_k(u)} \notag
	\end{align}
	Recall from~\eqref{e:gaps_cardinalities} that~$m+r-2n = 0$, so the factor~$N^{m+r-2n}$ in the previous expression is actually~$1$.
	However, for any fixed~$\eta \in (0,1)$, we have 
	\begin{equation} \label{e:calc_JA:2}
		\sum_{\substack{u \in [\eta (N-1),N-1], \\ u \in \N_0}}  
		\abs[0]{g_k(u)}  
		\leq 
		\sum_{u = \lceil \eta (N-1) \rceil}^{\infty}  
		\abs[0]{g_k(u)}  
		\to 0 \quad \text{as} \quad N \to \infty
	\end{equation}
	because~$g_k \in \ell^1(\Z)$. 
	This establishes the claim for~$J_N^\CA(\eta)$ and we are left to deal with~$J_N^\CB(\eta)$.
	\vspace{-0.5em}
	\paragraph{(b) The bound on~$\boldsymbol{J_N^\CB(\eta)}$.} 
	We apply the change of variables~$\Phi$ from~\eqref{e:change_of_var} again, i.e. in particular we set~$u_{1:n} \coloneqq (y_{[I_1]}, \ldots, y_{[I_n]})$.
	This time however, the definition of~$\CB_N(\eta)$ implies that~$u_k < \eta (N-1)$ for any~$k \in \llbracket 1,n \rrbracket$; as a consequence, the cardinality in~\eqref{e:volume_fibre} now reads 
	\begin{equation} \label{}
		\#(F(u_{1:n})) 
		\lesssim
		\del[0]{u_{[1:n]}}^{r-n}
		\leq 
		\del[0]{\eta nN}^{r-n}
	\end{equation}
	In complete analogy to the computations for~$J_N^\CA(\eta)$ in~\eqref{e:calc_JA}, we then find
	\begin{equs}[][e:calc_JB]
		J_N^\CB(\eta) 
		& 
		\leq N^{m-n} \sum_{\substack{u_{1:n} \in [0,\eta (N-1)]^n, \\ u_j \in \N_0}} \#(F(u_{1:n})) \prod_{j=1}^n \abs[0]{g_j(u_j)} \\
		& \leq (\eta n)^{r-n} N^{m+r-2n} \sum_{\substack{u_{1:n} \in [0,\eta (N-1)]^n, \\ u_j \in \N_0}}  \prod_{j=1}^n \abs[0]{g_j(u_j)} \\
		& \leq \eta^{r-n}  n^{r-n}  \prod_{j=1}^n \norm[0]{g_j}_{\ell^1(\Z)}
	\end{equs}
	where we have again used that~$m+r-2n = 0$ from~\eqref{e:gaps_cardinalities}. 
	From the same equation, we also obtain~$r - n \geq 1$ such that we can make the previous expression arbitrarily small by choosing~$\eta$ small enough.
	\vspace{-0.5em}
	\paragraph{(c) Summary of the computations.} 
	Let~$\eps > 0$. From~\eqref{e:I_N_trafo:other}, \eqref{e:decomp_J_N}, and~\eqref{e:decomp_J_N_parts}, for fixed~$\eta \in (0,1)$ and~$N \in \N$, we have that
	\begin{equation*}
		\abs[0]{\CI_{N}(P)} \leq J_N^\CA(\eta) + J_N^\CB(\eta) \,.
	\end{equation*}
	For the two terms on the right hand side, we have shown: 
	\begin{itemize}
		\item By~\eqref{e:calc_JA}, uniformly over~$\eta \in (0,1)$, we can choose~$N \in \N$ large enough such that~$J_N^\CA(\eta) < \eps/ 2$.
		\item By~\eqref{e:calc_JB}, uniformly over~$N \in \N$, we can choose~$\eta \in (0,1)$ small enough such that~$J_N^\CB(\eta) < \eps/ 2$.
	\end{itemize}
	Altogether, for any~$\eps > 0$, we can therefore find~$N \in \N$ large and~$\eta \in (0,1)$ small enough such that
	\begin{equation*}
		\abs[0]{\CI_{N}(P)} \leq J_N^\CA(\eta) + J_N^\CB(\eta) < \eps \,,
	\end{equation*}
	i.e.~$\CI_N(P) \to 0$ as~$N \to \infty$.
	This finishes the proof in case~$s = 0$ and~$t = 1$.	

	\medskip
	In case of general~$s,t \in [0,1]$, recall the definition of~$\CI^{M;s,t}_N(P,\w)$ from~\eqref{e:def:INst}. 
	By translation invariance of the summand therein and the simple change of va\-ria\-bles~$\tilde{i}_k \coloneqq i_k - \lfloor Ns \rfloor$, we can switch to the simplex~$\triangle^{2n}_{0,\lfloor Nt \rfloor - \lfloor Ns \rfloor - 1}$ as the domain of summation.
	The above proof (for~$s=0$ and~$t=1$) then works mutatis mutandis.
\end{proof}

\subsection{Proof that cross-simplex pairings vanish}
In this subsection, we present the proof of Lemma~\ref{l:cross_simplex_pairings} which states that cross-simplicial pairings give rise to asymptotically negligible contributions.

\begin{proof}[of Lemma~\ref{l:cross_simplex_pairings}]
	The proof is basically the same as that for Proposition~\ref{p:non_ladder_vanishes}, specifically the analysis of the term~$J_N^\CA(\eta)$, so we only provide a sketch of the argument. 
	With the same notation as in that proof, the expression we get after changing to gap variables analogous to~\eqref{e:I_N_trafo} reads
	\begin{equation} \label{e:pf_cross_pairings_1}
		\abs[0]{\CI^{M;\triangle}_N(P,\w)} 
		\leq \frac{1}{N^n} \sum_{v_{1:2n} \in \tilde{\triangle}_N^{(2n)} } \prod_{k=1}^{n} \abs[4]{g_k\del[4]{\sum_{\ell = a_k+1}^{b_k} v_\ell}} 
	\end{equation}
	where~$v_{[1:2n]} \coloneqq \sum_{\ell=1}^{2n} v_\ell$ and 
	\begin{equs}
		\thinspace & 
		\tilde{\triangle}_N^{(2n)} \\
		\coloneqq \ & \cbr[1]{v_{1:2n} \in \N_0^{2n}: \ v_\ell \geq 0, \ v_{[1:2n]} \leq C'(N-1), \ \exists \, u: \ \text{s.t.} \ v_u \gtrsim \lfloor N s_{u+1} \rfloor - \lfloor N t_u \rfloor},
	\end{equs}
	for some constant~$C' > 0$, for example~$C' \coloneqq (t_l - s_1) + 1$ will do. Note that we have potentially increased the domain of integration by choosing~$C'$ in this way, hence the inequality in~\eqref{e:pf_cross_pairings_1}.
	
	Most importantly, the the fact that there exists an~$u \in \llbracket 1,2n-1 \rrbracket$ such that
	\begin{equation}
		v_u > \lfloor N s_{u+1} \rfloor - \lfloor N t_u \rfloor \gtrsim N(s_{u+1} - t_{u}) \eqqcolon \eta N, \quad \eta \coloneqq s_{u+1} - t_{u} > 0
	\end{equation}
	is due to the assumption that there is at least one cross-simplicial pair~$(a,b) \in P$.
	One can now re-do the steps that led to~\eqref{e:calc_JA} and, thanks to the fact that~$\eta > 0$, conclude by choosing~$N$ large enough as in~\eqref{e:calc_JA:2}.
\end{proof}

%%%%%
%%%%%
%%%%%

\section{Counterexample to the conditional decay condition} \label{a:counterexample}

In this Appendix, we present a counterexample to the conditional decay condition of Gehringer, Li, and Sieber~\cite[Def.~3.11]{Gehringer_Li_Sieber_2022}, cf. item~\ref{intro:conditional_decay} on p.~\pageref{intro:conditional_decay}.

\begin{definition} \label{d:conditional_decay_condition}
	Let~$X = (X_k)_{k \in \Z}$ a stationary, centred Gaussian sequence with covariance function~$\rho$ such that~$\rho(0) = 1$, i.e.~$X_0 \sim \gamma \equiv \CN(0,1)$.
	Further, for~$k \in \Z$, let
	\begin{equation*}
		\CF_k \coloneqq \sigma\del[0]{X_i: i \leq k} \,.
	\end{equation*} 
	A function~$f \in L^2(\gamma)$ is said to satisfy the \emph{conditional decay condition} (w.r.t.~$X$) if 
	\begin{equation*}
		\sum_{\ell > 0} \norm[0]{\E\sbr[0]{f(X_\ell) \sVert[0] \CF_0}}_{L^2(\Omega)} < \infty \,.
	\end{equation*}
\end{definition}

Let us now introduce the\footnote{Conventionally, the parameter~$r \in (0,1/2)$ is denoted by~$d$. We changed the notation to avoid conflicts with the (lower bound of the) Hermite rank~$d \in \N$.} \emph{FARIMA($0,r,0$) model} for~$r \in (0,\nicefrac{1}{2})$, see the monograph by Pipiras and Taqqu~\cite[Sec.~2.4]{Pipiras_Taqqu_2017} or~\cite{taqqu_et_al_95} for a concise introduction; see also Remark~\ref{rmk:fBM} for its relation to fractional Brownian increments.

\begin{definition}[FARIMA($0,r,0$) model] \label{d:farima_model}
	Let~$r \in (0,\nicefrac{1}{2})$ and consider a sequence~$(\eps_k)_{k \in \Z}$ of i.i.d. Gaussian random variables such that~$\eps_0 \sim \CN(0,1)$.
	We define
	\begin{equation} \label{e:farima_model}
		X_i \coloneqq \sum_{j=0}^\infty c_j \eps_{i-j}, \quad c_j \coloneqq \frac{\Gamma(j+r)}{\Gamma(j+1)\Gamma(r)}, \quad i,j \in \Z \,.
	\end{equation}
\end{definition}

Using Stirling's formula, one can show that 
\begin{equation} \label{e:farima_coeff_asymp}
	c_j \asymp \frac{j^{r-1}}{\Gamma(r)} \quad \text{as} \quad j \to \infty \,,
\end{equation}
see~\cite[Eq.~(2.4.5)]{Pipiras_Taqqu_2017}. The following lemma is the content of~\cite[Coro.~2.4.4]{Pipiras_Taqqu_2017}.

\begin{lemma} \label{l:cov_farima}
	For~$r \in (0,\nicefrac{1}{2})$ and~$k \in \Z$, we have 
	\begin{equation} \label{e:cov_farima}
		\rho(k) = \frac{\Gamma(1-2r)}{\Gamma(1-r)\Gamma(r)} \frac{\Gamma(\abs[0]{k}+r)}{\Gamma(\abs[0]{k}-r+1)} 
		\sim 
		\abs[0]{k}^{2r-1} \frac{\Gamma(1-2r)\sin(r\pi)}{\pi} \quad \text{as} \quad \abs[0]{k} \to \infty \,.
	\end{equation}
\end{lemma}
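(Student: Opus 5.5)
We will compute $\rho(k)=\E[X_0X_k]$ directly from the moving-average representation \eqref{e:farima_model}, sum the resulting series in closed form via Gauss's hypergeometric summation theorem, and then read off the asymptotics from the standard ratio asymptotics of the Gamma function together with the reflection formula.

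\emph{Step 1 (reduction to a series).} By \eqref{e:farima_coeff_asymp} and $r<\nicefrac12$, we have $c_j\lesssim j^{r-1}$ with $2(r-1)<-1$, so $(c_j)\in\ell^2$. Hence the series in \eqref{e:farima_model} converges in $L^2(\Omega)$ and defines a stationary centred Gaussian sequence. Since $\rho(-k)=\rho(k)$ by stationarity, it suffices to treat $k\geq 0$. Independence of the $\eps_i$ then gives
\begin{equation*}
	\rho(k)=\sum_{j\ge0}c_j c_{j+k}=\frac{1}{\Gamma(r)^2}\sum_{j\ge0}\frac{\Gamma(j+r)\,\Gamma(j+k+r)}{\Gamma(j+1)\,\Gamma(j+k+1)} \,.
\end{equation*}

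\emph{Step 2 (closed form).} Writing $\Gamma(j+r)=\Gamma(r)(r)_j$, $\Gamma(j+k+r)=\Gamma(k+r)(k+r)_j$ and $\Gamma(j+k+1)=\Gamma(k+1)(k+1)_j$ in Pochhammer notation, the series becomes
\begin{equation*}
	\rho(k)=\frac{\Gamma(k+r)}{\Gamma(r)\Gamma(k+1)}\;{}_2F_1(r,k+r;k+1;1) \,.
\end{equation*}
Gauss's theorem ${}_2F_1(a,b;c;1)=\frac{\Gamma(c)\Gamma(c-a-b)}{\Gamma(c-a)\Gamma(c-b)}$ applies because $c-a-b=1-2r>0$. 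This yields ${}_2F_1(r,k+r;k+1;1)=\frac{\Gamma(k+1)\Gamma(1-2r)}{\Gamma(k+1-r)\Gamma(1-r)}$, and multiplying out gives exactly the first identity in \eqref{e:cov_farima}. This summation step is the only non-routine point. Its applicability rests on absolute convergence at $z=1$: the terms behave like $j^{2r-2}$, which is summable. If one prefers to avoid hypergeometric functions, an alternative is to use the spectral density $\frac{1}{2\pi}\abs{1-e^{i\lambda}}^{-2r}$ of $X$ and the classical integral $\int_{-\pi}^{\pi}\cos(k\lambda)\,\abs{2\sin(\lambda/2)}^{-2r}\dif\lambda$. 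We would go with Gauss's theorem.

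\emph{Step 3 (asymptotics).} Using $\Gamma(k+a)/\Gamma(k+b)\sim k^{a-b}$ as $k\to\infty$ with $a=r$ and $b=1-r$, we get $\rho(k)\sim k^{2r-1}\,\Gamma(1-2r)/(\Gamma(r)\Gamma(1-r))$. The reflection formula $\Gamma(r)\Gamma(1-r)=\pi/\sin(\pi r)$ then gives the stated constant $\Gamma(1-2r)\sin(r\pi)/\pi$. The case $k\to-\infty$ follows from the symmetry noted in Step 1.
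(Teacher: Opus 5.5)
Your proof is correct. The covariance series $\sum_{j\geq 0} c_j c_{j+k}$ and its rewriting as $\frac{\Gamma(k+r)}{\Gamma(r)\Gamma(k+1)}\,{}_2F_1(r,k+r;k+1;1)$ check out. So does the use of Gauss's summation theorem, which is legitimate because $c-a-b=1-2r>0$ and $c=k+1\geq 1$. The asymptotics via $\Gamma(k+a)/\Gamma(k+b)\sim k^{a-b}$ and the reflection formula are also fine.

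The paper does not prove this lemma at all; it imports it from \cite[Coro.~2.4.4]{Pipiras_Taqqu_2017}. What your route buys is self-containedness. It uses only:
\begin{itemize}
\item the moving-average representation of Definition~\ref{d:farima_model};
\item the coefficient asymptotics \eqref{e:farima_coeff_asymp} already recorded in the paper, needed only to justify $L^2$-convergence of the series;
\item two classical Gamma-function identities.
\end{itemize}
The appendix thus becomes checkable without the monograph, whereas the citation buys brevity. The spectral-density alternative you mention would add steps: identifying the spectral density $\frac{1}{2\pi}\lvert 1-e^{i\lambda}\rvert^{-2r}$ and evaluating a trigonometric integral. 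That is heavier than your hypergeometric summation.

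As a by-product, your closed form at $k=0$ gives $\rho(0)=\Gamma(1-2r)/\Gamma(1-r)^2=\sum_{j\geq 0}c_j^2>c_0^2=1$. So the sequence of Definition~\ref{d:farima_model} has to be rescaled to meet the normalisation $\rho(0)=1$ imposed in Definition~\ref{d:conditional_decay_condition}. This is harmless for the lemma itself and for the exponent $2r-1$.
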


\begin{corollary}
	For~$r \in (0,\nicefrac{1}{2})$, the FARIMA($0,r,0$) model is \emph{long-range dependent} in the sense of~\cite[Condition~II, p.~$17$]{Pipiras_Taqqu_2017}, i.e.~$\abs[0]{\rho}_{\ell^1(\Z)} = \infty$.
\end{corollary}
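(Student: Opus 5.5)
The claim is that $\norm[0]{\rho}_{\ell^1(\Z)} = \infty$ for the FARIMA($0,r,0$) model with $r \in (0,\nicefrac{1}{2})$. We will read this off directly from the asymptotics in Lemma~\ref{l:cov_farima}. The plan is to compare $\abs[0]{\rho(k)}$ with the non-summable sequence $\abs[0]{k}^{2r-1}$ and invoke the limit comparison test.

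First we check that the limiting constant
\begin{equation*}
	c_r \coloneqq \frac{\Gamma(1-2r)\sin(r\pi)}{\pi}
\end{equation*}
is strictly positive. For $r \in (0,\nicefrac{1}{2})$ we have $1-2r \in (0,1)$, so $\Gamma(1-2r) > 0$. We also have $r\pi \in (0,\nicefrac{\pi}{2})$, so $\sin(r\pi) > 0$. Hence $c_r > 0$.

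By~\eqref{e:cov_farima}, $\rho(k)/(c_r \abs[0]{k}^{2r-1}) \to 1$ as $\abs[0]{k}\to\infty$. Consequently there is a $K \in \N$ such that $\abs[0]{\rho(k)} \geq \tfrac{c_r}{2}\abs[0]{k}^{2r-1}$ for all $\abs[0]{k} \geq K$.

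Since $2r-1 \in (-1,0)$, the exponent satisfies $2r - 1 > -1$, so the series $\sum_{k \geq K} k^{2r-1}$ diverges by comparison with the harmonic series; in fact $k^{2r-1} \geq k^{-1}$ for $k \geq 1$. Therefore
\begin{equation*}
	\norm[0]{\rho}_{\ell^1(\Z)} \geq \sum_{\abs[0]{k}\geq K} \abs[0]{\rho(k)} \geq c_r \sum_{k \geq K} k^{2r-1} = \infty .
\end{equation*}

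There is no real obstacle here. The only point requiring care is the positivity of the constant, which ensures that the asymptotic equivalence in Lemma~\ref{l:cov_farima} gives a genuine lower bound.
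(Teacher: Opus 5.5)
Your proof is correct and is exactly the argument the paper intends: the corollary is recorded without proof as an immediate consequence of Lemma~\ref{l:cov_farima}, since $\rho(k)\sim c_r|k|^{2r-1}$ with $2r-1\in(-1,0)$ forces $\sum_k|\rho(k)|=\infty$. Your explicit check that $c_r>0$ is the only detail the paper leaves implicit. It could also be read off the exact Gamma-function formula in~\eqref{e:cov_farima}, which shows $\rho(k)>0$ for every $k$.
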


\begin{remark}[Relation to fractional Brownian differences] \label{rmk:fBM}
	Consider a two-sided fractional Brownian motion~(fBM)~$B_H$ with Hurst parameter~$H \in (1/2,1)$. 
	Another natural choice of a long-range dependent sequence is given by the \emph{fractional Gaussian noise}~$\tilde{X}_i \coloneqq B_H(i+1) - B_H(i)$, the coveriance function of which behaves like $\tilde{\rho}(k) \asymp \abs[0]{k}^{2H-2}$ as~$\abs[0]{k} \to \infty$, see~\cite[Sec.~2.8]{Pipiras_Taqqu_2017}.   
	In particular, for~$H = r + \nicefrac{1}{2}$, the asymptotics of~$\tilde{\rho}$ and~$\rho$ are the same: \emph{Morally}, this justifies to think of the variables~$X_i$ as given by fractional Brownian differences.
	One should be able to replicate the arguments in this appendix for~$\tilde{X}_i$ as well, using the locally independent decomposition of fBM~\cite[Sec.~3.1]{hairer_05} in a similar way as Gehringer and Li~\cite[Sec.~3.5]{gehringer_li_20}.
	We opted to present the FARIMA($0,r,0$) model because its concrete form in~\eqref{e:farima_model} allows for less technical arguments. 
\end{remark}

Finally, we can present the counterexample we alluded to.

\begin{proposition} \label{p:farima_counterexample}
	Let~$d \geq 2$. Then, for any~$r \in \del[1]{\frac{1}{2}-\frac{1}{d}, \frac{1}{2}-\frac{1}{2d}}$, the following statements hold true:
	\begin{enumerate}[label=(\roman*)]
		\item \label{farima_counterexample:i} The covariance function~$\rho$ defined in~\eqref{e:cov_farima} satisfies~$\rho \in \ell^d(\Z)$
		\item \label{farima_counterexample:ii} 
		Then,~$f = H_d$ \emph{violates} the conditional chaos decay condition with respect to the FARIMA($0,r,0$) model introduced in Definition~\ref{d:farima_model}.
	\end{enumerate} 
\end{proposition}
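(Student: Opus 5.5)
The plan is to treat the two items separately. Item (i) is a direct summability check, and item (ii) reduces to an explicit computation of the conditional expectation through the Hermite scaling identity.

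For \ref{farima_counterexample:i}, I would use Lemma~\ref{l:cov_farima}. It gives $\abs[0]{\rho(k)}^d \sim C\abs[0]{k}^{d(2r-1)}$ as $\abs[0]{k}\to\infty$. The series $\sum_k \abs[0]{k}^{d(2r-1)}$ converges if and only if $d(1-2r)>1$, that is, $r<\frac12-\frac1{2d}$. This is exactly the upper endpoint of the admissible interval, so $\rho\in\ell^d(\Z)$. Nothing beyond comparison with a $p$-series is needed.

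For \ref{farima_counterexample:ii}, I would split each variable using the moving-average representation~\eqref{e:farima_model}. For $\ell>0$, write $X_\ell = Y_\ell + Z_\ell$, where
\begin{equation*}
Y_\ell \coloneqq \sum_{j\ge \ell} c_j\,\eps_{\ell-j}, \qquad Z_\ell \coloneqq \sum_{j=0}^{\ell-1} c_j\,\eps_{\ell-j}.
\end{equation*}
Here $Y_\ell$ depends only on $(\eps_k)_{k\le 0}$, and $Z_\ell$ is independent of it. One caveat: $\CF_0$ is generated by $(X_i)_{i\le0}$, not by $(\eps_k)_{k\le0}$. I would handle this by noting that the FARIMA filter is invertible for $r\in(0,\nicefrac12)$, since $\eps_k=\sum_j \pi_j X_{k-j}$ with square-summable $\pi_j$ coming from $(1-B)^{r}$. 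Hence $\CF_0=\sigma(\eps_k:k\le0)$.

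Set $\sigma_\ell^2\coloneqq\sum_{j\ge\ell}c_j^2 = \V(Y_\ell)\le\rho(0)=1$. Write $Y_\ell=\sigma_\ell U$ and $Z_\ell=\sqrt{1-\sigma_\ell^2}\,V$, with $U,V$ independent standard normals and $U$ being $\CF_0$-measurable. I would then use the classical identity
\begin{equation*}
\E\sbr[0]{H_d(\sigma U+\sqrt{1-\sigma^2}V)\,\sVert[0]\,U}=\sigma^d H_d(U).
\end{equation*}
This is Mehler's formula, equivalently the fact that $P_t H_d = e^{-dt}H_d$ for the OU semigroup of Definition~\ref{d:OU_operator} with $e^{-t}=\sigma$; it can also be checked directly from the generating function $e^{\lambda x-\lambda^2/2}$. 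It yields
\begin{equation*}
\norm[0]{\E\sbr[0]{H_d(X_\ell)\sVert[0]\CF_0}}_{L^2(\Omega)}=\sqrt{d!}\,\sigma_\ell^d.
\end{equation*}

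It then remains to find the growth of $\sigma_\ell$. By~\eqref{e:farima_coeff_asymp}, $c_j^2\asymp j^{2r-2}/\Gamma(r)^2$, so comparison with an integral gives
\begin{equation*}
\sigma_\ell^2\sim \frac{\ell^{2r-1}}{(1-2r)\Gamma(r)^2}.
\end{equation*}
Therefore $\sigma_\ell^d\sim C\,\ell^{-d(1-2r)/2}$. The sum over $\ell>0$ diverges precisely when $d(1-2r)/2\le1$, that is, $r\ge\frac12-\frac1d$, and this holds on the stated interval. So the conditional decay condition fails.

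I expect the only delicate point to be the identification $\CF_0=\sigma(\eps_k:k\le0)$. If the invertibility argument proves awkward, I would instead only use $\CF_0\subseteq\sigma(\eps_k:k\le0)$. That inclusion gives $\norm[0]{\E[\cdot\sVert\CF_0]}\le\norm[0]{\E[\cdot\sVert\sigma(\eps_{\le0})]}$, which is the wrong direction for a lower bound. So I would rely on the invertibility of $(1-B)^{r}$, which is standard in Pipiras and Taqqu, to get equality.
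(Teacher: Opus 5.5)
Your proposal is correct and follows the paper's proof essentially step for step. Item (i) is the same $p$-series check. Item (ii) uses the same splitting of $X_\ell$ into the $\mathcal{F}_0$-measurable part $\sum_{j\ge\ell}c_j\varepsilon_{\ell-j}$ plus an independent remainder, and the same scaling identity $\mathbb{E}[H_d(\sigma U+\sqrt{1-\sigma^2}V)\,|\,U]=\sigma^d H_d(U)$; the paper gets this identity from the binomial expansion of $H_d(ax+by)$ rather than from Mehler's formula. The tail asymptotics $\sigma_\ell^d\asymp \ell^{d(r-1/2)}$ are also the same.

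Your invertibility argument for $\mathcal{F}_0=\sigma(\varepsilon_k:k\le 0)$ makes explicit a step the paper uses without comment when it evaluates $\mathbb{E}[\varepsilon_{\ell-j}\,|\,\mathcal{F}_0]$. Like the paper, you silently take $\sum_{j\ge 0}c_j^2=1$, whereas the coefficients in~\eqref{e:farima_model} give $\rho(0)=\Gamma(1-2r)/\Gamma(1-r)^2\neq 1$. This is harmlessly fixed by dividing $X$ by $\sqrt{\rho(0)}$.
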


\begin{proof}
	Regarding the claim in~\ref{farima_counterexample:i}: By Lemma~\ref{l:cov_farima}, observe that~$\rho \in \ell^d(\Z)$ is equivalent to
	\begin{equation*}
		(2r-1)d < -1 \quad \Longleftrightarrow \quad r < \frac{1}{2} - \frac{1}{2d} \,.
	\end{equation*} 
	Let us turn to the claim in~\ref{farima_counterexample:ii}:
	For fixed $\ell > 0$, we decompose
	\begin{equation*}
		X_\ell = \E\sbr[0]{X_\ell \sVert[0] \CF_0} + R_{\ell}	
	\end{equation*}
	where $R_{\ell} \coloneqq X_\ell - \E\sbr[0]{X_\ell \sVert[0] \CF_0}$ is uncorrelated with, hence independent from $\CF_k$, as our process is Gaussian. 
	As a consequence
	\begin{equation*}
		\E\sbr[0]{H_d(X_\ell) \sVert[0] \CF_0} 
		=  
		\E_{R_\ell}[H_d(\E\sbr[0]{X_\ell \sVert[0] \CF_0} + R_{\ell})]
	\end{equation*}
	where, on the RHS, we take the expectation w.r.t. $R_{\ell}$, the random variable $X_{\ell}$ still fixed. 
	Recall the following binomial-type expansion formula for Hermite polynomials and~$a,b \in \R$ such that~$a^2 + b^2 = 1$ and~$x,y \in \R$:
	\begin{equation*}
		H_d(ax + by) = \sum_{j=0}^d \binom{d}{j} a^j b^{d-j} H_j(x) H_{d-j}(y)\,.
	\end{equation*}
	Let now~$a = a_\ell \coloneqq \norm[0]{\E\sbr[0]{X_\ell \sVert[0] \CF_0}}_{L^2(\Omega)}$ and~$b = b_\ell \coloneqq \norm[0]{R_\ell}_{L^2(\Omega)}$. 
	Recall that~$f= H_d$ and observe that 
	\begin{equs} 
		\E\sbr[0]{H_d(X_\ell)\sVert[0] \CF_0} 
		%%%
		&=  \E_{R_{\ell}}\sbr[3]{H_d\del[3]{a_\ell \frac{\E[X_\ell|\CF_0]}{a_{\ell}} + b_{\ell} \frac{R_{\ell}}{b_\ell}}} \\
		%%%
		&= \sum_{j=0}^d a_{\ell}^j b_{\ell}^{d-j} H_j \del[3]{\frac{\E[X_\ell|\CF_0]}{a_{\ell}}} \E \sbr[3]{H_{d-j} \left(\frac{R_{\ell}}{b_{\ell}}\right)} \\
		&= a_{\ell}^d \, H_d \del[3]{\frac{\E[X_\ell|\CF_0]}{a_{\ell}}}
	\end{equs}
	where we used that all the expectations in the second line that are evaluated in elements of chaoses of order at least $1$ vanish.
	Using~\eqref{e:farima_model}, we can then compute
	\begin{equs}
		\E\sbr[0]{X_\ell \sVert[0] \CF_0}
		& = 
		\sum_{j=0}^\infty c_j \E\sbr[0]{\eps_{\ell-j} \sVert [0] \CF_0}
		=
		\sum_{j \geq \ell} c_j \eps_{\ell-j} 
	\end{equs}
	and thus, since sequence~$(\eps_k)_{k \in \Z}$ consists of independent random variables, we find  
	\begin{equs}
		a_\ell^2 
		& = 
		\V\del[1]{\E\sbr[0]{X_\ell \sVert[0] \CF_0}}
		= 
		\sum_{j \geq \ell} c_j^2 \,.
	\end{equs}
	In particular, we see that~$\frac{\E[X_\ell|\CF_0]}{a_\ell} \sim \gamma$ which, combined with~\eqref{e:farima_coeff_asymp}, implies that 
	\begin{equs}
		\thinspace &
		\sum_{\ell > 0} \norm[0]{\E\sbr[0]{H_d(X_\ell) \sVert[0] \CF_0}}_{L^2(\Omega)}
		= \  
		\sqrt{d!} \sum_{\ell > 0} a_\ell^d   
		=
		\sqrt{d!} \sum_{\ell > 0} \del[3]{\sum_{j \geq \ell} c_j^2}^{\nicefrac{d}{2}} \\
		\asymp \ &  
		\sqrt{d!} \sum_{\ell > 0} \del[3]{\sum_{j \geq \ell} j^{2r-2}}^{\nicefrac{d}{2}}
		\asymp \   
		\sqrt{d!} \sum_{\ell > 0} \ell^{d(r-\nicefrac{1}{2})} \,.
	\end{equs} 
	Note that, in the last step, we have used that the inner sum converges which is equivalent to~$r < 1/2$. 
	Finally, we observe that the last sum diverges because 
	\begin{equation*}
		d(r-\nicefrac{1}{2}) > - 1 \quad \Longleftrightarrow \quad r > \frac{1}{2} - \frac{1}{d} \,.
	\end{equation*}
	The proof is complete.
\end{proof}

{\small
	\bibstyle{alpha}
	\bibliography{AKP26_References.bib}
}

\vspace{2em}
\noindent

\noindent
\begin{minipage}[t]{0.45\textwidth}
	\raggedright
	\noindent
	\small
	\textbf{Henri Elad Altman} \\
	Université Sorbonne Paris Nord \\
	LAGA, CNRS UMR 7539, Institut Galilée \\
	99 Av. J.-B. Clément, 93430 Villetaneuse, France \\
	{\it E-mail address:}
	{\tt elad-altman@math.univ-paris13.fr}
	
	\vspace{2em}

	\noindent
	\small
	\textbf{Tom Klose} \\
	University of Oxford \\ 
	Mathematical Institute \\
	Woodstock Road \\
	Oxford, OX2 6GG, United Kingdom \\
	{\it E-mail address:}
	{\tt tom.klose@maths.ox.ac.uk}
\end{minipage}
\hfill
\begin{minipage}[t]{0.48\textwidth}
	\raggedright
	\noindent
	\small
	\textbf{Nicolas Perkowski} \\
	Freie Universit\"at Berlin \\
	Arnimallee 7 \\
	14195 Berlin, Germany \\ 
	{\it E-mail address:}
	{\tt perkowski@math.fu-berlin.de}

	\medskip

	\noindent
	Max-Planck Institute for Mathematics \\ in the Sciences \\
	Inselstraße 22 \\
	04103 Leipzig, Germany \\
	{\it E-mail address:}
	{\tt nicolas.perkowski@mis.mpg.de}	
\end{minipage}

\end{document}